\newcommand{\plus}[1]{\mathop{\amalg}\limits_{#1}}
\newcommand\nbd\nobreakdash
\newcommand{\Cat}{{\mathcal{C}\mspace{-2.mu}\it{at}}}
\newcommand{\nCat}[1]{{#1}\hbox{\protect\nbd-}\kern1pt\Cat}	
\newcommand{\cd}[2][]{\vcenter{\hbox{\xymatrix#1{#2}}}}
\def\matrixobject@{%
	\edef \next@{={\DirectionfromtheDirection@ }}%
	\expandafter \toks@ \next@ \plainxy@
	\let\xy@@ix@=\xyq@@toksix@
	\xyFN@ \OBJECT@}
\let\xy@entry@@norm=\entry@@norm
\def\entry@@norm@patched{%
	\let\object@=\matrixobject@
	\xy@entry@@norm }
\newcommand{\Twocong}[2][0.5]{\ar@{}[#2] \save ?(#1)*{\cong}\restore}
\newcommand{\Twoeq}[2][0.5]{\ar@{}[#2] \save ?(#1)*{=}\restore}
\newcommand{\Ltwocell}[3][0.5]{\ar@{}[#2] \ar@{=>}?(#1)+/r 0.2cm/;?(#1)+/l 0.2cm/^{#3}}
\newcommand{\Rtwocell}[3][0.5]{\ar@{}[#2] \ar@{=>}?(#1)+/l 0.2cm/;?(#1)+/r 0.2cm/^{#3}}
\newcommand{\Utwocell}[3][0.5]{\ar@{}[#2] \ar@{=>}?(#1)+/d  0.2cm/;?(#1)+/u 0.2cm/_{#3}}
\newcommand{\Dtwocell}[3][0.5]{\ar@{}[#2] \ar@{=>}?(#1)+/u  0.2cm/;?(#1)+/d 0.2cm/^{#3}}
\newcommand{\Ultwocell}[3][0.5]{\ar@{}[#2] \ar@{=>}?(#1)+/dr  0.2cm/;?(#1)+/ul 0.2cm/^{#3}}
\newcommand{\Urtwocell}[3][0.5]{\ar@{}[#2] \ar@{=>}?(#1)+/dl  0.2cm/;?(#1)+/ur 0.2cm/^{#3}}
\newcommand{\Dltwocell}[3][0.5]{\ar@{}[#2] \ar@{=>}?(#1)+/ur  0.2cm/;?(#1)+/dl 0.2cm/^{#3}}
\newcommand{\Drtwocell}[3][0.5]{\ar@{}[#2] \ar@{=>}?(#1)+/ul  0.2cm/;?(#1)+/dr 0.2cm/^{#3}}
\newcommand{\Ulthreecell}[3][0.5]{\ar@{}[#2] \ar@3?(#1)+/dr  0.2cm/;?(#1)+/ul 0.2cm/_{#3}}
\newcommand{\gpd}{{\mathcal{G}\mspace{-2.mu}\it{pd}}}
\newcommand{\ngpd}[1]{{#1}\hbox{\protect\nbd-}\kern1pt\gpd}
\newcommand{\wgpd}{ \ngpd{\infty}}	
\newcommand{\D}{\mathcal{D}}
\newcommand{\M}{\mathbf{M}}
\newcommand{\R}{\mathcal{R}}
\newcommand{\G}{\mathbb{G}}
\newcommand{\p}{\mathbb{P}}
\newcommand{\C}{\mathscr{C}}
\newcommand{\A}{\mathscr{A}}
\newcommand{\F}{\mathscr{F}}
\newcommand{\cyl}{\mathbf{Cyl}}
\newcommand{\Cyl}{\underline{\mathbf{Cyl}}}
\newcommand{\ulthreecell}[3][0.5]{\ar@{}[#2] \ar@3?(#1)+/dr  0.2cm/;?(#1)+/ul 0.2cm/_{#3}}
\renewcommand{\epsilon}{\varepsilon}
\renewcommand{\theta}{\vartheta}
\renewcommand{\rho}{\varrho}
\renewcommand{\phi}{\varphi}
\newcounter{ctr} \numberwithin{ctr}{section}
\theoremstyle{definition}
\theoremstyle{definition}
\newtheorem{defi}[ctr]{Definition}
\theoremstyle{definition}
\theoremstyle{definition}
\newtheorem{rmk}[ctr]{Remark}
\theoremstyle{definition}
\newtheorem{ex}[ctr]{Example}
\theoremstyle{plain}
\newtheorem{thm}[ctr]{Theorem}
\theoremstyle{plain}
\theoremstyle{plain}
\newtheorem{prop}[ctr]{Proposition}
\theoremstyle{plain}
\newtheorem{lemma}[ctr]{Lemma}
\theoremstyle{plain}
\newtheorem{cor}[ctr]{Corollary}
\theoremstyle{definition}
\newtheorem{conj}[ctr]{Conjecture}
\DeclareMathOperator{\ob}{Ob}
\DeclareMathOperator{\colim}{colim}
\DeclareMathOperator{\f}{Fib}
\DeclareMathOperator{\height}{ht}
\begin{document}

	\title{Towards a globular path object for weak $\infty$-groupoids}
	\author{Edoardo Lanari\thanks{Supported by Macquarie University iMQRes PhD scholarship}}

	\maketitle
	\begin{abstract}
 
 	The goal of this paper is to address the problem of building a path object for the category of Grothendieck (weak) $\infty$-groupoids. This is the missing piece for a proof of Grothendieck's homotopy hypothesis. We show how to endow the putative underlying globular set with a system of composition, a system of identities and a system of inverses, together with an approximation of the interpretation of any map for a theory of $\infty$-categories.
 	Finally, we introduce a coglobular $\infty$-groupoid representing modifications of $\infty$-groupoids, and prove  some basic properties it satisfies, that will be exploited to interpret all $2$-dimensional categorical operations on cells of the path object $\p X$ of a given $\infty$-groupoid $X$.
\end{abstract}
\section{Introduction}
Alexander Grothendieck first introduced his definition of (a model of) weak $\infty$-groupoids in the famous 1983 letter to Quillen, see \cite{Gr}. He wanted to have a completely algebraic model of these highly structured gadgets, encoding every possible composition operation, inverses in all codimensions and coherence constraints needed for a sensible definition of weak $\infty$-groupoid. Moreover, he conjectured that these algebraic structures modeled all homotopy types, the so-called ``homotopy hypothesis''.

Roughly speaking, Grothendieck $\infty$-groupoids are strict models of a new sort of algebraic theory (called ``globular theories''), whose arities are given by the so called ``globular sums'', i.e. suitable pasting of $n$-disks for every $n\geq 0$. Not every globular theory is a suitable theory for $\infty$-groupoids, but rather only a specific subclass of these, called ``coherators'', which should be thought as suitable cofibrant replacements of the globular theory $\tilde{\Theta}$ for strict $\infty$-groupoids (see Section 3 of \cite{MA}). These are contractible and cellular theories in a precise sense, which ensures (respectively) that all the sensible operations on cells of an $\infty$-groupoid exist and that the relations between these existing operations only hold up to higher cells (rather than being identities as in the strict case).

In 2010 George Maltsiniotis made this notion more precise in his paper \cite{MA}, where he defines a category of Grothendieck $\infty$-groupoids denoted by $\wgpd$. In fact, he defines many categories of such, parametrized by the choice of a coherator, but in the end these should all give rise to equivalent models. He also constructs an adjunction of the form:
\begin{equation}
\label{adj top grpd}
\xymatrixcolsep{1pc}
\vcenter{\hbox{\xymatrix{
			**[l]\wgpd \xtwocell[r]{}_{\Pi_{\infty}}^{\vert \bullet \vert}{'\perp}& **[r] \mathbf{Top}}}}
\end{equation}
The homotopy hypothesis states that the adjunction in \eqref{adj top grpd} induces an equivalence of homotopy theories (i.e. the underlying $(\infty,1)$-categories).

In this paper, we address the problem of proving this conjecture, and we manage to make some progress by taking some necessary steps towards its complete solution. As we clarify later on in this introduction, the problem reduces to show that adjoining an $n$-cell along its source does not change the homotopy type of an $\infty$-groupoid, and the idea is to prove that by constructing a path object for the fibration category $\wgpd$. We define its underlying globular set and endow it with a non-trivial algebraic structure, namely a system of composition, a system of identities and a system of inverses. We also succeed in constructing a non-functorial interpretation of all the required structure and manage to fix the non-functoriality in low dimension by introducing modifications (a higher dimensional analogue of natural transformations). What is now left to do is to prove a conjecture that compares M.Batanin's definition of $\infty$-groupoids and Grothendieck's one, together with generalizing this process of ``correcting'' the construction to make it functorial in all higher dimensions.

In a follow-up paper we adapt this strategy to construct a path object for Grothendieck 3-groupoids, defined using an appropriate notion of 3-truncated coherator. For this path object, we prove the existence of the desired factorization of the diagonal map, which implies the existence of the semi-model structure provided the abovementioned conjecture holds true in this dimension. This also implies that Grothendieck $3$-groupoids model homotopy $3$-types.
%

In order to give meaning to the statement that constitutes the homotopy hypothesis, one has to define the homotopy theory of $\infty$-groupoids. Already in \cite{Gr} there is a very natural and simple functorial definition of homotopy groups of a given $\infty$-groupoid, which induces, just like in the case of topological spaces or Kan complexes, a notion of weak equivalence. In details, a map $f\colon X \rightarrow Y$ in $\wgpd$ is a weak equivalence if it induces a bijection on the set of path components $\pi_0(X)\cong \pi_0(Y)$, as well as isomorphisms of homotopy groups:
\[
\bfig
\morphism(0,50)|a|/@{>}@<0pt>/<800,0>[\pi_{n}(X,x)`\pi_{n}\left(Y,f(x) \right);\pi_{n}(f)]
\efig
\]
for every object $x$ of $X$.

This choice of weak equivalences endows the category $\wgpd$ with a relative category structure, and we can also consider $\mathbf{Top}$ as a relative category, with the usual notion of weak equivalences of topological spaces. It turns out that the functor $\Pi_{\infty}$ preserves weak equivalences, and thus a precise statement of the homotopy hypothesis is that the functor $\Pi_{\infty}$ is a weak equivalence of relative categories (e.g. it induces a weak equivalence at the level of the simplicial localization of the relative categories involved). Note that its left adjoint may not be a morphism of relative categories and, therefore, the adjunction in \eqref{adj top grpd} may not be one of relative categories. Recently, Simon Henry proved in \cite{Hen} that it suffices to construct a semi-model structure on $\infty$-groupoids to validate the homotopy hypothesis. This semi-model structure should have, as generating cofibration (resp. trivial cofibrations), the ``boundary inclusions'' $\partial \colon S^{n-1}\rightarrow D_n$ (resp. the ``source maps'' $\sigma_n \colon D_n \rightarrow D_{n+1}$) for every $n\geq 0$, which clearly resemble the ones for the Serre model structure on topological spaces.

The only difficult part in proving the existence of this semi-model structure is showing that given a cocartesian square of the form
\[
\bfig 
\morphism(0,0)|a|/@{>}@<0pt>/<600,0>[D_n`X;]
\morphism(0,0)|l|/@{>}@<0pt>/<0,-400>[D_n`D_{n+1};\sigma_n]
\morphism(600,0)|r|/@{>}@<0pt>/<0,-400>[X`X^+;i]
\morphism(0,-400)|l|/@{>}@<0pt>/<600,0>[D_{n+1}`X^+;]
\efig 
\] the map $i\colon X \rightarrow X^+$ is a weak equivalence of $\infty$-groupoids. In fact, it is straightforward to show that to prove the ``pushout lemma'' in this situation it is enough to construct a path object $\p X$ for every cofibrant object $X$ (even finitely cellular would do), i.e. a factorization of the diagonal map $\Delta \colon X \rightarrow X \times X$ into a weak equivalence followed by a fibration as displayed below:
\[
\bfig 
\morphism(0,0)|a|/@{>}@<0pt>/<500,0>[X`\p X;\simeq]
\morphism(500,0)|a|/@{>>}@<0pt>/<500,0>[\p X` X\times X;p]
\efig
\]
In this paper we therefore address the problem of defining such functorial path object. The natural way of obtaining it is to define the path object as a right adjoint functor, or, equivalently, as a functor co-represented by a co-$\infty$-groupoid object in the category $\wgpd$. More precisely, one needs to construct a globular functor $\cyl\colon\mathfrak{C} \rightarrow \wgpd$ which represent cylinders on globular sums, where $\mathfrak{C}$ is the chosen coherator whose category of models we denoted by $\wgpd$, and then set $\p X = \wgpd(\cyl(\bullet),X)$. We do not achieve this result in its entirety,  but significant and necessary steps are made towards a complete construction of it. In particular, we define the underlying globular set of this path object in Definition \ref{path space defi}, i.e. the set of its $n$-cells for every $n\geq 0$, and we endow this with a system of composition, of identities and of inverses in Theorem \ref{algebraic structure on PX}. Furthermore, we also define an interpretation of each homogeneous operation in a theory for $\infty$-categories in a non-functorial fashion, in the sense that it does not respect substitution of operations on the nose. This is a non-trivial combinatorial problem which we solve by making use of trees to exhibit the cylinder on a globular sum as the colimit of a zig-zag diagram of globular sums. This allows us to define, given any operation in a theory for $\infty$-categories $\rho\colon D_n \rightarrow A$ a vertical stack of cylinders, which we then compose using a vertical composition operation that we introduce in full generality in Subsection 9.2, thus getting a map $\hat{\rho}\colon\cyl(D_n) \rightarrow \cyl(A)$. The idea is to then use modifications, i.e. ``homotopies of cylinders'', to adjust the source and target of this approximation of $\cyl(\rho)$ and make it functorial. At this point, we define the globular set of modifications in an $\infty$-groupoid $X$ and use it to interpret all operations of dimension less or equal than 2. What is left to do is extend this process to higher dimensions, in order to interpret all the necessary coherences to endow the path object with the structure of an $\infty$-category, and then prove that a certain conjecture we state below holds true, so as to obtain a proof of the existence of the semi-model structure and thus a proof of the homotopy hypothesis.

Sections 2 serves as a recap of (some of) the definitions and constructions related to $\infty$-groupoids in the literature. The following section is a recap on direct categories and lifting of factorization systems on a category $\C$ to a functor category $\C^{\D}$ for a direct category $\D$, together with a proof of contractibility of globular sums. 

In Section 4 we introduce the suspension-loop space adjunction. This is the first piece of original content, and it will be exploited extensively throughout this paper. In section 5 we define the coglobular object of cylinders on $n$-globes. This is enough to get a functor $\p \colon \wgpd \rightarrow [\G^{op},\mathbf{Set}]$, which sends an $\infty$-groupoid $X$ to the underlying globular set $\p X$ of the putative path object on it.

Section 6 is devoted to the study of this globular set $\p X$ we have just defined, and we show in Theorem \ref{algebraic structure on PX} how to endow it with some non-trivial algebraic structure, such as a system of composition, a system of identities and a system of inverses.

Beyond the algebraic structure just mentioned, an $\infty$-groupoid also involves ``coherence'' data. To equip $\p X$ with choices of these data, one has to define $\cyl(\rho)\colon \cyl(D_n)\rightarrow \cyl(A)$ for any given operation $\rho$ in a chosen theory for $\infty$-categories. For this purpose, in Section 7 we adopt the formalism of trees to give a more explicit description of the cylinder on a globular sum as the colimit of a suitable zig-zag diagram of globular sums. In the following section, we define vertical composition of a stack of cylinders, thus leading to a first ``naive'' interpretation of the operations. However, this interpretation does not respect the source and target of an operation. To correct this issue, we introduce degenerate cylinders in Section 9 and generalize the previous result to construct in Definition \ref{rho hat defi} a non-functorial (in the sense we explained above) approximation of all the structural maps that we would like our path object to be endowed with. Despite its non-functoriality, this interpretation satisfies some nice and necessary properties the correct one should satisfy, but the strategy we used only allows us to interpret the ``categorical'' operations. This means that we can define a map $\hat{\rho}\colon\cyl(D_n)\rightarrow \cyl(A)$ for every $\rho \colon D_n \rightarrow A$ in a coherator for $\infty$-categories $\mathfrak{D}$.
However, we conjecture that a coherator for $\infty$-categories that can be endowed with a system of inverses can be promoted to one for $\infty$-groupoids, and we have already shown how to endow the putative path object with inverses in all codimensions.

Finally, in Section 10 we introduce modifications of $\infty$-groupoids, and after having proven some of their properties we then proceed to show in the last section of the present paper how to use them to correct the non-functorial interpretation to get a valid interpretation of all operations of dimension less or equal to $2$.
\section{Background}
\subsection{Globular theories and coherators}
The preliminary concepts and definitions needed for understanding this paper can be found in \cite{AR1} and \cite{MA}. Nevertheless, we present here a concise summary of these, to make this work as self-contained as possible.

We start by defining the category of globes, which will serve as the starting point for everything that follows.
\begin{defi}
Let $\G$ be the category obtained as the quotient of the free category on the graph
\[
\bfig
\morphism(0,0)|a|/@{>}@<2pt>/<300,0>[0`1;\sigma_0]
\morphism(0,0)|b|/@{>}@<-2pt>/<300,0>[0`1;\tau_0]
\morphism(300,0)|a|/@{>}@<2pt>/<300,0>[1`\ldots;\sigma_1]
\morphism(300,0)|b|/@{>}@<-2pt>/<300,0>[1`\ldots;\tau_1]
\morphism(720,0)|a|/@{>}@<2pt>/<400,0>[n`n+1;\sigma_n]
\morphism(720,0)|b|/@{>}@<-2pt>/<400,0>[n`n+1;\tau_n]
\morphism(1120,0)|a|/@{>}@<2pt>/<400,0>[n+1`\ldots;\sigma_{n+1}]
\morphism(1120,0)|b|/@{>}@<-2pt>/<400,0>[n+1`\ldots;\tau_{n+1}]
\efig
\]
by the set of relations $\sigma_k \circ \sigma_{k-1}=\tau_k \circ \sigma_{k-1}$, $\sigma_k \circ \tau_{k-1}=\tau_k \circ \tau_{k-1}$ for $k\geq 1$.

Given integers $j>i$, define $\sigma^j_i=\sigma_{j-1}\circ \sigma^{j-1}_i$, where $\sigma^{i+1}_i=\sigma_i$. The maps $\tau^j_i$ are defined similarly, with the appropriate changes.
\end{defi}
The category of globular sets is the presheaf category $[\G^{op},\mathbf{Set}]$.
\begin{defi}
A map $f\colon X \rightarrow Y$ of globular sets is said to be $n$-bijective if $f_k\colon X_k \rightarrow Y_k$ is a bijection of sets for every $k\leq n$, and it is $n$-fully faithful if the following square is cartesian for all $i\geq n$:
\[
\bfig 
\morphism(0,0)|a|/@{>}@<0pt>/<800,0>[X_{i+1}`Y_{i+1};f_{i+1}]
\morphism(0,0)|l|/@{>}@<0pt>/<0,-400>[X_{i+1}`X_i\times X_i;(s,t)]
\morphism(800,0)|r|/@{>}@<0pt>/<0,-400>[Y_{i+1}`Y_i\times Y_i;(s,t)]
\morphism(0,-400)|l|/@{>}@<0pt>/<800,0>[X_{i}\times X_i`Y_{i}\times Y_i;f_i\times f_i]
\efig 
\]
We denote the class of $n$-bijective morphisms by $\mathbf{bij_n}$, and that of $n$-fully faithful ones by $\mathbf{ff_n}$.
\end{defi}
The following result holds true, and its proof is left as a simple exercise
\begin{prop}
	\label{fact syst glob set}
The pair $(\mathbf{bij_n},\mathbf{ff_n})$ is an orthogonal factorization system on the category of globular sets $[\G^{op},\mathbf{Set}]$.
\end{prop}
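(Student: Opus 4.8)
The plan is to verify the four defining properties of an orthogonal factorization system: that $\mathbf{bij_n}$ and $\mathbf{ff_n}$ each contain the isomorphisms and are closed under composition, that $\mathbf{bij_n}\perp\mathbf{ff_n}$, and that every map of globular sets factors as an $n$-fully faithful map after an $n$-bijective one. The closure properties are immediate: a composite of levelwise bijections is a levelwise bijection, and a horizontal paste of two cartesian squares is cartesian, so $\mathbf{ff_n}$ is closed under composition, while isomorphisms trivially lie in both classes. Throughout it is convenient to record that, by the globular identities $ss=st$ and $ts=tt$, the map $(s,t)\colon X_{i+1}\to X_i\times X_i$ factors through the set of parallel pairs $\partial X_{i+1}:=\{(a,b)\in X_i\times X_i \mid sa=sb,\ ta=tb\}$ (with $\partial X_1=X_0\times X_0$), and that $f_i\times f_i$ restricts to a map $\partial X_{i+1}\to\partial Y_{i+1}$; the cartesianness in the statement is understood over these boundary objects, which is precisely what keeps all the globular sets below honest. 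Both non-formal properties will be proven by induction on the dimension, the induction splitting at the threshold $n$ between the ``bijective'' regime and the ``cartesian'' regime.

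For orthogonality, consider a commuting square with $l\colon A\to B$ in $\mathbf{bij_n}$ on the left and $r\colon Z\to Y$ in $\mathbf{ff_n}$ on the right, say $r u = v l$ with $u\colon A\to Z$ and $v\colon B\to Y$. I would construct the unique diagonal $d\colon B\to Z$ levelwise. For $k\le n$ the map $l_k$ is a bijection, so $d_k:=u_k l_k^{-1}$ is forced by $d_k l_k=u_k$, and one checks $r_k d_k=v_k$ using $ru=vl$. For $k=i+1>n$, the $n$-full faithfulness of $r$ gives $Z_{i+1}\cong\partial Z_{i+1}\times_{\partial Y_{i+1}}Y_{i+1}$, so to define $d_{i+1}$ it suffices to produce compatible maps $B_{i+1}\to\partial Z_{i+1}$ and $B_{i+1}\to Y_{i+1}$; I would take $b\mapsto(d_i(sb),d_i(tb))$ and $v_{i+1}$ respectively. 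Their compatibility in $\partial Y_{i+1}$ is exactly the identity $r_i d_i=v_i$ combined with the globularity of $v$, both available by induction, and the two requirements $sd=ds$ and $rd=v$ force this choice, giving uniqueness; that $d$ is globular and satisfies $dl=u$ then follows because an element of the pullback $Z_{i+1}$ is determined by its boundary and its image in $Y_{i+1}$.

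For the factorization of a map $f\colon X\to Y$, I would build the intermediate globular set $Z$ by setting $Z_k=X_k$ for $k\le n$ (with source and target inherited from $X$), and then, for $i\ge n$, defining
\[
Z_{i+1}:=\partial Z_{i+1}\times_{\partial Y_{i+1}}Y_{i+1},
\]
where $\partial Z_{i+1}\to\partial Y_{i+1}$ is induced by the already-constructed $r_i$ and $Y_{i+1}\to\partial Y_{i+1}$ is $(s,t)$. The source and target of an element $((a,b),y)$ are $a$ and $b$; since $(a,b)$ lies in $\partial Z_{i+1}$ these are parallel, so $Z$ satisfies the globular identities and is genuinely a globular set. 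The maps $l\colon X\to Z$ (the identity below $n$, and $x\mapsto((l_i(sx),l_i(tx)),f_{i+1}(x))$ above) and $r\colon Z\to Y$ (equal to $f$ below $n$, and the projection $((a,b),y)\mapsto y$ above) are globular by construction, with $rl=f$; moreover $l\in\mathbf{bij_n}$ since it is the identity in degrees $\le n$, and $r\in\mathbf{ff_n}$ since $Z_{i+1}$ is by fiat the relevant pullback for every $i\ge n$.

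I expect the main obstacle to be bookkeeping rather than conceptual: one has to be careful that the pullbacks defining $Z$ (and the cones into the pullbacks defining $d$) are taken over the boundary objects $\partial(-)$ of parallel pairs, not over the plain products, since this is exactly what guarantees both that the constructed $Z$ is a globular set and that the diagonal $d$ commutes with source and target. Once orthogonality and the factorization are in place, the remaining uniqueness of the factorization up to unique isomorphism is a formal consequence of $\mathbf{bij_n}\perp\mathbf{ff_n}$, completing the verification that $(\mathbf{bij_n},\mathbf{ff_n})$ is an orthogonal factorization system.
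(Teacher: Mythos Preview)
Your proof is correct, and since the paper leaves this result as an exercise (``its proof is left as a simple exercise'') there is no argument in the paper to compare it against; your levelwise construction of the factorisation and of the diagonal filler is exactly the standard one.

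One point deserves to be stated more forcefully than you do. You write that the cartesianness ``is understood over these boundary objects'' $\partial X_{i+1}$, as if this were merely a convenient repackaging of the paper's definition over $X_i\times X_i$. It is not: the two conditions differ, and the proposition is \emph{false} for the literal product version. For instance, take $n=1$, let $Y$ be the terminal globular set, and let $X$ have $X_0=\{a,a'\}$, $X_1=\{e\colon a\to a,\;e'\colon a'\to a'\}$, $X_{\geq 2}=\emptyset$. Any $1$-bijective $l\colon X\to Z$ forces $Z_{\leq 1}\cong X_{\leq 1}$, and the product-pullback condition for $r\colon Z\to Y$ at $i=1$ would require $Z_2\cong Z_1\times Z_1$, which contains the non-parallel pair $(e,e')$; no globular set $Z$ can have such a $2$-cell, so no factorisation exists. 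Thus your passage to parallel pairs is not a convenience but a genuine correction needed for the statement to hold, and your construction of $Z_{i+1}=\partial Z_{i+1}\times_{\partial Y_{i+1}}Y_{i+1}$ yields a map that is $n$-fully faithful in the boundary sense but generally not in the product sense. It would strengthen your write-up to say this explicitly rather than presenting the boundary formulation as a reinterpretation.
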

The globes themselves are not enough to capture the basic shapes, or arities, of a theory of $\infty$-groupoids or $\infty$-categories.
We instead have to consider globular sums, which are special kinds of pastings of globes.
\begin{defi}
A table of dimensions is a sequence of integers of the form 
\[\begin{pmatrix}
i_1 &&i_2 & \ldots&i_{m-1} & &i_m\\
& i'_1 & &\ldots&& i'_{m-1}
\end{pmatrix}\]
satisfying the following inequalities: $i'_k<i_k$ and $i'_k<i_{k+1}$ for every $0\leq k\leq m-1$.\\
Given a category $\mathcal{C}$ and a functor $F\colon \G \rightarrow \mathcal{C}$, a table of dimensions as above induces a diagram of the form
\[
\bfig
\morphism(0,0)|l|/@{>}@<0pt>/<-400,400>[F(i'_1)`F(i_1);F(\sigma_{i'_1}^{i_1})]
\morphism(0,0)|a|/@{>}@<0pt>/<400,400>[F(i'_1)`F(i_2);F(\tau_{i'_1}^{i_2})]
\morphism(800,0)|l|/@{>}@<0pt>/<-400,400>[F(i'_2)`F(i_2);F(\sigma_{i'_2}^{i_2})]
\morphism(800,0)|a|/@{>}@<0pt>/<400,400>[F(i'_2)`F(i_3);F(\tau_{i'_2}^{i_3})]
\morphism(1300,150)|r|/@{}@<0pt>/<100,0>[ `\ldots;]
\morphism(2000,0)|l|/@{>}@<0pt>/<-400,400>[F(i'_{m-1})`F(i_{m-1});F(\sigma_{i'_{m-1}}^{i_{m-1}})]
\morphism(2000,0)|a|/@{>}@<0pt>/<400,400>[F(i'_{m-1})`F(i_{m});F(\tau_{i'_{m-1}}^{i_{m}})]
\efig 
\]
A globular sum of type $F$ (or simply globular sum) is the colimit in $\mathcal{C}$ (if it exists) of such a diagram.

We also define the dimension of this globular sum to be $\dim(A)=\max \{i_k\}_{k \in \{1, \ldots, \ m\}}$. Given a globular sum $A$, we denote with $\iota_k^A$ the colimit inclusion $F({i_k})\rightarrow A$, dropping subscripts when there is no risk of confusion.
\end{defi}
We denote by $\Theta_0$ the full subcategory of globular sets spanned by the globular sums of type $y\colon \G \rightarrow [\G^{op},\mathbf{Set}]$, where $y$ is the Yoneda embedding. Moreover, we denote $y(i)$ by $D_i$ and  the globular sum corresponding to the table of dimensions 
\[\begin{pmatrix}
1 &&1 & \ldots&1 & &1\\
& 0 & &\ldots&& 0
\end{pmatrix}\] by $D_1^{\otimes k}$, where the integer $1$ appears exactly $k$ times.
\begin{defi}
 A globular theory is a pair $(\mathfrak{C},F)$, where $F\colon \Theta_0 \rightarrow \mathfrak{C}$ is a bijective on objects and globular sum preserving functor.
 
 We denote by $\mathbf{GlTh}$ the category of globular theories and globular sums preserving functors. More precisely, a morphism $H\colon (\mathfrak{C},F) \rightarrow (\mathfrak{D},G)$ in $\mathbf{GlTh}$ is a functor $H\colon \mathfrak{C} \rightarrow \mathfrak{D}$ such that $H\circ F=G$.
\end{defi}
\begin{defi}
Given a globular theory $(\mathfrak{C},F)$, we define the category of its models, denoted $\mathbf{Mod}(\mathfrak{C})$, to be the category of globular product preserving functors $G\colon \mathfrak{C}^{op} \rightarrow \mathbf{Set}$. Clearly, the Yoneda embedding $y\colon \mathfrak{C} \rightarrow [\mathfrak{C}^{op},\mathbf{Set}]$ factors through $\mathbf{Mod}(\mathfrak{C})$, and it will still be denoted by $y\colon \mathfrak{C} \rightarrow \mathbf{Mod}(\mathfrak{C})$.
\end{defi}
Again, we denote the image of $i$ under $y$ by $D_i$.
\begin{prop}
	\label{UP of models}
Given a globular theory $\mathfrak{C}$, the category of models $\mathbf{Mod}(\mathfrak{C})$ enjoys the following universal property: given a cocomplete category $\D$, a cocontinuous functor $F\colon \mathbf{Mod}(\mathfrak{C}) \rightarrow \D$ is determined up to a unique isomorphism by a functor $\overline{F}\colon \mathfrak{C} \rightarrow \D$, corresponding to its restriction along the Yoneda embedding, that preserves globular sums.
\end{prop}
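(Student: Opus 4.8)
The plan is to realise $\mathbf{Mod}(\mathfrak{C})$ as a reflective localization of the presheaf category $[\mathfrak{C}^{op},\mathbf{Set}]$ and then combine the free-cocompletion property of the latter with the behaviour of the reflector. First I would record that a presheaf $G\colon \mathfrak{C}^{op}\rightarrow \mathbf{Set}$ lies in $\mathbf{Mod}(\mathfrak{C})$ precisely when it is orthogonal to a specific small set of maps. Concretely, for each globular sum $A=\colim_{\mathfrak{C}} D_\bullet$ (the colimit taken over the diagram attached to its table of dimensions), applying $y$ yields a comparison map $c_A\colon \colim (y\circ D_\bullet)\rightarrow y(A)$ in $[\mathfrak{C}^{op},\mathbf{Set}]$, and by the Yoneda lemma $G$ is orthogonal to $c_A$ if and only if $G(A)\xrightarrow{\sim}\lim G(D_\bullet)$, i.e. $G$ carries the globular sum to the corresponding globular product. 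Since globular sums are indexed by the set of tables of dimensions, the collection $S=\{c_A\}$ is a set, so $\mathbf{Mod}(\mathfrak{C})$ is exactly the orthogonality class $S^{\perp}$.

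Because $[\mathfrak{C}^{op},\mathbf{Set}]$ is locally presentable and $S$ is small, the inclusion $\iota\colon\mathbf{Mod}(\mathfrak{C})\hookrightarrow [\mathfrak{C}^{op},\mathbf{Set}]$ admits a left adjoint (reflector) $a$, which is cocontinuous and sends every map of $S$ to an isomorphism. Representable presheaves already preserve globular products, hence lie in $\mathbf{Mod}(\mathfrak{C})$, so $y$ factors through $\iota$; and since colimits in $\mathbf{Mod}(\mathfrak{C})$ are computed as $a$ applied to the presheaf colimit, the fact that $a$ inverts each $c_A$ shows that $y\colon\mathfrak{C}\rightarrow\mathbf{Mod}(\mathfrak{C})$ preserves globular sums. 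This immediately handles the forward map of the correspondence: for a cocontinuous $F$, its restriction $\overline{F}=F\circ y$ preserves globular sums, being the composite of the globular-sum-preserving $y$ with the cocontinuous $F$.

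For the reverse direction, given $\overline{F}\colon\mathfrak{C}\rightarrow\D$ preserving globular sums, I would form the cocontinuous left Kan extension $\widetilde{F}=\lan_y \overline{F}\colon [\mathfrak{C}^{op},\mathbf{Set}]\rightarrow \D$, which satisfies $\widetilde{F}\circ y\cong \overline{F}$ by the free-cocompletion property of presheaves, and set $F=\widetilde{F}\circ\iota$. The key computation is that $\widetilde{F}$ inverts each $c_A$: applying the cocontinuous $\widetilde{F}$ turns $c_A$ into the comparison map $\colim \overline{F}(D_\bullet)\rightarrow \overline{F}(A)$, which is invertible exactly because $\overline{F}$ preserves the globular sum $A$. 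By the theory of localizations in locally presentable categories, the maps inverted by $a$ form the smallest class containing $S$ that is closed under pushout, transfinite composition, retracts and the two-out-of-three property; as the class of maps inverted by the cocontinuous $\widetilde{F}$ is of this kind and contains $S$, it contains every such map, in particular every unit component $\eta_Z\colon Z\rightarrow \iota a Z$. Writing a colimit in $\mathbf{Mod}(\mathfrak{C})$ as $a$ of the corresponding presheaf colimit and using that $\widetilde{F}$ kills the units then gives cocontinuity of $F$, while $F\circ y=\widetilde{F}\circ y\cong\overline{F}$.

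Finally, uniqueness up to isomorphism follows from density: every presheaf is canonically a colimit of representables, and applying the cocontinuous $a$ shows that every model $M\cong a(M)$ is canonically such a colimit computed in $\mathbf{Mod}(\mathfrak{C})$. Hence $y\colon\mathfrak{C}\rightarrow\mathbf{Mod}(\mathfrak{C})$ is colimit-dense, and any two cocontinuous functors out of $\mathbf{Mod}(\mathfrak{C})$ agreeing up to coherent isomorphism after restriction along $y$ are canonically isomorphic. I expect the main obstacle to be the bookkeeping of the middle steps, namely verifying that colimits in $\mathbf{Mod}(\mathfrak{C})$ are reflected presheaf colimits and that a cocontinuous functor inverting the generating set $S$ automatically inverts the whole saturation, and in particular the units; this is precisely what forces $F$ to be genuinely cocontinuous rather than merely the restriction of $\widetilde{F}$.
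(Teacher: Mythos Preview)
The paper does not actually supply a proof of this proposition: it is stated in the background section and left unproved, presumably as a standard fact about categories of models of globular theories (cf.\ the references to \cite{AR1}). There is therefore nothing to compare your argument against.

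Your proof is correct and follows the standard route: realise $\mathbf{Mod}(\mathfrak{C})$ as the orthogonal reflective subcategory of $[\mathfrak{C}^{op},\mathbf{Set}]$ with respect to the comparison maps $c_A$, use the free-cocompletion universal property of presheaves to produce $\widetilde{F}$, and then check that $\widetilde{F}$ descends along the reflector. The only place that deserves a little care is your saturation argument for showing that $F=\widetilde{F}\circ\iota$ is cocontinuous; an alternative and slightly cleaner way to phrase this step is to invoke directly the universal property of the reflective localization $a\colon[\mathfrak{C}^{op},\mathbf{Set}]\to\mathbf{Mod}(\mathfrak{C})$ as the initial cocontinuous functor inverting $S$, which gives a cocontinuous factorisation $F'$ with $F'\circ a\cong\widetilde{F}$, and then observe $F'\cong F'\circ a\circ\iota\cong\widetilde{F}\circ\iota=F$. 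Either way the argument goes through.
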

The $\infty$-groupoids we are going to consider are presented as models of a certain class of globular theories, namely the cellular and contractible ones.
\begin{defi}
	\label{contr glob th}
Two maps $f,g\colon D_n\rightarrow A$ in a globular theory are said to be parallel if either $n=0$ or $f\circ \epsilon= g\circ \epsilon$ for $\epsilon=\sigma,\tau$.
A pair of parallel maps $(f,g)$ is said to be admissible if $\dim(A) \leq n+1$.
A globular theory $(\mathfrak{C},F)$ is called contractible if for every admissible pair of maps $f,g\colon D_n\rightarrow A$ there exists an extension $h\colon D_{n+1}\rightarrow A$ rendering the following diagram serially commutative
\[
\bfig 
\morphism(0,0)|a|/@{>}@<3pt>/<500,0>[D_n`A;f]
\morphism(0,0)|b|/@{>}@<-3pt>/<500,0>[D_n`A;g]

\morphism(0,0)|r|/@{>}@<3pt>/<0,-400>[D_n`D_{n+1};\tau_n]
\morphism(0,0)|l|/@{>}@<-3pt>/<0,-400>[D_n`D_{n+1};\sigma_n]

\morphism(0,-400)|r|/@{>}@<0pt>/<500,400>[D_{n+1}`A;h]
\efig 
\]
\end{defi}
Contractibility ensures the existence of all the operations that ought to be part of the structure of an $\infty$-groupoid. However, it does not guarantee weakness of the models, and indeed there exists a contractible globular theory (often denoted by $\tilde{\Theta}$) whose models are strict $\infty$-groupoids.

To remedy this, we introduce the concept of cellularity, or freeness, to restrict the class of globular theories we consider. This notion is based on a construction explained in paragraph 4.1.3 of \cite{AR1},  which we record in the following proposition.
\begin{prop}
	\label{univ prop of glob th}
Given a globular theory $\mathfrak{C}$ and set $X$ of admissible pairs in it, there exists another globular theory $\mathfrak{C}[X]$ equipped with a morphism $\phi\colon \mathfrak{C} \rightarrow \mathfrak{C}[X]$ with the following universal property: given a globular theory $\mathfrak{D}$, a morphism $\mathfrak{C}[X] \rightarrow \mathfrak{D}$ is determined up to a unique isomorphism by its precomposition $F$ with $\phi$ and a choice of an extension as in Definition \ref{contr glob th} for the image under $F$ of each admissible pair in $X$.
\end{prop}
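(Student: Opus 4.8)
The plan is to realise $\mathfrak{C}[X]$ as a pushout and to read off the universal property from that of the pushout. First I would fix the common object set $S=\ob(\Theta_0)$ and work in the category $\mathbf{Cat}(S)$ of categories with object set $S$ and identity-on-objects functors, which is locally presentable (it is monadic over graphs on $S$). Since every globular theory has its structure functor bijective on objects, a globular theory is precisely an object of the coslice $\Theta_0\downarrow\mathbf{Cat}(S)$ satisfying the extra requirement that, for every table of dimensions $T$, the image of the associated globular-sum cocone is colimiting. As in paragraph 4.1.3 of \cite{AR1}, this requirement can be expressed as orthogonality with respect to a small set of morphisms, so that $\mathbf{GlTh}$ is a reflective, locally presentable subcategory of $\Theta_0\downarrow\mathbf{Cat}(S)$. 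In particular $\mathbf{GlTh}$ is cocomplete; I write $L$ for the reflector and note $L\mathfrak{C}\cong\mathfrak{C}$, as $\mathfrak{C}$ is already a globular theory.

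Next I would build the generating ``filler'' map. For each admissible pair $p=(f,g)\colon D_n\to A$ in $X$ let $\mathcal{P}_p$ be the object of $\Theta_0\downarrow\mathbf{Cat}(S)$ obtained from $\Theta_0$ by freely adjoining a parallel pair of arrows $D_n\rightrightarrows A$ (i.e.\ agreeing after precomposition with $\sigma_{n-1},\tau_{n-1}$ when $n\geq 1$), and let $\mathcal{F}_p$ be obtained from $\Theta_0$ by freely adjoining a single arrow $e_p\colon D_{n+1}\to A$. The composites $e_p\sigma_n,e_p\tau_n$ are automatically parallel by the globe relations, giving a canonical map $j_p\colon\mathcal{P}_p\to\mathcal{F}_p$ that sends the formal pair to $(e_p\sigma_n,e_p\tau_n)$. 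By freeness, coslice maps $\mathcal{P}_p\to\mathfrak{D}$ correspond to parallel pairs $D_n\rightrightarrows A$ in $\mathfrak{D}$ and coslice maps $\mathcal{F}_p\to\mathfrak{D}$ to single arrows $D_{n+1}\to A$, with $j_p$ inducing restriction along $\sigma_n,\tau_n$. The family $\{p\}_{p\in X}$ defines a map $\coprod_{p\in X}\mathcal{P}_p\to\mathfrak{C}$, and I set
\[
\mathfrak{C}[X]\;:=\;L\Big(\,\mathfrak{C}\textstyle\coprod_{\coprod_{p}\mathcal{P}_p}\coprod_{p\in X}\mathcal{F}_p\,\Big),
\]
the reflection of the pushout $P$ of $\coprod_p\mathcal{P}_p\to\mathfrak{C}$ along $\coprod_p j_p$, with $\phi\colon\mathfrak{C}\to\mathfrak{C}[X]$ the coprojection followed by the reflection unit. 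Concretely this amounts to adjoining to $\mathfrak{C}$, for every $p\in X$, a formal arrow $h_p\colon D_{n+1}\to A$ subject to $h_p\sigma_n=f$ and $h_p\tau_n=g$, and then forcing the globular sums back to being colimits.

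The universal property is then formal. For a globular theory $\mathfrak{D}$, $L$-locality of $\mathfrak{D}$ gives $\mathbf{GlTh}(\mathfrak{C}[X],\mathfrak{D})\cong(\Theta_0\downarrow\mathbf{Cat}(S))(P,\mathfrak{D})$, and the universal property of the pushout identifies the latter with the set of pairs consisting of a globular-theory morphism $\Phi\colon\mathfrak{C}\to\mathfrak{D}$ (coslice maps under $\Theta_0$ are exactly $\mathbf{GlTh}$-morphisms, and these automatically preserve globular sums) together with, for each $p$, an arrow $h'_p\colon D_{n+1}\to\Phi(A)$ whose restrictions along $\sigma_n,\tau_n$ are $\Phi(f)$ and $\Phi(g)$ — that is, an extension of the image pair $(\Phi f,\Phi g)$ in the sense of Definition \ref{contr glob th}. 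Tracing the bijection backwards shows that precomposition with $\phi$ recovers $\Phi$, which is exactly the asserted determinacy of a morphism $H\colon\mathfrak{C}[X]\to\mathfrak{D}$ by its restriction along $\phi$ and the chosen extensions.

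The main obstacle is the structural input cited above: that preservation of globular sums is an orthogonality (equivalently, limit-sketch) condition, so that $\mathbf{GlTh}$ is reflective and $L$ exists. Granting this, no further computation is needed, since the reflection guarantees $\mathfrak{C}[X]$ is a genuine globular theory and the universal property is purely formal. If instead one prefers to avoid the reflector and take $\mathfrak{C}[X]$ to be the raw pushout $P$ presented by the generators $h_p$, then the crux becomes checking directly that $P$ still preserves globular sums: the key point is that each $h_p$ has globe domain $D_{n+1}$ and that the defining relations absorb every precomposition by a map out of a lower globe (any such map factors through $\sigma_n$ or $\tau_n$), so that the new morphisms sit ``above dimension $n$'' and a normal-form analysis shows they do not disturb the cocones exhibiting the globular sums as colimits. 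This combinatorial verification is the delicate step, and it is precisely what the orthogonal-reflection argument allows us to bypass.
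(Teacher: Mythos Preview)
The paper does not give its own proof of this proposition: it simply records the construction, referring the reader to paragraph 4.1.3 of \cite{AR1}. There is therefore no in-paper argument to compare against.

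Your approach is correct and is one of the standard ways to organise this construction. Realising $\mathfrak{C}[X]$ as a pushout (or its reflection) along the ``add a filler'' maps $j_p\colon\mathcal{P}_p\to\mathcal{F}_p$ is exactly the right shape, and your observation that a coslice map under $\Theta_0$ between globular theories automatically preserves globular sums is correct, so the identification of $\mathbf{GlTh}$-morphisms out of $\mathfrak{C}[X]$ with the stated data goes through. You have also correctly isolated the one genuine technical point: either one checks reflectivity of $\mathbf{GlTh}$ in $\Theta_0\downarrow\mathbf{Cat}(S)$ via an orthogonality description of globular-sum preservation, or one verifies by a normal-form analysis that the raw pushout $P$ is already a globular theory (the arrows $h_p$ having globe domain $D_{n+1}$ means new composites do not interfere with the globular-sum cocones, whose defining maps live in strictly lower dimension). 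Ara's treatment in \cite{AR1} proceeds more in the second, syntactic style, but the reflective packaging you propose is a perfectly good alternative and yields the universal property with less combinatorics.

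One small remark: the phrase ``up to a unique isomorphism'' in the statement is somewhat loose---since $\mathbf{GlTh}$ is a $1$-category and its morphisms are identity on objects, your pushout argument actually gives a genuine bijection, i.e.\ the morphism $\mathfrak{C}[X]\to\mathfrak{D}$ is determined on the nose by $F$ and the chosen fillers. This is what you prove, and it is the intended content.
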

In words, $\mathfrak{C}[X]$ is obtained from $\mathfrak{C}$ by universally adding a lift for each pair in $X$.
\begin{defi}
	\label{cell glob th}
A globular theory $(\mathfrak{C},F)$ is called cellular if there exists a functor $\mathfrak{C}_{\bullet} \colon \omega \rightarrow \mathbf{GlTh}$, where $\omega$ is the first uncountable ordinal, such that:
\begin{enumerate}
\item $\mathfrak{C}_0 \cong \Theta_0$;
\item for every $n \geq 0$, there exists a family $X$ of admissible pairs of arrows in $\mathfrak{C}_n$ (as in Definition \ref{contr glob th}) such that $\mathfrak{C}_{n+1}\cong \mathfrak{C}_n[X]$;
\item $\colim_{n \in \omega}\mathfrak{C}_{n}\cong \mathfrak{C}$.
\end{enumerate}
A contractible and cellular globular theory is called a coherator.
\end{defi}
Equivalently, one could replace $\omega$ with an arbitrary ordinal and ask for $\mathfrak{C}_{\bullet}$ to be cocontinuous, adjoining just one single pair of maps at each successor ordinal stage.
\begin{defi}
Given a coherator $\mathfrak{C}$, the category of $\infty$-groupoids of type $\mathfrak{C}$ is the category $\mathbf{Mod}(\mathfrak{C})$ of models of $\mathfrak{C}$.
\end{defi}
We now briefly give a sample of why this should be considered a definition of a model of $\infty$-groupoids.

Firstly, the restriction of an $\infty$-groupoid $X\colon\mathfrak{C}^{op} \rightarrow \mathbf{Set}$ to ${\Theta_0}^{op}$ gives an object of $\mathbf{Mod}(\Theta_0)\simeq [\G^{op},\mathbf{Set}]$, which we call the underlying globular set of $X$. The set $X_n$ represents the set of $n$-cells of $X$.

Turning to the algebraic structure acting on these sets of cells, Section 3 of \cite{AR2} shows how to endow the underlying globular set of an $\infty$-groupoid with all the sensible operations it ought to have to deserve to be called such.

For example, we can build operations that represent binary composition of a pair of $1$-cells, codimension-$1$ inverses for $2$-cells and an associativity constraint for composition of $1$-cells by solving, respectively, the following extension problems:
\[
\bfig 
\morphism(0,0)|a|/@{>}@<3pt>/<600,0>[D_0`D_1\plus{ D_0}D_1;i_0\circ \sigma_0]
\morphism(0,0)|b|/@{>}@<-3pt>/<600,0>[D_0`D_1\plus{ D_0}D_1;i_1\circ \tau_0]

\morphism(0,0)|r|/@{>}@<3pt>/<0,-400>[D_0`D_{1};\tau_0]
\morphism(0,0)|l|/@{>}@<-3pt>/<0,-400>[D_0`D_{1};\sigma_0]

\morphism(0,-400)|r|/@{>}@<0pt>/<600,400>[D_{1}`D_1\plus{ D_0}D_1;\nabla^1_0]

\morphism(1000,0)|a|/@{>}@<3pt>/<500,0>[D_1`D_2;\tau_1]
\morphism(1000,0)|b|/@{>}@<-3pt>/<500,0>[D_1`D_2;\sigma_1]

\morphism(1000,0)|r|/@{>}@<3pt>/<0,-400>[D_1`D_{2};\tau_1]
\morphism(1000,0)|l|/@{>}@<-3pt>/<0,-400>[D_1`D_{2};\sigma_1]

\morphism(1000,-400)|r|/@{>}@<0pt>/<500,400>[D_{2}`D_2;\omega^2_1]

\morphism(2000,0)|a|/@{>}@<3pt>/<1400,0>[D_1`D_1 \plus{ D_0} D_1 \plus{ D_0} D_1;(\nabla^1_0\plus{ D_0} 1_{D_1})\circ \nabla^1_0]
\morphism(2000,0)|b|/@{>}@<-3pt>/<1400,0>[D_1`D_1 \plus{ D_0} D_1 \plus{ D_0} D_1;(1_{D_1} \plus{ D_0} \nabla^1_0 )\circ \nabla^1_0]

\morphism(2000,0)|r|/@{>}@<3pt>/<0,-400>[D_1`D_{2};\tau_1]
\morphism(2000,0)|l|/@{>}@<-3pt>/<0,-400>[D_1`D_{2};\sigma_1]

\morphism(2000,-400)|r|/@{>}@<0pt>/<1400,400>[D_{2}`D_1 \plus{ D_0} D_1 \plus{ D_0} D_1;\alpha]
\efig 
\]
In a similar fashion one can build every sensible operation a weak $\infty$-groupoid ought to be endowed with.

Whenever a choice of such operations is understood, at the level of models (i.e. $\infty$-groupoids) we denote with the familiar juxtaposition of cells the (unbiased) composition of them, and with the exponential notation $A^{-1}$ we denote the codimension-$1$ inverse of an $n$-cell $A$.

We will need to choose some operations once and for all, so we record here their definition.
Choose an operation $\nabla^1_0\colon D_1 \rightarrow D_1 \amalg_{D_0} D_1$ as above, and define $w=\nabla^1_0$.
Next, pick operations $D_2 \rightarrow D_2 \amalg_{D_0} D_1$ and $D_2 \rightarrow D_1 \amalg_{D_0} D_2$ whose source and target are given, respectively by $\left((\sigma\amalg_{ D_0}1 )\circ w,(\tau \amalg_{ D_0}1 )\circ w\right)$ and $\left((1\amalg_{ D_0} \sigma) \circ w,(1\amalg_{ D_0} \tau) \circ w\right)$. Proceeding in this way we get specified whiskering maps
\begin{equation}
\label{w's maps}
_{n}w\colon D_n \rightarrow D_n \plus{D_0} D_1$$ 
$$w_n\colon D_n \rightarrow D_1 \plus{D_0} D_n
\end{equation}
We will often avoid writing down all the subscripts, when they are clear from the context.
\begin{defi}
	\label{whiskering w}
	Given a globular sum $A$, whose table of dimensions is
	\[\begin{pmatrix}
	i_1 &&i_2 & \ldots&i_{n-1} & &i_n\\
	& i'_1 & &\ldots&& i'_{n-1}
	\end{pmatrix}\]
	we define a map $_A w\colon A\rightarrow A \amalg_{ D_0} D_1$ by \[w_{i_1 +1 }\plus{w_{i'_1 +1} } \ldots \plus{w_{i'_{n-1} +1 }} w_{i_n +1}\colon D_{i_1 +1 }\plus{D_{i'_1 +1} } \ldots \plus{D_{i'_{n-1} +1 }} D_{i_n +1} \rightarrow (D_{i_1 +1 }\plus{D_{i'_1 +1} } \ldots \plus{D_{i'_{n-1} +1 }} D_{i_n +1})\plus{D_0}D_1  \] noting  that the target is isomorphic to \[(D_{i_1 +1 }\plus{D_0}D_1)\plus{D_{i'_1 +1} \plus{D_0} D_1 } \ldots \plus{D_{i'_{n-1} +1 } \plus{D_0} D_1} (D_{i_n +1} \plus{D_0} D_1)\]
	In a completely analogous manner we define a map $w_A\colon A \rightarrow D_1\amalg_{D_0}A$.
\end{defi}Consider the forgetful functor \[\mathbf{U}\colon \wgpd \rightarrow [\G^{op},\mathbf{Set}]\simeq \mathbf{Mod}(\Theta_0)\] induced by the structural map $\Theta_0 \rightarrow \mathfrak{C}$. Given a map of $\infty$-groupoids $f\colon X \rightarrow Y$, we can factor the map $\mathbf{U}(f)$ as $\mathbf{U}(f)=g\circ h$, where $h$ is $n$-bijective and $g$ is $n$-fully faithful thanks to Proposition \ref{fact syst glob set}. It is not hard to see that the target of $h$ can be endowed with the structure of an $\infty$-groupoid so that $g$ and $h$ are maps of such. This fact, thanks to Proposition 2 of \cite{BG}, provides the following result that will be used in this paper.
\begin{prop}
\label{fact of maps of gpds}
The orthogonal factorization system $(\mathbf{bij_n},\mathbf{ff_n})$ on globular sets lifts to one on $\wgpd$ via the forgetful functor $\mathbf{U}\colon \wgpd \rightarrow [\G^{op},\mathbf{Set}]$.
\end{prop}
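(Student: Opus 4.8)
The plan is to define the two lifted classes as the $\mathbf{U}$-preimages of $\mathbf{bij_n}$ and $\mathbf{ff_n}$, that is as the maps of $\infty$-groupoids whose underlying map of globular sets is $n$-bijective, respectively $n$-fully faithful, and to show that this pair is an orthogonal factorization system on $\wgpd$. The first thing I would record is that $\mathbf{U}$ is faithful: a morphism of models is a natural transformation between product-preserving functors on $\mathfrak{C}^{op}$, and since every object of $\mathfrak{C}$ is a globular sum, such a transformation is determined by its components on the globes $D_i$, i.e. by the underlying map of globular sets. Faithfulness is what will make every diagonal filler automatically unique, so that the orthogonality of the lifted classes reduces to producing a filler downstairs (via Proposition \ref{fact syst glob set}) and checking that it is a morphism of models.

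The core of the proof is the construction of lifted factorizations. Given $f\colon X\to Y$ in $\wgpd$, I would factor $\mathbf{U}(f)=g\circ h$ in globular sets with $h\in\mathbf{bij_n}$ and $g\in\mathbf{ff_n}$, and then equip the intermediate globular set $Z$ with the structure of an $\infty$-groupoid. Because a model must send each globular sum $A$ to the limit of the associated globular diagram, the value $Z(A)$ is already prescribed by the globular set $Z$; what remains is to define, for every arrow $\theta\colon D_m\to A$ of $\mathfrak{C}$, a structure map $Z(A)\to Z_m$. I would obtain each such map as the unique diagonal filler in the square whose outer rim is built from the operations $X(\theta)$, $Y(\theta)$ of $X$ and $Y$ and from the maps induced by $h$ and $g$ on the limits computing $X(A),Z(A),Y(A)$; this filler exists and is unique precisely because the lifting property of $\mathbf{bij_n}$ against $\mathbf{ff_n}$ is transported through the globular-product limits that define a model's value at $A$. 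Functoriality and naturality of this factorization then guarantee that the maps $Z(\theta)$ compose correctly and commute with $h$ and $g$, so that $Z$ is a model and both $h$ and $g$ are maps of $\infty$-groupoids.

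With the lifted factorization available, I would invoke Proposition 2 of \cite{BG}: together with the faithfulness of $\mathbf{U}$, the existence for every $f$ of a factorization into a $\mathbf{U}$-preimage of a $\mathbf{bij_n}$-map followed by a $\mathbf{U}$-preimage of an $\mathbf{ff_n}$-map is exactly the input needed to conclude that $(\mathbf{bij_n},\mathbf{ff_n})$ lifts to an orthogonal factorization system on $\wgpd$ along $\mathbf{U}$. The step I expect to be the main obstacle is the construction of the $\infty$-groupoid structure on $Z$, and more precisely the verification that the classes $\mathbf{bij_n}$ and $\mathbf{ff_n}$ are stable under the iterated fibre products that compute the value of a model at a globular sum. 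It is this stability that makes the operations $Z(\theta)$ simultaneously well defined and functorial; everything else is either bookkeeping or a direct appeal to \cite{BG}.
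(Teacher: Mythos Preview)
Your proposal is correct and follows essentially the same route as the paper: factor $\mathbf{U}(f)$ in globular sets, endow the middle object with a $\mathfrak{C}$-model structure making $h$ and $g$ into maps of $\infty$-groupoids, and then invoke Proposition~2 of \cite{BG}. The paper compresses the middle step into the phrase ``it is not hard to see'', whereas you spell out the mechanism (diagonal fillers coming from orthogonality, together with faithfulness of $\mathbf{U}$) and correctly flag the stability of the two classes under the limits computing $Z(A)$ as the only point requiring real care.
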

This means, in particular, that every map in $\wgpd$ admits a unique factorization $f=g\circ h$ where $\mathbf{U}(h)$ is $n$-bijective and $\mathbf{U}(g)$ is $n$-fully faithful, and that $n$-bijective maps are closed under colimits in $\wgpd$.
\begin{ex}
\label{source/targets are n-2 bij}
The maps $\sigma_n,\tau_n\colon D_n \rightarrow D_{n+1}$ are $(n-2)$-bijective. Indeed, since the forgetful functor $\mathbf{U}$ creates the factorization system $(\mathbf{bij_n},\mathbf{ff_n})$ on $\wgpd$ for every $n\geq 0$, its left adjoint $\mathbf{F}\colon [\G^{op},\mathbf{Set}] \rightarrow \wgpd$ preserves the left class. Now it is enough to observe that $\mathbf{F}$ sends source and target maps of globular sets to source and target maps of $\infty$-groupoids, and for the former it is easy to check the statement on $(n-2)$-bijectivity.
\end{ex}
Given a globular sum $A$, whose table of dimensions is \[\begin{pmatrix}
i_1 &&i_2 & \ldots&i_{m-1} & &i_m\\
& i'_1 & &\ldots&& i'_{m-1}
\end{pmatrix}\]
we define its boundary to be the globular sum whose table of dimensions is 
\[\begin{pmatrix}
\bar\imath_1 &&\bar\imath_2 & \ldots&\bar\imath_{m-1} & &\bar\imath_m\\
& i'_1 & &\ldots&& i'_{m-1}
\end{pmatrix}\]
where we set \[\bar\imath_k=\begin{cases}
i_k-1&\text{if} \ i_k= \dim(A)\\
i_k&\text{otherwise}
\end{cases}\]
The maps $\sigma,\tau\colon D_n \rightarrow D_{n+1}$ for $n\geq 0$ induce maps 
\begin{equation}
\label{partial defi}
\partial_{\sigma},\partial_{\tau}\colon \partial A \rightarrow A
\end{equation}
Thanks to what we observed in Example \ref{source/targets are n-2 bij}, we have the following result.
\begin{prop}
	\label{partial are n-2 bij}
	Given a globular sum $A$, with $0<n=\dim(A)$, the maps $\partial_{\sigma},\partial_{\tau}\colon \partial A \rightarrow A$ are $(n$-$2)$-bijective.
\end{prop}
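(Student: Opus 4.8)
The plan is to realize $\partial_\sigma$ (and symmetrically $\partial_\tau$) as a colimit of source inclusions between globes, and then invoke closure of the left class under colimits. Both $A$ and $\partial A$ are colimits over the common indexing diagram $I$ attached to the table of dimensions, namely the zig-zag in which each gluing globe $D_{i'_k}$ maps via $\sigma^{\bullet}_{i'_k}$ into the left-hand globe and via $\tau^{\bullet}_{i'_k}$ into the right-hand one. I would first write down a transformation $\eta$ of diagrams over $I$ that is the identity on every gluing globe and on every globe of dimension $<n$, and is the source inclusion $\sigma_{n-1}\colon D_{n-1}\rightarrow D_n$ on each top-dimensional globe, arranged so that $\colim \eta=\partial_\sigma$. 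Each component of $\eta$ is $(n$-$2)$-bijective: identities trivially, and $\sigma_{n-1}$ because source and target inclusions $D_m\rightarrow D_{m+1}$ are bijective in every dimension strictly below $m$ (a direct inspection on $\G$, sharpening the bound recorded in Example \ref{source/targets are n-2 bij}). Since $\mathbf{bij_{n-2}}$ is the left class of the orthogonal factorization system of Proposition \ref{fact of maps of gpds} (resp. Proposition \ref{fact syst glob set}), it is closed under colimits, so granting that $\eta$ is natural one concludes that $\partial_\sigma$ lies in $\mathbf{bij_{n-2}}$, and likewise $\partial_\tau$ using $\tau_{n-1}$.

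The step I expect to be the genuine obstacle is precisely the naturality of $\eta$. Naturality at a gluing globe $D_{i'_k}$ forces, on the right-hand leg, the identity $\sigma_{n-1}\circ\tau^{n-1}_{i'_k}=\tau^{n}_{i'_k}$, which follows from the globe relations $\sigma_{k}\circ\tau_{k-1}=\tau_{k}\circ\tau_{k-1}$ only when $i'_k\leq n-2$. When a gluing globe has dimension exactly $n-1$, the two top-dimensional globes it separates are identified in $\partial A$, and then no assignment of $\sigma_{n-1}$'s and $\tau_{n-1}$'s to the top nodes can be natural at all such gluings at once: a top globe flanked by $(n$-$1)$-dimensional gluings on both sides would be forced to carry $\sigma_{n-1}$ and $\tau_{n-1}$ simultaneously. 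So the naive colimit description of $\partial_\sigma$ genuinely breaks in these degenerate configurations.

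I would resolve this by observing that the entire difficulty lives in dimension $n-1$, whereas $(n$-$2)$-bijectivity only concerns dimensions $\leq n-2$. The idea is to apply the truncation functor $(-)_{\leq n-2}$ given by restriction along $\G_{\leq n-2}\hookrightarrow\G$, which preserves colimits as a restriction of presheaves. In this range $\sigma_{n-1}$ and $\tau_{n-1}$ restrict to one and the same map $D_{n-1}\rightarrow D_n$ — they differ only on the top cell, in dimension $n-1$ — and that map is an isomorphism onto the $(\leq n$-$2)$-part of $D_n$. Hence the truncated source and target diagrams over $I$ become isomorphic, the isomorphisms are natural exactly because the cosource and cotarget inclusions coincide below dimension $n-1$, and no choice between $\sigma$ and $\tau$ has to be made. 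Passing to colimits then shows that $(\partial_\sigma)_{\leq n-2}$ and $(\partial_\tau)_{\leq n-2}$ are isomorphisms, which is the assertion that $\partial_\sigma,\partial_\tau$ are $(n$-$2)$-bijective.

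Finally, to land the statement in $\wgpd$ rather than in globular sets, I would note that $A$, $\partial A$ and the inclusions $\sigma_{n-1},\tau_{n-1}$ all arise by applying the colimit-preserving free functor $\mathbf{F}\colon[\G^{op},\mathbf{Set}]\rightarrow\wgpd$ to the corresponding globular-set data, and that $\mathbf{F}$ preserves the left class $\mathbf{bij_{n-2}}$ exactly as used in Example \ref{source/targets are n-2 bij}. Thus it suffices to run the truncation computation for globular sets and transport it along $\mathbf{F}$, which yields the result for $\infty$-groupoids. The only non-bookkeeping content is the truncation observation of the previous paragraph, which is what dissolves the $\sigma$/$\tau$ ambiguity created by adjacent top-dimensional globes.
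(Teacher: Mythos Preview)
Your proof is correct and in spirit matches the paper's intended argument, but the paper itself gives no proof: it only asserts that the proposition follows from Example~\ref{source/targets are n-2 bij}, leaving the closure-under-colimits step implicit. You have filled this in and, in doing so, handled two points the paper glosses over. First, you correctly note that one needs the sharper bound on $\sigma_{m},\tau_{m}\colon D_{m}\rightarrow D_{m+1}$ (bijectivity in every dimension strictly below $m$), which is stronger than the $(m{-}2)$-bijectivity literally stated in the example; without this sharpening the colimit argument would only yield $(n{-}3)$-bijectivity. Second, you identify the genuine obstruction to writing $\partial_{\sigma}$ as the colimit of a naive natural transformation over the zig-zag diagram when some gluing globe has dimension $n{-}1$, and your truncation fix is clean: since $\sigma_{n-1}$ and $\tau_{n-1}$ coincide after restricting to $\G_{\leq n-2}$, the truncated diagrams become equal, so $(\partial_{\sigma})_{\leq n-2}$ is an isomorphism. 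The final transport along $\mathbf{F}$ is exactly parallel to Example~\ref{source/targets are n-2 bij}. So your argument is both the one the paper has in mind and a more careful version of it.
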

Let us now see how to adapt the main definitions to the case of $\infty$-categories, following \cite{AR1}.
The definition is essentially the same as that of $\infty$-groupoids, except we have to restrict the class of admissible maps.
\begin{defi}
Given a globular theory $(\mathfrak{C},F)$, we say that a map $f$ in $\mathfrak{C}$ is globular if it is in the image of $\Theta_0$ under $F$.

On the other hand, $f$ is called homogeneous if for every factorization $f=g\circ f'$ where $g$ is a globular map, $g$ must be the identity.

$\mathfrak{C}$ is said to be homogeneous if it comes endowed with a globular sum preserving functor $H\colon \mathfrak{C} \rightarrow \Theta$ that detects homogeneous maps, in the sense that a map $f$ in $\mathfrak{C}$ is homogeneous if and only if $H(f)$ is such, where $\Theta$ is the globular theory for strict $\infty$-categories, as defined in \cite{AR1}. If this is the case, then given an homogeneous map $\rho\colon D_n \rightarrow A$ we have $n\geq \dim(A)$, and every map $f$ admits a unique factorization as a homogeneous map followed by a globular one.
\end{defi}

\begin{defi}
Let $(\mathfrak{C},F)$ be a globular theory. A pair of maps $(f,g)$ with $f,g\colon D_n \rightarrow A$ is said to be admissible for a theory of $\infty$-categories (or just admissible, in case there is no risk of confusion with the groupoidal case) if either $n=0$, or both of them are homogeneous maps or else if there exists homogeneous maps $f',g'\colon D_n \rightarrow \partial A$ such that the following diagrams commute
\[
\bfig 
\morphism(0,0)|a|/@{>}@<0pt>/<400,0>[D_n`A;f]

\morphism(0,0)|l|/@{>}@<0pt>/<0,-400>[D_n`\partial A;f']

\morphism(0,-400)|r|/@{>}@<0pt>/<400,400>[\partial A`A;\partial_{\sigma}]

\morphism(1000,0)|a|/@{>}@<0pt>/<400,0>[D_n`A;g]

\morphism(1000,0)|l|/@{>}@<0pt>/<0,-400>[D_n`\partial A;g']

\morphism(1000,-400)|r|/@{>}@<0pt>/<400,400>[\partial A`A;\partial_{\tau}]

\efig 
\]
\end{defi}
The definition of a coherator for $\infty$-categories is totally analogous to that for $\infty$-groupoids, i.e. it is a contractible and cellular globular theory, except the pair of maps that we consider in both cases have to be the admissible ones in the sense of the previous definition.

More precisely, the pairs appearing in Definition \ref{contr glob th} and in point 2 of Definition \ref{cell glob th} must be pairs of admissible maps.
\begin{defi}
An $\infty$-category is a model of a coherator for $\infty$-categories.
\end{defi} 
\section{Direct categories and cofibrations}
\begin{defi}(see also \cite{Ho}, Chapter 5)
	\label{direct category}
	A direct category is a pair $(\C,d)$, where $\C$ is a small category and $d:\ob(\C) \rightarrow \lambda$ is a function into an ordinal $\lambda$ , such that if there is a non-identity morphism $f:a \rightarrow b$ in $\C$, then $d(a)< d(b)$.
	
	Given a cocomplete category $\D$ and a functor $X\colon \C \rightarrow \D$, we define the latching object of $X$ at an object $c\in \C$ to be the object of $\D$ given by
	\[L_c(X)=\colim_{c' \in \C_{< d(c)}\downarrow c}X(c')\]
	This defines a functor $L_c$ from the functor category $[\C,\D]$ to the category $\D$, together with a natural transformation $\epsilon_c\colon L_c \Rightarrow ev_c$, with codomain the functor given by evaluation at $c$.
	We also define the latching map of a natural transformation $\alpha \colon  X\rightarrow Y$ in $\D^{\C}$ at an object $c\in \C$ to be the map of $\D$ 
	\[\hat{L}_c(\alpha)\colon X(c) \plus{L_c(X)} L_c(Y) \rightarrow Y(c)\]
	induced by $L_c(f)$ and $\epsilon_c$.
\end{defi}
We now prove two results on direct categories and weak orthogonality, denoted by $\pitchfork$, that will be used in what follows. 
\begin{lemma}
	\label{Reedy construction}
	Let $\D$ be a direct category and $\C$ a category equipped with two classes of arrows $(\mathcal{L},\R)$ such that $\mathcal{L}\pitchfork \R$.
	If we define

	\[	 \mathcal{L}^{\D}= \{ \alpha\colon X \rightarrow Y \ \text{in} \ \C^{\D} \ | \ \hat{L}_d(\alpha)\in \mathcal{L} \ \forall d \in \D \} \]
	and \[\R^{\D}=\{\alpha\colon X \rightarrow Y \ \text{in} \ \C^{\D} \ | \ \alpha_d\colon X(d) \rightarrow Y(d) \in \R  \ \forall d \in \D \}\] we have $\mathcal{L}^{\D} \pitchfork \R^{\D}$.
	\begin{proof}
		Consider a commutative square \[
		\vcenter{\hbox{\xymatrix@!0@=15mm{
					A\ar[r]^-{a}\ar[d]_-{l}&X\ar[d]^-{r}\\
					B\ar[r]_-{b}&Y
		}}}
		\]
		where $l \in \mathcal{L}^{\D}$ and $r \in \R^{\D}$, and let $d \colon\ob(\D) \rightarrow \lambda$ be the degree functor . The idea is to use transfinite induction on the degree of objects of $\D$ to find a lift for the given square. Clearly, the only non trivial step is extending a lift for the restriction of the square to $\C^{\D_{\leq \alpha}}$ to a lift for the restriction of the square to $\C^{\D_{\leq \alpha +1}}$.
		
		Consider an object $e\in\D$ such that $d (e)= \alpha$. We have an induced square
		
		\[
		\bfig
		\morphism(0,0)|a|/@{>}@<0pt>/<800,0>[	A(e) \plus{L_e(A)} L_e(B)`X(e);]
		\morphism(0,0)|a|/@{>}@<0pt>/<0,-500>[A(e) \plus{L_e(A)} L_e(B) `B(e);\hat{L}_e(l)]
		\morphism(800,0)|a|/@{>}@<0pt>/<0,-500>[X(e) `Y(e);r_e]
		\morphism(0,-500)|a|/@{>}@<0pt>/<800,0>[B(e)`Y(e);]
		\efig
		\] where the upper-horizontal map is induced by $a$ and the lifts at lower degrees that exist by inductive assumption.
		Choose a filler $k_e\colon B(e) \rightarrow X(e)$, which exists since the left-hand side arrow is in $\mathcal{L}$ and the right-hand side one is in $\R$.
		The collection $\{k_e\colon d(e)=\alpha\}$ gives the desired extension to $\C^{\D_{\leq \alpha +1}}$.
	\end{proof}
\end{lemma}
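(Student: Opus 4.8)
The plan is to establish the lifting property $\mathcal{L}^{\D}\pitchfork\R^{\D}$ directly, by solving an arbitrary lifting problem via transfinite induction on the degree function $d\colon\ob(\D)\rightarrow\lambda$, building the diagonal filler one degree at a time. I would start from a commutative square in $\C^{\D}$ with top edge $a\colon A\rightarrow X$, bottom edge $b\colon B\rightarrow Y$, left leg $l\in\mathcal{L}^{\D}$ and right leg $r\in\R^{\D}$, and aim to produce a natural transformation $k\colon B\rightarrow X$ with $k\circ l=a$ and $r\circ k=b$. Since a natural transformation is determined by its components and every object of $\D$ carries a well-defined degree, it suffices to define $k_e\colon B(e)\rightarrow X(e)$ by induction on $d(e)$, never revising a component once chosen; at limit stages one simply retains the lifts already constructed, so no choice is required there.

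The inductive step is where the latching machinery of Definition~\ref{direct category} enters. Assume that compatible components $k_{e'}$ have been selected for all $e'$ with $d(e')<\alpha$, so that they form a partial (natural) lift over the objects of degree below $\alpha$. Fix $e$ with $d(e)=\alpha$. I would form the latching map $\hat{L}_e(l)\colon A(e)\plus{L_e(A)}L_e(B)\rightarrow B(e)$, which lies in $\mathcal{L}$ precisely because $l\in\mathcal{L}^{\D}$. Crucially, $L_e(B)=\colim_{e'\in\D_{<d(e)}\downarrow e}B(e')$ involves only objects of degree strictly below $\alpha$, so the lower-degree lifts already assemble (by the naturality built into the inductive hypothesis) into a cocone, hence a single map $L_e(B)\rightarrow X(e)$; together with $a_e\colon A(e)\rightarrow X(e)$ this pins down the top edge of a square whose bottom edge is $b_e$ and whose right leg is $r_e\in\R$. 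Because $\mathcal{L}\pitchfork\R$, this square admits a filler $k_e\colon B(e)\rightarrow X(e)$.

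The main point to verify—and the step I expect to carry the real content, rather than the mere existence of each $k_e$—is that the resulting family assembles into an honest natural transformation and satisfies the two triangle identities globally. Here the latching formulation does the work automatically: factoring the lift through $\hat{L}_e(l)$ forces $k_e\circ\epsilon_e$ to agree on $L_e(B)$ with the cocone built from the lower lifts, and since for any non-identity $\phi\colon e'\rightarrow e$ the map $B(\phi)$ factors as $B(e')\rightarrow L_e(B)\xrightarrow{\epsilon_e}B(e)$, this identity unwinds to exactly $k_e\circ B(\phi)=X(\phi)\circ k_{e'}$, i.e. naturality with respect to every arrow into $e$. Commutativity of the upper triangle of the chosen filler records $k_e\circ l_e=a_e$ and of the lower triangle $r_e\circ k_e=b_e$, so the two global triangle identities hold componentwise. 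Since every object has degree below $\lambda$, the induction defines $k_e$ for all $e\in\D$, and these components constitute the desired filler $k\colon B\rightarrow X$, establishing $\mathcal{L}^{\D}\pitchfork\R^{\D}$.
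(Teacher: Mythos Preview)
Your proof is correct and follows essentially the same transfinite-induction-on-degree strategy as the paper, forming the latching square at each object $e$ and invoking $\mathcal{L}\pitchfork\R$ to obtain the component $k_e$. If anything you are more careful than the paper, which leaves the verification of naturality and the triangle identities implicit, whereas you spell out why the latching formulation forces $k_e\circ B(\phi)=X(\phi)\circ k_{e'}$ and why the two triangles commute componentwise.
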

\begin{lemma}
	\label{preservation of cofs^D}
Let $A,B$ be two cocomplete categories equipped, respectively, with two classes of arrows $(\mathcal{L}_A,\mathcal{R}_A)$ and $(\mathcal{L}_B,\mathcal{R}_B)$ such that $\mathcal{L}_A \pitchfork \mathcal{R}_A$ and $\mathcal{L}_B \pitchfork\mathcal{R}_B$.
Given a cocontinuous functor $F\colon A \rightarrow B$ such that $F(\mathcal{L}_A)\subset \mathcal{L}_B$ and a direct category $\D$, the induced map $F^{\D}\colon A^{\D} \rightarrow B^{\D}$ preserves the direct cofibrations, i.e.\[F(\mathcal{L}^{\D}_A)\subset \mathcal{L}^{\D}_B\]
\begin{proof}
Given a map $\alpha\colon X \rightarrow Y$ in the functor category $A^{\D}$ which is in $\mathcal{L}^{\D}_A$, we have by definition that $F^{\D}(\alpha)$ is in $\mathcal{L}^{\D}_B$ if and only if the latching map at every object $d\in \D$
 \[\hat{L}_d(F(\alpha))\colon FX(d) \plus{L_d(FX)} L_d(FY) \rightarrow FY(d)\] is in $\mathcal{L}_B$. But this map is isomorphic to  $F(\hat{L}_d(\alpha))$ since $F$ is cocontinuous, and this map is in $\mathcal{L}_B$ since $F$ sends maps in $\mathcal{L}^{\D}_A$ to maps in $\mathcal{L}^{\D}_B$ and the latching map of $\alpha$ at $d$ is in $\mathcal{L}_A$.
\end{proof}
\end{lemma}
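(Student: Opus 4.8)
The plan is to peel the statement back, object by object of $\D$, to the single hypothesis $F(\mathcal{L}_A)\subset \mathcal{L}_B$, the key being that a cocontinuous functor commutes with the formation of latching objects and latching maps. So fix a map $\alpha\colon X \rightarrow Y$ in $\mathcal{L}^{\D}_A$. Writing $F^{\D}\alpha=F\circ\alpha$ for the whiskered natural transformation, by definition of $\mathcal{L}^{\D}_B$ I must show that $\hat{L}_e(F^{\D}\alpha)\in\mathcal{L}_B$ for every object $e\in\D$.

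First I would record that the latching object $L_e(X)=\colim_{e'\in\D_{<d(e)}\downarrow e}X(e')$ is a colimit, so cocontinuity of $F$ produces a canonical isomorphism $F(L_e(X))\cong L_e(F^{\D}X)$: the diagram over $\D_{<d(e)}\downarrow e$ computing $L_e(F^{\D}X)$ is exactly $F$ applied to the one computing $L_e(X)$, and this identification is natural and compatible with the counit $\epsilon_e\colon L_e\Rightarrow ev_e$. Next, the latching map $\hat{L}_e(\alpha)$ is the map out of the pushout $X(e)\plus{L_e(X)}L_e(Y)\rightarrow Y(e)$ induced by $L_e(\alpha)$ and $\epsilon_e$. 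Since $F$ also preserves this pushout, and the comparison isomorphisms above are natural in $X$ and $Y$, I obtain a commuting square in the arrow category of $B$ identifying $F(\hat{L}_e(\alpha))$ with $\hat{L}_e(F^{\D}\alpha)$.

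The conclusion is then immediate: because $\alpha\in\mathcal{L}^{\D}_A$ we have $\hat{L}_e(\alpha)\in\mathcal{L}_A$, hence $F(\hat{L}_e(\alpha))\in F(\mathcal{L}_A)\subset\mathcal{L}_B$, and transporting along the canonical isomorphism gives $\hat{L}_e(F^{\D}\alpha)\in\mathcal{L}_B$. As $e$ was arbitrary, $F^{\D}\alpha\in\mathcal{L}^{\D}_B$, which is what I wanted.

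The main obstacle is the bookkeeping in the middle step: I must verify that $F$ applied to the iterated colimit (latching object) \emph{and} then to the pushout defining $\hat{L}_e(\alpha)$ really reassembles into the pushout defining $\hat{L}_e(F^{\D}\alpha)$, with the induced arrow matching the expected one up to the canonical comparison; this is where cocontinuity is used twice and where a careless argument could go wrong. A subtlety worth flagging is that the argument literally yields a map \emph{isomorphic} to one known to lie in $\mathcal{L}_B$, so I would either assume $\mathcal{L}_B$ is closed under isomorphism in the arrow category (automatic if $\mathcal{L}_B={}^{\pitchfork}\mathcal{R}_B$) or note directly that the left lifting property against $\mathcal{R}_B$ transfers across an isomorphism of arrows, so that membership in $\mathcal{L}_B$ is preserved.
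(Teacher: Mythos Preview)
Your proof is correct and follows essentially the same approach as the paper: both arguments reduce the claim to the observation that, by cocontinuity of $F$, the latching map $\hat{L}_e(F^{\D}\alpha)$ is canonically isomorphic to $F(\hat{L}_e(\alpha))$, and then invoke $F(\mathcal{L}_A)\subset\mathcal{L}_B$. You are simply more explicit than the paper about the two uses of cocontinuity (for $L_e$ and for the pushout) and about closure of $\mathcal{L}_B$ under isomorphism of arrows.
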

\begin{ex}
	\label{G is a direct cat}
	The category of globes $\mathbb{G}$ has a natural structure of direct category, with degree function defined by
	\[\begin{matrix}
	\deg \colon \mathbb{G} \rightarrow \mathbb{N} \\
	\qquad n \mapsto n 
	\end{matrix}\]
	Every time we have a coglobular object $\mathbf{D}_{\bullet}\colon \G \rightarrow \C$ in a finitely cocomplete category, we can consider the latching map of $!\colon \emptyset \rightarrow \mathbf{D}_{\bullet}$ at $n$, i.e. the map \[\hat{L}_n(!)\colon L_n(\mathbf{D}_{\bullet}) \rightarrow \mathbf{D}_n\]
	Notice that \[\hat{L}_1(!)=(\mathbf{D}(\sigma_0),\mathbf{D}(\tau_0))\colon \mathbf{D}_0 \coprod \mathbf{D}_0\rightarrow \mathbf{D}_1\] and the other latching maps are obtained inductively from the following cocartesian square
	\[
	\bfig 
	
	\morphism(0,0)|a|/@{>}@<0pt>/<500,0>[L_n(\mathbf{D}_{\bullet})` \mathbf{D}_n;\hat{L}_n(!)]
	\morphism(0,0)|a|/@{>}@<0pt>/<0,-500>[L_n(\mathbf{D}_{\bullet})` \mathbf{D}_n;\hat{L}_n(!)]
	\morphism(500,0)|a|/@{>}@<0pt>/<0,-500>[ \mathbf{D}_n`L_{n+1}(\mathbf{D}_{\bullet});]
	\morphism(0,-500)|a|/@{>}@<0pt>/<500,0>[\mathbf{D}_n`L_{n+1}(\mathbf{D}_{\bullet}); ]
	
	\morphism(500,-500)|l|/@{-->}@<0pt>/<300,-300>[L_{n+1}(\mathbf{D}_{\bullet})` \mathbf{D}_{n+1};\exists ! \hat{L}_{n+1}(!)]
	
	\morphism(500,0)|a|/{@{>}@/^2em/}/<300,-800>[ \mathbf{D}_{n}` \mathbf{D}_{n+1}; \mathbf{D}(\sigma_n)]
	\morphism(0,-500)|l|/{@{>}@/_2em/}/<800,-300>[\mathbf{D}_{n}` \mathbf{D}_{n+1};\mathbf{D}(\tau_n) ]
	\efig 
	\]
\end{ex}
When $\mathbf{D}_{\bullet}\colon\G \rightarrow \mathfrak{C}\rightarrow \wgpd$ is the canonical coglobular $\infty$-groupoid, we will also denote $L_{n}(\mathbf{D}_{\bullet})$ by $S^{n-1}$, borrowing  this notation from topology.
\begin{defi}
Let $I$ (resp. $J$) be the set of boundary inclusions $\{S^{n-1} \rightarrow D_n\}_{n \geq 0}$ (resp. $\{\sigma_n\colon D_n \rightarrow D_{n+1}\}_{n \geq 0}$), and $\mathbb{I}$ its saturation, i.e. the set $^{\pitchfork}(I^{\pitchfork})$ (resp. $\mathbb{J}= {}^{\pitchfork}(J^{\pitchfork})$). 

We say that a map of $\infty$-groupoids $f\colon X \rightarrow Y$ is a cofibration (resp. trivial cofibration) if it belongs to $\mathbb{I}$ (resp. $\mathbb{J}$).

The maps in the class $J^{\pitchfork}$ (resp. $I^{\pitchfork}$) are called fibrations (resp. trivial fibrations).
\end{defi} 
The small object argument provides a factorization system on $\infty$-groupoids given by cofibrations and trivial fibrations. Lemma \ref{Reedy construction} will be applied to this factorization system and to the the direct category structure on $\G$ as defined in Example \ref{G is a direct cat}, to provide a way of inductively extending certain maps in $\wgpd^{\G}$.

Let $*$ denote the terminal object in the category of $\infty$-groupoids. Since every map in $J$ admits a retraction, the following result is straightforward.
\begin{prop}
Every $\infty$-groupoid is fibrant, i.e. the unique map $X \rightarrow *$ is a fibration for every $X \in \wgpd$.
\end{prop}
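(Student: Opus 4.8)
The plan is to unwind the definition of fibration and reduce the whole statement to the existence of a retraction for each generating map $\sigma_n$. By definition the fibrations are exactly the maps in $J^{\pitchfork}$, so it suffices to show that the unique map $X \to *$ lifts against every $\sigma_n\colon D_n \to D_{n+1}$, $n\geq 0$. Since the codomain of $X\to *$ is terminal, the lower edge of any lifting square against $\sigma_n$ is uniquely determined, so the only datum is a map $a\colon D_n \to X$, and a solution is a map $b\colon D_{n+1}\to X$ with $b\circ\sigma_n = a$.

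First I would record the elementary fact underlying the remark that precedes the statement: if $\sigma_n$ admits a retraction $r_n\colon D_{n+1}\to D_n$ in $\wgpd$, i.e. a map with $r_n\circ\sigma_n = \mathrm{id}_{D_n}$, then $X\to *$ automatically lifts against $\sigma_n$. Indeed, given $a\colon D_n \to X$ one sets $b = a\circ r_n$, and then $b\circ\sigma_n = a\circ r_n\circ\sigma_n = a$, while commutativity of the lower triangle is vacuous because its target is $*$. This reduces the proposition to producing the retractions $r_n$.

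The key step is therefore to construct a retraction of $\sigma_n$, and here I would invoke contractibility of the coherator $\mathfrak{C}$. Consider the pair $(\mathrm{id}_{D_n},\mathrm{id}_{D_n})$ of parallel maps $D_n \to D_n$ in $\mathfrak{C}$: they are trivially parallel, and the pair is admissible since $\dim(D_n)=n\leq n+1$. Applying Definition \ref{contr glob th} with $A = D_n$ yields a map $h\colon D_{n+1}\to D_n$ in $\mathfrak{C}$ making the relevant diagram serially commute, in particular $h\circ\sigma_n = \mathrm{id}_{D_n}$, so $h$ is the desired retraction in $\mathfrak{C}$. Pushing forward along the Yoneda embedding $y\colon\mathfrak{C}\to\mathbf{Mod}(\mathfrak{C})=\wgpd$, which is functorial and hence preserves the relation $h\circ\sigma_n = \mathrm{id}$, gives a retraction $r_n = y(h)$ of $\sigma_n$ in $\wgpd$, completing the argument.

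There is no serious obstacle here; the only point requiring genuine care is checking that the constant pair $(\mathrm{id}_{D_n},\mathrm{id}_{D_n})$ really satisfies the admissibility hypothesis of Definition \ref{contr glob th} (parallelness is immediate and the dimension bound $\dim(D_n)\leq n+1$ holds), so that contractibility may legitimately be applied. The retraction $h$ so obtained is just the chosen identity operation $1_{(-)}\colon X_n \to X_{n+1}$ on cells, and the proposition is the familiar statement that each $n$-cell arises as the source of its own identity $(n+1)$-cell.
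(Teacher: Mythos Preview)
Your proof is correct and follows exactly the approach indicated in the paper, which simply remarks that every map in $J$ admits a retraction and declares the result straightforward. You have spelled out the two steps the paper leaves implicit: that a retraction of $\sigma_n$ suffices to produce lifts against $\sigma_n$ into any $X$, and that such a retraction exists by applying contractibility of $\mathfrak{C}$ to the admissible pair $(\mathrm{id}_{D_n},\mathrm{id}_{D_n})$.
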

\begin{defi}
An $\infty$-groupoid $X$ is said to be contractible if every map $S^{n-1} \rightarrow X$ admits an extension to $D_n$, or, equivalently, if the unique map $X \rightarrow *$ is a trivial fibration.
\end{defi}
\begin{prop}
	\label{glob sums are contractible}
Globular sums, seen as objects in the image of the Yoneda embedding functor $y\colon \mathfrak{C} \rightarrow \wgpd$, are contractible $\infty$-groupoids.
\begin{proof}
We proceed by induction on $n=\dim(A)$. If $n=0$ then $A=D_0$, in which case the statement is obvious.

Let $n>0$ and  let us prove that any map $\alpha\colon S^{m-1} \rightarrow A$ extends to $D_m$. By contractibility of $\mathfrak{C}$ we already know this is possible whenever $\dim(A)=n \leq m$, so we assume $m<n$. Consider $\partial A$, whose dimension is $n-1$ by construction, and is therefore contractible by inductive assumption. The map $\partial_{\sigma}\colon \partial A \rightarrow A$ is $(n-2)$-bijective, thanks to Proposition \ref{partial are n-2 bij}, thus $\alpha$ must factor through it since it consists of a pair of parallel $(m-1)$-cells, and contractibility of $\partial A$ allows us to find the desired extension.
\end{proof}
\end{prop}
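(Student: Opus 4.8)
The plan is to argue by induction on $n=\dim(A)$, proving at each stage that every map $\alpha\colon S^{m-1}\to A$ extends along the boundary inclusion $S^{m-1}\to D_m$, which is exactly the defining condition for $A$ to be a contractible $\infty$-groupoid. The first thing I would record is the translation of the problem into the language of cells: since $S^{m-1}\cong D_{m-1}\amalg_{S^{m-2}}D_{m-1}$ (as in Example \ref{G is a direct cat}), a map $\alpha\colon S^{m-1}\to A$ is the same datum as a parallel pair $f,g\colon D_{m-1}\to A$, and such a pair is admissible in the sense of Definition \ref{contr glob th} precisely when $\dim(A)\leq m$. The base case $n=0$ forces $A=D_0$, which carries a single cell concentrated in dimension zero, so the required extensions are immediate.

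For the inductive step I fix $A$ with $\dim(A)=n>0$, assume the statement for all globular sums of strictly smaller dimension, and split according to the size of $m$. When $m\geq n$ the pair $f,g\colon D_{m-1}\to A$ is admissible, so contractibility of the coherator $\mathfrak{C}$ (Definition \ref{contr glob th}) directly produces the desired filler $h\colon D_m\to A$. All the work is thus concentrated in the range $m<n$.

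In that range the key device is the boundary map $\partial_\sigma\colon\partial A\to A$, which by Proposition \ref{partial are n-2 bij} is $(n\text{-}2)$-bijective and hence a bijection on $k$-cells for every $k\leq n-2$. Because $m-1\leq n-2$, every cell in the image of $\alpha$ sits in this bijective range, so $\alpha$ factors uniquely as $\partial_\sigma\circ\alpha'$ through a map $\alpha'\colon S^{m-1}\to\partial A$, the parallel pair of $\alpha$ lifting to a parallel pair of $\alpha'$. Since $\dim(\partial A)=n-1<n$, the inductive hypothesis applies to $\partial A$ and yields an extension $h'\colon D_m\to\partial A$ of $\alpha'$; postcomposing, $h=\partial_\sigma\circ h'\colon D_m\to A$ extends $\alpha$, which closes the induction.

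The step I expect to be most delicate is the factorization through $\partial_\sigma$ in the case $m<n$: I must ensure not only that the two top $(m-1)$-cells lift, but that all lower cells of $S^{m-1}$ lift compatibly with the source and target operators, so that a parallel pair is carried to a parallel pair. This is exactly what $(n\text{-}2)$-bijectivity secures once $m-1\leq n-2$, and the only remaining verification — that $\partial_\sigma\circ h'$ reproduces $f$ and $g$ on the nose — follows by tracing source and target through the unique factorization.
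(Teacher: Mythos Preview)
Your proposal is correct and follows essentially the same approach as the paper's proof: induction on $\dim(A)$, with the case $m\geq n$ handled directly by contractibility of $\mathfrak{C}$ and the case $m<n$ reduced to $\partial A$ via the $(n{-}2)$-bijectivity of $\partial_\sigma$. Your final paragraph even makes explicit the point the paper leaves implicit, namely that the factorization of $\alpha$ through $\partial_\sigma$ is legitimate because a map out of $S^{m-1}$ is determined by a parallel pair of $(m{-}1)$-cells, all of whose data lie in the bijective range.
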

\section{Suspension functor}
In this section we construct, given $X\in \wgpd$ and two $0$-cells $a,b \in X_0$, the $\infty$-groupoid of morphisms from $a$ to $b$, denoted by $\Omega (X,a,b)$.

This functor will be then extended to an adjunction
\[\xymatrixcolsep{1pc}
\vcenter{\hbox{\xymatrix{
			**[l]\wgpd \xtwocell[r]{}_{\Omega}^{\Sigma}{'\perp}& **[r] S^0 \downarrow\wgpd
}}}
\] between $\infty$-groupoids and doubly pointed $\infty$-groupoids.

To begin with, let us construct a cocontinuous functor $\Sigma\colon \wgpd \rightarrow S^0\downarrow \wgpd$.
First, define 
\[\xymatrixcolsep{2pc} \xymatrix{
	 \mathbb{G} \ar[r]^-{\Sigma}&S^0 \downarrow\wgpd }\]
on objects by 
\[D_n \mapsto (D_{n+1}, \sigma_0^{n+1}, \tau_0^{n+1})\]
and on generating morphisms by
\[ \begin{matrix}
 \sigma_m \mapsto \sigma_{m+1}\\
 \tau_m \mapsto \tau_{m+1}
\end{matrix}\]
Since $S^0 \downarrow\wgpd$ clearly admits globular sums of type $\Sigma$, this functor uniquely extends to a globular functor 
\[
\Sigma\colon	\Theta_0 \rightarrow S^0 \downarrow\wgpd \]
Factor $\Sigma$ as the composite of a bijective on objects and a fully faithful functor 
\[
\bfig
\morphism(0,0)|a|/@{>}@<0pt>/<400,0>[\Theta_0`\mathfrak{D};S]
\morphism(400,0)|a|/@{>}@<0pt>/<600,0>[\mathfrak{D}`S^0 \downarrow \wgpd;V]
\efig 
\] and observe that $S$ preserves globular sums, so that $\mathfrak{D} $ is a globular theory. We will inductively extend $S$ to a map $\mathfrak{C} \rightarrow \mathfrak{D}$ in $\mathbf{GlTh}$.

For this purpose, let $\gamma$ be an ordinal and consider a functor $\mathfrak{C}_{\bullet}\colon\gamma \rightarrow \textbf{GlTh}$ providing a tower for $\mathfrak{C}$.
Without loss of generality, we may assume that, for each $\alpha < \gamma$, $\mathfrak{C}_{\alpha +1}$ is obtained from $\mathfrak{C}_{\alpha }$ by adding a lift $\rho$ to a single parallel pair of morphisms as in the following diagram
\[ \bfig
\morphism(0,0)|a|/@{>}@<2pt>/<750,0>[D_n`A;h_1]
\morphism(0,0)|b|/@{>}@<-2pt>/<750,0>[D_n`A;h_2]
\morphism(0,0)|r|/@{>}@<2pt>/<0,-500>[D_n`D_{n+1};\tau]
\morphism(0,0)|l|/@{>}@<-2pt>/<0,-500>[D_n`D_{n+1};\sigma]
\morphism(0,-500)|r|/@{>}@<0pt>/<750,500>[D_{n+1}`A;\rho]
\efig
\] This means that $\rho \circ \sigma=h_1,\ \rho \circ \tau=h_2$.

Assume, by transfinite induction, that we have already defined a functor
\[\xymatrixcolsep{2pc} \xymatrix{
\mathfrak{C}_{\alpha} \ar[r]^-{\Sigma}&\mathfrak{D} }\] that matches on $\Theta_0$ with the one we started from.

Thanks to the universal property of $\mathfrak{C}_{\alpha +1}$, in order to extend this functor to one on $\mathfrak{C}_{\alpha +1}$ we only need to define an interpretation of $\rho$ under $\Sigma$.
More precisely, we need to define $\Sigma(\rho)\colon\Sigma D_{n+1} \cong D_{n+2} \rightarrow \Sigma A$ such that $\Sigma(\rho) \circ \Sigma(\sigma)=\Sigma(h_1)$ and $\Sigma(\rho) \circ \Sigma(\tau)=\Sigma(h_2)$.
The following diagram in $S^0 \downarrow \mathfrak{C}$ admits a filler in $\mathfrak{C}$ by contractibility, as indicated by the dotted arrow, which is automatically a map under $S^0$:
\[ \bfig
\morphism(0,0)|a|/@{>}@<2pt>/<750,0>[D_{n+1}`\Sigma A;\Sigma(h_1)]
\morphism(0,0)|b|/@{>}@<-2pt>/<750,0>[D_{n+1}`\Sigma A;\Sigma(h_2)]
\morphism(0,0)|r|/@{>}@<2pt>/<0,-500>[D_{n+1}`D_{n+2};\tau]
\morphism(0,0)|l|/@{>}@<-2pt>/<0,-500>[D_{n+1}`D_{n+2};\sigma]
\morphism(0,-500)|r|/@{.>}@<0pt>/<750,500>[D_{n+2}`\Sigma A;\Sigma(\rho)]
\efig
\]
and we define $\Sigma (\rho)$ to be a choice of such a filler.

We have thus succeeded in extending $\Sigma$ to $\mathfrak{C}_{\alpha +1}$. The case of limit ordinals follows from cocontinuity of $\mathfrak{C}_{\bullet}$, which is part of the requirements for a cellular globular theory.

Finally, we get by induction a functor
\[\xymatrixcolsep{2pc} \xymatrix{
	\mathfrak{C} \ar[r]^-{\Sigma}&\mathfrak{D} }\]
and we define the suspension functor to be the cocontinuous extension to $\wgpd$ of the composite $V\circ \Sigma\colon \mathfrak{C}\rightarrow S^0\downarrow \wgpd$, as in Proposition \ref{UP of models}.
 
 Being a cocontinuous functor between locally presentable categories, $\Sigma$ admits a right adjoint that gives rise to an adjunction
 \[\xymatrixcolsep{1pc}
 \vcenter{\hbox{\xymatrix{
 			**[l]\wgpd \xtwocell[r]{}_{\Omega}^{\Sigma}{'\perp}& **[r] S^0 \downarrow\wgpd
 }}}
 \] 
  By adjunction, the underlying globular set of $\Omega (X,a,b)$ is given by 
 \[\Omega (X,a,b)_n\colon =\{x \in X_{n+1}| \ s_0^{n+1}(x)=a, \ t_0^{n+1}(x)=b \} \]
 \begin{rmk}
 	\label{cocontinuity of UoSIgma}
If we compose $\Sigma$ with the forgetful functor $U\colon S^0 \downarrow\wgpd \rightarrow \wgpd $, we get a functor which is no longer cocontinuous.
Nevertheless, it is well known that $U$ creates connected colimits, therefore $U\circ \Sigma$ preserves all such.
Because $\Sigma(I)\subset I$, where $I=\{S^{n-1} \rightarrow D_n\}_{n \geq 0}$ is the set of sphere inclusions, we therefore have that $U\circ \Sigma$ preserves cofibrations (i.e. maps in $\mathbb{I}$, the saturation of $I$). A similar situation is treated in Lemma 1.3.52 of \cite{CIS}.
 \end{rmk}
To justify the notation we observe the following fact: if we interpret a map $(\alpha,\beta)\colon S^{n+1} \rightarrow X$ as a map $(\hat{\alpha}, \hat{\beta})\colon S^{n} \rightarrow \Omega (X,a,b)$, where $a=s_0^{n+1} (\alpha)$ and $b=t_0^{n+1} (\beta)$, then it holds true that
 \[\pi_{n}(\Omega ( X ,a,b),\hat{\alpha}, \hat{\beta}) \cong \pi_{n+1}(X,\alpha,\beta)\] where, by definition, given an $\infty$-groupoid $Y$ and two $(n-1)$-cells $a,b\in Y_{n-1}$, we have \[\pi_{n}(Y,a,b)=\{[f] \colon f\in Y_n, s(f)=a, t(f)=b\}\] with $[f]=[g]$ if and only if there exists an $(n+1)$-cell $H\in Y_{n+1}$ such that $s(H)=f, t(H)=g$ (see also \cite{AR2}, Definition 4.11).
 \begin{prop}
 	\label{omega preserves contractibles}
Let $(X,(a,b))$ be an object in $S^0 \downarrow\wgpd$. Assume that $X$ is a contractible $\infty$-groupoid. Then $\Omega (X,a,b)$ is again contractible.
\begin{proof}
Diagrams of the form:
\[ \bfig
\morphism(0,0)|a|/@{>}@<0pt>/<800,0>[S^{n-1}`\Omega (X,a,b);]
\morphism(0,0)|a|/@{>}@<0pt>/<0,-500>[S^{n-1}`D_n;]
\morphism(0,-500)|a|/@{-->}@<0pt>/<800,500>[D_n`\Omega (X,a,b);]
\efig \]
correspond, under the adjunction $\Sigma \dashv \Omega$, to diagrams under $S^0$ of the form

\[ \bfig
\morphism(0,0)|a|/@{>}@<0pt>/<800,0>[S^n`X;]
\morphism(0,0)|a|/@{>}@<0pt>/<0,-500>[S^n`D_{n+1};]
\morphism(0,-500)|a|/@{-->}@<0pt>/<800,500>[D_{n+1}`X;]
\efig \]
By assumption, all such diagrams admit an extension, which concludes the proof.
\end{proof}
 \end{prop}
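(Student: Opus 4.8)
The plan is to express contractibility of $\Omega(X,a,b)$ as a lifting (extension) property and then transport the resulting extension problems across the adjunction $\Sigma \dashv \Omega$ to extension problems against the contractible object $X$. By definition, $\Omega(X,a,b)$ is contractible exactly when every map $S^{n-1} \rightarrow \Omega(X,a,b)$ extends along the sphere inclusion $S^{n-1} \rightarrow D_n$ to a map $D_n \rightarrow \Omega(X,a,b)$, so it suffices to produce such an extension for each $n \geq 0$.

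First I would transpose this extension problem under $\Sigma \dashv \Omega$. A map $S^{n-1} \rightarrow \Omega(X,a,b)$ in $\wgpd$ corresponds to a map $\Sigma(S^{n-1}) \rightarrow X$ in the coslice $S^0 \downarrow \wgpd$, and similarly for $D_n$, so the whole extension square transposes to an extension square in $S^0 \downarrow \wgpd$ with right-hand object $(X,a,b)$. Invoking Remark \ref{cocontinuity of UoSIgma}, which gives $\Sigma(I) \subset I$, together with the identity $\Sigma(D_n) = D_{n+1}$, I would identify the transposed vertical map $\Sigma(S^{n-1}) \rightarrow \Sigma(D_n)$: it is a boundary inclusion with target $D_{n+1}$, hence necessarily the sphere inclusion $S^n \rightarrow D_{n+1}$, and in particular $\Sigma(S^{n-1}) = S^n$ (for $n=0$ this reads $\Sigma(\emptyset) = S^0$, the initial object of the coslice). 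Thus the original problem becomes: extend a given map $S^n \rightarrow X$ that lies under $S^0$ along $S^n \rightarrow D_{n+1}$ to a map $D_{n+1} \rightarrow X$ under $S^0$.

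Finally I would solve this transposed problem using the contractibility of $X$. Forgetting the $S^0$-marking, contractibility furnishes an extension $g \colon D_{n+1} \rightarrow X$ of the underlying map $S^n \rightarrow X$. The only place where the coslice structure intervenes — and the point I expect to be the crux — is checking that $g$ is automatically a morphism under $S^0$. Here I would use that the two marked points of $\Sigma(D_n) = D_{n+1}$ are its iterated source and target $\sigma_0^{n+1}, \tau_0^{n+1}$, which are $0$-cells and therefore lie in the boundary $S^n$; consequently the structure map $S^0 \rightarrow D_{n+1}$ factors through the inclusion $S^n \rightarrow D_{n+1}$. Composing with $g$ then gives $g \circ (S^0 \rightarrow D_{n+1}) = g \circ (S^n \rightarrow D_{n+1}) \circ (S^0 \rightarrow S^n) = (S^n \rightarrow X) \circ (S^0 \rightarrow S^n) = (a,b)$, so $g$ respects the pointing and transposes back to the desired extension $D_n \rightarrow \Omega(X,a,b)$. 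Once this compatibility of the marking with the boundary is in place, the statement follows formally from the adjunction and the contractibility of $X$.
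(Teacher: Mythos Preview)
Your proof is correct and follows essentially the same route as the paper: transpose the extension problem along $\Sigma \dashv \Omega$ and use contractibility of $X$. You are more explicit than the paper about one point the paper leaves implicit, namely that the lift $g\colon D_{n+1}\to X$ obtained in $\wgpd$ is automatically a morphism under $S^0$ because the marking $S^0 \to D_{n+1}$ factors through the boundary $S^n$; this is a welcome clarification rather than a different argument.
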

The following lemma will be used quite frequently in the forthcoming sections. Its proof is straightforward and it is thus left to the reader.
\begin{lemma}
	\label{decomposition of glob sum}
For every globular sum $A$ there exist unique globular sums $\alpha_1,\ldots , \alpha_q$ such that 
\begin{equation}
A\cong \Sigma \alpha_1 \plus{ D_0} \Sigma \alpha_2\plus{ D_0} \ldots \plus{ D_0} \Sigma \alpha_q
\end{equation} the colimit being taken over the maps
\[
\bfig 
\morphism(0,0)|a|/@{>}@<0pt>/<-400,-400>[D_0`\Sigma \alpha_{i};\top]
\morphism(0,0)|a|/@{>}@<0pt>/<400,-400>[D_0`\Sigma \alpha_{i+1};\perp]
\efig 
\] where we denote the image via  $\Sigma\colon \wgpd \rightarrow S^0\downarrow \wgpd$ of any globular sum $B$ by $(\Sigma B, \perp,\top)$.
\end{lemma}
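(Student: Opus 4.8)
The plan is to reduce the statement to the combinatorics of tables of dimensions, exploiting two facts: a globular sum is determined up to isomorphism by its table, and $\Sigma$, being cocontinuous and therefore globular-sum preserving, sends a globular sum to the one whose table of dimensions is obtained by adding $1$ to every entry (since $\Sigma(\sigma^j_i)=\sigma^{j+1}_{i+1}$ and likewise for $\tau$), doubly pointed by its two $0$-cells $\perp=\sigma_0^{\bullet}$ and $\top=\tau_0^{\bullet}$. After setting aside the trivial case $A=D_0$, I may assume $\dim A\geq 1$; the defining inequalities of a table then force every $i_k\geq 1$ (a $0$-globe among the $i_k$ can occur only when $m=1$, i.e. when $A=D_0$).

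First I would analyse the $0$-skeleton of $A$ by looking at the positions $k$ with $i'_k=0$. Gluing $D_{i_k}$ to $D_{i_{k+1}}$ along $D_{i'_k}$ identifies the $i'_k$-target of the former with the $i'_k$-source of the latter; when $i'_k\geq 1$ the inclusion $D_{i'_k}\hookrightarrow D_{i_k}$ already contains both $0$-cells, so the two globes share their entire $0$-skeleton, whereas when $i'_k=0$ a genuinely new $0$-cell is created. Hence the indices $1\leq k_1<\cdots<k_{q-1}\leq m-1$ at which $i'_k=0$ cut the list of globes into $q$ maximal consecutive blocks, within each of which all connecting dimensions are $\geq 1$, and the ordered $0$-cells $x_0<\cdots<x_q$ of $A$ are exactly the endpoints of these blocks.

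Next I would show each block is a suspension. The table of the $j$-th block has all entries (in both rows) $\geq 1$; subtracting $1$ from every entry produces a table $T_j$ that still satisfies the defining inequalities and has nonnegative entries, hence defines a globular sum $\alpha_j$. By the description of $\Sigma$ recalled above, the underlying $\infty$-groupoid of $\Sigma\alpha_j$ is precisely the $j$-th block, with $\perp,\top$ equal to its endpoints $x_{j-1},x_j$. Finally I would reassemble the blocks: the cut at $k_j$, where $i'_{k_j}=0$, is exactly the identification of the target $0$-cell of block $j$ with the source $0$-cell of block $j+1$, i.e. of $\top$ of $\Sigma\alpha_j$ with $\perp$ of $\Sigma\alpha_{j+1}$ over $D_0$. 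This realises $A$ as the colimit of the chain $\Sigma\alpha_1\plus{D_0}\cdots\plus{D_0}\Sigma\alpha_q$ taken over the prescribed spans, as desired.

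For uniqueness, note that the ordered set of $0$-cells of $A$, the cut points among them, and the blocks they delimit are all intrinsic to $A$; since a globular sum is determined by its table and each $\alpha_j$ is recovered by decrementing the table of the $j$-th block, the data $(q;\alpha_1,\ldots,\alpha_q)$ is uniquely determined. The only delicate point, and the step I expect to require the most care, is the geometric bookkeeping linking the two pictures: verifying that an $i'_k\geq 1$ gluing genuinely forces a shared $0$-skeleton (so that a block really is doubly pointed by a single source/target pair and hence a suspension), and that $\Sigma$---defined abstractly as a cocontinuous globular-sum-preserving functor---reproduces the incremented table with the correct identifications on the nose. Everything else amounts to routine checking of inequalities and an appeal to the classification of globular sums by their tables of dimensions.
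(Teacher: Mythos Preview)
Your argument is correct and is precisely the kind of ``straightforward'' verification the paper has in mind: the lemma is stated with the remark that its proof is left to the reader, so there is no proof in the paper to compare against beyond that. Your approach via tables of dimensions---cutting at the positions where $i'_k=0$ and observing that each resulting block has every entry $\geq 1$, hence is $\Sigma$ applied to the globular sum obtained by decrementing---is the natural one, and the paper later rephrases exactly this decomposition in the language of trees (Remark~\ref{decomposition of trees}): a tree is the gluing at the root of its maximal subtrees with a single edge at the bottom, which is the same statement read through the tree/table dictionary.

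The only place where you hedge unnecessarily is the ``delicate point'' at the end. That $\Sigma$ reproduces the incremented table is immediate: $\Sigma$ is cocontinuous into $S^0\downarrow\wgpd$, the forgetful functor to $\wgpd$ preserves connected colimits (Remark~\ref{cocontinuity of UoSIgma}), and globular sums are connected; combined with $\Sigma D_n=D_{n+1}$, $\Sigma\sigma_k=\sigma_{k+1}$, $\Sigma\tau_k=\tau_{k+1}$ this gives the incremented table on the nose, with $\perp,\top$ the unique $0$-cells of the block. No further bookkeeping is needed.
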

	\section{Cylinders on globes}
In this section we define $\infty$-groupoids $\cyl(D_n)$ for each $n \geq 0$ that represents cylinders between $n$-cells. These should be thought as homotopies between cells that are not parallel, so that one needs to provide first homotopies between the 0-dimensional boundary, then between the 1-dimensional boundary adjusted using those homotopies, and so on.

\begin{ex}
	By definition, $\cyl (D_0)$ is the free $\infty$-groupoid on a $1$-cell. Therefore, giving a $0$-cylinder in an $\infty$-groupoid $X$ is equivalent to specifying one of its $1$-cells.
	
	If we go one dimension up, we have that a $1$-cylinder $C\colon \cyl(D_1) \rightarrow X$ consists of the following data
	\[
	\bfig
	\morphism(0,0)|a|/@{>}@<0pt>/<300,0>[a`b;\alpha]
	\morphism(0,0)|l|/@{>}@<0pt>/<0,-300>[a`c;f]
	\morphism(300,0)|r|/@{>}@<0pt>/<0,-300>[b`d;g]
	\morphism(0,-300)|b|/@{>}@<0pt>/<300,0>[c`d; \beta]
	\morphism(250,-75)|a|/@{=>}@<0pt>/<-150,-150>[`; C]
	\efig\]
	Following the notation in the next section, we have that $f=C \circ \cyl (\sigma)$ and  $g=C \circ \cyl (\tau)$. Moreover, $\alpha=C \circ \iota_0$ and $\beta=C \circ \iota_1$.
	
	This cylinder represents the fact that to give a ``homotopy'' from $\alpha$ to $\beta$ we first have to give one between $a$ and $c$, and one from $b$ to $d$. Only then can we compose these with the cells we want to compare, and consider the 2-cells such as $C$ that fill the resulting square, thus giving us the homotopy we are looking for.
\end{ex}
\subsection{Cylinders and boundaries}
\begin{defi}
	\label{cyl defi}
We define, by induction on $n\in\mathbb{N}$, a coglobular object $\cyl(D_{\bullet})\in \wgpd^{\G}$, together with a map \[(\iota_0,\iota_1)\colon D_{\bullet} \coprod D_{\bullet} \rightarrow \cyl(D_{\bullet}) \]
We begin by setting
\[\cyl(D_0)=D_1, \ (\iota_0,\iota_1)_0=(\sigma,\tau)\colon D_0 \coprod D_0 \rightarrow D_1\]
Now, let $n>0$ and assume we have constructed \[\cyl(D_{\bullet})\in \wgpd^{\G_{\leq n-1 }} \ \text{and} \ (\iota_0, \iota_1)\colon D_{\bullet} \coprod D_{\bullet} \rightarrow \cyl(D_{\bullet})\]
We then define $\cyl(D_n)$ as the colimit in $\wgpd$ of the following diagram:
\begin{equation}
\label{cyl D_n}
\bfig
\morphism(-1000,-250)|a|/@{>}@<0pt>/<500,250>[D_n` D_n \plus{D_0} D_1;w]
\morphism(-1000,-250)|a|/@{>}@<0pt>/<500,-250>[D_n` \Sigma \cyl(D_{n-1});\Sigma (\iota_0)]
\morphism(-1000,-750)|r|/@{>}@<0pt>/<500,250>[D_n` \Sigma \cyl(D_{n-1}); \ \Sigma (\iota_1)]
\morphism(-1000,-750)|l|/@{>}@<0pt>/<500,-250>[D_n` D_1 \plus{D_0} D_n;w]
\efig
\end{equation}
Next, we define $\iota_0,\iota_1\colon D_n \rightarrow \cyl(D_n)$ respectively as the composites
\[\bfig
\morphism(0,0)|a|/@{>}@<0pt>/<500,0>[D_n` D_n \plus{D_0} D_1;\iota]
\morphism(500,0)|a|/@{>}@<0pt>/<600,0>[D_n \plus{D_0} D_1` \cyl(D_n);]
\efig
\] 
\[\bfig
\morphism(0,0)|a|/@{>}@<0pt>/<500,0>[D_n` D_1 \plus{D_0} D_n;\iota ]
\morphism(500,0)|a|/@{>}@<0pt>/<600,0>[D_1 \plus{D_0} D_n` \cyl(D_n);]
\efig
\] 
where the unlabelled maps are given by the colimit inclusions.

Finally, for $\epsilon=\sigma,\tau$, we construct the induced map $\cyl(\epsilon)\colon \cyl(D_{n-1}) \rightarrow \cyl(D_n)$ by induction.
We define $\cyl(\sigma), \cyl(\tau) \colon\cyl(D_0) \rightarrow \cyl(D_1)$ respectively as the lower and upper composite maps
\begin{equation}
\label{cyl(sigma)}
\bfig
\morphism(-1500,-500)|a|/{@{>}@/^2em/}/<1000,500>[\cyl(D_0)\cong D_1` D_1 \plus{D_0} D_1;i_1]
\morphism(-1500,-500)|b|/{@{>}@/^-2em/}/<1000,-500>[\cyl(D_0)\cong D_1` D_1 \plus{D_0} D_1;i_0]
\morphism(-1000,-250)|a|/@{>}@<0pt>/<500,250>[D_1` D_1 \plus{D_0} D_1;w]
\morphism(-1000,-250)|a|/@{>}@<0pt>/<500,-250>[D_1` \Sigma \cyl(D_0);\Sigma (\iota_0)]
\morphism(-1000,-750)|r|/@{>}@<0pt>/<500,250>[D_1` \Sigma \cyl(D_0); \ \Sigma (\iota_1)]
\morphism(-1000,-750)|l|/@{>}@<0pt>/<500,-250>[D_1` D_1 \plus{D_0} D_1;w]
\morphism(-500,-1000)|b|/{@{>}@/^-2em/}/<1000,500>[D_1 \plus{D_0} D_1` \cyl(D_1);]
\morphism(-500,0)|b|/{@{>}@/^2em/}/<1000,-500>[D_1 \plus{D_0} D_1` \cyl(D_1);]
\efig
\end{equation}
We then inductively define for $\epsilon=\sigma, \tau$ the structural map $\cyl(\epsilon)\colon\cyl(D_{n-1})\rightarrow \cyl(D_n)$ as the map induced on colimits by the following natural transformation
\[
\bfig

\morphism(-1300,-250)|a|/@{>}@<0pt>/<500,250>[D_{n-1}` D_{n-1} \plus{D_0} D_1;w]
\morphism(-1300,-250)|a|/@{>}@<0pt>/<500,-250>[D_{n-1}` \Sigma \cyl(D_n-2);\Sigma (\iota_0)]
\morphism(-1300,-750)|r|/@{>}@<0pt>/<500,250>[D_{n-1}` \Sigma \cyl(D_n-2); \ \Sigma (\iota_1)]
\morphism(-1300,-750)|l|/@{>}@<0pt>/<500,-250>[D_{n-1}` D_1 \plus{D_0} D_{n-1};w]

\morphism(-800,0)|a|/@{>}@<0pt>/<1300,0>[ D_{n-1} \plus{D_0} D_1`D_n \plus{D_0} D_1;\epsilon \plus{D_0} 1]
\morphism(-1300,-250)|a|/@{>}@<0pt>/<1300,0>[D_{n-1}` D_n ;\epsilon]
\morphism(-800,-500)|a|/@{>}@<0pt>/<1300,0>[\Sigma \cyl(D_n-2)` \Sigma \cyl(D_n-1);\Sigma \cyl(\epsilon)]
\morphism(-1300,-750)|a|/@{>}@<0pt>/<1300,0>[D_{n-1}` D_n ;\epsilon]
\morphism(-800,-1000)|a|/@{>}@<0pt>/<1300,0>[D_1  \plus{D_0}D_{n-1} `D_1 \plus{D_0} D_n;1 \plus{D_0}  \epsilon]

\morphism(0,-250)|a|/@{>}@<0pt>/<500,250>[D_n` D_n \plus{D_0} D_1;w]
\morphism(0,-250)|a|/@{>}@<0pt>/<500,-250>[D_n` \Sigma \cyl(D_n-1);\Sigma (\iota_0)]
\morphism(0,-750)|r|/@{>}@<0pt>/<500,250>[D_n` \Sigma \cyl(D_n-1); \ \Sigma (\iota_1)]
\morphism(0,-750)|l|/@{>}@<0pt>/<500,-250>[D_n` D_1 \plus{D_0} D_n;w]
\efig
\]
\end{defi}
\begin{defi}
Given an $\infty$-groupoid $X$, an $n$-cylinder in $X$ is a map $C:\cyl (D_n) \rightarrow X$. 
We denote the source and target cylinders of $C$ by, respectively
\[s(C)=C\circ \cyl(\sigma), \ t(C)=C\circ \cyl(\tau)\]
If $C\circ\iota_0=A, C\circ \iota_1=B$, then we write $C\colon A \curvearrowright B$.

By \eqref{cyl D_n}, an $n$-cylinder $F\colon A \curvearrowright B$ in an $\infty$-groupoid $X$ is given by a pair of $1$-cells $F_{s_0},F_{t_0}$ in $X$ (sometimes denoted just with $F_s,F_t$ when there is no risk of ambiguity) and an $(n-1)$-cylinder $\bar{F}\colon F_{t_0} A \curvearrowright B F_{s_0}$ in $\Omega \left( X, s(F_{s_0}), t(F_{t_0}) \right)$. $F$ and $\bar{F}$ will often be referred to as mutually transposed.
We will sometimes refer to the cell $C\circ \iota_0$ (resp. $C\circ \iota_1$) as the top (resp. bottom) cell of $C$, and denote it with $C_0$ (resp. $C_1$).
\end{defi}
\begin{ex}
	A $2$-cylinder $C\colon A \curvearrowright B$ in $X$ consists of a pair of $1$-cells $f=C_{s_0}, g=C_{t_0}$ and a $1$-cylinder $\bar{C}\colon gA\curvearrowright Bf$ in $\Omega \left( X, s(f),t(g)  \right) $. It can also be represented as the following data in $X$
	\[
	\xymatrix{s(f) \ar[dd]_{f} \rrtwocell{A}  \Dtwocell{ddrr}{} &&s(g) \ar[dd]^{g}  \\ & \\
		t(f)\ar[rr]^{t(B)} && t(g) }
	\quad \xymatrix{ \\ \Rrightarrow}\quad
	\xymatrix{s(f) \ar[rr]^{s(A)} \ar[dd]_{f}  \Dtwocell{ddrr}{} &&s(g) \ar[dd]^{g} \\ \\
		t(f) \rrtwocell{B} &&t(g)}
	\]
Or, in a way that better justifies its name, as 
\begin{equation}
\cd[@+3em]{
	A \ar@/^1em/[r]^-{} \ar@/_1em/[r]_-{} \ar[d]_-{f}
	\Dtwocell{r}{A} \Dtwocell[0.33]{dr}{}
	\Dtwocell[0.67]{dr}{} \ulthreecell{dr}{} &
	B \ar[d]^-{g} \\
	A' \ar@/^1em/[r]^-{} \ar@/_1em/[r]_-{} \Dtwocell{r}{B}  & B'
}
\end{equation}
where the front face is the square (i.e. $1$-cylinder) given by $t(C)$, and the back one is $s(C)$.
\end{ex}
We will often denote the source of the $n$-th latching map $\hat{L}_n(\iota_0,\iota_1)$ as $\partial \cyl (D_n)$. This can be constructed as the following pushout 
\begin{equation}
\label{2}
\bfig 
\morphism(0,0)|a|/@{>}@<0pt>/<750,0>[S^{n-1}\coprod S^{n-1}` \cyl(S^{n-1});(\iota_0,\iota_1)]
\morphism(0,0)|a|/@{>}@<0pt>/<0,-500>[S^{n-1}\coprod S^{n-1}`D_{n} \coprod D_{n};]
\morphism(0,-500)|a|/@{>}@<0pt>/<750,0>[D_{n} \coprod D_{n}` \partial \cyl(D_{n});]
\morphism(750,0)|a|/@{>}@<0pt>/<0,-500>[\cyl(S^{n-1})`  \partial \cyl(D_{n});]
\efig
\end{equation}
Let us now prove the following result:
 \begin{prop}
 	\label{direct cof cyl}
The natural map 
\[
\iota=(\iota_0, \iota_1)\colon D_{\bullet} \coprod D_{\bullet} \rightarrow \cyl(D_{\bullet})
\] is a direct cofibration in $\wgpd^{\G}$ (i.e. it belongs to the class $\mathbb{I}^{\G}$ according to the notation established in Lemma \ref{Reedy construction} ).
\begin{proof}
We prove by induction on $n$ that the latching map at $n$ \[\hat{L}_n(\iota_0, \iota_1)\colon \partial \cyl (D_n) \rightarrow \cyl(D_n)\] fits into a cocartesian square of the form
\begin{equation}
	\label{cyl' D_n}
	\bfig
	\morphism(0,0)|l|/@{>}@<0pt>/<0,-500>[S^{n}` \partial \cyl(D_{n});]
	\morphism(0,0)|a|/@{>}@<0pt>/<850,0>[S^{n}` D_{n+1};\partial]
	\morphism(0,-500)|a|/@{>}@<0pt>/<850,0>[\partial \cyl(D_{n})` \cyl(D_{n});\hat{L}_n(\iota_0, \iota_1)]
	\morphism(850,0)|a|/@{>}@<0pt>/<0,-500>[D_{n+1}` \cyl(D_{n});]
	\efig
	\end{equation} and is therefore in $\mathbb{I}$. Observe that the statement is trivially true by definition if $n=0$, so we assume $n>0$ and its validity for every $k<n$.
In fact, we are going to prove that there is a pushout square of the form
\begin{equation}
\label{1}
	\bfig
\morphism(0,0)|a|/@{>}@<0pt>/<1200,0>[\Sigma \partial \cyl(D_{n-1})` \Sigma  \cyl(D_{n-1});\Sigma\left(\hat{L}_{n-1}(\iota)\right)]
\morphism(0,0)|a|/@{>}@<0pt>/<0,-400>[\Sigma \partial \cyl(D_{n-1})` \partial \cyl(D_{n});]
\morphism(0,-400)|a|/@{>}@<0pt>/<1200,0>[\partial \cyl(D_{n})` \cyl(D_{n});\hat{L}_n(\iota)]
\morphism(1200,0)|a|/@{>}@<0pt>/<0,-400>[\Sigma  \cyl(D_{n-1})`  \cyl(D_n);]
\efig 
\end{equation}
 and then conclude by applying the inductive hypothesis.
 
 We prove this representably, i.e. we have to prove that, given $n$-cells $A,B$ and $1$-cells $C_s,C_t$ in an $\infty$-groupoid $X$, satisfying $t^n(A)=s(C_t), s^n(B)=t(C_s)$, together with an $(n-1)$-cylinder $C'\colon A' \curvearrowright B'$ in $\Omega \left( X,s^n(A),t^n(B)\right)$ and a pair of parallel $(n-1)$-cylinders in $X$ of the form $C^1\colon s(A)\curvearrowright s(B),C^2\colon t(A)\curvearrowright t(B)$, there exists a unique $n$-cylinder $C\colon A \rightarrow B$ in $X$ with $\overline{C}=C', s(C)=C^1,t(C)=C^2$ provided $s(C')=\overline{C^1}, t(C')=\overline{C^2}$. This fact is an easy consequence of Definition \ref{cyl defi}.
\end{proof}
 \end{prop}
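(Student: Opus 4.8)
The plan is to invoke Lemma~\ref{Reedy construction} to convert the statement into one about latching maps, and then verify that statement by induction on the globe dimension. Taking $\D=\G$ with the direct structure of Example~\ref{G is a direct cat}, $\C=\wgpd$, and $(\mathcal L,\R)=(\mathbb I,I^{\pitchfork})$ (so that $\mathcal L\pitchfork\R$ holds, the small object argument providing the relevant factorization), membership $\iota\in\mathbb I^{\G}$ is by definition the requirement that the latching map $\hat L_n(\iota_0,\iota_1)\colon\partial\cyl(D_n)\to\cyl(D_n)$ lie in $\mathbb I$ for every $n\geq 0$. Since $\mathbb I={}^{\pitchfork}(I^{\pitchfork})$ is a saturated class it is closed under pushout, so it suffices to prove by induction that $\hat L_n(\iota_0,\iota_1)$ fits into the cocartesian square \eqref{cyl' D_n} whose top edge is the boundary inclusion $\partial\colon S^n\to D_{n+1}$, a member of $I$. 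The base case $n=0$ is immediate: the latching object $L_0$ is initial, whence $\partial\cyl(D_0)=D_0\coprod D_0=S^0$ and $\hat L_0(\iota_0,\iota_1)=(\sigma,\tau)=\partial\colon S^0\to D_1=\cyl(D_0)$, so \eqref{cyl' D_n} holds with identity vertical maps.

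For the inductive step I would first establish the auxiliary pushout square \eqref{1} and then paste it beneath the suspension of the square \eqref{cyl' D_n} at level $n-1$. Applying $\Sigma$ to the inductive square is legitimate because $U\circ\Sigma$ preserves connected colimits, in particular pushouts (Remark~\ref{cocontinuity of UoSIgma}); moreover $\Sigma$ carries the coglobular object $D_\bullet$ to its shift, so that $\Sigma D_n=D_{n+1}$ and $\Sigma$ identifies $\Sigma S^{n-1}$ with $S^n$ and $\Sigma\partial$ with the boundary inclusion $\partial\colon S^n\to D_{n+1}$. Thus $\Sigma$ turns the cocartesian square for $n-1$ into a cocartesian square with top edge $\partial\colon S^n\to D_{n+1}$ and bottom edge $\Sigma\hat L_{n-1}(\iota)$. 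Stacking \eqref{1} under this square along their common edge $\Sigma\hat L_{n-1}(\iota)\colon\Sigma\partial\cyl(D_{n-1})\to\Sigma\cyl(D_{n-1})$ and invoking the pasting law for pushouts yields precisely \eqref{cyl' D_n}, closing the induction.

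The genuine content, and the step I expect to be the main obstacle, is the pushout square \eqref{1}, i.e. the identification $\cyl(D_n)\cong\partial\cyl(D_n)\plus{\Sigma\partial\cyl(D_{n-1})}\Sigma\cyl(D_{n-1})$. I would prove it representably, testing against an arbitrary $\infty$-groupoid $X$ and unwinding the three presentations in play: the defining colimit \eqref{cyl D_n} of $\cyl(D_n)$, the pushout \eqref{2} presenting $\partial\cyl(D_n)$, and the adjunction $\Sigma\dashv\Omega$, which turns a map $\Sigma\cyl(D_{n-1})\to X$ into an $(n-1)$-cylinder $C'$ in a suitable loop space $\Omega(X,-,-)$. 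Under these dictionaries a map out of $\partial\cyl(D_n)$ is a quadruple $(A,B,C^1,C^2)$ of two $n$-cells together with parallel $(n-1)$-cylinders $C^1\colon s(A)\curvearrowright s(B)$ and $C^2\colon t(A)\curvearrowright t(B)$ on their boundaries, while a map out of $\Sigma\cyl(D_{n-1})$ is a single $(n-1)$-cylinder $C'$, the vertex $\Sigma\partial\cyl(D_{n-1})$ recording the boundary of $C'$. The universal property to verify is therefore exactly the assertion stated after \eqref{1}: given such data with $s(C')=\overline{C^1}$ and $t(C')=\overline{C^2}$, there is a unique $n$-cylinder $C$ with $\overline C=C'$, $s(C)=C^1$, $t(C)=C^2$.

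I anticipate that the delicate bookkeeping lies in checking that transposition commutes with source and target, so that the compatibility $s(C')=\overline{C^1}$, $t(C')=\overline{C^2}$ is literally the coincidence forced along the span vertex $\Sigma\partial\cyl(D_{n-1})$; once the cocone condition for \eqref{1} is matched with these hypotheses without loss, existence and uniqueness of $C$ follow formally by reading off the defining zig-zag \eqref{cyl D_n}, which exhibits an $n$-cylinder as a compatible choice of its transpose $\overline C$ together with the whiskering $1$-cells recorded by the two outer legs $D_n\plus{D_0}D_1$ and $D_1\plus{D_0}D_n$. This is the ``easy consequence of Definition~\ref{cyl defi}'' underlying the argument.
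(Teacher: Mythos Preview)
Your proposal is correct and follows essentially the same route as the paper: induction on $n$, the base case $n=0$ being immediate, and the inductive step obtained by establishing the auxiliary pushout \eqref{1} representably and then pasting it with the suspension of the square at level $n-1$. The paper is terser about the pasting step (it just says ``conclude by applying the inductive hypothesis''), whereas you spell out explicitly that $\Sigma$ carries the square \eqref{cyl' D_n} at level $n-1$ to one with top edge $\partial\colon S^n\to D_{n+1}$; your unpacking of the representable argument for \eqref{1} also matches the paper's exactly.
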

\begin{defi}
We call $\partial \cyl (D_n)$ the boundary of the $n$-cylinder.
Given an $n$-cylinder in $X$ $C\colon\cyl (D_n) \rightarrow X$, we call the boundary of $C$, denoted by $\partial C$, the following composite
\[
\bfig
\morphism(0,0)|a|/@{>->}@<0pt>/<600,0>[ \partial \cyl (D_n)`    \cyl (D_n);] \morphism(600,0)|a|/@{>}@<0pt>/<500,0>[ \cyl (D_n)`   X;C]
\efig
\]
\end{defi}
Thanks to \eqref{2}, we know that specifying the boundary of an $n$-cylinder in an $\infty$-groupoid $X$ is equivalent to providing the following data:
\begin{itemize}
\item a pair of parallel $(n-1)$-cylinders $C\colon A\curvearrowright B,D\colon	A'\curvearrowright B'$ in $X$;
\item a pair of $n$-cells $\alpha\colon A \rightarrow A', \beta \colon B \rightarrow B'$ in $X$.
\end{itemize}
We can define a map of coglobular $\infty$-groupoids $\mathbf{C}_{\bullet}\colon \cyl(D_{\bullet})\rightarrow D_{\bullet}$ that fits into the following factorization of the codiagonal map
\[
\bfig
\morphism(0,0)|a|/@{>->}@<0pt>/<750,0>[ D_{\bullet} \coprod D_{\bullet}`\cyl (D_{\bullet});(\iota_0,\iota_1)]
\morphism(750,0)|a|/@{>}@<0pt>/<500,0>[ \cyl (D_{\bullet})`D_{\bullet};\mathbf{C}_{\bullet}]
\efig
\]
by solving the lifting problem
\[\bfig
\morphism(0,0)|a|/@{>}@<0pt>/<500,0>[ D_{\bullet} \coprod D_{\bullet}`D_{\bullet};\nabla]
\morphism(0,0)|a|/@{>->}@<0pt>/<0,-400>[D_{\bullet} \coprod D_{\bullet}`\cyl(D_{\bullet});(\iota_0,\iota_1)]
\morphism(0,-400)|r|/@{-->}@<0pt>/<500,400>[\cyl(D_{\bullet})`D_{\bullet};\mathbf{C}_{\bullet}]
\efig
\] using Propositions \ref{direct cof cyl} and \ref{glob sums are contractible}.

\section{Path-space of an $\infty$-groupoid}
The work done so far allows us to define the underlying globular set of a candidate for the path-object associated with an object $X\in \wgpd$, and to endow it with a non-trivial algebraic structure.
\begin{defi}
	\label{path space defi}
	We define a functor $\p\colon \wgpd \rightarrow [\G^{op}, \mathbf{Set}]$ by setting $\p X=\wgpd(\cyl(D_{\bullet}),X)$, where the globular structure is induced by the coglobular object $\cyl(D_{\bullet})\colon \G \rightarrow \wgpd$.
	
	Precomposition with $\iota\colon D_{\bullet}\amalg D_{\bullet} \rightarrow \cyl(D_{\bullet})$ yields a natural map \[p_X=(p_0,p_1)\colon\p X \rightarrow X\times X\]
\end{defi}
Recall that a map of globular sets $f\colon X \rightarrow Y$ is called a trivial fibration if it has the right lifting property with respect to the set of maps $\{S^{n-1}\rightarrow D_n\}_{n\geq 0}$.
Similarly, $f$ is called a fibration if it has the right lifting property with respect to the set of maps $\{\sigma_n\colon D_n \rightarrow D_{n+1}\}_{n\geq 0}$.
\begin{prop}
	The map $p_X=(p_0,p_1)\colon\p X \rightarrow X\times X$ (resp. $p_i\colon \p X \rightarrow X$ for $i=0,1$) is a fibration (resp. trivial fibration) of globular sets.
	\begin{proof}
		Let us first prove the claim about $p_X$. We have to prove it lifts against maps of the form $\sigma_n\colon D_n \rightarrow D_{n+1}$ for $n\geq 0$. This is equivalent to saying that given $(n+1)$-cells $A,B$ in $X$ and an $n$-cylinder in $X$ of the form $C\colon s(A) \curvearrowright s(B)$ we can always extend $C$ to $C'\colon A \curvearrowright B$, so that $s(C')=C$.
		
		We prove this by induction on $n$. If $n=0$ we define  $t(C')=BCA^{-1}$, using the convention that juxtaposition stands for the choice of a composition operation and $()^{-1}$ for the choice of an inverse. To finish this step we need to find a $2$-cell $(BCA^{-1})A \rightarrow BC$, which is certainly possible thanks to the contractibility of $\mathfrak{C}$.
		
		Let $n>0$, and assume the statement holds for every integer $k<n$. We are given an $n$-cylinder $C\colon A \curvearrowright B$ in $ X$, together with $(n+1)$-cells $\Gamma \colon A \rightarrow A'$ and $\Delta\colon B \rightarrow B'$.
		We thus get an $(n-1)$-cylinder $\bar{C}\colon C_t A \curvearrowright BC_s$ in $\Omega (X,s(C_s),t(C_t))$, together with $n$-cells $C_t\Gamma\colon C_t A \rightarrow C_t A', \ \Delta C_s \colon B C_s \rightarrow B'C_s$ in the same groupoid.
		By inductive hypothesis we now obtain an $n$-cylinder $\bar{C'}\colon C_t  \Gamma \curvearrowright \Delta C_s$ in $\Omega (X,s(C_s),t(C_t))$, with source $\overline{C}$, which transposes to give the desired $(n+1)$-cylinder $C'\colon \Gamma \curvearrowright \Delta$ in $X$, whose source is $C$.
		
		We now prove that the map $p_0\colon \p X \rightarrow X$ is a trivial fibration, the other case being entirely similar. This amounts to prove it lifts against all the maps of the form $S^{n-1}\rightarrow D_n$.
		The case $n=0$ is equivalent to proving that, given a $ 0$-cell $x\in X_0$ we can find a $0$-cylinder $C\colon \cyl(D_0) \rightarrow X$, i.e. a $1$-cell in $X$, such that its source is precisely $x$. A possible solution is to take the trivial cylinder on $x$, i.e. $x \circ \mathbf{C}_0$.
		
		If $n=1$, we are given $1$-cells $C,D$ and $\gamma$, and we have to extend this to a $1$-cylinder $\Gamma\colon \gamma \rightarrow \delta$, whose source and target are, respectively, $C$ and $D$. If we set $\delta=D\gamma C^{-1}$ then we are left with providing a $2$-cell $\overline{\Gamma}\colon(D\gamma C^{-1})C \rightarrow D\gamma$, which surely exists thanks to the contractibility of $\mathfrak{C}$.
		
		Now let $n>1$, and assume we have a pair of parallel $ (n-1)$-cylinders $(C,D)$ in $X$, i.e. $\epsilon(C)=\epsilon(D)$ for $\epsilon=s,t$, together with an $n$-cell $\Gamma\colon C_0 \rightarrow D_0$. Notice that, in particular, we have that $C_{\epsilon}$ and $D_{\epsilon}$ are parallel for $\epsilon=0,1$. These data transpose to give a pair of parallel $(n-2)$-cylinders $(\bar{C},\bar{D})$ in $\Omega (X,s(C_s),t(C_t))$. Moreover, we also get an $(n-1)$-cell $C_t\Gamma\colon \bar{C}_0=C_t(C_0) \rightarrow \bar{D}_0= C_t(D_0)$ in $\Omega (X,s(C_s),t(C_t))$.
		
		By inductive hypothesis we thus get an $(n-1)$-cylinder $\chi \colon C_t\Gamma \curvearrowright \epsilon$. By construction, the source (resp. target) of $\epsilon$ are of the form $(\bar{C}_1) C_s$ (resp. $(\bar{D}_1) C_s$). Thanks to Lemma 4.12 in \cite{AR2}, we see that there exists an $n$-cell $\Delta$ in $X$, and an $n$-cell $\epsilon \rightarrow \Delta C_s$ in $\Omega (X,s(C_s),t(C_t))$. We can compose this piece of data with $\chi$ using Lemma \ref{cell +cyl+cell}, getting an $(n-1)$-cylinder $\bar{C'}\colon C_t \Gamma \curvearrowright \Delta C_s$ in $\Omega (X,s(C_s),t(C_s))$, which transposes to give the desired cylinder $C'\colon \Gamma \curvearrowright \Delta$ in $X$, having $C$ as source and $D$ as target.
	\end{proof}
\end{prop}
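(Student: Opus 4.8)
The plan is to verify both lifting properties \emph{representably}: each is equivalent to an extension problem for cylinders in an arbitrary $\infty$-groupoid $X$, which I would then solve by induction on dimension, using the transposition supplied by \eqref{cyl D_n} to reduce each inductive step to a strictly lower-dimensional cylinder living in a loop space $\Omega(X,a,b)$ (which is again an $\infty$-groupoid). Throughout, the fillers whose existence is not forced by the transposed data would be obtained from the contractibility of $\mathfrak{C}$ (Definition \ref{contr glob th}).

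For the fibration claim, unwinding Definitions \ref{cyl defi} and \ref{path space defi} shows that lifting $p_X$ against $\sigma_n\colon D_n \rightarrow D_{n+1}$ amounts to: given $(n+1)$-cells $A,B$ and an $n$-cylinder $C\colon s(A)\curvearrowright s(B)$, produce an $(n+1)$-cylinder $C'\colon A\curvearrowright B$ with $s(C')=C$. I would induct on $n$. For $n=0$ the data are $1$-cells $A,B$ and a connecting $1$-cell $C$; I would set the target edge of the square to the composite $BCA^{-1}$ and fill it by a $2$-cell, which exists by contractibility. For $n>0$, renaming so that $C\colon A\curvearrowright B$ is the $n$-cylinder and $\Gamma\colon A\rightarrow A'$, $\Delta\colon B\rightarrow B'$ are the given $(n+1)$-cells, I would transpose $C$ to $\bar C\colon C_{t_0}A\curvearrowright B\,C_{s_0}$ in $\Omega(X,s(C_{s_0}),t(C_{t_0}))$ and whisker to obtain $n$-cells $C_{t_0}\Gamma$ and $\Delta\,C_{s_0}$. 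Since $s(C_{t_0}\Gamma)=C_{t_0}A$ and $s(\Delta\,C_{s_0})=B\,C_{s_0}$, the cylinder $\bar C$ has exactly the boundary needed to apply the inductive hypothesis in the loop space; the resulting cylinder $\bar{C'}\colon C_{t_0}\Gamma\curvearrowright\Delta\,C_{s_0}$ is already in transposed form, so it transposes back directly to the desired $C'$.

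For the trivial-fibration claim, the pushout \eqref{2} describing $\partial\cyl(D_n)$ shows that lifting $p_0$ against $S^{n-1}\rightarrow D_n$ amounts to: given a parallel pair of $(n-1)$-cylinders $(C,D)$ and an $n$-cell $\Gamma\colon C_0\rightarrow D_0$, construct an $n$-cylinder $C'$ with $s(C')=C$, $t(C')=D$ and top cell $C'_0=\Gamma$. Again I would induct on $n$. For $n=0$ I would take the degenerate cylinder $x\circ\mathbf{C}_0$ on the given $0$-cell $x$; for $n=1$ I would set the bottom edge to $D\gamma C^{-1}$ and fill by a $2$-cell via contractibility. For $n>1$ the data transpose to a parallel pair $(\bar C,\bar D)$ of $(n-2)$-cylinders together with a whiskered $(n-1)$-cell $C_{t_0}\Gamma\colon \bar C_0\rightarrow\bar D_0$ in $\Omega(X,s(C_{s_0}),t(C_{t_0}))$, and the inductive hypothesis yields an $(n-1)$-cylinder $\chi$ with $s(\chi)=\bar C$, $t(\chi)=\bar D$ and top cell $C_{t_0}\Gamma$.

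The main obstacle sits exactly here. Unlike in the fibration case, the bottom cell $\epsilon$ of the cylinder $\chi$ produced in the loop space is \emph{not} forced to have the whiskered form $\Delta\,C_{s_0}$ that is required for $\chi$ to transpose back to an honest cylinder $C'\colon\Gamma\curvearrowright\Delta$ in $X$. To repair this I would first produce, from $\epsilon$, an $n$-cell $\Delta$ of $X$ together with a comparison $n$-cell $\epsilon\rightarrow\Delta\,C_{s_0}$ in the loop space (an argument of the type of Lemma~4.12 of \cite{AR2}), and then graft this comparison onto $\chi$ by a ``cell--cylinder--cell'' composition that alters only the bottom cell, leaving $s(\chi)=\bar C$, $t(\chi)=\bar D$ and the top cell intact. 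Checking that such a correction is available and is compatible with transposition is the only genuinely non-formal point; the remaining verifications are routine unwindings of Definition \ref{cyl defi} together with the contractibility of $\mathfrak{C}$.
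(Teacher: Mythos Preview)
Your proposal is correct and follows essentially the same approach as the paper's own proof: the same inductive transposition into $\Omega(X,-,-)$, the same base cases, and in the trivial-fibration inductive step the same identification of the obstacle (the bottom cell $\epsilon$ need not be of whiskered form) together with the same repair via Lemma~4.12 of \cite{AR2} and a cell--cylinder--cell composition (this is precisely Lemma~\ref{cell +cyl+cell}, which indeed preserves the source and target cylinders). The only point you flag as ``non-formal'' is exactly what the paper isolates as Lemma~\ref{cell +cyl+cell}.
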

The codiagonal factorization \[
\bfig
\morphism(0,0)|a|/@{>->}@<0pt>/<750,0>[ D_{\bullet} \coprod D_{\bullet}`\cyl (D_{\bullet});(\iota_0,\iota_1)]
\morphism(750,0)|a|/@{>}@<0pt>/<500,0>[ \cyl (D_{\bullet})`D_{\bullet};\mathbf{C}_{\bullet}]
\efig
\] induces, by applying the functor $\wgpd(\bullet,X)$, a diagonal factorization in $[\G^{op},\mathbf{Set}]$ of the form
\[
\bfig
\morphism(0,0)|a|/@{>->}@<0pt>/<500,0>[X`\p X;c]
\morphism(500,0)|a|/@{>->}@<0pt>/<500,0>[ \p X`X\times X;p]
\efig
\] 
It is important to remark that any sensible structure of $\infty$-groupoid on the globular set $\p X$ will not make $c$ into a map in $\wgpd$, and we will see later on a possible way of fixing this. Also notice that $p$ is a fibration thanks to the previous proposition, and $c$ is easily seen to be a section.
\begin{ex}
	Given a coherator $\mathfrak{C}$, if at some stage of the tower of globular theories witnessing its cellularity, a map $\rho\colon D_1 \rightarrow D_1 \amalg_{ D_0} D_1$ is added, with source given by $i_0\circ \sigma$ and target $i_1\circ \tau$ ($i_j$ being the inclusion on the $j$-th factor), then it is reasonable to define $\cyl(\rho)=\hat{\rho}$, thanks to the freeness of $\mathfrak{C}$, where $\hat{\rho}$ is defined in Definition \ref{rho hat defi}. Indeed, the elementary interpretation that we define in Section 9 respects the source and target operations by default in the case of $1$-dimensional homogeneous operations.
	
	It is straightforward to check that, in general, $(\mathbf{C}_A, \mathbf{C}_B)\circ \hat{\rho} \neq \mathbf{C}_{(A,B)\circ \rho}$, so that in this case $c\colon X \rightarrow \mathbb{P}X$ cannot be made into a strict map of $\infty$-groupoids.
\end{ex}
Let us now endow $\p X$ with part of the structure that any $\infty$-groupoid possesses. We will show how to endow it with a system of composition (see \cite{BAT} for the definition of this concept in the context of globular operads), inverses for each $n$-cell together with suitable coherence cells and identity on $n$-cells for every $n\geq 0$.

To make this more precise, we define the globular theory $\mathfrak{D}$ freely generate by these operations, and then extend the functor $\p$ to $\p \colon \wgpd \rightarrow \mathbf{Mod}(\mathfrak{D})$.
\begin{defi}
	\label{structure systems}
	A system of composition in a globular theory $\mathfrak{D}$ consists of a family of maps $\{\mathbf{c}_n\colon D_n \rightarrow D_n \amalg_{D_{n-1}} D_n\}_{n \geq 1}$ such that $\mathbf{c}_n \circ \sigma=i_1 \circ \sigma$ and $\mathbf{c}_n \circ \tau=i_2 \circ \tau$, where $i_1$ (resp. $i_2$) denotes the colimit inclusion onto the first (resp. second) factor.
	
	A system of identities (with respect to a chosen system of composition) consists of a family of maps $\{\mathbf{id}_n\colon D_{n+1} \rightarrow D_n \}_{n \geq 0}\cup \{\mathbf{l}_n,\mathbf{r}_n\colon D_n \rightarrow D_{n-1}\}_{n\geq 2}$ such that $\mathbf{id}_n \circ \epsilon= 1_{D_n},$ for every $n\geq 0$ and $\epsilon=\sigma, \tau$, $\mathbf{l}_n \circ \sigma=1_{D_{n-1}}, \ \mathbf{l}_n\circ \tau = (1_{D_{n-1}},\tau \circ \mathbf{id}_{n-2} )\circ \mathbf{c}_{n-1}$ and $\mathbf{r}_n \circ \sigma=1_{D_{n-1}}, \ \mathbf{r}_n\circ \tau = (\sigma \circ \mathbf{id}_{n-2},1_{D_{n-1}})\circ \mathbf{c}_{n-1}$.
	
	A system of inverses (with respect to chosen systems of compositions and identities) consists of a family of maps $\{\mathbf{i}_n\colon D_n \rightarrow D_n\}_{n \geq 1} \cup \{\mathbf{k}_n^s,\mathbf{k}_n^t\colon D_n \rightarrow D_{n-1} \}_{n\geq 2}$ such that $\mathbf{i}_n\circ \sigma= \tau, \  \mathbf{i}_n\circ \tau = \sigma$, $\mathbf{k}_n^s\circ \sigma =\sigma \circ \mathbf{id}_{n-2}, \ \mathbf{k}_n^s \circ \tau =(1_{D_{n-1}},\mathbf{i}_{n-1})\circ\mathbf{c}_{n-1}, \ \mathbf{k}_n^t\circ \sigma =\tau \circ \mathbf{id}_{n-2}$ and $\mathbf{k}_n^t\circ \tau =(\mathbf{i}_{n-1},1_{D_{n-1}}) \circ\mathbf{c}_{n-1} $.
	
	If $\mathfrak{D}$ admits a choice of such three systems, given a globular functor $\mathbf{F}\colon \mathfrak{D} \rightarrow \wgpd$ we say that for every $\infty$-groupoid $X$, the $\mathfrak{D}$-model $\wgpd(\mathbf{F},X)$ can be endowed with such systems.
\end{defi}
If we let $\mathfrak{D}$ be the globular theory freely generated by a system of composition, a system of identities and a system of inverses (i.e. given a globular theory $\mathfrak{A}$, globular functors $\mathfrak{D} \rightarrow \mathfrak{A}$ correspond to a choice of such three systems in $\mathfrak{A}$, see \cite{AR1} for a construction of $\mathfrak{D}$) then extending the functor $\p $ to the more structured codomain $\mathbf{Mod}(\mathfrak{D})$ is equivalent to the following extension problem
\[
\bfig
\morphism(0,0)|a|/@{>}@<0pt>/<500,0>[\Theta_0`\wgpd;\cyl]
\morphism(0,0)|a|/@{>}@<0pt>/<0,-400>[\Theta_0` \mathfrak{D};]
\morphism(0,-400)|r|/@{-->}@<0pt>/<500,400>[\mathfrak{D}`\wgpd;\cyl]
\efig
\] which, in turn, is equivalent to defining interpretations $\cyl(f)$ for each of the generators $f$ of $\mathfrak{D}$ described above, satisfying the appropriate equations.
\begin{thm}
	\label{algebraic structure on PX}
	The functor $\cyl\colon\Theta_0 \rightarrow \wgpd$ admits an extension to $\mathfrak{D}$.
	
	Equivalently, given any $\infty$-groupoid $X$, the globular set $\p X$ can be endowed with a system of composition, a system of identities and a system of inverses.
\end{thm}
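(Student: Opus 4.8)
The plan is to prove the equivalent formulation, namely to extend $\cyl$ along $\Theta_0 \hookrightarrow \mathfrak{D}$. By the universal property of the free globular theory $\mathfrak{D}$, this amounts to choosing, for each generator $g$ of $\mathfrak{D}$ listed in Definition \ref{structure systems}, an interpretation $\cyl(g)$ satisfying the prescribed source and target equations. Since $\cyl$ is a globular functor it preserves globular sums, so the target of e.g. $\cyl(\mathbf{c}_n)$ is $\cyl(D_n)\amalg_{\cyl(D_{n-1})}\cyl(D_n)$; working representably, defining $\cyl(\mathbf{c}_n)$ is the same as giving, naturally in $X$, a way to compose two $n$-cylinders $F,G$ in $X$ with $t(F)=s(G)$, and similarly for the remaining generators.

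The construction proceeds by induction on the dimension $n$, exploiting the recursive transpose description from Definition \ref{cyl defi}: an $n$-cylinder $F$ in $X$ is the same as a pair of $1$-cells $F_{s_0},F_{t_0}$ together with an $(n-1)$-cylinder $\bar F\colon F_{t_0}A \curvearrowright B F_{s_0}$ in the loop groupoid $\Omega(X,s(F_{s_0}),t(F_{t_0}))$, which is again an $\infty$-groupoid. The structural maps $\cyl(\sigma),\cyl(\tau)$ are compatible with this decomposition and keep the $1$-cell data $F_{s_0},F_{t_0}$ fixed; hence, for $n\geq 2$, two $n$-cylinders composable along $\mathbf{c}_n$ (i.e. with $t(F)=s(G)$) necessarily share their $1$-cell data, and the equation $t(\bar F)=s(\bar G)$ holds in the common loop groupoid. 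I would then define the composite by setting $\overline{F\cdot G}=\bar F\cdot \bar G$, the composite supplied by the inductive hypothesis applied to $\Omega(X,s(F_{s_0}),t(F_{t_0}))$; the operations $\mathbf{id}_n,\mathbf{i}_n,\mathbf{l}_n,\mathbf{r}_n,\mathbf{k}_n^s,\mathbf{k}_n^t$ are handled the same way, transposing the desired operation into the loop groupoid and invoking the inductive hypothesis. Functoriality of $\Omega$ and naturality of the transpose guarantee that these assignments assemble into genuine coglobular maps.

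The base cases ($n=1$ for composition and inversion, $n=0$ for identities) are where the real work lies and must be carried out by hand in $X$ itself. Here a cylinder is a literal square filled by a $2$-cell, and the operations amount to horizontal pasting of such squares, flipping and inverting the filler, and producing degenerate squares. These are assembled from the specified whiskering maps $w$, the composition $\nabla^1_0$, and the chosen inverse and identity operations of $X$, all of which exist because $\mathfrak{C}$ is contractible; the auxiliary cells needed to make the resulting top and bottom edges literally equal to the prescribed composites (for instance the filler $(BCA^{-1})A \rightarrow BC$ that already appeared in the proof that $p_0$ is a trivial fibration) are again furnished by contractibility of $\mathfrak{C}$.

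I expect the main obstacle to be verifying that the inductively defined operations satisfy the exact source and target equations of Definition \ref{structure systems} on the nose --- for example $\mathbf{c}_n\circ\sigma=i_1\circ\sigma$ and the unit and inverse coherence identities relating $\mathbf{l}_n,\mathbf{r}_n,\mathbf{k}_n^s,\mathbf{k}_n^t$ to $\mathbf{c}_{n-1}$ and $\mathbf{id}_{n-2}$ --- since these must be checked compatibly with the transpose decomposition and with the whiskering $F_{t_0}A$, $BF_{s_0}$ appearing in $\bar F$. Managing this bookkeeping, rather than any single hard construction, is the crux; contractibility of $\mathfrak{C}$ is invoked repeatedly to guarantee the fillers that make the equations hold.
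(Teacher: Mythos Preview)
Your overall strategy --- induction on $n$ via the transpose $F\mapsto\bar F$, reducing operations on $n$-cylinders to operations on $(n{-}1)$-cylinders in the loop groupoid --- is exactly the paper's. The gap is that the inductive step as you state it does not typecheck: setting $\overline{G\circ_{\mathbf c_n}F}=\bar G\circ_{\mathbf c_{n-1}}\bar F$ does not produce the transpose of any $n$-cylinder in $X$. Indeed $\bar F\colon f_t F_0\curvearrowright F_1 f_s$ and $\bar G\colon f_t G_0\curvearrowright G_1 f_s$, so their $\mathbf c_{n-1}$-composite in $\Omega(X,\ldots)$ has top cell $(f_t G_0)(f_t F_0)$ and bottom cell $(G_1 f_s)(F_1 f_s)$; neither is of the form $f_t\cdot(-)$ respectively $(-)\cdot f_s$ for an $n$-cell of $X$, which is what the transpose of an $n$-cylinder requires. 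The paper isolates the fix as a stand-alone lemma (compose an $n$-cylinder with an $(n{+}1)$-cell at top and an $(n{+}1)$-cell at bottom, without disturbing its source and target), and this sandwiching by coherence cells $f_t(G_0F_0)\to(f_tG_0)(f_tF_0)$ and $(G_1f_s)(F_1f_s)\to(G_1F_1)f_s$ is invoked at every stage of the induction, not only at the base. The same mismatch recurs for $\mathbf{id}_n$ (since $f_t 1_A\neq 1_{f_t A}$) and for $\mathbf i_n$ (since $f_t A^{-1}\neq(f_tA)^{-1}$).

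A second point you underestimate: the coherence generators $\mathbf l_n,\mathbf r_n,\mathbf k_n^s,\mathbf k_n^t$ cannot be obtained just by transposing and appealing to the inductive hypothesis. Their prescribed targets involve the already-constructed $\cyl(\mathbf c_{n-1})$ and $\cyl(\mathbf{id}_{n-2})$, which themselves contain the sandwiching cells above; to hit those targets on the nose one needs comparison cylinders --- an $n$-cylinder $T_{F,G}$ from $\overline{G\circ_{\mathbf c_n}F}$ to $\bar G\circ_{\mathbf c_{n-1}}\bar F$, and $\lambda_F$ from $\overline{\mathbf{id}_n(F)}$ to $\mathbf{id}_{n-1}(\bar F)$ --- which the paper produces via a companion lemma and then explicitly threads into the definitions of $\cyl(\mathbf l_n)$, $\cyl(\mathbf k_n^s)$, etc. So the substantive work lives in the inductive step, contrary to your expectation that only the base cases are delicate.
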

The proof will be subdivided into several lemmas, the first one addressing the system of compositions. Before proving this, we need a construction representing the composition of an $n$-cylinder with a pair of $(n+1)$-cells attached at the top and bottom, respectively. This is an instance of the vertical composition of degenerate cylinders, as defined in Section \ref{vert comp of deg cyl (section)}.
\begin{lemma}
	\label{cell +cyl+cell}
Given an $\infty$-groupoid $X$, an $n$-cylinder $C\colon A \curvearrowright B$ in $X$ and $(n+1)$-cells $\alpha\colon A' \rightarrow A$ and $\beta\colon B \rightarrow B'$ we can compose these data to get an $n$-cylinder $\beta C \alpha \colon A' \curvearrowright B'$. Moreover, $\epsilon(\beta C \alpha)=\epsilon(C)$ for $\epsilon=s,t$.
\begin{proof}
We prove this by induction on $n$, the case $n=0$ being straightforward. Let's assume $n>0$ and that we have already defined this operation for every $k<n$. We can transpose the data at hand to get an $(n-1)$-cylinder $\overline{C}\colon C_{t_0} A \curvearrowright B C_{s_0}$ in $\Omega\left( X,s^n(A),t^n(B) \right)$ and $n$-cells $C_{t_0}\alpha \colon C_{t_0}A' \rightarrow C_{t_0} A,\ \beta C_{s_0}\colon BC_{s_0} \rightarrow B'C_{s_0}$, where juxtaposition denotes the result of composing using the whiskering $w$'s defined in \ref{w's maps}. By inductive hypothesis we can compose these data to get an $(n-1)$-cylinder $(\beta C_{s_0})\overline{C} (\alpha C_{t_0})\colon C_{t_0}A' \curvearrowright B'C_{s_0}$. Finally, we define $(\beta C \alpha)_{\epsilon_0}=C_{\epsilon_0}$ for $\epsilon=s,t$, and $\overline{\beta C \alpha}=(\beta C_{s_0})\overline{C} (\alpha C_{t_0})$.

The statement on source and target cylinders follows easily from the inductive argument we have just outlined.
\end{proof}
\end{lemma}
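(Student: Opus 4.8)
The plan is to proceed by induction on $n$, using the recursive presentation of cylinders coming from \eqref{cyl D_n}: an $n$-cylinder $C\colon A \curvearrowright B$ amounts to a pair of $1$-cells $C_{s_0},C_{t_0}$ together with an $(n-1)$-cylinder $\overline{C}\colon C_{t_0}A \curvearrowright BC_{s_0}$ in the loop space $\Omega\left(X,s^n(A),t^n(B)\right)$. Since composing with $\alpha$ and $\beta$ ought to modify only the top and bottom cells of $C$ while leaving its ``walls'' alone, the natural definition is to set $(\beta C \alpha)_{\epsilon_0}=C_{\epsilon_0}$ for $\epsilon=s,t$ and to produce the transpose $\overline{\beta C \alpha}$ by feeding suitably whiskered copies of $\alpha$ and $\beta$ into the lower-dimensional instance of the operation.

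The base case $n=0$ is immediate: here $\cyl(D_0)=D_1$, so a $0$-cylinder is simply a $1$-cell $C\colon A \to B$ and $\alpha,\beta$ are $1$-cells $A'\to A$ and $B\to B'$; we let $\beta C \alpha$ be the chosen composite of the three $1$-cells, and there are no source or target cylinders to compare, so the ``moreover'' clause holds vacuously.

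For the inductive step I would transpose the entire situation into $\Omega\left(X,s^n(A),t^n(B)\right)$. Left-whiskering $\alpha$ by the $1$-cell $C_{t_0}$ gives an $n$-cell $C_{t_0}\alpha\colon C_{t_0}A' \to C_{t_0}A$, and right-whiskering $\beta$ by $C_{s_0}$ gives an $n$-cell $\beta C_{s_0}\colon BC_{s_0}\to B'C_{s_0}$. The point is that the target of the first and the source of the second are exactly the top cell $\overline{C}_0=C_{t_0}A$ and the bottom cell $\overline{C}_1=BC_{s_0}$ of $\overline{C}$, so these two $n$-cells sit in precisely the right position to be composed with the $(n-1)$-cylinder $\overline{C}$ via the inductive hypothesis. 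This yields an $(n-1)$-cylinder $(\beta C_{s_0})\,\overline{C}\,(C_{t_0}\alpha)\colon C_{t_0}A'\curvearrowright B'C_{s_0}$ in the loop space, and I would declare this to be $\overline{\beta C \alpha}$. Combined with the assignment $(\beta C \alpha)_{\epsilon_0}=C_{\epsilon_0}$, transposing back produces the desired $n$-cylinder $\beta C \alpha\colon A' \curvearrowright B'$, since its transpose must be a cylinder $C_{t_0}A' \curvearrowright B'C_{s_0}$, which is exactly what we have built.

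The equation $\epsilon(\beta C \alpha)=\epsilon(C)$ then falls out of the construction: the structural $1$-cells are unchanged by fiat, and the ``moreover'' clause of the inductive hypothesis gives $\epsilon\bigl((\beta C_{s_0})\,\overline{C}\,(C_{t_0}\alpha)\bigr)=\epsilon(\overline{C})$, which transposes back to $\epsilon(\beta C \alpha)=\epsilon(C)$. I expect the only point requiring genuine care to be the boundary bookkeeping in the inductive step, namely verifying that whiskering $\alpha,\beta$ by $C_{t_0},C_{s_0}$ really lands their source and target on $\overline{C}_0$ and $\overline{C}_1$; this is guaranteed by the compatibility of the whiskering operations from \eqref{w's maps} with source and target, together with the fact that $A'$ is parallel to $A$ and $B'$ to $B$, and everything else is a mechanical unwinding of the recursive definition of $\cyl(D_n)$. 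One could alternatively phrase the whole argument as a special case of the vertical composition of (degenerate) cylinders, but the direct induction above is entirely self-contained.
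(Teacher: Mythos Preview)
Your proof is correct and follows essentially the same route as the paper: induction on $n$, transposing into $\Omega\left(X,s^n(A),t^n(B)\right)$, whiskering $\alpha$ and $\beta$ by $C_{t_0}$ and $C_{s_0}$, and applying the inductive hypothesis to $\overline{C}$. Your remark that this is an instance of vertical composition of degenerate cylinders also matches the paper's own comment preceding the lemma.
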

This operation also comes endowed with a ``comparison cylinder'', as explained in the following result.
\begin{lemma}
	\label{coherence of cell+cyl+cell}
In the situation of the previous lemma, there exists an $(n+1)$-cylinder $\Gamma_{\beta,C,\alpha}$ in $X$ such that $s(\Gamma_{\beta,C,\alpha})=C$ and $t(\Gamma_{\beta,C,\alpha})=\beta C \alpha$.
\begin{proof}
For sake of simplicity we drop the subscripts of $\Gamma$ in what follows. We prove this result by induction on $n$. The base case $n=0$ is straightforward by contractibility of $\mathfrak{C}$ once we set $\Gamma_0=\alpha^{-1}$ and $\Gamma_1=\beta$.
Let $n>0$, and assume the result holds for each $k<n$. By inductive hypothesis we get an $n$-cylinder $\gamma\colon \overline{C} \rightarrow( C_{t_0}\alpha) \overline{C} (\beta C_{s_0} ) $. If we analyze the source and target of $\gamma_0$ and $\gamma_1$, we see that, thanks to Lemma 4.12 in \cite{AR2}, there exist a pair of $n$-cells $E,F$ and  $(n+1)$-cells $\theta\colon C_{t_0}E\rightarrow \gamma_0,\phi\colon\gamma_1 \rightarrow F C_{s_0}$. We now define $\Gamma_{\epsilon_0}=C_{\epsilon_0}$ for $\epsilon=s,t$, and $\overline{\Gamma}=\phi \gamma \theta$, which concludes the proof.
\end{proof}
\end{lemma}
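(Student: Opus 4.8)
The plan is to argue by induction on $n$, constructing $\Gamma_{\beta,C,\alpha}$ directly from its defining data in the sense of Definition \ref{cyl defi}: a pair of side $1$-cells, a choice of top and bottom cells, and a transposed cylinder one dimension lower living in an appropriate loop space. The key structural observation is that, by the ``Moreover'' clause of Lemma \ref{cell +cyl+cell}, the $n$-cylinders $C$ and $\beta C \alpha$ have literally the same source and target, so the $(n+1)$-cylinder we seek is a genuine homotopy between two cylinders with identical boundary side-data. This forces the side $1$-cells of $\Gamma_{\beta,C,\alpha}$ to be exactly those of $C$, namely $C_{s_0}$ and $C_{t_0}$, and reduces the problem to producing the transposed $n$-cylinder $\overline{\Gamma}$ together with suitable top and bottom $(n+1)$-cells.

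For the base case $n=0$ a $1$-cylinder is a square, and I would simply prescribe its top and bottom $1$-cells to be $\Gamma_0 = \alpha^{-1}$ and $\Gamma_1 = \beta$, take its source and target $1$-cells to be $C$ and $\beta C \alpha$, and fill the resulting square with the $2$-cell whose existence is guaranteed by the contractibility of $\mathfrak{C}$ (the two composite $1$-cells bounding the square are parallel, so contractibility applies).

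For the inductive step I would transpose all the data into $\Omega\!\left(X, s(C_{s_0}), t(C_{t_0})\right)$, where $C$ becomes the $(n-1)$-cylinder $\overline{C}$ and $\alpha,\beta$ become the whiskered $n$-cells $C_{t_0}\alpha$ and $\beta C_{s_0}$. Applying the inductive hypothesis to these yields an $n$-cylinder $\gamma$ whose source is $\overline{C}$ and whose target is the transpose of $\beta C \alpha$. Since the source and target of $\gamma$ are thus already the correct ones, the only remaining issue — and this is where the real work lies — is that $\gamma$ is not yet the transpose of an $(n+1)$-cylinder with the prescribed side $1$-cells $C_{s_0}, C_{t_0}$: its top cell $\gamma_0$ and bottom cell $\gamma_1$ need not be of the whiskered form $C_{t_0}E$ and $F C_{s_0}$ demanded by the recursive description of cylinders.

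The hard part is therefore correcting these top and bottom cells without disturbing the source and target. To do this I would invoke Lemma 4.12 of \cite{AR2} to extract $n$-cells $E, F$ together with comparison $(n+1)$-cells $\theta\colon C_{t_0}E \to \gamma_0$ and $\phi\colon \gamma_1 \to F C_{s_0}$, and then post-compose these onto $\gamma$ by means of the operation of Lemma \ref{cell +cyl+cell}, obtaining the $n$-cylinder $\phi\,\gamma\,\theta$ whose top and bottom cells are now exactly $C_{t_0}E$ and $F C_{s_0}$. Crucially, the ``Moreover'' clause of Lemma \ref{cell +cyl+cell} ensures that this composition leaves the source and target of $\gamma$ unchanged, so $\phi\,\gamma\,\theta$ still interpolates the transposes of $C$ and $\beta C \alpha$. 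Setting $\overline{\Gamma}=\phi\,\gamma\,\theta$, taking $E$ and $F$ as the top and bottom $(n+1)$-cells and $C_{s_0}, C_{t_0}$ as the side $1$-cells, and transposing back, I would recover the desired $\Gamma_{\beta,C,\alpha}$; the compatibility of transposition with the source and target operations (immediate from Definition \ref{cyl defi}) then yields the two identities $s(\Gamma_{\beta,C,\alpha})=C$ and $t(\Gamma_{\beta,C,\alpha})=\beta C\alpha$.
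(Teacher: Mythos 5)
Your proposal is correct and follows essentially the same route as the paper's own proof: the same base case via contractibility with $\Gamma_0=\alpha^{-1}$, $\Gamma_1=\beta$, the same transposition into $\Omega\left(X,s(C_{s_0}),t(C_{t_0})\right)$ and application of the inductive hypothesis to obtain $\gamma$, and the same correction of the top and bottom cells via Lemma 4.12 of \cite{AR2} and the composition operation of Lemma \ref{cell +cyl+cell}. Your added explanation of \emph{why} the correction step is needed (the whiskered form required by the recursive description of cylinders) and why sources and targets survive it (the ``Moreover'' clause) only makes explicit what the paper leaves implicit.
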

In what follows, given an $n$-cylinder $F$ in an $\infty$-groupoid $X$, we denote $\epsilon^n(F)$ by $f_{\epsilon}$ for $\epsilon=s,t$.
\begin{lemma}
	Let $\mathfrak{D}_{\mathbf{c}}$ be the globular theory freely generated by a system of compositions. Then there exists an extension of the form
	\[
	\bfig
	\morphism(0,0)|a|/@{>}@<0pt>/<500,0>[\Theta_0`\wgpd;\cyl]
	\morphism(0,0)|a|/@{>}@<0pt>/<0,-400>[\Theta_0` \mathfrak{D}_{\mathbf{c}};]
	\morphism(0,-400)|r|/@{-->}@<0pt>/<500,400>[\mathfrak{D}_{\mathbf{c}}`\wgpd;\cyl]
	\efig
	\]
	\begin{proof}
		We define this extension by induction, fixing an $\infty$-groupoid $X$ and proceeding representably. Firstly, we need to define $\cyl(\mathbf{c}_1)$. This corresponds to $\widehat{\mathbf{c}_1}$ of Definition \ref{rho hat defi}.
		Given a pair of composable $1$-clinders $C,C'$ in $X$, we denote $\p X(c_1)(C,C')$ with $C'\circ C$, and define the top cell (resp. bottom cell) of it to be $C_{0}'\circ C_0$ (resp. $C_{1}'\circ C_1$),composed using $c_1$. We then set $(C'\circ C)_{s}=s(C)$ and $(C'\circ C)_{t}=t(C')$ and declare the $2$-cell $\overline{C'\circ C}$ (i.e. a $0$-cylinder in $\Omega \left( X,s(C_0),t(C'_1)\right)$) to be the composite of
		\[
		\bfig 
\morphism(0,0)|a|/@{>}@<0pt>/<700,0>[C'_{t}(C_0'C_0)`(C'_{t}C'_0)C_0;\cong]
\morphism(700,0)|a|/@{>}@<0pt>/<700,0>[(C'_{t}C'_0)C_0`(C'_1 C'_{s})C_0;\overline{C'}C_0]
\morphism(1400,0)|a|/@{>}@<0pt>/<700,0>[(C'_1 C'_{s})C_0`C'_1 (C'_{s}C_0);\cong]
\morphism(2100,0)|a|/@{>}@<0pt>/<700,0>[C'_1 (C'_{s}C_0)`C'_1 (C_1 C_{s});C'_1 \overline{C}]
\morphism(2800,0)|a|/@{>}@<0pt>/<700,0>[C'_1 (C_1 C_{s})`(C'_1 C_1 )C_{s};\cong]
		\efig 
		\] 
where we have used the fact that $C'_{s_0}=C_{t_0}$, and we have denoted instances of associativity of composition of $1$-cells with ``$\cong$'' and the effect of composing using $c_1$ with juxtaposition.
Given $n>1$, suppose we have defined $\cyl(\mathbf{c}_k)$ for each $k<n$, and denote $(F,G)\circ \cyl(\mathbf{c}_k)$ by $G\circ_{\mathbf{c}_k}F$ for each composable pair of $k$-cylinders in an $\infty$-groupoid $X$. For every $\infty$-groupoid $X$ and every pair of $n$-cylinders $F,G\colon \cyl(D_n) \rightarrow X$ such that $t(f)=s(G)$, we define $\overline{G\circ_{c_n}F}$ to be the following composite, obtained applying Lemma \ref{cell +cyl+cell} to the following piece of data in $\Omega(X,s^n(F_0),t^n(F_1))$:
\[
		\bfig
		\morphism(0,0)|a|/@{>}@<0pt>/<0,-400>[f_t (G_0 F_0)`(f_t G_0) (f_t F_0);D_1]
		\morphism(0,-400)|a|/{@{>}@/^2em/@<0pt>}/<0,-400>[(f_t G_0) (f_t F_0)`(G_1 f_s)( F_1 f_s);\overline{G}\circ_{\mathbf{c}_{n-1}}\overline{F}]
		\morphism(0,-800)|a|/@{>}@<0pt>/<0,-400>[(G_1 f_s)( F_1 f_s)`(G_1 F_1)f_s;D_2]
		\efig 
		\]
		Here, $D_1$ is an $n$-cell obtained by contractibility of $D_n \amalg_{D_{n-1}}D_n \amalg_{ D_0}D_1$, and $G_i$ is defined similarly. 	
		This assignment defines, by the Yoneda lemma, a map \[\cyl(D_n) \rightarrow \cyl(D_n) \plus{\cyl(D_{n-1})} \cyl(D_n)\] which we take as the definition of $\cyl(c_n)$. 
		We also have the following chain of equalities, provided by the inductive hypothesis together with Lemma \ref{cell +cyl+cell}
		 \[\overline{s(G\circ_{\mathbf{c_n}}F)}=s(\overline{G\circ_{\mathbf{c_n}}F})=s(\overline{G}\circ_\mathbf{{c_{n-1}}} \overline{F})=s(\overline{F})=\overline{s(F)}\]
		and 
		\[(G\circ_{\mathbf{c_n}}F)_0=F_0\] which imply that $s(G\circ_{\mathbf{c_n}}F)=s(F)$, and a similar argument can be provided for the target.
		\end{proof}
\end{lemma}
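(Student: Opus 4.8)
The plan is to exploit the universal property of $\mathfrak{D}_{\mathbf{c}}$: since it is the globular theory freely generated by a system of compositions, extending $\cyl$ along $\Theta_0 \to \mathfrak{D}_{\mathbf{c}}$ amounts to choosing, for each $n\geq 1$, a map
\[
\cyl(\mathbf{c}_n)\colon \cyl(D_n) \longrightarrow \cyl(D_n)\plus{\cyl(D_{n-1})}\cyl(D_n)
\]
(using that $\cyl$ preserves globular sums, so that the codomain is $\cyl(D_n\amalg_{D_{n-1}}D_n)$) subject to the two boundary equations $\cyl(\mathbf{c}_n)\circ\cyl(\sigma)=i_1\circ\cyl(\sigma)$ and $\cyl(\mathbf{c}_n)\circ\cyl(\tau)=i_2\circ\cyl(\tau)$. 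I would build each $\cyl(\mathbf{c}_n)$ representably. Since $\wgpd\bigl(\cyl(D_n)\amalg_{\cyl(D_{n-1})}\cyl(D_n),X\bigr)$ is the set of composable pairs of $n$-cylinders in $X$ (those with $t(F)=s(G)$) while $\wgpd(\cyl(D_n),X)=\p X_n$ is the set of $n$-cylinders, the Yoneda lemma reduces the task to producing, naturally in $X$, a binary operation $(F,G)\mapsto G\circ_{\mathbf{c}_n}F$ on composable pairs, and then checking the boundary equations.

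The construction goes by induction on $n$. For $n=1$ I would declare the top and bottom cells of $G\circ_{\mathbf{c}_1}F$ to be the $\mathbf{c}_1$-composites $G_0\circ F_0$ and $G_1\circ F_1$, keep the outer source/target $1$-cells inherited from $F$ and $G$, and define the filling $2$-cell $\overline{G\circ_{\mathbf{c}_1}F}$ as the evident pasting of $\overline{F}$, $\overline{G}$ and associativity comparisons, using the shared-boundary identity $G_{s_0}=F_{t_0}$. For the inductive step I would transpose the data across the suspension--loop adjunction $\Sigma\dashv\Omega$ of Section~4: $F$ and $G$ become $(n-1)$-cylinders $\overline{F},\overline{G}$ in a hom-groupoid $\Omega(X,\ldots)$, which by the inductive hypothesis admit a $\mathbf{c}_{n-1}$-composite $\overline{G}\circ_{\mathbf{c}_{n-1}}\overline{F}$, and the idea is to transpose this composite back to an $n$-cylinder in $X$.

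The hard part is that this transpose does not carry the correct top and bottom cells on the nose. Its top cell is $(f_t G_0)(f_t F_0)$---whisker, then compose---whereas the cylinder we want has top cell $f_t(G_0 F_0)$---compose, then whisker---and symmetrically at the bottom we meet $(G_1 f_s)(F_1 f_s)$ against $(G_1 F_1)f_s$. These differ by a distributivity-of-whiskering-over-composition discrepancy. I would resolve this by invoking contractibility of $\mathfrak{C}$ to supply comparison $n$-cells
\[
D_1\colon f_t(G_0 F_0)\to (f_t G_0)(f_t F_0),\qquad D_2\colon (G_1 f_s)(F_1 f_s)\to (G_1 F_1)f_s,
\]
and then absorbing them with the cell+cylinder+cell operation of Lemma~\ref{cell +cyl+cell}: I set $\overline{G\circ_{\mathbf{c}_n}F}=D_2\,\bigl(\overline{G}\circ_{\mathbf{c}_{n-1}}\overline{F}\bigr)\,D_1$, with outer source/target cells again taken from $F$ and $G$. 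This is the step I expect to be the main obstacle, both in getting the domains and codomains of $D_1,D_2$ to match and in confirming that Lemma~\ref{cell +cyl+cell} produces a genuine $(n-1)$-cylinder with the prescribed top and bottom.

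Finally, I would verify the boundary equations, which after transposition reduce to $s(G\circ_{\mathbf{c}_n}F)=s(F)$ and $t(G\circ_{\mathbf{c}_n}F)=t(G)$. The first follows by stringing together the clause $\epsilon(\beta C\alpha)=\epsilon(C)$ of Lemma~\ref{cell +cyl+cell} (so the inserted comparison cells $D_1,D_2$ do not disturb source and target), the inductive equality $s(\overline{G}\circ_{\mathbf{c}_{n-1}}\overline{F})=s(\overline{F})$, the naturality $\overline{s(F)}=s(\overline{F})$ of transposition, and the identity $(G\circ_{\mathbf{c}_n}F)_0=F_0$ on top cells; the target equation is dual. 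This completes the inductive definition of the family $\cyl(\mathbf{c}_n)$ and hence, by the universal property invoked at the outset, the desired extension of $\cyl$ to $\mathfrak{D}_{\mathbf{c}}$.
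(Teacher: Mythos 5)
Your proposal is correct and follows essentially the same route as the paper's proof: reduce via Yoneda to a representable binary operation on composable cylinders, induct on $n$ with the base case handled by associativity comparisons, transpose along $\Sigma\dashv\Omega$ in the inductive step, repair the top/bottom-cell discrepancy with contractibility cells $D_1,D_2$ absorbed via Lemma \ref{cell +cyl+cell}, and verify the boundary equations through the same chain of equalities. No substantive differences to report.
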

It follows from Lemma \ref{coherence of cell+cyl+cell} that there exists an $n$-cylinder $T_{F,G}$ in $\Omega(X,s^n(F_0),t^n(G_1))$ such that $s(T_{F,G})=\overline{G \circ_{\mathbf{c}_{n}} F}$ and $t(T_{F,G})=\overline{G}\circ_{\mathbf{c}_{n-1}} \overline{F}$.

We now address the problem of definining a system of identities.
\begin{lemma}
	Let $\mathfrak{D}_{\mathbf{id}}$ be the globular theory freely generated by a system of compositions and  identities. Then there exists an extension of the form
	\[
	\bfig
	\morphism(0,0)|a|/@{>}@<0pt>/<500,0>[\Theta_0`\wgpd;\cyl]
	\morphism(0,0)|a|/@{>}@<0pt>/<0,-400>[\Theta_0` \mathfrak{D}_{\mathbf{id}};]
	\morphism(0,-400)|r|/@{-->}@<0pt>/<500,400>[\mathfrak{D}_{\mathbf{id}}`\wgpd;\cyl]
	\efig
	\]
	\begin{proof}
		We only need to define a system of identities. Firstly, set \[\cyl(\mathbf{id}_0)=\mathbf{C}_1\colon \cyl(D_1)\rightarrow D_1= \cyl(D_0)\]
		Let $n>1$, and assume we have already defined $\cyl(\mathbf{id}_k)$ for each $k<n$. Given $F\colon \cyl(D_k) \rightarrow X$, denote the $(k+1)$-cylinder $F\circ \cyl(\mathbf{id}_k)$ by $\mathbf{id}_k(F)$. 
		We have to define, for every $\infty$-groupoid $X$ and every $n$-cylinder $F\colon A \curvearrowright B$ in $X$, an $(n+1)$-cylinder $\mathbf{id}_n(F)\colon \cyl(D_{n+1})\rightarrow X$.
		Define its transpose $\overline{\mathbf{id}_n(F)}$ as the vertical composite of the following diagram
		\[
		\bfig
		\morphism(0,0)|a|/@{>}@<0pt>/<0,-400>[f_t 1_A`1_{f_t A};C_1]
		\morphism(0,-400)|a|/{@{>}@/^2em/@<0pt>}/<0,-400>[1_{f_t A}`1_{B f_s};\mathbf{id}_{n-1}(\overline{F})]
		\morphism(0,-800)|a|/@{>}@<0pt>/<0,-400>[1_{B f_s}`1_B f_s;C_2]
		\efig
		\]
		Here, juxtaposition of cells indicates, as usual, the whiskering operations $w$ defined in the previous section, $C_1$ and $C_2$ are $n$-cells provided by the contractibility of $D_n\amalg_{ D_0} D_1$ and $D_1 \amalg_{ D_0} D_n$ respectively, and the composition operation is the one defined in Lemma \ref{cell +cyl+cell}. 
		
		Having defined identities and binary compositions, we can construct whiskering maps
		\[*_k\colon\cyl(D_k) \rightarrow \cyl(D_k) \plus{\cyl(D_{k-2})}\cyl(D_{k-1})\] \[_k *\colon\cyl(D_k) \rightarrow \cyl(D_{k-1}) \plus{\cyl(D_{k-2})}\cyl(D_k)\]
		by setting \[*_k=(1\plus{\cyl(D_{k-2})} \cyl(\mathbf{id}_{k-1}))\circ \cyl(\mathbf{c}_k)\] and \[_k*=(\cyl(\mathbf{id}_{k-1}) \plus{\cyl(D_{k-2})}1 )\circ \cyl(\mathbf{c}_k)\]
		When no confusion arises, subscripts will be dropped.
		Notice that, thanks to Lemma \ref{coherence of cell+cyl+cell}, for every $k$-cylinder $F$ in $X$, there exists a $(k+1)$-cylinder $\lambda_F$ such that \[s(\lambda_F)=\overline{\mathbf{id}_k(F)} \  \text{and} \ t(\lambda_F)=\mathbf{id}_{k-1}(\overline{F})\]
		These identity cylinders satisfy the required properties thanks to Lemma \ref{cell +cyl+cell}, so we are left with defining the action of $\cyl$ on the maps $\mathbf{l}_n,\mathbf{r}_n$. We will only construct the $\mathbf{l}_n $'s, the remaining bit being similar. The construction of $\mathbf{l}_2$ reduces to definining a $2$-cylinder $\Gamma$ whose source is $C\circ_{\mathbf{c}_{1}}\mathbf{id}_0(s(C))$ and whose target is $C$. We use contractibility of $\mathfrak{C}$ once to find a pair of $2$-cells $\Gamma_0\colon C_0 1_{s(C_0)}  \rightarrow C_0, \ \Gamma_1\colon C_1 1_{s(C_1)}\rightarrow C_1$, and then again to choose a $3$-cell between the following composites
		\[
		\bfig 
	\morphism(0,0)|a|/@{>}@<0pt>/<700,0>[C_t(C_0 1_{s(C_0)})`C_tC_0;C_t\Gamma_0]
\morphism(700,0)|a|/@{>}@<0pt>/<700,0>[C_tC_0`C_1 C_s;C]
	\morphism(0,-500)|a|/@{>}@<0pt>/<1200,0>[C_t(C_0 1_{s(C_0)})`(C_1 1_{s(C_1)})C_s;C\circ_{\mathbf{c}_{1}}\mathbf{id}_0(s(C))]	
	\morphism(1200,-500)|a|/@{>}@<0pt>/<700,0>[(C_1 1_{s(C_1)})C_s`C_1 C_s;\Gamma_1 C_s]	
	\efig 	
		\]
		Given an $(n-1)$-cylinder $F\colon A\curvearrowright B$ in $X$, we let $\alpha\colon A \circ 1_{s(A)}\rightarrow A$ be an instance of unitality of composition in $\mathfrak{C}$, and by contractibility of $D_{n-1}\amalg_{ D_0} D_1 $ we get an $n$-cell $E_1$ whose source is $f_t \alpha$ and whose target is \[(\mathbf{l}_{n-2}(\overline{F})\circ_{\mathbf{c}_{n-1}}(\overline{F} * \lambda_{s(F)})\circ_{\mathbf{c}_{n-1}}T_{1_{s(F),F}})_0\] which coincides with the composite
\[
\bfig 
\morphism(0,0)|a|/@{>}@<0pt>/<700,0>[f_t\left( A1_{s(A)}\right)`\left( f_t A\right)\left(f_t 1_{s(A)}\right);\cong]
\morphism(700,0)|a|/@{>}@<0pt>/<700,0>[\left(f_t A\right)\left(f_t 1_{s(A)}\right)`\left(f_t A\right)\left(1_{f_t A}\right);\cong]
\morphism(1400,0)|a|/@{>}@<0pt>/<500,0>[\left(f_t A\right)\left( 1_{f_t A}\right)`f_t A;\cong]
\efig 
\] where each one of the displayed cells is obtained by contractibility of $\mathfrak{C}$.
		Now, we define $\overline{\mathbf{l}_{n-1}(F)}$ to be the following composite
		\[
		\bfig
		\morphism(0,0)|a|/@{>}@<0pt>/<0,-400>[f_t \alpha`(\mathbf{l}_{n-2}(\overline{F})\circ_{\mathbf{c}_{n-1}}(\overline{F} * \lambda_{s(F)})\circ_{\mathbf{c}_{n-1}}T_{1_{s(F),F}})_0;E_1]
		\morphism(0,-400)|a|/{@{>}@/^2em/@<0pt>}/<0,-400>[(\mathbf{l}_{n-2}(\overline{F})\circ_{\mathbf{c}_{n-1}}(\overline{F} * \lambda_{s(F)})\circ_{\mathbf{c}_{n-1}}T_{1_{s(F),F}})_0`(\mathbf{l}_{n-2}(\overline{F})\circ_{\mathbf{c}_{n-1}}(\overline{F} * \lambda_{s(F)})\circ_{\mathbf{c}_{n-1}}T_{1_{s(F),F}})_1;(\mathbf{l}_{n-2}(\overline{F})\circ_{\mathbf{c}_{n-1}}(\overline{F} * \lambda_{s(F)})\circ_{\mathbf{c}_{n-1}}T_{1_{s(F),F}})]
		\morphism(0,-800)|a|/@{>}@<0pt>/<0,-400>[(\mathbf{l}_{n-2}(\overline{F})\circ_{\mathbf{c}_{n-1}}(\overline{F} * \lambda_{s(F)})\circ_{\mathbf{c}_{n-1}}T_{1_{s(F),F}})_ 1`\alpha' f_s;E_2]
		\efig
		\]
		Here, $\alpha'\colon B \circ 1_{s(B)} \rightarrow B$ is another instance of unitality of composition in $\mathfrak{C}$.
		$E_2$ is obtained similarly to $E_1$, and we compose the diagram using Lemma \ref{cell +cyl+cell}. 
	\end{proof}
\end{lemma}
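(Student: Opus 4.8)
The plan is to leverage the extension of $\cyl$ to the theory $\mathfrak{D}_{\mathbf{c}}$ of the previous lemma: since $\mathfrak{D}_{\mathbf{id}}$ is freely generated over $\mathfrak{D}_{\mathbf{c}}$ by adjoining a system of identities, a globular functor $\mathfrak{D}_{\mathbf{id}}\rightarrow\wgpd$ restricting to the given one on $\mathfrak{D}_{\mathbf{c}}$ amounts, by freeness, to a compatible choice of interpretations $\cyl(\mathbf{id}_n),\cyl(\mathbf{l}_n),\cyl(\mathbf{r}_n)$ of the identity generators of Definition \ref{structure systems}. I would produce these representably in a fixed $\infty$-groupoid $X$ (via Yoneda), defining the corresponding operations on cylinders in $X$. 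The governing device throughout is the transpose correspondence from \eqref{cyl D_n}: an $(n+1)$-cylinder $F$ is equivalent to its boundary $1$-cells $f_s,f_t$ together with an $n$-cylinder $\overline{F}$ in a loop space $\Omega(X,\ldots)$. Accordingly, I would define every operation by induction on dimension, transposing to the loop space, applying the already-constructed one-dimension-lower operation there, and calling on contractibility to repair any boundary discrepancy.

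For the identity cells I would set $\cyl(\mathbf{id}_0)=\mathbf{C}_1\colon\cyl(D_1)\rightarrow D_1=\cyl(D_0)$, the relevant component of the codiagonal factorization map $\mathbf{C}_\bullet$, so that a $0$-cylinder (i.e.\ a $1$-cell of $X$) is sent to the trivial $1$-cylinder on it. For the inductive step, writing $\mathbf{id}_k(F)=F\circ\cyl(\mathbf{id}_k)$, I would build $\mathbf{id}_n(F)$ for an $n$-cylinder $F\colon A\curvearrowright B$ by taking in the loop space the already-defined identity cylinder $\mathbf{id}_{n-1}(\overline{F})$, whose source and target match the required whiskered identities $1_{f_tA}$ and $1_{Bf_s}$ only up to coherence; contractibility of the globular sums $D_n\amalg_{D_0}D_1$ and $D_1\amalg_{D_0}D_n$ then supplies correction $n$-cells $C_1,C_2$, and Lemma \ref{cell +cyl+cell} glues the three into $\overline{\mathbf{id}_n(F)}$. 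The defining equations $\mathbf{id}_n\circ\sigma=\mathbf{id}_n\circ\tau=1$ follow because, by the source-target clause of Lemma \ref{cell +cyl+cell}, the source and target of the cylinder produced are exactly $F$, propagated through the induction.

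With compositions and identities both interpreted, I would assemble the whiskering maps $*_k$ and ${}_k*$ by whiskering $\cyl(\mathbf{c}_k)$ with $\cyl(\mathbf{id}_{k-1})$, and record the comparison cylinders $\lambda_F$ furnished by Lemma \ref{coherence of cell+cyl+cell} together with the cylinders $T_{F,G}$ comparing a composite with its transpose. The unitors then go by induction, and I would treat $\mathbf{l}_n$ in detail ($\mathbf{r}_n$ being symmetric with the opposite whiskering). For $\mathbf{l}_2(C)$ I would construct a $2$-cylinder with source $C\circ_{\mathbf{c}_1}\mathbf{id}_0(s(C))$ and target $C$: contractibility of $\mathfrak{C}$ yields the two boundary $2$-cells $\Gamma_0,\Gamma_1$ and then a filling $3$-cell. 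In the inductive step I would transpose, invoke the lower unitor $\mathbf{l}_{n-2}(\overline{F})$, whisker it against $\overline{F}*\lambda_{s(F)}$, compose through $T_{1_{s(F)},F}$ using $\circ_{\mathbf{c}_{n-1}}$, and use contractibility of $D_{n-1}\amalg_{D_0}D_1$ to obtain boundary correction cells $E_1,E_2$ that align the outcome with the prescribed source $1_{D_{n-1}}$ and target $(1_{D_{n-1}},\tau\circ\mathbf{id}_{n-2})\circ\mathbf{c}_{n-1}$; Lemma \ref{cell +cyl+cell} then assembles $\overline{\mathbf{l}_{n-1}(F)}$.

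I expect the main obstacle to be the source-target bookkeeping rather than the existence of any single cell. The boundary data demanded by Definition \ref{structure systems}, such as $\mathbf{l}_n\circ\tau=(1_{D_{n-1}},\tau\circ\mathbf{id}_{n-2})\circ\mathbf{c}_{n-1}$, agree only up to coherence with the cells emitted by the loop-space induction, so at each stage the discrepancy must be consistently absorbed into the contractibility cells $C_i,E_i$, and one must check that these choices remain compatible with the comparison cylinders $\lambda$ and $T$ built one dimension down. Contractibility of $\mathfrak{C}$ guarantees every such correction cell exists; the genuine work is the combinatorial verification that the prescribed source and target equations reduce correctly under transposition at each inductive step.
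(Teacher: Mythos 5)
Your proposal is correct and follows essentially the same route as the paper's own proof: the same base case $\cyl(\mathbf{id}_0)=\mathbf{C}_1$, the same inductive construction of $\overline{\mathbf{id}_n(F)}$ by sandwiching $\mathbf{id}_{n-1}(\overline{F})$ between contractibility cells $C_1,C_2$ and composing via Lemma \ref{cell +cyl+cell}, the same whiskerings $*_k$, ${}_k*$ and comparison cylinders $\lambda_F$, $T$, and the same two-stage treatment of $\mathbf{l}_2$ and $\mathbf{l}_{n-1}$ with correction cells $E_1,E_2$. The only work you defer — checking that the boundary equations of Definition \ref{structure systems} reduce correctly under transposition — is exactly what the paper discharges by the source/target clause of Lemma \ref{cell +cyl+cell}, as you indicate.
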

We are now ready to conclude the proof of Theorem \ref{algebraic structure on PX}:
\begin{proof}
	The only thing left to define is a system of inverses on $\mathbb{P}(X)$, and again we do so by induction.
	Given an $\infty$-groupoid $X$ and a $1$-cylinder $F\colon A \curvearrowright B$ in $X$, we define $\overline{\mathbf{i}_1(F)}$ as the composite
	\[
	\bfig
	\morphism(0,0)|a|/@{>}@<0pt>/<700,0>[f_s A^{-1}`B^{-1}B f_s A^{-1};]
	\morphism(700,0)|a|/@{>}@<0pt>/<1200,0>[B^{-1}B f_s A^{-1}`B^{-1} f_t A A^{-1};B^{-1}(\overline{F})^{-1}A^{-1}]
	\morphism(1900,0)|a|/@{>}@<0pt>/<700,0>[B^{-1} f_t A A^{-1}`B^{-1}f_t;]
	\efig 
	\]
	where the unlabelled cells are obtained by contractibility of $\mathfrak{C}$ and $()^{-1}$ is the action of taking the inverse of a given cell, made possible by the choice of an inverse operation in the contractible globular theory $\mathfrak{C}$.
	
As before, we can construct $\mathbf{k}_2^s,\mathbf{k}_2^t$ using the contractibility of $\mathfrak{C}$, which concludes the proof of the base case.

To address the inductive step, assume given an $n$-cylinder $F\colon A \curvearrowright B$ in $X$ with $n>1$. We define $\overline{\mathbf{i}_n(F)}$ as the  composite of the following diagram, obtained using Lemma \ref{cell +cyl+cell}
	\[
	\bfig
	\morphism(0,0)|a|/@{>}@<0pt>/<0,-400>[f_t A^{-1}`(f_t A)^{-1};M_1]
	\morphism(0,-400)|a|/{@{>}@/^2em/@<0pt>}/<0,-400>[(f_t A)^{-1}`(B f_s)^{-1};\mathbf{i}_{n-1}(\overline{F})]
	\morphism(0,-800)|a|/@{>}@<0pt>/<0,-400>[(B f_s)^{-1}`B^{-1} f_s;M_2]
	\efig 
	\]
	Here, $M_1$ and $M_2$ are $n$-cells obtained by contractibility of $\mathfrak{C}$.
	Again, observe that it follows from Lemma \ref{coherence of cell+cyl+cell} that there exists a cylinder $\mu_F$ whose source is $\overline{\mathbf{i}_n(F)}$ and whose target is $\mathbf{i}_{n-1}(\overline{F})$.
	
	We are now left with constructing $\mathbf{k}^{\epsilon}_{n+1}$ for $\epsilon=s,t$. The two cases being similar, we only construct $\mathbf{k}^s_{n+1}$.
	Let $\beta\colon AA^{-1} \rightarrow 1_{t(A)}$ be an instance of a coherence constraint for inverses, provided by contractibility of $\mathfrak{C}$. The latter also provides a cell $H_1$, whose source is $f_t \beta$ and whose target is given by
	\[((\mathbf{i}_{n}(\lambda_F)) \circ_{\mathbf{c}_{n}}(\mathbf{k}^s_{n-1}(\overline{F}))\circ_{\mathbf{c}_{n}} (\mu_F*\overline{F}) \circ_{\mathbf{c}_{n}}(T_{F^{-1},F}))_0\] which coincides with the following composite
	\[
	\bfig 
\morphism(0,0)|a|/@{>}@<0pt>/<700,0>[f_t (AA^{-1})`(f_t A)(f_t A^{-1});\cong]	
\morphism(700,0)|a|/@{>}@<0pt>/<700,0>[(f_t A)(f_t A^{-1})`(f_t A)(f_t A)^{-1};\cong]	
\morphism(1400,0)|a|/@{>}@<0pt>/<600,0>[(f_t A)(f_t A)^{-1}`1_{s(f_t A)};\cong]
\morphism(2000,0)|a|/@{>}@<0pt>/<500,0>[1_{s(f_t A)}`f_t 1_{s( A)};\cong]
	\efig 
	\]
	Finally, we define $\mathbf{l}^s_{n+1}(F)$ as the composite of the following diagram, using Lemma \ref{cell +cyl+cell}
	\[
	\bfig 
	\morphism(0,0)|a|/@{>}@<0pt>/<0,-400>[f_t \beta`((\mathbf{i}_{n}(\lambda_F)) \circ_{\mathbf{c}_{n}}(\mathbf{k}^s_{n-1}(\overline{F}))\circ_{\mathbf{c}_{n}} (\mu_F*\overline{F}) \circ_{\mathbf{c}_{n}}(T_{F^{-1},F}))_0;H_1]
	\morphism(0,-400)|a|/{@{>}@/^2em/@<0pt>}/<0,-400>[((\mathbf{i}_{n}(\lambda_F)) \circ_{\mathbf{c}_{n}}(\mathbf{k}^s_{n-1}(\overline{F}))\circ_{\mathbf{c}_{n}} (\mu_F*\overline{F}) \circ_{\mathbf{c}_{n}}(T_{F^{-1},F}))_0`((\mathbf{i}_{n}(\lambda_F)) \circ_{\mathbf{c}_{n}}(\mathbf{k}^s_{n-1}(\overline{F}))\circ_{\mathbf{c}_{n}} (\mu_F*\overline{F}) \circ_{\mathbf{c}_{n}}(T_{F^{-1},F}))_1;((\mathbf{i}_{n}(\lambda_F)) \circ_{\mathbf{c}_{n}}(\mathbf{k}^s_{n-1}(\overline{F}))\circ_{\mathbf{c}_{n}} (\mu_F*\overline{F}) \circ_{\mathbf{c}_{n}}(T_{F^{-1},F}))]
	
	\morphism(0,-800)|a|/@{>}@<0pt>/<0,-400>[((\mathbf{i}_{n}(\lambda_F)) \circ_{\mathbf{c}_{n}}(\mathbf{k}^s_{n-1}(\overline{F}))\circ_{\mathbf{c}_{n}} (\mu_F*\overline{F}) \circ_{\mathbf{c}_{n}}(T_{F^{-1},F}))_1`\gamma f_s;H_2]
	\efig 
	\]
	Here, $\gamma\colon BB^{-1}\rightarrow 1_{t(B)}$ is an instance of a coherence constraint for inverses, and $H_2$ is obtained analogously to $H_1$, both being provided by the contractibility of $\mathfrak{C}$.
	\end{proof}
\section{Globular decomposition of Cylinders on globular sums}
Now that we have constructed the coglobular object $\cyl(D_{\bullet})\colon \G \to \wgpd$, we can extend it to a functor $\cyl(D_{\bullet})\colon \Theta_0 \to \wgpd$, since $\wgpd$ is cocomplete (see, for instance, Corollary 5.6.8 in \cite{BOR}).

The goal of this section is, given a globular sum $A \in \Theta_0$, to express $\cyl(A)$ as the colimit of a zig-zag diagram in $\wgpd$, which will be explicitly described. Furthermore, this diagram only consists of globular sums: more precisely, it factors through the Yoneda embedding $y\colon \mathfrak{C} \rightarrow \wgpd$.
\subsection{Zig-zag diagrams}
To begin with, we have to define what a zig-zag is, and record their basic properties.
\begin{defi} \label{zig-zag length n}
	Given a natural number $n$, define a category $\mathbf{I}_n$ as the one associated to the poset $(\{(0,k)\colon 0 \leq k \leq n\}\cup \{(1,m)\colon 0 \leq m \leq n-1\},\prec)$, where the relation is completely described by 
	$$\begin{cases}
	(0,k)\prec (1,k)  \ \forall k \in \{0, \ldots, n-1\} \\
	(0,m) \prec (1,m-1) \ \forall  m \in \{1,\ldots,n\}
	\end{cases} $$
	Notice that, if $k<n$, there is a natural inclusion $\mathbf{I}_k \rightarrow \mathbf{I}_n$.
\end{defi}
Pictorially, $\mathbf{I}_n$ looks like 
\begin{equation}
\label{zig zag picture}
\bfig

\morphism(0,0)|a|/@{>}@<0pt>/<250,-250>[(0,0)`(1,0); ]
\morphism(500,0)|a|/@{>}@<0pt>/<-250,-250>[(0,1)`(1,0);]
\morphism(500,0)|a|/@{>}@<0pt>/<175,-175>[(0,1)`;]
\morphism(1500,0)|a|/@{>}@<0pt>/<250,-250>[(0,n-1)`(1,n-1); ]
\morphism(1500,0)|a|/@{>}@<0pt>/<-175,-175>[(0,n-1)`; ]
\morphism(2000,0)|a|/@{>}@<0pt>/<-250,-250>[(0,n)`(1,n-1); ]

\morphism(1000,0)|a|/@{}@<0pt>/<0,0>[\ldots` ; ]
\morphism(1000,-250)|a|/@{}@<0pt>/<0,0>[\ldots`; ]

\efig
\end{equation}
We have two natural inclusions
$\bfig \morphism(0,0)|a|/@{>}@<3pt>/<500,0>[*`\mathbf{I}_n;(0,0)=\iota_0 ]
\morphism(0,0)|b|/@{>}@<-3pt>/<500,0>[ *`\mathbf{I}_n; (0,n)=\iota_n]
\efig$ for any positive natural number $n$, where $*$ denotes the terminal category.

$\mathbf{I}_n$ is the free-living zig-zag of length $n$, in a sense made precise by the following
\begin{defi}
	A zig-zag of length $n$ in a category $\mathscr{C}$ is a functor 
	$\bfig  \morphism(0,0)|a|/@{>}@<0pt>/<500,0>[\mathbf{I}_n`\mathscr{C};F]\efig$.
	If $F(0,0)=a$ and $F(0,n)=b$ we write $F \colon a \rightsquigarrow b$. 
\end{defi}
We can also define a partial binary operation on zig-zags, which satisfies an associativity property and will be used in the next section.
\begin{defi}
	\label{extend zig-zag}
	Define the category $\mathbf{I}_{m}\bullet \mathbf{I}_{n}$ as the pushout
	\[
	\bfig
	\morphism(0,0)|a|/@{>}@<0pt>/<500,0>[*`\mathbf{I}_n;\iota_n]
	\morphism(0,0)|a|/@{>}@<0pt>/<0,-500>[*`\mathbf{I}_m;\iota_0]
	\morphism(500,0)|a|/@{>}@<0pt>/<0,-500>[\mathbf{I}_n`\mathbf{I}_{m}\bullet \mathbf{I}_{n};]
	\morphism(0,-500)|a|/@{>}@<0pt>/<500,0>[\mathbf{I}_m`\mathbf{I}_{m}\bullet \mathbf{I}_{n}; ]
	\efig
	\] Note that $\mathbf{I}_{m}\bullet \mathbf{I}_{n} \cong \mathbf{I}_{m+n}$.
	
	Given a pair of zig-zags $F \colon \mathbf{I}_n \rightarrow \mathcal{C}$, $G \colon \mathbf{I}_m \rightarrow \mathcal{C}$ such that $F\colon a \rightsquigarrow b$ and $G \colon b \rightsquigarrow c$ we define \[G\bullet F \colon \mathbf{I}_{m}\bullet \mathbf{I}_{n}\rightarrow \mathcal{C}\] as the unique functor making the following diagram commute
	\begin{equation}
	\label{composition of zig-zags}
	\bfig
	\morphism(0,0)|a|/@{>}@<0pt>/<500,0>[*`\mathbf{I}_n;\iota_n]
	\morphism(0,0)|a|/@{>}@<0pt>/<0,-500>[*`\mathbf{I}_m;\iota_0]
	\morphism(500,0)|a|/@{>}@<0pt>/<0,-500>[\mathbf{I}_n`\mathbf{I}_{m}\bullet \mathbf{I}_{n};]
	\morphism(0,-500)|a|/@{>}@<0pt>/<500,0>[\mathbf{I}_m`\mathbf{I}_{m}\bullet \mathbf{I}_{n}; ]
	
	\morphism(500,-500)|l|/@{-->}@<0pt>/<300,-300>[\mathbf{I}_{m}\bullet \mathbf{I}_{n}`\mathcal{C};\exists ! G \bullet F]
	
	\morphism(500,0)|a|/{@{>}@/^2em/}/<300,-800>[\mathbf{I}_{n}`\mathcal{C};F]
	\morphism(0,-500)|l|/{@{>}@/_2em/}/<800,-300>[\mathbf{I}_m`\mathcal{C};G ]
	\efig
	\end{equation}
	Note that $G \bullet F\colon a \rightsquigarrow c$.
\end{defi}
	Obviously, this can be iterated to express $\mathbf{I}_{n_1+n_2+\ldots n_k}$ as an iterated pushout $\mathbf{I}_{n_k}\bullet \ldots\bullet \mathbf{I}_{n_1} $.
\begin{lemma}
	\label{associativity of concatenation}
	Concatenation of zig-zags is associative. More precisely, if we are given $F\colon a \rightsquigarrow b$, $G\colon b \rightsquigarrow c$ and $H\colon c \rightsquigarrow d$ then it holds true that \[H \bullet (G \bullet F)=(H\bullet G)\bullet F\]
\end{lemma}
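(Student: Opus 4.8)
The plan is to reduce the statement to the associativity of pushouts of categories, since $\bullet$ is defined purely by the universal property of the pushout in Definition \ref{extend zig-zag}. Write $p,q,r$ for the lengths of $F,G,H$, so that $F\colon \mathbf{I}_p \to \mathcal{C}$, $G\colon \mathbf{I}_q \to \mathcal{C}$ and $H\colon \mathbf{I}_r \to \mathcal{C}$. First I would observe that the two triple concatenations share a common domain: the category $\mathbf{I}_r \bullet(\mathbf{I}_q \bullet \mathbf{I}_p)$ and the category $(\mathbf{I}_r \bullet \mathbf{I}_q)\bullet \mathbf{I}_p$ are both obtained by gluing three copies of $\mathbf{I}_p,\mathbf{I}_q,\mathbf{I}_r$ end-to-start along two copies of the terminal category $*$. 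Because pushouts are associative up to canonical isomorphism (a colimit of a colimit is a colimit of the amalgamated diagram), there is a canonical isomorphism $\mathbf{I}_r \bullet(\mathbf{I}_q \bullet \mathbf{I}_p)\cong (\mathbf{I}_r \bullet \mathbf{I}_q)\bullet \mathbf{I}_p$, both being identified with the single triple pushout $\mathbf{I}_p \sqcup_* \mathbf{I}_q \sqcup_* \mathbf{I}_r$, which is moreover isomorphic to $\mathbf{I}_{p+q+r}$ by the remark following Definition \ref{extend zig-zag}. I would fix this identification once and for all and regard both $H\bullet(G\bullet F)$ and $(H\bullet G)\bullet F$ as functors out of this one category, equipped with the three canonical inclusions $j_F\colon \mathbf{I}_p \to \mathbf{I}_{p+q+r}$, $j_G\colon \mathbf{I}_q \to \mathbf{I}_{p+q+r}$, $j_H\colon \mathbf{I}_r \to \mathbf{I}_{p+q+r}$.

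The heart of the argument is the universal property of this triple pushout: a functor out of it is uniquely determined by three functors out of $\mathbf{I}_p$, $\mathbf{I}_q$, $\mathbf{I}_r$ that agree on the two shared terminal-category inclusions (sending $\iota_p$ of $\mathbf{I}_p$ and $\iota_0$ of $\mathbf{I}_q$ to the common object $b$, and $\iota_q$ of $\mathbf{I}_q$ and $\iota_0$ of $\mathbf{I}_r$ to $c$). I would then chase the defining square \eqref{composition of zig-zags} twice to verify that each of the two composites restricts along $j_F, j_G, j_H$ to $F$, $G$, $H$ respectively. For $(H\bullet G)\bullet F$, the inner composite $H\bullet G$ restricts to $G$ on $\mathbf{I}_q$ and to $H$ on $\mathbf{I}_r$ by its defining square; composing with the outer square, which restricts to $F$ on $\mathbf{I}_p$ and to $H\bullet G$ on $\mathbf{I}_r \bullet \mathbf{I}_q$, shows that the triple restricts to $F$, $G$, $H$ on the three pieces. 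The entirely symmetric computation (now nesting on the other side) handles $H\bullet(G\bullet F)$. Since both functors have the same domain and agree on the three pieces that jointly cover it as a colimit, the uniqueness clause of the universal property forces $H\bullet(G\bullet F)=(H\bullet G)\bullet F$.

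I expect the only real obstacle to be bookkeeping rather than mathematics: one must check that the canonical associativity isomorphism of iterated pushouts is compatible with all three inclusions $j_F, j_G, j_H$ simultaneously, so that \textquotedblleft restricting to the $F$-piece\textquotedblright{} denotes the same functor on both sides of the claimed equality. Once this coherence is pinned down, the proof carries no genuine combinatorial content and is a formal consequence of the universal property, expressing nothing more than the associativity of gluing categories along single points.
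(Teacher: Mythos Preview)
Your argument is correct: associativity of $\bullet$ reduces immediately to the associativity of iterated pushouts in $\mathbf{Cat}$, and the uniqueness clause of the universal property then identifies the two functors once you check they restrict to $F$, $G$, $H$ on the three pieces. The paper itself gives no proof of this lemma at all, treating it as evident, so there is nothing to compare against; your write-up simply spells out the routine verification that the paper omits.
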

\begin{defi}
	Let $\mathcal{C}$ be a cocomplete category. Suppose given a zig-zag $F\colon\mathbf{I}_{n} \rightarrow \mathcal{C}$, we define a zig-zag of length $1$ $\tilde{F}\colon \mathbf{I}_1 \rightarrow \mathcal{C}$ by setting $\tilde{F}(0,0)=F(0,0)$, $\tilde{F}(0,1)=F(0,n)$ and $\tilde{F}(1,0)=\colim_{\mathbf{I}_n} F$, where the structural maps are given by the colimit inclusions.
\end{defi}
Given integers $n_i$ for $1\leq i \leq k$, we let $n=\sum_{1}^{k}n_i$. Given a zig-zag $F\colon \mathbf{I}_{n}\cong \mathbf{I}_{n_k}\bullet \ldots\bullet \mathbf{I}_{n_1} \rightarrow \mathcal{C}$, where the target is a cocomplete category, we can consider its restrictions $F_i \colon \mathbf{I}_{n_i} \rightarrow \mathcal{C}$, obtained as in Definition \ref{extend zig-zag}. The next result then holds true, and its proof is simply a matter of applying the universal property of colimits.
\begin{lemma} 
	\label{colimit of zig zags}
	In the situation just described, we have the following isomorphism in $\mathcal{C}$:
	\[
	\colim_{\mathbf{I}_{n}} F \cong \colim_{\mathbf{I}_{k}}\left(\tilde{F}_k\bullet \ldots \bullet \tilde{F}_1\right)
	\]
\end{lemma}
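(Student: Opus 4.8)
The plan is to establish the isomorphism by the Yoneda lemma, showing that both sides corepresent the same functor: for every object $X\in\mathcal{C}$ I will produce a bijection, natural in $X$, between cocones under $F$ with vertex $X$ (i.e. natural transformations $F\Rightarrow\Delta X$ to the constant diagram) and cocones under $\tilde{F}_k\bullet\cdots\bullet\tilde{F}_1$ with vertex $X$. Since $\mathcal{C}$ is cocomplete all the colimits in question exist, so the entire content lies in this comparison of cocones.

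First I would exploit the decomposition $\mathbf{I}_n\cong\mathbf{I}_{n_k}\bullet\cdots\bullet\mathbf{I}_{n_1}$. By an iterated application of Definition \ref{extend zig-zag} this presents $\mathbf{I}_n$ as an iterated pushout in $\mathbf{Cat}$ of the subcategories $\mathbf{I}_{n_i}$, glued along the terminal category $*$ at the shared endpoints $\iota_{n_i}=\iota_0$. Because $[-,\mathcal{C}]$ sends these pushouts to pullbacks, a natural transformation $F\Rightarrow\Delta X$ is exactly a family of natural transformations $c_i\colon F_i\Rightarrow\Delta X$, one for each restriction $F_i\colon\mathbf{I}_{n_i}\to\mathcal{C}$, subject to the single condition that $c_i$ and $c_{i+1}$ assign the same morphism to the shared endpoint $F_i(0,n_i)=F_{i+1}(0,0)$. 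In other words, a cocone under $F$ is the same datum as a compatible family of cocones under the individual pieces $F_i$.

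Next I would invoke the universal property of each $\colim_{\mathbf{I}_{n_i}}F_i$: a cocone $c_i\colon F_i\Rightarrow\Delta X$ corresponds bijectively to a morphism $\bar{c}_i\colon\colim_{\mathbf{I}_{n_i}}F_i\to X$, and under this bijection the component of $c_i$ at the endpoint $(0,0)$ (resp. $(0,n_i)$) is the composite of $\bar{c}_i$ with the colimit inclusion of that endpoint. By the very definition of $\tilde{F}_i$, whose peak is $\colim_{\mathbf{I}_{n_i}}F_i$ and whose two structural maps are precisely those colimit inclusions, the datum of $\bar{c}_i$ together with the induced endpoint maps is exactly a cocone under the length-one zig-zag $\tilde{F}_i$. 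Matching the endpoint conditions, a compatible family $(c_1,\dots,c_k)$ is therefore the same as a cocone under the concatenation $\tilde{F}_k\bullet\cdots\bullet\tilde{F}_1$ over $\mathbf{I}_k$, which gives the desired natural bijection and hence the isomorphism.

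The only delicate point, and the step I would be most careful about, is the bookkeeping of the gluing conditions: one must verify that the identification of endpoints effected by the pushouts in $\mathbf{I}_n\cong\mathbf{I}_{n_k}\bullet\cdots\bullet\mathbf{I}_{n_1}$ corresponds, under the colimit universal property, exactly to the identification of top vertices in $\tilde{F}_k\bullet\cdots\bullet\tilde{F}_1$ prescribed by Definition \ref{extend zig-zag}. A more transparent alternative is to prove only the two-fold case $k=2$ and then induct on $k$ using associativity of concatenation (Lemma \ref{associativity of concatenation}), grouping $\mathbf{I}_n\cong\mathbf{I}_{n_k}\bullet(\mathbf{I}_{n_{k-1}}\bullet\cdots\bullet\mathbf{I}_{n_1})$ and combining the two-fold statement with the inductive hypothesis. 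Either route requires nothing beyond the universal property of colimits.
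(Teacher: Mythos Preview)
Your argument is correct and is precisely the route the paper intends: the paper does not spell out a proof but simply remarks that it ``is simply a matter of applying the universal property of colimits,'' and your Yoneda-style comparison of cocones is exactly that argument made explicit.
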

\subsection{Trees and globular sums}
We now need an alternative way of representing globular sums. In \cite{AR1} this is done by associating to any given globular sum $A$ a finite planar tree that uniquely represents it.
\begin{defi}
	Consider the functor category $\text{Ord}_{fin}^{\omega}$, where $\omega$ is viewed as a poset with respect to inclusion, and $ \text{Ord}_{fin}$ is the category of finite linearly ordered sets.
	
	The category $\mathcal{T}$ of finite planar trees is the full subcategory of $\text{Ord}_{fin}^{\omega}$ spanned by the objects $T$ such that $T(0)$ is the terminal object of $\text{Ord}_{fin}$ (i.e. the singleton with its unique ordering) and there exists an $n\in \mathbb{N}$ such that $T(i)=\emptyset$ for each $i\geq n$.
	
	We call the elements of $v(T)=\bigcup_{k \in \omega}T_k$ vertices of $T$, and we say that a vertex $x$ has height $m$, denoted by $\height(x)=m$, if $x\in T_m$. Finally, we set $\height(T)=\max_{x \in v(T)}\height(x)$.
\end{defi}
Explicitly, a finite planar tree $T$ consists of a family of finite linearly ordered sets $(T_i, \leq_i)_{0\leq i \leq n}$ for some $n\in \mathbb{N}$, with $T(0)=\{*\}$, together with order-preserving maps $\iota^T_i\colon T_{i+1} \rightarrow T_i$.
\begin{ex}
	Consider the finite planar tree $T$ given by $T_1=\{ x_1^1< x_2^1\}, \ T_2=\{ x_1^2\}$ and $ T_3=\{ x_1^3< x_2^3< x_3^3 \}$, whose only non trivial structural map $T_2 \rightarrow T_1$ is given by $x_1^2 \mapsto x_2^1$.
	
	Such a tree $T$ can be depicted as
	\[
	\begin{tikzpicture}
	\node[circle, draw,
	inner sep=0pt, minimum width=2pt] (A) at (0,0) {$x_1^0$};
	
	\node[circle, draw,
	inner sep=0pt, minimum width=2pt] (B) at (-1,1) {$x_1^1$};
	\node[circle, draw,
	inner sep=0pt, minimum width=2pt] (C) at (1,1) {$x_2^1$};
	
	\node[circle, draw,
	inner sep=0pt, minimum width=2pt] (D) at (1,2) {$x_1^2$};
	
	\node[circle, draw,
	inner sep=0pt, minimum width=2pt] (E) at (0,3) {$x_1^3$};
	\node[circle, draw,
	inner sep=0pt, minimum width=2pt] (F) at (1,3) {$x_2^3$};
	\node[circle, draw,
	inner sep=0pt, minimum width=2pt] (G) at (2,3) {$x_3^3$};
	
	\path [] (A) edge (B);
	\path [] (A) edge (C);
	
	\path [] (C) edge (D);
	
	\path [] (D) edge (E);
	\path [] (D) edge (F);
	\path [] (D) edge (G);
	\end{tikzpicture}
	\]
\end{ex}
\begin{defi}
Given a tree $T$, we can define a relation on the set of vertices $v(T)$ as follows. Consider $x\neq y \in v(T)$, and set
\begin{equation}
\label{prec def}
x\prec y \iff \begin{cases}
\height (x)=\height(y) \ \text{and} \ y<x \ \text{in} \ T_{\height(x)}\\
\height (x)<\height(y) \ \text{and} \ \iota_T^k(y)\leq x \ \text{in} \ T_{\height(x)}\\
\height (x)>\height(y) \ \text{and} \ y < \iota^{k'}_T x \ \text{in} \ T_{\height(y)}
\end{cases}
\end{equation}
where $k=\height(y)-\height(x)$ and $k'=\height(x)-\height(y)$, and $\iota_T^k:T_{\height(y)}\rightarrow T_{\height(x)}, \iota_T^{k'}:T_{\height(x)}\rightarrow T_{\height(y)}$ are composite of the structural maps of $T$.
Clearly, this defines a linear order on $v(T)$.
\end{defi}
For instance, given the tree $T$ of the previous example, the totally ordered set of its vertices is given by 
\[\{x_1^0 \prec x_2^1 \prec x_1^2 \prec x_3^3 \prec x_2^3 \prec x_1^3 \prec x_1^1\}\]
We can associate a tree with every given globular sum $A$, to do so we need the following definition.
\begin{defi}
A given a tree $T$ and a vertex $x\in T$, we say that $x$ is maximal (also called a leaf) if \[\left( \iota^T_{\height(x)}\right)^{-1}(x)=\emptyset\]
Let $\{x_1\prec x_2 \prec \ldots \prec x_k\}$ be the ordered set of maximal vertices of $T$. Let $h_i$ be the height of the highest vertex $y$ such that both $x_i$ and $x_{i+1}$ belong to the fiber of (an iteration of) $\iota^T$ over $y$. $h_i$ is called the height of the region between $x_i$ and $x_{i+1}$.
\end{defi}
It is an easy exercise to prove the following result.
\begin{lemma}
	\label{max vert + reg=tree}
The ordered set $\{x_1\prec x_2 \prec \ldots \prec x_k\}$ of maximal vertices of a given tree $T$ together with their respective heights and the set $\{h_1,\ldots, h_{k-1}\}$ of heights of the regions between them uniquely determine the tree $T$.
\end{lemma}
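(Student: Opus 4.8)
The plan is to reconstruct $T$ explicitly from the listed data and to verify that every step of the reconstruction is forced, so that any two trees sharing the data must coincide; I argue by induction on the number $k$ of maximal vertices, writing $m_i=\height(x_i)$ for the height of the $i$-th leaf and letting $a_i^j\in T_j$ denote the ancestor of $x_i$ at height $j$, i.e.\ the image of $x_i$ under the iterated structural maps $T_{m_i}\rightarrow \cdots \rightarrow T_j$ (defined for $0\le j\le m_i$). The starting observation is that in a finite planar tree every vertex is an ancestor of some leaf: picking a child repeatedly must terminate at a maximal vertex by finiteness of the height. Consequently $T$ is completely determined once we know, for each leaf, its chain of ancestors $a_i^0,\ldots,a_i^{m_i}=x_i$ together with the order on each $T_j$ and the identifications recording when two such chains have merged. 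The key claim is that for $i<i'$ one has $a_i^j=a_{i'}^j$ precisely when $j\le \min_{i\le l<i'}h_l$; in particular the highest common ancestor of two consecutive leaves $x_i,x_{i+1}$ sits exactly at height $h_i$. This holds because any common ancestor of $x_l$ and $x_{l+1}$ is an ancestor of $x_l$, hence of height $\le m_l$, so that an intermediate short leaf forces a branching below its own height; transitivity of the merging relation then reduces the general case to the consecutive one.

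For the base case $k=1$ the tree has a single leaf $x_1$, every vertex is an ancestor of it, so $T_j=\{\ast\}$ for $0\le j\le m_1$ and $T_j=\emptyset$ otherwise, with the unique structural maps; this is determined by $m_1$ alone. For the inductive step I delete the rightmost leaf $x_k$ together with its exclusive ancestors, namely $a_k^{h_{k-1}+1},\ldots,a_k^{m_k}$, which lie strictly above the highest common ancestor of $x_{k-1}$ and $x_k$; by the claim the latter sits at height $h_{k-1}$, and the same claim shows these vertices are ancestors of no other leaf. The resulting tree $T'$ has maximal vertices $x_1,\ldots,x_{k-1}$ with unchanged heights and region heights $h_1,\ldots,h_{k-2}$, since none of those regions involves $x_k$, so $T'$ is uniquely determined by the inductive hypothesis. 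Finally $T$ is recovered from $T'$ by attaching a single linear branch running from the vertex $a_{k-1}^{h_{k-1}}$ of $T'$ up to height $m_k$: this attachment is forced, as the branch must emanate from height $h_{k-1}$ and terminate at height $m_k$ by the region- and leaf-height data, while planarity together with the fact that $x_k$ is the last leaf forces it to be inserted as the rightmost child. Hence $T$ is uniquely determined.

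The one point requiring care, and the only real content behind the word ``exercise'', is the highest-common-ancestor claim, i.e.\ that the recorded region heights genuinely detect where the ancestor chains of any two leaves merge. Once this is established the remaining bookkeeping is routine: the vertices at each height $j$ are the blocks of leaves reaching height $j$ grouped by the merging relation, the planar structure guarantees that these blocks are contiguous so that the linear order on each $T_j$ is recovered, and the maps $\iota^T_j$ are then read off from the evident refinement of blocks as $j$ decreases.
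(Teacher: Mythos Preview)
The paper does not actually give a proof of this lemma: it is introduced with ``It is an easy exercise to prove the following result'' and left at that. Your argument is a valid solution to that exercise. The induction on the number of leaves, peeling off the extremal leaf together with its exclusive ancestors above height $h_{k-1}$ and then reattaching a linear branch, is exactly the natural reconstruction, and your key claim that $a_i^j=a_{i'}^j$ precisely when $j\le\min_{i\le l<i'}h_l$ is the right invariant driving it.

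Two minor points of care. First, under the paper's order $\prec$ (see \eqref{prec def}), vertices at the same height satisfy $x\prec y$ iff $y<x$ in the planar order $<$, so the $\prec$-maximal leaf $x_k$ is the \emph{leftmost} leaf in the usual drawing, not the rightmost; correspondingly the reattached child at height $h_{k-1}+1$ is the $<$-minimal element of its fibre. This does not affect the structure of your argument, only the word ``rightmost''. Second, your justification of the forward direction of the key claim is a bit compressed: the reduction ``to the consecutive case by transitivity'' handles the implication $j\le\min h_l\Rightarrow a_i^j=a_{i'}^j$, but the converse really uses the interval property (descendants of a fixed vertex form a $\prec$-interval among the leaves), which you only invoke explicitly in the final paragraph. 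Moving that observation up to where the key claim is proved would make the argument watertight.
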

We now explain how to associate a tree $T^A$ with a given globular sum $A$. Suppose $A$ is defined by the table of dimensions
\[\begin{pmatrix}
i_1 &&i_2 & \ldots&i_{m-1} & &i_m\\
& i'_1 & &\ldots&& i'_{m-1}
\end{pmatrix}\]
We define $T^A$, invoking the previous lemma, by specifying the order $\{x_1\prec x_2 \prec \ldots \prec x_k\}$ of its maximal vertices and the height $\{h_1,\ldots, h_{k-1}\}$ of the regions between them. We let $k=m$ and we impose that $x_i$ has height $i_{m-i+1}$ and $h_i=i'_{m-i}$.

For example, the globular sum $\begin{pmatrix}
2 &&2 && 1 &&2\\
& 1 &&0&& 0
\end{pmatrix}$
can be represented, equivalently, as
\[
\xymatrix{\bullet \ruppertwocell
	\rlowertwocell
	\ar[r]^-{}
	& \bullet \ar[r]^-{} &\bullet \rtwocell &\bullet\\} \ \ \  \text{or} \ 
\begin{tikzpicture}
\node[circle, draw,
inner sep=0pt, minimum width=2pt] (A) at (0,-3) {};

\node[circle, draw,
inner sep=0pt, minimum width=2pt] (B) at (-1,-2) {};
\node[circle, draw,
inner sep=0pt, minimum width=2pt] (C) at (1,-2) {};
\node[circle, draw,
inner sep=0pt, minimum width=2pt] (F) at (0,-2) {};

\node[circle, draw,
inner sep=0pt, minimum width=2pt] (D) at (-2,-1) {};
\node[circle, draw,
inner sep=0pt, minimum width=2pt] (E) at (0,-1) {};

\node[circle, draw,
inner sep=0pt, minimum width=2pt] (G) at (1,-1) {};

\path [] (A) edge (B);
\path [] (A) edge (C);

\path [] (A) edge (F);

\path [] (B) edge (D);
\path [] (B) edge (E);
\path [] (C) edge (G);
\end{tikzpicture}
\]
\begin{ex}
	\label{suspension of trees}
Given a globular sum $A\in \Theta_0$, we have defined its suspension $\Sigma A$ in Section 2. It is very easy to define the tree $T^{\Sigma A}$ associated with $\Sigma A$ in terms of $T^A$. In fact, suppose $T^A$ consists of the family of finite linearly ordered sets $(T_i,\leq_i)_{0\leq i \leq n}$ for some natural number $n$. Then we have $T^{\Sigma A}_1=\{*\}$ and for every $k>1$:
\[ T^{\Sigma A}_k=T^A_{k-1}\]
Moreover, $\iota^{T^{\Sigma A}}_k=\iota^{T^{A}}_{k-1}$.
\begin{rmk}
	\label{decomposition of trees}
	The decomposition given in Lemma \ref{decomposition of glob sum} has a more geometric interpretation in the language of trees. It corresponds to the elementary fact that any tree can be realized as the glueing at the root of a family of trees all having a single edge at the bottom.
\end{rmk}
For instance, if we let $A$ be the globular sum whose table of dimensions is \[\begin{pmatrix}
2 &&2 && 1 &&2\\
& 1 &&0&& 0
\end{pmatrix}\] then $\Sigma A$ has table of dimensions given by 
\[\begin{pmatrix}
3 &&3 && 2 &&3\\
& 2 &&1&& 1
\end{pmatrix}\]
Moreover, the tree $T^{\Sigma A}$ can be depicted as 
\[\begin{tikzpicture}
\node[circle, draw,
inner sep=0pt, minimum width=2pt] (H) at (0,-4) {};
\node[circle, draw,
inner sep=0pt, minimum width=2pt] (A) at (0,-3) {};

\node[circle, draw,
inner sep=0pt, minimum width=2pt] (B) at (-1,-2) {};
\node[circle, draw,
inner sep=0pt, minimum width=2pt] (C) at (1,-2) {};
\node[circle, draw,
inner sep=0pt, minimum width=2pt] (F) at (0,-2) {};

\node[circle, draw,
inner sep=0pt, minimum width=2pt] (D) at (-2,-1) {};
\node[circle, draw,
inner sep=0pt, minimum width=2pt] (E) at (0,-1) {};

\node[circle, draw,
inner sep=0pt, minimum width=2pt] (G) at (1,-1) {};

\path [] (A) edge (B);
\path [] (A) edge (C);

\path [] (A) edge (F);

\path [] (B) edge (D);
\path [] (B) edge (E);
\path [] (C) edge (G);
\path [] (H) edge (A);
\end{tikzpicture}
\]
\end{ex}
It turns out that the cylinder on a given globular sum $A$ has a quite simple description in terms of trees. In fact, it is the colimit of a suitable zig-zag diagram $\Cyl(A)\colon \mathbf{I}_{n_A} \rightarrow \wgpd$, for an integer $n_A$ that will be defined in what follows. More precisely, this diagram is the composite of a diagram $\mathbf{I}_{n_A} \rightarrow \mathfrak{C}$ followed by the Yoneda embedding.

To begin with, we want to list the globular sums $\{\Cyl(A)(1,k)\}_{0\leq k \leq n_A -1}$, i.e. those appearing on the bottom row (see \eqref{zig zag picture}) of the zig-zag diagram associated with  $\cyl(A)$. These are obtained by considering $n_A-1$ copies of the tree associated with $A$, and to each of these we add a single new branch, following the procedure we now describe.
We start by sticking it at the bottom right and then we traverse the tree going upward and to the left, counterclockwise.

So for example this is what one gets for $A=D_2 \amalg_{D_0} D_1$, whose associated tree is 
\begin{tikzpicture}
\tikzstyle{every node}=[circle, draw,
inner sep=0pt, minimum width=2pt]
\node[] (A) at (0,-3) {};

\node[] (B) at (-0.5,-2.5) {};
\node[] (C) at (0,-2.5) {};

\node[] (E) at (-0.5,-2) {};

\path [] (A) edge (B);
\path [] (A) edge (C);

\path [] (B) edge (E);

\end{tikzpicture}:

\begin{equation}
\begin{tikzpicture}
\label{list of trees}

\tikzstyle{every node}=[circle, draw,
inner sep=0pt, minimum width=2pt]
\node[] (A) at (0,-3) {};

\node[] (B) at (-0.5,-2.5) {};
\node[] (C) at (0.5,-2.5) {};
\node[] (D) at (0,-2.5) {};

\node[] (E) at (-0.5,-2) {};

\path [] (A) edge (B);
\path [draw=red, very thick] (A) edge (C);
\path [] (A) edge (D);
\path [] (B) edge (E);

\node[] (A') at (2,-3) {};

\node[] (B') at (1.5,-2.5) {};
\node[] (C') at (2,-2) {};
\node[] (D') at (2,-2.5) {};

\node[] (E') at (1.5,-2) {};

\path [] (A') edge (B');
\path [draw=red, very thick] (D') edge (C');
\path [] (A') edge (D');
\path [] (B') edge (E');

\node[] (A'') at (4,-3) {};

\node[] (B'') at (3.5,-2.5) {};
\node[] (C'') at (4,-2.5) {};
\node[] (D'') at (4.5,-2.5) {};

\node[] (E'') at (3.5,-2) {};

\path [] (A'') edge (B'');
\path [draw=red, very thick] (A'') edge (C'');
\path [] (A'') edge (D'');
\path [] (B'') edge (E'');

\node[] (a) at (6,-3) {};

\node[] (b) at (5.5,-2.5) {};
\node[] (c) at (6,-2) {};
\node[] (d) at (6.5,-2.5) {};

\node[] (e) at (5,-2) {};

\path [] (a) edge (b);
\path [draw=red, very thick] (b) edge (c);
\path [] (a) edge (d);
\path [] (b) edge (e);

\node[] (a'') at (8,-3) {};

\node[] (b'') at (7.5,-2.5) {};
\node[] (c'') at (7.5,-2) {};
\node[] (d'') at (8.5,-2.5) {};

\node[] (e'') at (7.5,-1.5) {};

\path [] (a'') edge (b'');
\path [draw=red, very thick] (c'') edge (e'');
\path [] (a'') edge (d'');
\path [] (b'') edge (c'');

\node[] (a') at (10,-3) {};

\node[] (b') at (9.5,-2.5) {};
\node[] (c') at (10,-2) {};
\node[] (d') at (10.5,-2.5) {};

\node[] (e') at (9,-2) {};

\path [] (a') edge (b');
\path [draw=red, very thick] (b') edge (e');
\path [] (a') edge (d');
\path [] (b') edge (c');

\node[] (a''') at (12,-3) {};

\node[] (b''') at (12,-2.5) {};
\node[] (c''') at (12,-2) {};
\node[] (d''') at (12.5,-2.5) {};

\node[] (e''') at (11.5,-2.5) {};

\path [] (a''') edge (b''');
\path [draw=red, very thick] (a''') edge (e''');
\path [] (a''') edge (d''');
\path [] (b''') edge (c''');
\end{tikzpicture}
\end{equation}
On the other hand, the upper row is constant on $A$, i.e. \[\Cyl(A)(0,k)=A \ \forall  \ 0\leq k \leq n_A \]
Let us now formalize what we have said so far.
\begin{defi}
	\label{l(A) defi}
	Given a tree $T$ of height $n$, seen as a family of linearly ordered sets $(T_i)_{0\leq i \leq n}$ together with compatible maps $\iota_k:T_{k+1} \rightarrow T_k$, we define a set of trees $\mathscr{L}(A)$ by considering all the trees obtained from $A$ by adjoining a single edge.
	
	Formally, this means that we consider all possible trees $B$ such that there exists a unique $1 \leq k \leq n+1$ such that $B_k=A_k \cup \{*_B\}$ and $B_i=A_i$ for each $i\neq k$, in such a way that the obvious map $A \rightarrow B$ is a map of trees.
\end{defi}
Note that, by construction, for every $B$ in $\mathscr{L}(A)$ there is a natural inclusion of trees $\chi^A_B\colon A\rightarrow B$.
\begin{lemma}
	Given $B,C \in \mathscr{L}(A)$, the presheaf of sets $B\cup_A C$ inherits the structure of a tree in such a way that the natural inclusions $B\rightarrow B\cup_A C$ and $C\rightarrow B\cup_A C$ are morphisms of such.
	\begin{proof}
		We have to endow each set $(B\cup_A C)_k$ with a linear order, in a compatible way. If the newly added vertices $*_B,*_C$ appear at different heights then this is straightforward: it is inherited from $B$ and $C$.
		If they appear at the same height there exists an $x\in A_{\height(*_B)}$ such that $*_B<_B x <_C *_C$ or $*_C<_C x <_B *_Cb$. In the former case we set $*_B <_{B \cup_A C} *_C$ , in the latter $*_C <_{B\cup_A C} *_B$.
		The rest of the proof is straightforward.
	\end{proof}
\end{lemma}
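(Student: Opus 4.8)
The plan is to compute the pushout $B\cup_A C$ in the presheaf category of sets underlying $\mathcal{T}$, where colimits are formed levelwise, and only afterwards equip each level with a linear order turning the result into an object of $\mathcal{T}$. Since the inclusions $\chi^A_B\colon A\rightarrow B$ and $\chi^A_C\colon A\rightarrow C$ are levelwise injective, the levelwise pushout is
\[(B\cup_A C)_k = A_k \cup (B_k\setminus A_k)\cup (C_k\setminus A_k),\]
so by Definition \ref{l(A) defi} each level coincides with $A_k$ except that $*_B$ is adjoined at height $k_B=\height(*_B)$ and $*_C$ at height $k_C=\height(*_C)$. The structural maps $\iota_k$ of $B\cup_A C$ are those induced by the universal property and agree with the ones of $A$, $B$ and $C$ wherever the latter are defined.

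First I would dispose of the levels carrying at most one of the two new vertices. On such a level the underlying set is $A_k$, $A_k\cup\{*_B\}$, or $A_k\cup\{*_C\}$, and I simply transport the linear order from $A$, $B$, or $C$ along the evident bijection; the structural maps remain order-preserving there because they already were in $B$ and $C$. Thus the only genuine work occurs when $k_B=k_C=:k$, that is when both edges are adjoined at the same height, so that the underlying set is $A_k\cup\{*_B,*_C\}$ and one must decide the relative order of $*_B$ and $*_C$.

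The main obstacle is precisely this choice, and I would resolve it by comparing the insertion positions of $*_B$ and $*_C$ inside the common ordered set $A_k$. Because $\leq_B$ and $\leq_C$ both restrict to $\leq_A$ on $A_k$, each new vertex determines a ``gap'' of $A_k$, namely the pair of consecutive elements of $A_k$ between which it sits (allowing that it may sit below all or above all of them). If these gaps differ there is a witness $x\in A_k$ with $*_B<_B x<_C *_C$ (or symmetrically), and I set $*_B<*_C$ accordingly; this is in fact forced by order-preservation of $\iota_{k-1}$, since $\iota_{k-1}(*_B)$ and $\iota_{k-1}(*_C)$ then become comparable through $\iota_{k-1}(x)$. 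If instead the two gaps coincide — which can only happen when $*_B$ and $*_C$ share a parent and the same position among its children, so that $B$ and $C$ are isomorphic under $A$ — the two candidate orderings are symmetric and I fix either one, the result being independent of the choice up to a canonical isomorphism fixing $A$. I would then check directly that the relation so defined is a total order refining $\leq_A$ and that $\iota_{k-1}$ stays order-preserving.

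Finally I would confirm that $B\cup_A C$ lies in $\mathcal{T}$: its level-$0$ set is the unchanged terminal object $A_0$, only finitely many new vertices have been added so the levels are eventually empty, and every $\iota_k$ has been checked to be order-preserving. The maps $B\rightarrow B\cup_A C$ and $C\rightarrow B\cup_A C$ are the pushout coprojections, which are levelwise injective and commute with the $\iota_k$ by construction; since the order on each level restricts back to $\leq_B$, respectively $\leq_C$, by the way it was defined, these coprojections are order-preserving, hence morphisms of trees, as required.
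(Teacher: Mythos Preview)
Your approach is the same as the paper's: form the pushout levelwise and, at the one height carrying both new vertices, decide their relative order via a separating element of $A_k$. There is one slip, though: you assert that if $*_B$ and $*_C$ lie in the same gap of $A_k$ then they must share a parent and hence $B\cong C$ under $A$. This fails. Take $A=D_1\amalg_{D_0}D_1$, so that $A_2=\emptyset$: the two trees in $\mathscr{L}(A)$ obtained by adjoining an edge at height $2$, one over each height-$1$ vertex, sit in the same (vacuous) gap but are certainly not isomorphic under $A$. More generally, whenever a gap admits several possible parents in $A_{k-1}$, distinct parent choices give distinct $B,C\in\mathscr{L}(A)$ with no separating $x\in A_k$; in those cases the order of $*_B$ and $*_C$ is not free but forced by the order of their parents, since $\iota_{k-1}$ must remain monotone. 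The paper's own terse proof elides this sub-case as well (its assertion that a separating $x$ always exists fails in exactly the same situations); once you add the parent-comparison clause, your argument goes through.
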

We now define a relation on the set $\mathscr{L}(A)$, introduced in \ref{l(A) defi}.
\begin{defi}
	
	Given $B\neq C$ in $\mathscr{L}(A)$, set $B\lessdot C$ if and only if $*_B \prec_{B \cup_A C}*_C$, where $\prec$ is the relation defined in \eqref{prec def}.
\end{defi}
\begin{lemma}
	Given a globular sum $A$, the relation on $\mathscr{L}(A)$ just defined is a linear order.
	\begin{proof}
		The only non-trivial thing to check is transitivity. If $B \lessdot C \lessdot D$ then, since $(B\cup C)\cup D=B\cup(C\cup D)$, one has that $*_B<_{B \cup C} *_C <_{C\cup D}*_D$ implies $*_B<_{B \cup C \cup D}*_D$, which in turn implies $*_B <_{B \cup D} *_D$.	
	\end{proof}
\end{lemma}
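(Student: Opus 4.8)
The plan is to deduce the total-order axioms for $\lessdot$ from the single fact, established earlier, that $\prec$ is a linear order on the vertex set $v(T)$ of any one tree $T$. The whole difficulty is that the defining comparisons $B\lessdot C$, $C\lessdot D$ and $B\lessdot D$ take place in three \emph{different} trees $B\cup_A C$, $C\cup_A D$ and $B\cup_A D$, so the strategy is to realise all the relevant new vertices as vertices of one common tree and to compare them there.

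First I would dispose of comparability and asymmetry. For distinct $B,C\in\mathscr{L}(A)$ the new vertices $*_B,*_C$ are genuinely distinct elements of $v(B\cup_A C)$, since the pushout is taken over $A$ and neither new vertex lies in the image of $A$. As $\prec$ is a linear order on $v(B\cup_A C)$, exactly one of $*_B\prec *_C$ and $*_C\prec *_B$ holds; this says precisely that exactly one of $B\lessdot C$ and $C\lessdot B$ holds, giving both totality and asymmetry (irreflexivity being vacuous, as $\lessdot$ is only defined on distinct pairs).

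The substantive step is transitivity, for which the key input is that $\prec$ is stable under suitable tree inclusions. Abstractly, any morphism of trees that is a level-wise order embedding and commutes with the structural maps $\iota$ both preserves and reflects $\prec$; this is immediate from \eqref{prec def}, whose clauses refer only to heights, to the level orders $\leq_i$, and to iterates of $\iota$. I would then form the ternary union $T:=B\cup_A C\cup_A D$, which, by iterating the previous lemma and using associativity of the pushout $(B\cup_A C)\cup_A D\cong B\cup_A(C\cup_A D)$, is again a tree receiving canonical maps from $B\cup_A C$, $C\cup_A D$ and $B\cup_A D$. Granting that these maps are level-wise order embeddings, $B\lessdot C$ and $C\lessdot D$ transport to $*_B\prec_T *_C$ and $*_C\prec_T *_D$; transitivity of $\prec$ on $v(T)$ then gives $*_B\prec_T *_D$, and reflecting along $B\cup_A D\hookrightarrow T$ yields $*_B\prec_{B\cup_A D} *_D$, that is, $B\lessdot D$.

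The main obstacle I anticipate is exactly the claim that these canonical inclusions are level-wise order embeddings, which reduces to checking that the tie-breaking rule used to linearly order each level of a union of trees is consistent between the binary and the ternary unions. Concretely, when two or three of the added edges sit at the same height, the order among $*_B,*_C,*_D$ produced inside $T$ must agree with the pairwise orders produced inside the three binary unions; this amounts to verifying that the two bracketings of the iterated pushout induce the very same tree structure, including the same level orders, so that the three binary comparisons are simultaneously witnessed by a single linear order on $v(T)$. Once this compatibility is in hand, transitivity — and hence the whole statement — is a formal consequence of the linearity of $\prec$ on a fixed tree.
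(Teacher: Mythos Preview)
Your proposal is correct and follows essentially the same route as the paper: both arguments pass to the ternary union $B\cup_A C\cup_A D$ via associativity of the pushout, transport the two pairwise comparisons there, and then reflect the resulting $*_B\prec *_D$ back to $B\cup_A D$. You are more explicit than the paper about the point that makes this work---namely that the canonical inclusions are level-wise order embeddings and hence preserve and reflect $\prec$---whereas the paper simply asserts the chain of implications.
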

\begin{lemma}
	\label{tree decomposition with vertices}
Given a tree $T$ and vertices $v_1,\ldots,v_n$ of $T$ such that every leaf of $T$ belongs to the tree $C_i$ above $v_i$ for some $i$, and the $C_i$'s are disjoint, we have the following isomorphism
\[T\cong \Sigma^{m_1}C_1 \plus{D_{h_1}}\Sigma^{m_2}C_2\plus{D_{h_2}}\ldots \plus{D_{h_{n-1}}}\Sigma^{m_n}C_n\] where $m_i=\height(v_i)$, $h_i$ is the height of the region between $v_i$ an $v_{i+1}$ and the maps are the unique maps in $\Theta_0$ that there are between those objects.
\begin{proof}
We argue by induction on the total number $m$ of vertices of the tree $T$ . Assume $v_i\neq T(0)$, i.e. the root, in which case there is nothing to prove. Decompose $T$ as $\Sigma T_1\plus{D_0}\ldots\plus{D_0}\Sigma T_k$, as in Remark \ref{decomposition of trees}. If $k>1$ then we can divide and reorder (if needed) the set of vertices  $\{v_i\}_{1\leq i \leq n}$ in $\{v_j\}_{1\leq j \leq r}\cup \{v_q\}_{r+1\leq q \leq n}$ so that the elements of $\{v_q\}_{r+1\leq q \leq n}$ are precisely those $v_i$'s that belong to $\Sigma T_k$, and thus automatically satisfy the assumption of the lemma for such tree. Therefore, because $h_r=0$ by construction, the statement about the decomposition of the tree $T$ holds true since we can apply the inductive hypothesis to the trees $\Sigma T_1\plus{D_0}\ldots\plus{D_0}\Sigma T_{k-1}$ and $\Sigma T_k$, which have strictly less than $m$ vertices.

If instead $k=1$, then $T=\Sigma T'$ and all the $v_i$'s belong to $T'$. Now, the result follows by induction, since $T'$ has $m-1$ vertices.
\end{proof}
\end{lemma}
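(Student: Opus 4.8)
The plan is to argue by induction on the total number of vertices of $T$, using the root decomposition of a tree recorded in Remark \ref{decomposition of trees} (the tree-theoretic avatar of Lemma \ref{decomposition of glob sum}) together with the description of suspension on trees from Example \ref{suspension of trees}. First I would dispose of the degenerate situation in which one of the $v_i$ is the root $T(0)$: in that case the subtree $C_i$ above it is all of $T$, and disjointness together with the covering hypothesis forces $n=1$, so that $m_1=0$ and the asserted isomorphism is simply $T\cong \Sigma^{0}C_1=T$. Hence I may assume from now on that no $v_i$ is the root, and write $T\cong \Sigma T_1 \plus{D_0}\cdots\plus{D_0}\Sigma T_k$ as in Remark \ref{decomposition of trees}.

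If $k>1$, I would partition the vertices $v_1,\ldots,v_n$ according to which top-level branch $\Sigma T_j$ contains them, reordering so that $v_{r+1},\ldots,v_n$ are exactly those lying in the last branch $\Sigma T_k$ (these automatically satisfy the hypotheses of the lemma for $\Sigma T_k$, and likewise $v_1,\ldots,v_r$ for the remaining branches). Since two consecutive vertices lying in distinct top-level branches have the root as their deepest common ancestor, one has $h_r=0$, so that $T\cong \bigl(\Sigma T_1 \plus{D_0}\cdots\plus{D_0}\Sigma T_{k-1}\bigr)\plus{D_0}\Sigma T_k$. Both factors have strictly fewer than $m$ vertices, so applying the inductive hypothesis to each and re-assembling the two partial decompositions along $D_{h_r}=D_0$ yields the desired isomorphism.

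The genuinely interesting case is $k=1$, where $T=\Sigma T'$ and every $v_i$ lives in $T'$. Under the correspondence of Example \ref{suspension of trees}, the subtree above each $v_i$ is unchanged (so the $C_i$ are the same), while every height is raised by one; thus, writing $m_i',h_i'$ for the data computed inside $T'$, we have $m_i=m_i'+1$ and $h_i=h_i'+1$. Since $T'$ has $m-1$ vertices, the inductive hypothesis decomposes it, and I would then apply $\Sigma$ to that decomposition. The key point is that $\Sigma$ commutes with the gluings: invoking that $U\circ\Sigma$ preserves connected colimits (Remark \ref{cocontinuity of UoSIgma}), hence in particular pushouts, together with $\Sigma D_h=D_{h+1}$, gives
\[
\Sigma\!\left(\Sigma^{m_i'}C_i \plus{D_{h_i'}} \Sigma^{m_{i+1}'}C_{i+1}\right)\;\cong\;\Sigma^{m_i}C_i \plus{D_{h_i}} \Sigma^{m_{i+1}}C_{i+1},
\]
so suspending the decomposition of $T'$ produces exactly the claimed decomposition of $T=\Sigma T'$.

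The part that requires care, rather than difficulty, is the bookkeeping of heights and of the regions between consecutive chosen vertices, and the verification that the gluing maps arising from pushing $\Sigma$ through the pushouts really are the unique maps in $\Theta_0$ named in the statement. This last point is handled by the fact that between any two globular sums there is at most one such structural map, so both the inductive isomorphism and its suspension are canonical and hence automatically compatible with the identifications $\Sigma D_{h_i'}=D_{h_i}$; thus no coherence needs to be checked by hand, and the induction closes.
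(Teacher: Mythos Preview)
Your proposal is correct and follows essentially the same strategy as the paper's proof: induction on the number of vertices, handling the root case trivially, splitting off the last top-level branch when $k>1$ using $h_r=0$, and in the $k=1$ case passing to $T'$ via suspension. Your version is in fact more explicit than the paper's in the $k=1$ step, where you spell out the height shifts $m_i=m_i'+1$, $h_i=h_i'+1$ and invoke Remark~\ref{cocontinuity of UoSIgma} to push $\Sigma$ through the pushouts; the paper simply asserts that the result follows by induction.
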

In what follows we assume $A$ is a globular sum, decomposed as $A\cong \Sigma \alpha_1 \plus{ D_0} \Sigma \alpha_2\plus{ D_0} \ldots \plus{ D_0} \Sigma \alpha_q$, and we define the maps in the zig-zag diagram associated with $\cyl(A)$.
	\begin{defi}
		\label{upward maps}
	Consider a globular sum $B\in \mathscr{L}(A)$. We define a map $z^A_B \colon A \rightarrow B$ as follows.
		Suppose $B$ is obtained by adjoining a new vertex $*_B$ to $A$, and let $m=\height(*_B)$. Observe that if $m>1$ then $*_B$ is necessarily adjoined to a unique $\Sigma \alpha_{i}$.
		Let $x=\iota^B_{m-1}(*_B)$ and consider $F=(\iota^B_{m-1})^{-1}\{x\}$. Clearly, $*_B \in F$. We now have two possible cases:
		\begin{itemize}
\item If $*_B=\min F$ then $z^A_B\in \Theta_0$ is the unique map whose underlying map of trees is given by $\chi^A_B$ if $n+1>m\geq 1$
 (see Definition 2.3.10 in \cite{AR1} for an explanation on how to associate a map of trees to a map in $\Theta_0$, and for the fact that there is only one such in this case). If $m=n+1$ then $A=\partial B$ and we set $z^A_B=\partial_{\tau}$. 
\item If $*_B\neq \min F$, let $y\in F$ be the predecessor of $*_B$ in $F$, and let $C$ be the subtree of $B$ over $y$. Then we apply Lemma \ref{tree decomposition with vertices} to the tree associated with $A$ and the set of vertices $\{v_1,\ldots,v_n,y\}$ where the $v_i$'s are all the leaves which do not lie above $y$. This allows us to define the map $z^A_B$ by imposing it to be $\Sigma^m({}_{C}w)\colon \Sigma^m C \rightarrow \Sigma^m C \plus{D_m}D_{m+1}$ (as in Definition \ref{whiskering w}) on $\Sigma^m C$ and the identity everywhere else.
	\end{itemize} 

Dually, we define a map $v^A_B\colon A \rightarrow B$ by cases:
\begin{itemize}
	\item If $*_B=\max F$ then $v^A_B\in \Theta_0$ is the unique map whose underlying map of trees is given by $\chi^A_B$ if $n+1>m\geq 1$. If $m=n+1$ then $A=\partial B$ and we set $v^A_B=\partial_{\sigma}$.
	\item If $*_B\neq \max F$, let $y\in F$ be the successor of $*_B$ in $F$, and let $C$ be the subtree of $B$ over $y$. Then we apply Lemma \ref{tree decomposition with vertices} to the tree associated with $A$ and the set of vertices $\{v_1,\ldots,v_n,y\}$ where the $v_i$'s are all the leaves which do not lie above $y$. This allows us to define the map $z^A_B$ by imposing it to be $\Sigma^m({w}_{C})\colon \Sigma^m C \rightarrow D_{m+1} \plus{D_m}\Sigma^m C$ (as in Definition \ref{whiskering w}) on $\Sigma^m C$ and the identity everywhere else.
\end{itemize} 
\end{defi} 
We are now ready to give the following definition
\begin{defi}
Given a globular sum $A$, let $\mathscr{L}(A)=\{A_1\lessdot A_2 \lessdot \ldots \lessdot A_m\}$, so that $m=\vert \mathscr{L}(A) \vert$. We define a functor $\Cyl(A)\colon \mathbf{I}_{\vert \mathscr{L}(A) \vert}\rightarrow \wgpd$ by setting:
\begin{itemize}
	\item $\Cyl(A)(0,k)=A$ for every $0\leq k \leq n$.
	\item $\Cyl(A)(1,q)= A_{q+1}$ for every $0\leq q \leq \vert \mathscr{L}(A) \vert-1$.
	\item $\Cyl(A) \left ( \left ( 0,r \right ) \rightarrow \left ( 1,r-1\right )\right )=z^A_{A_{r-1}}$.
	\item $\Cyl(A) \left ( \left ( 0,r \right ) \rightarrow \left ( 1,r\right )\right )=v^A_{A_r}$.
\end{itemize}
\[\bfig 
\morphism(-800,0)|l|/@{>}@<0pt>/<400,-300>[A`A_1;v^A_{A_1}]
\morphism(0,0)|a|/@{>}@<0pt>/<400,-300>[A`\ldots;]
\morphism(0,0)|r|/@{>}@<0pt>/<-400,-300>[A`A_1;z^A_{A_1}]
\morphism(800,0)|a|/@{>}@<0pt>/<-400,-300>[A `\ldots;]
\morphism(800,0)|l|/@{>}@<0pt>/<400,-300>[A `A_m;v^A_{A_m}]
\morphism(1600,0)|r|/@{>}@<0pt>/<-400,-300>[A`A_m;z^A_{A_m}]
\efig 
\]
\end{defi}
\begin{ex}
	\label{ex cyl decomp}
	Using the trees listed in \eqref{list of trees}, we can write down the full zig-zag diagram corresponding to $\cyl(D_2 \amalg_{D_0} D_1)$.
	\[
	\begin{tikzpicture}
	\matrix (m) [matrix of math nodes,row sep=1.5em,column sep=1.5em,minimum width=2em]
	{
		&  \ \ \ D_2\plus{D_0}D_1 \plus{D_0} D_1 \\
		D_2 \plus{D_0} D_1 & \\
		& D_2 \plus{D_0} D_2 \\
		D_2 \plus{D_0} D_1&\\
		&  \ \ \ D_2 \plus{ D_0} D_1 \plus{D_0} D_1\\
		D_2 \plus{D_0} D_1 &\\
		& \ \ \ D_2 \plus{D_1} D_2 \plus{D_0} D_1\\
		D_2 \plus{D_0} D_1 &\\
		& D_3 \plus{D_0} D_1\\
		D_2 \plus{D_0} D_1 &\\
		& \ \ \ D_2 \plus{D_1}D_2 \plus{D_0}D_1\\
		D_2 \plus{D_0}D_1&\\
		& D_1 \plus{D_0} D_2 \plus{D_0} D_1\\};
	\path[-stealth]
	(m-2-1) edge node [above] {$1\coprod w$} (m-1-2)
	edge  node [above] {$1 \coprod \sigma$} (m-3-2)
	(m-4-1)edge  node [above] {$1 \coprod \tau$} (m-3-2)
	edge  node [above] {$1 \coprod w$} (m-5-2)
	(m-6-1)edge  node [above] {$w \coprod 1$} (m-5-2)
	edge  node [above] {$(i_0,i_2)$} (m-7-2)
	(m-8-1)edge  node [above] {$w \coprod 1$} (m-7-2)
	edge  node [above] {$\sigma \coprod 1$} (m-9-2)
	(m-10-1)edge  node [above] {$\tau \coprod 1$} (m-9-2)
	edge  node [above] {$w \coprod 1$} (m-11-2)
	(m-12-1)edge  node [above] {$(i_1,i_2)$} (m-11-2)
	edge  node [above] {$w \coprod 1$} (m-13-2);

	\tikzstyle{every node}=[circle, draw,
	inner sep=0pt, minimum width=2pt]
	\node[] (A) at (5.5,7) {};
	
	\node[] (B) at (5,7.5) {};
	\node[] (C) at (5.5,7.5) {};
	\node[] (D) at (6,7.5) {};
	
	\node[] (E) at (5,8) {};
	
	\path [] (A) edge (B);
	\path [draw=red, very thick] (A) edge (D);
	\path [] (A) edge (C);
	\path [] (B) edge (E);

	\node[] (A') at (5.5,4.5) {};
	
	\node[] (B') at (5,5) {};
	\node[] (C') at (5.5,5.5) {};
	\node[] (D') at (5.5,5) {};
	
	\node[] (E') at (5,5.5) {};
	
	\path [] (A') edge (B');
	\path [draw=red, very thick] (D') edge (C');
	\path [] (A') edge (D');
	\path [] (B') edge (E');

	\node[] (A'') at (5.5,2) {};
	
	\node[] (B'') at (5,2.5) {};
	\node[] (C'') at (5.5,2.5) {};
	\node[] (D'') at (6,2.5) {};
	
	\node[] (E'') at (5,3) {};
	
	\path [] (A'') edge (B'');
	\path [draw=red, very thick] (A'') edge (C'');
	\path [] (A'') edge (D'');
	\path [] (B'') edge (E'');

	\node[] (a) at (5.5,-0.5) {};
	
	\node[] (b) at (5,0) {};
	\node[] (c) at (5.5,0.5) {};
	\node[] (d) at (6,0) {};
	
	\node[] (e) at (4.5,0.5) {};
	
	\path [] (a) edge (b);
	\path [draw=red, very thick] (b) edge (c);
	\path [] (a) edge (d);
	\path [] (b) edge (e);

	\node[] (a'') at (5.5,-3) {};
	
	\node[] (b'') at (5,-2.5) {};
	\node[] (c'') at (5,-2) {};
	\node[] (d'') at (6,-2.5) {};
	
	\node[] (e'') at (5,-1.5) {};
	
	\path [] (a'') edge (b'');
	\path [draw=red, very thick] (c'') edge (e'');
	\path [] (a'') edge (d'');
	\path [] (b'') edge (c'');

	\node[] (a') at (5.5,-5.5) {};
	
	\node[] (b') at (5,-5) {};
	\node[] (c') at (5.5,-4.5) {};
	\node[] (d') at (6,-5) {};
	
	\node[] (e') at (4.5,-4.5) {};
	
	\path [] (a') edge (b');
	\path [draw=red, very thick] (b') edge (e');
	\path [] (a') edge (d');
	\path [] (b') edge (c');

	\node[] (a''') at (5.5,-8) {};
	
	\node[] (b''') at (5.5,-7.5) {};
	\node[] (c''') at (5.5,-7) {};
	\node[] (d''') at (6,-7.5) {};
	
	\node[] (e''') at (5,-7.5) {};
	
	\path [] (a''') edge (b''');
	\path [draw=red, very thick] (a''') edge (e''');
	\path [] (a''') edge (d''');
	\path [] (b''') edge (c''');
	\end{tikzpicture}
	\]

\end{ex}
We will now prove that the colimit of the zig-zag diagram associated with a globular sum $A$ we have just defined is precisely $\cyl(A)$. To do so we need two preliminary lemmas, which we now present.
\begin{lemma}
	\label{cyl of suspension}
$\cyl(\Sigma B)$ can be expressed as the colimit of the following diagram
\begin{equation}
\label{cyl's of suspensions}
\bfig 
\morphism(-800,0)|a|/@{>}@<0pt>/<400,-300>[\Sigma B`\Sigma B \plus{ D_0} D_1;\iota]
\morphism(0,0)|a|/@{>}@<0pt>/<400,-300>[\Sigma B`\Sigma \cyl(B);\Sigma(\iota_0)]
\morphism(0,0)|a|/@{>}@<0pt>/<-400,-300>[\Sigma B`\Sigma B \plus{ D_0} D_1;{}_{\Sigma B}w]
\morphism(800,0)|a|/@{>}@<0pt>/<-400,-300>[\Sigma B`\Sigma \cyl(B);\Sigma(\iota_1)]
\morphism(800,0)|a|/@{>}@<0pt>/<400,-300>[\Sigma B`D_1 \plus{ D_0}\Sigma B;{w}_{\Sigma B}]
\morphism(1600,0)|a|/@{>}@<0pt>/<-400,-300>[\Sigma B`D_1 \plus{ D_0}\Sigma B;\iota]
\efig 
\end{equation}
\begin{proof}
As we let $B$ vary in $\Theta_0$, we see that the colimit of the zig-zag in the statement defines a globular functor $\Theta_0 \rightarrow \wgpd$, which coincides with $\cyl(\Sigma D_{\bullet})$ on $\G$. Therefore, there exists a natural isomorphism as stated thanks to the universal property of $\Theta_0$.
\end{proof}
\end{lemma}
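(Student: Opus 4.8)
The plan is to read both $B \mapsto \cyl(\Sigma B)$ and the colimit of \eqref{cyl's of suspensions} as globular-sum-preserving functors $\Theta_0 \to \wgpd$ and to show they agree on the globes $\G$; the universal property of $\Theta_0$ as the free globular-sum completion of $\G$ (the same property used to extend $\cyl$ from $\G$ to $\Theta_0$) then forces a natural isomorphism between them. So the work splits into checking that each side is a globular functor and that the two coglobular objects obtained by restricting to $\G$ coincide.

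First I would package the right-hand side as a functor $\Phi \colon \Theta_0 \to \wgpd$ sending $B$ to $\colim_{\mathbf{I}_3}$ of \eqref{cyl's of suspensions}. Its four vertex-assignments $\Sigma B$, $\Sigma B \plus{D_0} D_1$, $\Sigma\cyl(B)$ and $D_1 \plus{D_0} \Sigma B$ are functorial in $B$, and the edges $\iota$, ${}_{\Sigma B}w$, $w_{\Sigma B}$, $\Sigma(\iota_0)$, $\Sigma(\iota_1)$ assemble them into a diagram $\Theta_0 \to \wgpd^{\mathbf{I}_3}$; postcomposing with $\colim_{\mathbf{I}_3}$ gives $\Phi$. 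Because $\mathbf{I}_3$ is connected and each vertex-functor preserves connected colimits — $\Sigma$ by Remark \ref{cocontinuity of UoSIgma}, $\cyl$ since it preserves globular sums by construction, and gluing a fixed $D_1$ at the basepoint being cocontinuous — the colimits over $\mathbf{I}_3$ commute with the (connected) globular-sum colimits, so $\Phi$ is a globular functor. On globes I would use that $\Sigma D_n \cong D_{n+1}$ and that the two boundary peaks of \eqref{cyl's of suspensions} (the ones labelled $\iota$) each map into a single valley and hence glue nothing: deleting them turns \eqref{cyl's of suspensions} into the defining diagram \eqref{cyl D_n} of $\cyl(D_{n+1})$, giving $\Phi(D_n) \cong \cyl(\Sigma D_n)$, compatibly with the structural maps $\cyl(\sigma), \cyl(\tau)$ of Definition \ref{cyl defi}.

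It then remains to note that $\cyl \circ \Sigma$ is also a globular functor: $\Sigma$ sends globular sums to globular sums (Example \ref{suspension of trees}) and preserves them because globular sums are connected colimits and $U \circ \Sigma$ preserves such (Remark \ref{cocontinuity of UoSIgma}), while $\cyl$ preserves globular sums by construction. Since $\Phi$ and $\cyl\circ\Sigma$ are globular functors restricting to the same coglobular object $n \mapsto \cyl(D_{n+1})$ on $\G$, the universal property of $\Theta_0$ supplies the desired natural isomorphism $\cyl(\Sigma B) \cong \colim_{\mathbf{I}_3}$\eqref{cyl's of suspensions}, automatically compatible with the cylinder inclusions recorded by the $\iota$-labelled peaks. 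I expect the main obstacle to be the purely organisational verification that $\Phi$ is strictly functorial — above all that the whiskering maps ${}_{(-)}w$ and $w_{(-)}$ of Definition \ref{whiskering w} are natural in the globular-sum variable — together with confirming that the identification on globes respects every coglobular structure map; once these are in hand the universal property closes the argument.
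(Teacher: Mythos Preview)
Your proposal is correct and follows exactly the same approach as the paper's proof: both argue that the colimit of \eqref{cyl's of suspensions} defines a globular functor $\Theta_0 \to \wgpd$ which agrees with $\cyl(\Sigma D_\bullet)$ on $\G$, and then invoke the universal property of $\Theta_0$. You have simply unpacked in detail what the paper states in a single sentence, including the observation that the two outer $\iota$-peaks contribute nothing to the colimit when $B$ is a globe.
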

We record here a categorical lemma that will be used to prove the result immediately after it.
\begin{lemma}
	\label{iterated pushouts}
Given a category with pushouts $\mathcal{C}$ and a diagram in it of the form
\[
\bfig
\morphism(0,0)|a|/@{>}@<0pt>/<-400,-300>[C`A;]
\morphism(0,0)|a|/@{>}@<0pt>/<400,-300>[C`B;]
\morphism(-400,-300)|a|/@{>}@<0pt>/<-400,-300>[A`A';]
\morphism(400,-300)|a|/@{>}@<0pt>/<400,-300>[B`B';]
\efig
\]
we get a pushout square
\[
\bfig
\morphism(0,0)|a|/@{>}@<0pt>/<500,0>[A\plus{C}B`A\plus{C}B';]
\morphism(0,0)|a|/@{>}@<0pt>/<0,-500>[A\plus{C}B`A'\plus{C}B;]
\morphism(500,0)|a|/@{>}@<0pt>/<0,-500>[A\plus{C}B'`A'\plus{C}B';]
\morphism(0,-500)|a|/@{>}@<0pt>/<500,0>[A'\plus{C}B`A'\plus{C}B'; ]
\efig
\] 
\end{lemma}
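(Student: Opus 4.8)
The plan is to deduce the statement from the pasting lemma for pushouts (if one of two horizontally or vertically adjacent squares is a pushout, then the other is a pushout if and only if the composite rectangle is). First I would assemble the given data $C\to A$, $C\to B$, $A\to A'$, $B\to B'$ into a single commutative $3\times 3$ grid whose rows are $C\to B\to B'$, then $A\to A\plus{C}B\to A\plus{C}B'$, then $A'\to A'\plus{C}B\to A'\plus{C}B'$, and whose columns are the evident functoriality-of-pushout maps. The square we must prove is a pushout is the bottom-right one, and all the interior maps are the canonical comparison maps induced by $A\to A'$ and $B\to B'$.

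Next I would record which cells are pushouts essentially for free. The top-left square on $C,B,A,A\plus{C}B$ is a pushout by definition. The top rectangle (top-left and top-right glued along $B\to A\plus{C}B$) has corner $A\plus{C}B'$ and is exactly the pushout defining $A\plus{C}B'$, since $C\to B'$ is the composite $C\to B\to B'$; hence by the pasting lemma the top-right square is a pushout. By the symmetric argument applied to the left column, the left rectangle is the pushout defining $A'\plus{C}B$, so the bottom-left square is a pushout as well.

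To reach the bottom-right square I would invoke the pasting lemma once more, using the key identification $A'\plus{C}B'\cong (A'\plus{C}B)\plus{B}B'$: stacking the pushout $A'\plus{C}B$ of $A'\leftarrow C\to B$ with the pushout of $(A'\plus{C}B)\leftarrow B\to B'$, the pasting lemma identifies the resulting outer rectangle with the pushout of $A'\leftarrow C\to B'$, which is $A'\plus{C}B'$. This says precisely that the right rectangle (top-right glued above bottom-right) is a pushout. Since the top-right square is already known to be a pushout, the pasting lemma now forces the bottom-right square to be a pushout, which is the claim.

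There is no genuine obstacle here; the content is entirely the pasting lemma, applied twice. The only thing to watch is the bookkeeping of orientations, namely making sure that each rectangle I call a pushout is literally the colimit cocone defining the relevant iterated pushout and that the shared edges line up so the pasting lemma applies in the correct (horizontal or vertical) direction. One could equally route the two applications through the bottom-left square rather than the right rectangle; both give the same conclusion.
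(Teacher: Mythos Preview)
Your proof is correct. The paper states this lemma without proof, treating it as a standard categorical fact, so there is no argument to compare against; your route via the pasting lemma for pushouts in a $3\times 3$ grid is exactly the expected one. One minor streamlining: once you know the left rectangle is a pushout, you can observe directly that the full outer square (corners $C$, $B'$, $A'$, $A'\plus{C}B'$) is a pushout by definition, and then the pasting lemma immediately gives that the right rectangle is a pushout---this avoids the separate verification of $(A'\plus{C}B)\plus{B}B'\cong A'\plus{C}B'$, though of course that identification is itself just another instance of the same pasting argument.
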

We want to prove the following result
\begin{prop}
Given a globular sum $A$, there exists a natural isomorphism in $\wgpd$:
\[\colim_{\mathbf{I}_{\vert \mathscr{L}(A) \vert }}\Cyl(A)\cong \cyl(A)\]
\end{prop}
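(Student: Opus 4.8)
The plan is to argue by induction on the number of vertices of the tree $T^A$, exploiting the suspension decomposition $A\cong \Sigma\alpha_1 \plus{D_0}\Sigma\alpha_2\plus{D_0}\cdots\plus{D_0}\Sigma\alpha_q$ of Lemma \ref{decomposition of glob sum}. Since $\cyl\colon\Theta_0\to\wgpd$ is the globular-sum-preserving extension of the coglobular object $\cyl(D_\bullet)$, it sends this decomposition to the iterated pushout $\cyl(\Sigma\alpha_1)\plus{\cyl(D_0)}\cdots\plus{\cyl(D_0)}\cyl(\Sigma\alpha_q)$, where $\cyl(D_0)=D_1$. The base case $A=D_0$ is immediate: $\mathscr{L}(D_0)=\{D_1\}$, so $\Cyl(D_0)$ is the length-one zig-zag whose single bottom term is $D_1$, and its colimit is $D_1=\cyl(D_0)$.

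For the suspension building block, i.e. a single factor $\Sigma B$, I would begin from Lemma \ref{cyl of suspension}, which presents $\cyl(\Sigma B)$ as the colimit of a length-three zig-zag $Z=Z_+\bullet\tilde{M}\bullet Z_-$, whose middle length-one piece $\tilde M$ has colimit $\Sigma\cyl(B)$ and whose two ends $Z_\pm$ carry the whiskerings into $\Sigma B\plus{D_0}D_1$ and $D_1\plus{D_0}\Sigma B$. By the inductive hypothesis $\cyl(B)\cong\colim\Cyl(B)$, and since $\mathbf{I}_{\vert\mathscr{L}(B)\vert}$ is connected, the functor $\Sigma$ preserves this colimit by Remark \ref{cocontinuity of UoSIgma}, giving $\Sigma\cyl(B)\cong\colim\bigl(\Sigma\Cyl(B)\bigr)$. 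Thus $\tilde M=\widetilde{\Sigma\Cyl(B)}$, and replacing $\tilde M$ by the zig-zag $\Sigma\Cyl(B)$ and invoking Lemma \ref{colimit of zig zags} yields $\cyl(\Sigma B)\cong\colim\bigl(Z_+\bullet\Sigma\Cyl(B)\bullet Z_-\bigr)$. It then remains to identify $Z_+\bullet\Sigma\Cyl(B)\bullet Z_-$ with $\Cyl(\Sigma B)$: using Example \ref{suspension of trees} one checks that $\mathscr{L}(\Sigma B)$ consists of $\Sigma B\plus{D_0}D_1$, the suspensions $\Sigma B_j$ of the members of $\mathscr{L}(B)$, and $D_1\plus{D_0}\Sigma B$, in this $\lessdot$-order (so $\vert\mathscr{L}(\Sigma B)\vert=\vert\mathscr{L}(B)\vert+2$), and that the maps $z,v$ of Definition \ref{upward maps} reduce to $\Sigma z^B_{B_j},\Sigma v^B_{B_j}$ on the inner part and to the whiskerings at the two ends.

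For the general case $q>1$ I would peel off the last factor, writing $A=A'\plus{D_0}\Sigma\alpha_q$ with $A'=\Sigma\alpha_1\plus{D_0}\cdots\plus{D_0}\Sigma\alpha_{q-1}$, so that $\cyl(A)\cong\cyl(A')\plus{D_1}\cyl(\Sigma\alpha_q)$. The inductive hypothesis gives $\cyl(A')\cong\colim\Cyl(A')$ and $\cyl(\Sigma\alpha_q)\cong\colim\Cyl(\Sigma\alpha_q)$, and the point is to recognise $\Cyl(A)$ as the concatenation of the two blocks $\Cyl(\Sigma\alpha_q)$ and $\Cyl(A')$ fused along the root region, and to check that its colimit reproduces the pushout along $D_1$. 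Here Lemma \ref{colimit of zig zags} collapses each block to its colimit, while Lemma \ref{iterated pushouts} is what lets me commute the pushout defining $\cyl(A)$ past these colimits, so that the glued colimit equals $\cyl(A')\plus{D_1}\cyl(\Sigma\alpha_q)$. Naturality in $A$ then follows formally, since both $\colim\Cyl(-)$ and $\cyl(-)$ are globular-sum-preserving functors on $\Theta_0$ agreeing with $\cyl(D_\bullet)$ on $\G$, so the universal property of $\Theta_0$ forces the isomorphism to be compatible with all maps of globular sums.

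The hard part will be the combinatorial bookkeeping at the junction in the case $q>1$. The naive concatenation of the two blocks has one bottom vertex too many, because the root-adjacent addition on the right of $A'$ and the one on the left of $\Sigma\alpha_q$ name the same element of $\mathscr{L}(A)$ (indeed $\vert\mathscr{L}(A)\vert=\vert\mathscr{L}(A')\vert+\vert\mathscr{L}(\Sigma\alpha_q)\vert-1$). Analysing this shared root edge — verifying that the connecting maps $z^A_{A_r},v^A_{A_r}$ there glue the two blocks along exactly $\cyl(D_0)=D_1$ — is precisely the step where Lemma \ref{iterated pushouts} is indispensable, and is where I expect the main difficulty to lie.
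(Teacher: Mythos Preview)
Your proposal is correct and uses the same ingredients as the paper's proof (the suspension decomposition of Lemma \ref{decomposition of glob sum}, Lemma \ref{cyl of suspension}, Lemma \ref{colimit of zig zags}, and Lemma \ref{iterated pushouts}), with the induction on the $\alpha_i$ made explicit where the paper leaves it implicit. The one organisational difference is that the paper treats all $q$ suspension factors simultaneously---splitting $\mathscr{L}(A)$ into $2q{+}1$ alternating sub-intervals (each root-edge insertion isolated as its own length-one zig-zag $\mathbf{A}_{m_i}$) and then applying Lemma \ref{iterated pushouts} $(q{-}1)$ times at the end---which sidesteps the ``one bottom vertex too many'' overlap you flag, whereas your one-factor-at-a-time induction is equally valid but does require exactly the extra junction bookkeeping you anticipate.
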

\begin{proof}
In what follows, $\iota$ will denote a colimit inclusion, unless otherwise stated.

We make use of the (unique) decomposition of globular sums described in Lemma \ref{decomposition of glob sum}, which gives \[A\cong \Sigma \alpha_1 \plus{ D_0} \Sigma \alpha_2\plus{ D_0} \ldots \plus{ D_0} \Sigma \alpha_q\]
Therefore, by globularity of the cylinder functor, we have the isomorphism \[\cyl(A) \cong \cyl(\Sigma\alpha_1)\plus{\cyl(D_0)}\cyl(\Sigma\alpha_2)\plus{\cyl(D_0)}\ldots \plus{\cyl(D_0)} \cyl(\Sigma\alpha_q)\]
We can break the ordered set $\mathscr{L}(A)$ into subintervals by taking into consideration the globular sums $A_i$ for which the new edge is joined at the root. More precisely, let $1,m_1,\ldots, m_k,p=\vert \mathscr{L}(A) \vert$ be the ordered sequence of integers such that $A_{m_i}$ is obtained from $A$ by adding a new vertex at height $1$. We then have \[\mathscr{L}(A)=\{A_1\}\cup \{A_2,\ldots, A_{m_1 -1}\}\cup \{A_{m_1}\}\cup\ldots \cup  \{A_{m_{k-1}}\}\cup\{A_{m_{k-1}+1},\ldots A_{m_k -1}\}\cup\{A_{p}\}\]
and a corresponding isomorphism \[\mathbf{I}_{\vert \mathscr{L}(A)\vert}\cong \mathbf{I}_{1} \bullet \mathbf{I}_{p-m_{k-1}-1} \bullet \mathbf{I}_1 \bullet \ldots \bullet \mathbf{I}_{m_1-2} \bullet \mathbf{I}_{1} \] 
This, in turn, induces an isomorphism of diagrams \[\Cyl(A)\cong \mathbf{A_{p}}\bullet \Cyl(A)'_k \bullet \mathbf{A_{m_{k-1}}}\bullet \Cyl(A)'_{k-1}\bullet \ldots \bullet \Cyl(A)'_1 \bullet \mathbf{A_{1}}\] where we define $\Cyl(A)'_i=\Cyl(A)_{\vert \mathbf{I}_{(m_i-1 )- (m_{i-1}+1)+1}}$, and $\mathbf{A_1},\mathbf{A_p},\mathbf{A_{m_i}}$ are zig-zags of length $1$ which we now describe. By definition, $\mathbf{A_{m_i}}(1,0)=A_{m_i}$, and one has that $\mathbf{A_{m_i}}\left((0,0)\rightarrow (1,0)\right)$ is given by: 
\[1\plus{ D_0}w_{\Sigma (\alpha_{i})}\plus{ D_0}1\colon\Sigma(\alpha_1)\plus{ D_0}\ldots\plus{ D_0} \Sigma(\alpha_k)\rightarrow \Sigma(\alpha_1)\plus{ D_0}\ldots\plus{ D_0}\left(D_1 \plus{D_0}\Sigma(\alpha_i)\right)\plus{ D_0}\ldots\plus{ D_0} \Sigma(\alpha_k)\plus{D_0} D_1\]
On the other hand, we have that $\mathbf{A_{m_i}}\left((0,1)\rightarrow (1,0)\right)$
coincides with 
\[1\plus{ D_0} {}_{\Sigma (\alpha_{k-i+1})}w\plus{ D_0}1\colon\Sigma(\alpha_1)\plus{ D_0}\ldots\plus{ D_0} \Sigma(\alpha_k)\rightarrow \Sigma(\alpha_1)\plus{ D_0}\ldots\plus{ D_0}\left( \Sigma(\alpha_{k-i+1})\plus{D_0} D_1\right) \plus{ D_0}\ldots\plus{ D_0} \Sigma(\alpha_k) \]
$\mathbf{A_{1}}\left((0,0)\rightarrow (1,0)\right)$ is the map
\[\iota\colon\Sigma(\alpha_1)\plus{ D_0}\ldots\plus{ D_0}\Sigma(\alpha_k)\rightarrow \Sigma(\alpha_1)\plus{ D_0}\ldots\plus{ D_0} \Sigma(\alpha_k)\plus{D_0} D_1\]
and $\mathbf{A_1}\left((0,1)\rightarrow (1,0)\right)$ is given by
\[1\plus{ D_0}{}_{\Sigma (\alpha_{k})}w\colon\Sigma(\alpha_1)\plus{ D_0}\ldots\plus{ D_0} \Sigma(\alpha_k)\rightarrow \Sigma(\alpha_1)\plus{ D_0}\ldots\plus{ D_0}\left(\Sigma(\alpha_k)\plus{D_0} D_1\right)\]
Finally, $\mathbf{A_p}\left((0,0)\rightarrow (1,0)\right)$ coincides with
\[w_{\Sigma(\alpha_1)}\plus{D_0}1\colon\Sigma(\alpha_1)\plus{ D_0}\ldots\plus{ D_0} \Sigma(\alpha_k)\rightarrow \left( D_1 \plus{D_0}\Sigma(\alpha_1)\right) \plus{ D_0}\ldots\plus{ D_0}\Sigma(\alpha_k)\]
and $\mathbf{A_p}\left((0,1)\rightarrow (1,0)\right)$ is the map 
\[\iota\colon\Sigma(\alpha_1)\plus{ D_0}\ldots\plus{ D_0}\Sigma(\alpha_k)\rightarrow D_1\plus{D_0}\Sigma(\alpha_1)\plus{ D_0}\ldots\plus{ D_0} \Sigma(\alpha_k) \]
We now want to show that 
\begin{equation}
\label{3}
\colim_{\mathbf{I}_{\mathbf{I}_{(m_i-1 )- (m_{i-1}+1)+1}}} \Cyl(A)'_i \cong \Sigma(\alpha_1)\plus{ D_0}\ldots\plus{ D_0}\Sigma(\alpha_{i-1})\plus{ D_0} \Sigma\cyl( \alpha_{i})\plus{ D_0}\Sigma(\alpha_{i+1})\plus{ D_0}\ldots\plus{ D_0} \Sigma(\alpha_{k})
\end{equation}
Firstly, notice that the interval $ \{A_{m_{i-1} +1},\ldots,A_{m_{i}-1}\}$ is isomorphic to $\Sigma(\mathscr{L}(\alpha_{i}))$, i.e. the image of the set $\mathscr{L}(\alpha_{i})$ under the object-part function of the functor $\Sigma$.

By inspection of the maps $z^A_B$ and $v^A_B$ of Definition \ref{upward maps}, we see that the diagram $\Cyl(A)'_i$ coincides with \[\Sigma(\alpha_1)\plus{ D_0}\ldots\plus{ D_0}\Sigma(\alpha_{i-1})\plus{ D_0}\Sigma \circ \Cyl(\alpha_i)\plus{ D_0}\Sigma(\alpha_{i+1})\plus{ D_0}\ldots\plus{ D_0} \Sigma(\alpha_{k})\] 
Using Remark \ref{cocontinuity of UoSIgma}, we see that \eqref{3} holds. Thus, thanks to Lemma \ref{colimit of zig zags}, the colimit of the diagram $\Cyl(A)$ coincides with the colimit of the zig-zag on the left-hand side below, where $\coprod$ denotes the operation $\coprod_{D_0}$. A further application of Lemma \ref{colimit of zig zags} and Lemma \ref{cyl of suspension} proves that this last colimit is in turn isomorphic to the colimit of the right-hand side zig-zag below
\[\bfig 
\morphism(0,0)|a|/@{>}@<0pt>/<400,-300>[A`\Sigma(\alpha_1)\plus{ D_0}\ldots\plus{ D_0} \Sigma(\alpha_k)\plus{D_0} D_1;\iota]
 \morphism(0,-600)|r|/@{>}@<0pt>/<400,300>[A`\Sigma(\alpha_1)\plus{ D_0}\ldots\plus{ D_0} \Sigma(\alpha_k)\plus{D_0} D_1;1\plus{} {}_{\Sigma(\alpha_k)}w]
\morphism(0,-600)|a|/@{>}@<0pt>/<400,-400>[A`\Sigma(\alpha_1)\plus{ D_0}\ldots\plus{ D_0} \Sigma\left(\cyl(\alpha_k)\right); \ \ 1\plus{} \Sigma(\iota_0)]
\morphism(0,-1300)|r|/@{>}@<0pt>/<400,300>[A`\Sigma(\alpha_1)\plus{ D_0}\ldots\plus{ D_0} \Sigma\left(\cyl(\alpha_k)\right);1\plus{}\Sigma(\iota_1)]
\morphism(0,-1300)|r|/@{>}@<0pt>/<400,-300>[A`\ldots;]
\morphism(0,-1900)|r|/@{>}@<0pt>/<400,300>[A`\ldots;]
\morphism(0,-1900)|r|/@{>}@<0pt>/<400,-300>[A`\Sigma\left(\cyl(\alpha_1)\right)\plus{ D_0}\ldots\plus{ D_0} \Sigma(\alpha_k);\Sigma(\iota_0)\plus{} 1]
\morphism(0,-2500)|r|/@{>}@<0pt>/<400,300>[A`\Sigma\left(\cyl(\alpha_1)\right)\plus{ D_0}\ldots\plus{ D_0} \Sigma(\alpha_k);\Sigma(\iota_1)\plus{} 1]
\morphism(0,-2500)|r|/@{>}@<0pt>/<400,-300>[A`D_1 \plus{D_0}\Sigma(\alpha_1)\plus{ D_0}\ldots\plus{ D_0} \Sigma(\alpha_k); \ \ w_{\Sigma(\alpha_1)}\plus{}1]
\morphism(0,-3100)|a|/@{>}@<0pt>/<400,300>[A`D_1 \plus{D_0}\Sigma(\alpha_1)\plus{ D_0}\ldots\plus{ D_0} \Sigma(\alpha_k);\iota]

\morphism(2000,0)|r|/@{>}@<0pt>/<400,-300>[A`\Sigma(\alpha_1)\plus{ D_0}\ldots\plus{ D_0} \cyl\left(\Sigma(\alpha_k)\right);1\plus{}\iota_0]
\morphism(2000,-600)|r|/@{>}@<0pt>/<400,300>[A`\Sigma(\alpha_1)\plus{ D_0}\ldots\plus{ D_0} \cyl\left(\Sigma(\alpha_k)\right);1\plus{} (i\circ w_{\Sigma(
	\alpha_k)})]
\morphism(2000,-600)|r|/@{>}@<0pt>/<400,-300>[A`\ldots;]
\morphism(2000,-1200)|r|/@{>}@<0pt>/<400,300>[A`\ldots;]
\morphism(2000,-1200)|r|/@{>}@<0pt>/<400,-300>[A`\cyl\left(\Sigma(\alpha_1)\right)\plus{ D_0}\ldots\plus{ D_0} \Sigma(\alpha_k); (i\circ {}_{\Sigma(\alpha_1)}w)\plus{}1]
\morphism(2000,-1800)|r|/@{>}@<0pt>/<400,300>[A`\cyl\left(\Sigma(\alpha_1)\right)\plus{ D_0}\ldots\plus{ D_0} \Sigma(\alpha_k);\iota_1 \plus{}1]

\efig 
\]
To finish the proof we now apply Lemma \ref{iterated pushouts} $(k-1)$ times to diagrams of the form
\[
\bfig 
\morphism(0,0)|a|/@{>}@<0pt>/<-1100,-300>[D_1`\Sigma(\alpha_1)\plus{ D_0}\ldots\plus{ D_0}\left( \Sigma(\alpha_{i})\plus{D_0} D_1\right) \plus{ D_0}\ldots\plus{ D_0} \Sigma(\alpha_k) ;]
\morphism(0,0)|a|/@{>}@<0pt>/<1100,-300>[D_1`\Sigma(\alpha_1)\plus{ D_0}\ldots\plus{ D_0}\left( D_1\plus{D_0}\Sigma(\alpha_{i+1}) \right) \plus{ D_0}\ldots\plus{ D_0} \Sigma(\alpha_k) ;]
\morphism(-1100,-300)|a|/@{>}@<0pt>/<0,-400>[\Sigma(\alpha_1)\plus{ D_0}\ldots\plus{ D_0}\left( \Sigma(\alpha_{i})\plus{D_0} D_1\right) \plus{ D_0}\ldots\plus{ D_0} \Sigma(\alpha_k)`\Sigma(\alpha_1)\plus{ D_0}\ldots\plus{ D_0}\cyl\left( \Sigma(\alpha_{i})\right) \plus{ D_0}\ldots\plus{ D_0} \Sigma(\alpha_k);]
\morphism(1100,-300)|a|/@{>}@<0pt>/<0,-400>[\Sigma(\alpha_1)\plus{ D_0}\ldots\plus{ D_0}\left( D_1\plus{D_0}\Sigma(\alpha_{i+1}) \right) \plus{ D_0}\ldots\plus{ D_0} \Sigma(\alpha_k)`\Sigma(\alpha_1)\plus{ D_0}\ldots\plus{ D_0}\cyl\left( \Sigma(\alpha_{i+1})\right) \plus{ D_0}\ldots\plus{ D_0} \Sigma(\alpha_k);]
\efig 
\]
\end{proof}
Given $B\in \mathscr{L}(A)$, we denote by $i_B\colon B \rightarrow \cyl(A)$ the colimit inclusion.
\begin{rmk}
\label{some maps in cyl(A) zig zag}
If we consider the globular sum $A$ as in the previous theorem, then $D_1 \plus{D_0}A$ and $A \plus{D_0} D_1$ both belongs to $\mathcal{L}(A)$ by construction. It is clear from the proof of the previous theorem that the colimit inclusion $i_{D_1 \plus{D_0}A}\colon D_1 \plus{D_0}A \rightarrow \cyl(A)$ is given by $(\cyl(\sigma^{\dim (A)}),\iota_1)$. In a completely analogous manner, $i_{A \plus{D_0} D_1}\colon A \plus{D_0} D_1 \rightarrow \cyl(A)$ is equal to $(\iota_0,\cyl(\tau^{\dim (A)}))$.

If the globular sum $A$ decomposes as $A=S\plus{D_0} T$ then $S\plus{D_0}D_1\plus{D_0}T$ belongs to $\mathcal{L}(A)$, and the colimit inclusion \[i_{S\plus{D_0}D_1\plus{D_0}T}\colon S\plus{D_0}D_1\plus{D_0}T \rightarrow \cyl(A)\cong \cyl(S)\plus{\cyl(D_0)}\cyl(T)\] is given, on each summand respectively, by the composites
\[
\bfig
\morphism(0,0)|a|/@{>}@<0pt>/<500,0>[S`\cyl(S) ;\iota_0]
\morphism(500,0)|a|/@{>}@<0pt>/<1000,0>[\cyl(S)`\cyl(S)\plus{\cyl(D_0)}\cyl(T) ;i]
\morphism(250,-400)|a|/@{>}@<0pt>/<1000,0>[D_1\cong \cyl(D_0)`\cyl(S) \plus{\cyl(D_0)}\cyl(T);i]
\morphism(0,-800)|a|/@{>}@<0pt>/<500,0>[T`\cyl(T) ;\iota_1]
\morphism(500,-800)|a|/@{>}@<0pt>/<1000,0>[\cyl(T)`\cyl(S)\plus{\cyl(D_0)}\cyl(T) ;i]
\efig
\] where we denote with $i$ the obvious colimit inclusions.

Finally, observe that if $B\in \mathcal{L}(A)$ and the new edge is attached at height $m>0$, say to $\Sigma(\alpha_{i})$, then the colimit inclusion $i_B\colon B \rightarrow \cyl(A)$ factors through the natural map \[\Sigma(\alpha_1)\plus{D_0}\ldots \plus{D_0} \Sigma \cyl(\alpha_{i})\plus{D_0}\ldots \plus{D_0}\Sigma(\alpha_k) \rightarrow \cyl(A)\] whose existence is evident from the proof we have just presented, via the map
\[
1\plus{D_0}\Sigma(i_{B'})\plus{D_0}1\colon B\cong \Sigma(\alpha_1)\plus{D_0}\ldots \plus{D_0} \Sigma B'\plus{D_0}\ldots \plus{D_0}\Sigma(\alpha_k) \rightarrow \Sigma(\alpha_1)\plus{D_0}\ldots \plus{D_0} \Sigma \cyl(\alpha_{i})\plus{D_0}\ldots \plus{D_0}\Sigma(\alpha_k)
\] for a unique $B'\in \mathcal{L}(\alpha_{i})$.
\end{rmk}
\section{Operations on cylinders}
\subsection{Overview}
\label{overview}
Consider the globular sum preserving functor
\begin{equation}
\label{cyl} \cyl(\bullet)\colon  \Theta_0 \rightarrow \wgpd
\end{equation}
of which we have just given a more explicit definition.
Constructing an extension of this functor to a cocontinuous endofunctor on $\wgpd$ amounts to endowing \eqref{cyl} with the structure of a co-$\infty$-groupoid.

This means that given the coherator for $\infty$-groupoids $\mathfrak{C}$, we have to find an extension of the form:
\begin{equation}
\label{extension of cyl}
\bfig
\morphism(0,0)|a|/@{>}@<0pt>/<500,0>[\Theta_0`\wgpd;\cyl(\bullet)]
\morphism(0,0)|l|/@{>}@<0pt>/<0,-300>[\Theta_0`\mathfrak{C};]
\morphism(0,-300)|r|/@{-->}@<0pt>/<500,300>[\mathfrak{C}`\wgpd;\cyl(\bullet)]
\efig 
\end{equation}
Thanks to the cellularity property of $\mathfrak{C}$ and to Lemma \ref{univ prop of glob th}, this becomes an inductive process, where we assume we have an operation $\rho\colon D_n \rightarrow A$ in $\mathfrak{C}$, as well as interpretations of its boundary
\[\cyl (\rho \circ \sigma),\cyl (\rho \circ \tau)\colon\cyl(D_{n-1}) \rightarrow \cyl(A)\]
and we need to define a map $\cyl (\rho )\colon\cyl(D_{n}) \rightarrow \cyl(A)$ such that for $\epsilon=\sigma,\tau$: \[\cyl(\rho)\circ \cyl(\epsilon)=\cyl(\rho \circ \epsilon)\colon \cyl(D_{n-1})\rightarrow \cyl(A)\]
Given the fact that we have explained how to decompose of cylinders on globular sums into contractible  pieces,  we may try to use this fact to build maps representing a first approximation of these operations between cylinders.

At his point, we need a technical assumption in order to perform a specific construction that would not be applicable otherwise.
In fact, we assume that $\mathfrak{D}$ is a (homogeneous) coherator for $\infty$-categories, and doing so we manage to define in \ref{rho hat defi}, for every homogeneous operation
$\rho\colon D_n \rightarrow A$ in $\mathfrak{D}$, a map of $\infty$-groupoids \[\hat{\rho}\colon \cyl(D_n)\rightarrow \cyl(A)\] satisfying two properties, that can be expressed in the following commutative diagrams:
\begin{equation}
\label{hat properties}
\bfig
\morphism(-1000,0)|a|/@{>}@<0pt>/<750,0>[D_n \coprod D_n`A \coprod A;\rho \coprod \rho]
\morphism(-1000,-500)|a|/@{>}@<0pt>/<750,0>[\cyl(D_n)`\cyl(A);\hat{\rho}]
\morphism(-250,0)|r|/@{>}@<0pt>/<0,-500>[A \coprod A`\cyl(A);\iota_0 \coprod \iota_1]
\morphism(-1000,0)|l|/@{>}@<0pt>/<0,-500>[D_n \coprod D_n`\cyl(D_n);\iota_0 \coprod \iota_1]

\morphism(500,0)|a|/@{>}@<0pt>/<1000,0>[\cyl(S^{n-1})`\cyl(A);(\widehat{\rho \circ \sigma},\widehat{\rho \circ \tau})]
\morphism(500,0)|a|/@{>}@<0pt>/<0,-500>[\cyl(S^{n-1})`\cyl(D_n);]
\morphism(500,-500)|r|/@{>}@<0pt>/<1000,500>[\cyl(D_n)`\cyl(A);\hat{\rho}]
\efig
\end{equation}
This map is to be thought of as a first approximation of the ``correct'' functorial interpretation $\cyl(\rho)$. It then needs to be modified, more precisely its boundary has to be modified, so that we actually get a functor $\cyl(\bullet)\colon \mathfrak{D} \rightarrow \wgpd$. We will consider an instance of this process in Section 11, where we construct a functor $\mathscr{D}_{\leq 2}\rightarrow \wgpd$ ($\mathscr{D}_{\leq 2}$ being the full subcategory of $\mathscr{D}$ on globular sums of dimension less or equal to $2$).
One then has to generalize this process of adjusting the boundary of the maps $\hat{\rho}$ to all higher dimensions in order to render these interpretations functorial, thus succeeding in extending $\cyl(\bullet)$ to $\mathfrak{D}$. 
Then, provided the following conjecture holds true, the extension problem in \eqref{extension of cyl} can be solved thanks to the constructions performed in Section 6.
\begin{conj}
	A coherator for $\infty$-categories is contractible (i.e. it is a coherator for $\infty$-groupoids) provided it can be endowed with a system of inverses, as in Definition \ref{structure systems}.	
\end{conj}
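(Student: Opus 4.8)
The plan is to verify groupoidal contractibility of $\mathfrak{D}$ directly, that is, to produce for every \emph{groupoidally} admissible pair $f,g\colon D_n\rightarrow A$ (parallel, with $\dim(A)\leq n+1$) an extension $h\colon D_{n+1}\rightarrow A$ satisfying $h\circ\sigma=f$ and $h\circ\tau=g$, using only the extensions already guaranteed for the more restrictive \emph{categorically} admissible pairs together with the chosen system of inverses. The case $n=0$ is free, since such a pair is admissible in both senses and is therefore liftable in $\mathfrak{D}$ by hypothesis. For $n\geq 1$ I would argue by induction, and the key observation to exploit is that categorical admissibility differs from groupoidal admissibility only by an \emph{orientation} constraint: for a categorical coherator one additionally demands that $f$ and $g$ factor (homogeneously) through $\partial_{\sigma}$ and $\partial_{\tau}$ respectively. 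Everything reduces to removing this constraint by means of invertibility.

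First I would try to cut down the dimension of $A$. By Lemma~\ref{decomposition of glob sum} every globular sum of positive dimension is an iterated $D_0$-amalgam of suspensions $\Sigma\alpha_i$, and the suspension/loop machinery of Section 4 together with the fact that $\Omega$ preserves contractibility (Proposition~\ref{omega preserves contractibles}) should let a contractibility statement for the lower-dimensional $\alpha_i$ feed back into one for $A$. Combined with the boundary reduction used in Proposition~\ref{glob sums are contractible}, which disposes of the case $\dim(A)<n+1$ by factoring the pair through $\partial A$ via the $(d-2)$-bijectivity of $\partial_{\sigma},\partial_{\tau}$ from Proposition~\ref{partial are n-2 bij}, this should leave only the genuinely new situation: a top-dimensional pair $(f,g)$ with $\dim(A)=n+1$ whose orientation obstructs categorical admissibility. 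Here I would correct the orientation with the inverse operation $\mathbf{i}_n$ of Definition~\ref{structure systems}, which exchanges $\sigma$ and $\tau$, so as to replace $(f,g)$ by a categorically admissible pair; lift the corrected pair using the categorical coherator structure of $\mathfrak{D}$; and transport the lift back to an honest cell $f\Rightarrow g$ by composing (via $\mathbf{c}_n$) with the coherence cells $\mathbf{k}^{s}_{n},\mathbf{k}^{t}_{n},\mathbf{l}_n,\mathbf{r}_n$ and the degeneracies $\mathbf{id}_n$ that witness the invertibility and unit laws.

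The hard part---and the reason the statement is only conjectured---is strictness. A coherator is a \emph{strict} globular theory, so the serial commutativity demanded of $h$ is an equality of maps in $\mathfrak{D}$, not merely the existence of higher invertible cells. The system of inverses only guarantees that the correcting composites return to an identity \emph{up to} the cells $\mathbf{k}^{s}_{n},\mathbf{k}^{t}_{n}$, so the ``correct, lift, transport'' procedure above yields a cell whose boundary agrees with $(f,g)$ only after further invertible coherences, never on the nose. Forcing the boundary to be strict appears to require either replacing $\mathfrak{D}$ by a weakly equivalent coherator in which these contractions can be chosen to be identities, or---more promisingly---routing the argument through M.~Batanin's operadic model, where invertibility is governed by a contractible operad and the passage between categorical and groupoidal contractions is far cleaner. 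This is precisely the Batanin--Grothendieck comparison flagged in the introduction, and I expect the definitive proof to proceed through such a comparison rather than by direct manipulation inside $\mathfrak{D}$.
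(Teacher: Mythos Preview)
The statement you are attempting to prove is labeled a \emph{Conjecture} in the paper, not a theorem or proposition; the paper offers no proof of it whatsoever. It is presented as an open problem---indeed, the surrounding discussion explains that the entire strategy of the paper for building the path object is contingent on this conjecture, and the introduction explicitly lists ``prove a conjecture that compares M.~Batanin's definition of $\infty$-groupoids and Grothendieck's one'' among the things that remain to be done. So there is no paper proof against which to compare your proposal.

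Your own write-up is consistent with this: you correctly identify the obstruction (the strictness of boundaries in a coherator means the ``correct, lift, transport'' manoeuvre via $\mathbf{i}_n$, $\mathbf{k}^s_n$, $\mathbf{k}^t_n$, $\mathbf{l}_n$, $\mathbf{r}_n$ only recovers $(f,g)$ up to further coherence cells, never on the nose), and you concede in your final paragraph that a direct argument inside $\mathfrak{D}$ does not go through and that the Batanin comparison is the likely route. That diagnosis matches the paper's remark that ``this conjecture states that Grothendieck $\infty$-groupoids are essentially equivalent to $\infty$-groupoids \`a la Batanin''. In short, what you have written is not a proof but a discussion of why the conjecture is hard---which is appropriate, since no proof currently exists.
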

This conjecture states that Grothendieck $\infty$-groupoids are essentially equivalent to $\infty$-groupoids à la Batanin (see \cite{BAT}, Definition 9.5).

The idea to obtain the map $\hat{\rho}$ is to construct a vertical stack of $(n-1)$-dimensional (possibly degenerate) cylinders in the $\infty$-groupoid $\cyl(A)(x_0,  x_m)$ for an appropriate pair of $0$-cells $(x_0,  x_m)$ in $\cyl(A)$.
We then compose this vertical stack using a vertical composition operation, and
the result is an $(n-1)$-cylinder in the hom-groupoid $\cyl(A)(x_0,x_m)$, which, by construction, transposes to give the desired map \[\bfig \morphism  (0,0)|a|/@{>}@<0pt>/<800,0>[\cyl(D_n)`\cyl(A);\widehat{\rho}] \efig\]
\subsection{Vertical composition of cylinders}
\label{subsect vert comp of cyls}
The goal of this section is to define an operation that performs the vertical composition of a compatible stack of an $m$-tuple of $n$-cylinders.
This operation takes as input a sequence of $n$-cylinders $F_i\colon A_i \curvearrowright A_{i+1}$ in an $\infty$-groupoid $X$, and produces an $n$-cylinder denoted by \[F_m \otimes F_{m-1} \otimes \ldots \otimes F_1 \colon A_1 \curvearrowright A_{m+1}\] It is represented by a map 
\[\bfig
\morphism(750,0)|a|/@{>}@<0pt>/<1000,0>[\cyl (D_n)`\cyl(D_n) \otimes \ldots \otimes \cyl(D_n) ;]
\efig 
\]
where the codomain is defined to be the colimit of the following diagram:

\[ 
\bfig

\morphism(0,-500)|l|/@{>}@<0pt>/<-300,300>[D_n`\cyl(D_n);\iota_1]
\morphism(0,-500)|r|/@{>}@<0pt>/<300,300>[D_n`\cyl(D_n);\iota_0]

\morphism(600,-500)|l|/@{>}@<0pt>/<-300,300>[D_n`\cyl(D_n);\iota_1]  \morphism(600,-500)|r|/@{>}@<0pt>/<300,300>[D_n`\ldots;\iota_0]

\morphism(1200,-500)|l|/@{>}@<0pt>/<-300,300>[D_n`\ldots;\iota_1]  \morphism(1200,-500)|r|/@{>}@<0pt>/<300,300>[D_n`\cyl(D_n);\iota_0]
\efig
\]
Moreover, it will have the property that \[\epsilon(F_m \otimes F_{m-1} \otimes \ldots \otimes F_1 )=\epsilon(F_m) \otimes \epsilon(F_{m-1}) \otimes \ldots \otimes \epsilon(F_1)\] for $\epsilon=s,t$.
To begin with, we have to do some preliminary work.
\begin{lemma}
	Let $\C$ be a category and $X\in \mathrm{ob}(\A)$ an object of $\A$.
	Given an adjunction
	\[\xymatrixcolsep{1pc}
	\vcenter{\hbox{\xymatrix{
				**[l]	\A \xtwocell[r]{}_{G}^{F}{'\perp}& **[r]X \downarrow\A
	}}}
	\]
	there exist, for every $n>0$, functors $G_{n+1}\colon F^n X \downarrow \A \rightarrow \A$ and adjunctions 
	\[\xymatrixcolsep{1pc}
	\vcenter{\hbox{\xymatrix{
				**[l]	\A \xtwocell[r]{}_{G^{n+1}}^{F^{n+1}}{'\perp}& **[r] F^n X \downarrow\A
	}}}
	\]
	where $F^n$ denotes the obvious iteration of $F$.
	\begin{proof}
		The proceeds by induction, the case $n=0$ being valid by assumption.
		
		Let $n>0$ and suppose the lemma holds for $n-1$. We will prove it holds for $n$.
		Given an object $f\colon F^n X \rightarrow B$ in $F^n X \downarrow \A$, consider its transpose $\hat{f}\colon F^{n-1} X \rightarrow GB$ under the adjunction $F \dashv G$.
		Define $G_{n+1}(f) =G_n(\hat{f})$ on objects, and given an arrow $h\colon B \rightarrow C$ under $F^n X$, we set $G_{n+1}(h)\colon=G_n(G(h))$.
		
		The following chain of natural isomorphisms of hom-sets proves the statement:
		\[ \A(A,G_{n+1}(f)) \cong \A(A,G_n(\hat{f})) \cong  F^{n-1}X \downarrow \A (F^n A, \hat{f}) \cong F^n \downarrow \A (F^{n+1} A,f).\qedhere \]\end{proof}
\end{lemma}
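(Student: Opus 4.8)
The plan is to argue by induction on $n$, taking the hypothesised adjunction $F \dashv G$ as the base case (the ``$n=0$'' instance, where $F^0 X = X$, $F^1 = F$ and $G_1 = G$). Throughout I write $U\colon X \downarrow \A \to \A$ for the forgetful functor and $\bar F = U\circ F\colon \A \to \A$ for the induced endofunctor, so that $F^n X$ is to be read as $\bar F^n X\in \A$ and $F^n X \downarrow \A$ is an honest coslice of $\A$. The structure maps of $F$ assemble into a natural transformation $\phi\colon \mathrm{const}_X \Rightarrow \bar F$, whose component at $c\in\A$ I denote $\phi_c\colon X \to \bar F c$.

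For the inductive step I assume the adjunction $F^n \dashv G_n$ between $\A$ and $F^{n-1}X \downarrow \A$ and build $F^{n+1}\dashv G_{n+1}$ between $\A$ and $F^n X \downarrow \A$. The right adjoint is defined by a double transposition: given an object $f\colon F^n X \to B$ of $F^n X \downarrow \A$, I first promote $B$ to a coslice object $\beta = (X \xrightarrow{f\circ \phi_{F^{n-1}X}} B)$ of $X \downarrow \A$; then $f$ becomes a morphism $F(F^{n-1}X) \to \beta$ in $X \downarrow \A$, and transposing it across $F \dashv G$ yields $\hat f\colon F^{n-1}X \to G\beta$, i.e. an object of $F^{n-1}X \downarrow \A$. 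I set $G_{n+1}(f) = G_n(\hat f)$, and on a morphism $h\colon B \to C$ of the coslice I set $G_{n+1}(h)=G_n(G(h))$, the well-definedness being immediate from functoriality of $G$ and $G_n$. Dually, the left adjoint is defined by applying the endofunctor $\bar F$ to coslice objects: $F^{n+1}A = \bar F(F^n A)$, which has domain $\bar F(F^{n-1}X)=F^n X$ and so indeed lands in $F^n X \downarrow \A$.

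The adjunction is then witnessed by the chain of bijections, natural in $A\in\A$ and $f$,
\[
\A(A, G_{n+1} f) = \A(A, G_n \hat f) \cong (F^{n-1}X\downarrow\A)(F^n A, \hat f) \cong (F^n X \downarrow \A)(F^{n+1}A, f),
\]
in which the first equality is the definition of $G_{n+1}$, the second is the inductive hypothesis $F^n \dashv G_n$, and the third is the transposition of the base adjunction $F \dashv G$ read off at the level of coslices. Concretely, a coslice morphism on the left is a map $u$ into $GB$ commuting with the two structure maps, a coslice morphism on the right is a map $v$ into $B$ with $v\circ \bar F(p)=f$ (where $p$ is the structure map of $F^n A$), and the correspondence $u\leftrightarrow v$ is exactly the $F\dashv G$ transposition $u = \hat v$.

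I expect the only real work to be in the third isomorphism, i.e. in checking that the $F\dashv G$ transposition descends to coslices. The delicate points are: first, that promoting $f$ to the coslice object $\beta$ is forced and consistent, which rests on the naturality identity $\bar F(p)\circ \phi_{F^{n-1}X} = \phi_{\bar F^n A}$, giving $f\circ\phi_{F^{n-1}X} = v\circ\mu$ with $\mu=\phi_{\bar F^n A}$ the structure map of $F(\bar F^n A)\in X\downarrow\A$; and second, that the triangle conditions defining coslice morphisms on the two sides correspond under transposition, which follows from naturality of the adjunction isomorphism together with the fact that $\hat f$ is by construction the transpose of $f$. Once this bookkeeping of structure maps is in place, naturality in $A$ and $f$ is automatic, closing the induction.
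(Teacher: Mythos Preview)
Your proof is correct and follows essentially the same inductive strategy as the paper: define $G_{n+1}(f)=G_n(\hat f)$ via the transpose of $f$ under $F\dashv G$, then verify the adjunction through the same three-term chain of natural bijections. The only difference is that you are considerably more explicit than the paper about the coslice bookkeeping---in particular, how $B$ must be promoted to an object $\beta$ of $X\downarrow\A$ before the transposition $f\mapsto\hat f$ even makes sense, and why the third isomorphism descends to coslices---all of which the paper leaves to the reader.
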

Applying this to the adjunction 
\[\xymatrixcolsep{1pc}
\vcenter{\hbox{\xymatrix{
			**[l]\wgpd \xtwocell[r]{}_{\Omega}^{\Sigma}{'\perp}& **[r] S^0 \downarrow\wgpd
}}}
\]
we get adjunctions for every $n>0$ of the form
\[\xymatrixcolsep{1pc}
\vcenter{\hbox{\xymatrix{
			**[l]	\wgpd \xtwocell[r]{}_{\Omega^{n}}^{\Sigma^n}{'\perp}& **[r] S^{n-1} \downarrow\wgpd
}}}
\]
Our next piece of preliminary work will be to use Proposition \ref{direct cof cyl} to construct some morphisms in $\wgpd^{\mathbb{G}}$, by extending suitable maps into contractible objects along cofibrations. The solution to these extension problems will produce cylinders that represent coherent rebracketings of certain composites of globular pasting diagrams in a given $\infty$-groupoid.

For example, given an $\infty$-groupoid $X$ and a map $(f,\alpha,g)\colon D_1\amalg_{ D_0} D_2\amalg_{ D_0} D_1\rightarrow X$, that can be represented as the following pasting diagram labelled by cells of $X$
\[
\xymatrix{\bullet \ar[r]^-{f} &\bullet \rtwocell^h_k{\alpha} &\bullet \ar[r]^-{g} & \bullet}
\]
we can consider two ways of composing this pasting diagram, namely $(g \alpha)f$ and $g(\alpha f)$, where binary composition may be interpreted, for instance, using the maps ${}_{D_2}w,w_{D_2}$. In general these two cells will differ, and also their boundary will, being given respectively by the pairs of parallel $1$-cells $((gh)f,(gk)f)$ and $(g(hf),g(kf))$. Therefore, a comparison between the two $2$-cells cannot be encoded by a $3$-cell, but rather by a $2$-cylinder whose boundary consists of a pair of $1$-cylinders encoding a comparison between the (possibly) different $1$-cells we have just described.

To obtain these cylinders in general, given $m\geq 0$, we consider the following map in $\wgpd^{\mathbb{G}}$
\[ \bfig
\morphism(0,0)|a|/@{>}@<0pt>/<1000,0>[\Sigma^m D_{\bullet} \coprod \Sigma^m D_{\bullet} `\Sigma^m \cyl(D_{\bullet});\Sigma^m(\iota)]
\efig\]
This map belongs to $\mathbb{I}^{\mathbb{G}}$, thanks to Remark \ref{cocontinuity of UoSIgma} and Lemma \ref{preservation of cofs^D}.

In what follows, we assume we have chosen composition operations $\gamma\colon D_n \rightarrow D_1^{\otimes m}\plus{D_0} D_{n}\plus{D_0} D_1^{\otimes k}$ for $k,m,n>0$, which are compatible with the source and target maps, i.e. \[\gamma \circ \epsilon=\left( 1_{D_1^{\otimes m}}\plus{D_0}\epsilon\plus{D_0}1_{D_1^{\otimes k}}\right) \circ \gamma\] There is no risk of confusion in referring to all such maps as $\gamma$, because the codomain uniquely determines such $\gamma$.
\begin{defi}
	Given $q,m,k \geq 0$, thanks to Proposition \ref{glob sums are contractible} we have a pair of trivial fibrations in $\wgpd^{\mathbb{G}}$ \[D_1^{\otimes m}\plus{D_0}\Sigma D_{\bullet}\plus{D_0} D_1^{\otimes k}  \rightarrow * \]
	and 
	\[ D_1^{\otimes q}\plus{D_0} D_{m}\plus{D_{m-1}}\Sigma^{m}D_{\bullet}\plus{D_0}D_1^{\otimes k} \rightarrow *\]
	where the structural maps of the domains are the obvious ones and $*$ denotes the terminal object in $\wgpd^{\mathbb{G}}$.
	For $m,k \neq 0$ define maps 
	\[ \bfig
	\morphism(0,0)|a|/@{>}@<0pt>/<1000,0>[\Sigma D_{\bullet} \coprod \Sigma D_{\bullet} `D_1^{\otimes m}\plus{D_0}\Sigma D_{\bullet}\plus{D_0} D_1^{\otimes k} ;\psi^{m,k}]
	\efig \]
by setting the first component in dimension $n$ to be given by the composite
	\[
	\bfig 
	\morphism(0,0)|a|/@{>}@<0pt>/<1000,0>[D_{n+1}`D_1^{\otimes m-1}\plus{D_0} D_{n+1}\plus{D_0} D_1^{\otimes k} ;\gamma]
	\morphism(1000,0)|a|/@{>}@<0pt>/<2000,0>[D_1^{\otimes m-1}\plus{D_0} D_{n+1}\plus{D_0} D_1^{\otimes k} `D_1^{\otimes m}\plus{D_0} D_{n+1}\plus{D_0} D_1^{\otimes k} ;1_{D_1^{\otimes m-1}}\coprod w \coprod 1_{D_1^{\otimes k}}]	
	\efig 
	\] 
	and the second one to be
		\[
	\bfig 
	\morphism(0,0)|a|/@{>}@<0pt>/<1000,0>[D_{n+1}`D_1^{\otimes m}\plus{D_0} D_{n+1}\plus{D_0} D_1^{\otimes k-1} ;\gamma]
	\morphism(1000,0)|a|/@{>}@<0pt>/<2000,0>[D_1^{\otimes m}\plus{D_0} D_{n+1}\plus{D_0} D_1^{\otimes k-1} `D_1^{\otimes m}\plus{D_0} D_{n+1}\plus{D_0} D_1^{\otimes k} ;1_{D_1^{\otimes m}}\coprod w \coprod 1_{D_1^{\otimes k-1}}]	
	\efig 
	\] 
	This means that given an $\infty$-groupoid $X$ and a map \[(f_1,\ldots,f_m,\alpha,g_1,\ldots,g_k)	\colon D_1^{\otimes m}\plus{D_0} D_{n+1}\plus{D_0} D_1^{\otimes k} \rightarrow X\] we get a pair of $(n+1)$-cells in $X$ of the form $g_k\ldots g_1 (\alpha f_m)f_{m-1}\ldots f_1$ and $g_k\ldots g_2 (g_1 \alpha) f_mf_{m-1}\ldots f_1$, where juxtaposition is the result of composition using the appropriate $\gamma$.
 
	If $m=0$ and $k\neq 0$ define
	\[ \bfig
	\morphism(0,0)|a|/@{>}@<0pt>/<1000,0>[\Sigma D_{\bullet} \coprod \Sigma D_{\bullet} `\Sigma D_{\bullet}\plus{D_0} D_1^{\otimes k} ;\psi^{0,k}]
	\efig \]
	by setting the first component in dimension $n$ to be given by the composite
	\[
	\bfig 
	\morphism(0,0)|a|/@{>}@<0pt>/<600,0>[D_{n+1}`D_{n+1}\plus{D_0} D_{1} ;w]
	\morphism(600,0)|a|/@{>}@<0pt>/<1400,0>[D_{n+1}\plus{D_0} D_{1} `D_{n+1}\plus{D_0} D_1^{\otimes k} ;1_{D_{n+1}}\plus{D_0}\gamma]	
	\efig 
	\] 
	and the second one to be 
	\[
	\bfig 
	\morphism(0,0)|a|/@{>}@<0pt>/<750,0>[D_{n+1}` D_{n+1}\plus{D_0} D_1^{\otimes k-1} ;\gamma]
	\morphism(750,0)|a|/@{>}@<0pt>/<1300,0>[D_{n+1}\plus{D_0} D_1^{\otimes k-1} ` D_{n+1}\plus{D_0} D_1^{\otimes k} ; w \coprod 1_{D_1^{\otimes k-1}}]	
	\efig 
	\] 
	This means that given an $\infty$-groupoid $X$ and a map \[(\alpha,g_1,\ldots,g_k)	\colon  D_{n+1}\plus{D_0} D_1^{\otimes k} \rightarrow X\] we get a pair of $(n+1)$-cells in $X$ of the form $(g_k\ldots g_1) \alpha$ and $g_k\ldots g_2(g_1 \alpha)$, where juxtaposition is the result of composition using the appropriate $\gamma$ or $w$, as described above.
	Finally, if $k=0$ and $m \neq 0$ define
	\[ \bfig
	\morphism(0,0)|a|/@{>}@<0pt>/<1000,0>[\Sigma D_{\bullet} \coprod \Sigma D_{\bullet} ` D_1^{\otimes m} \plus{D_0} \Sigma D_{\bullet};\psi^{m,0}]
	\efig \]
	by setting the first component in dimension $n$ to be given by the composite
	\[
	\bfig 
	\morphism(0,0)|a|/@{>}@<0pt>/<600,0>[D_{n+1}`D_{1}\plus{D_0} D_{n+1} ;w]
	\morphism(600,0)|a|/@{>}@<0pt>/<1400,0>[D_{1}\plus{D_0} D_{n+1} `D_1^{\otimes m}\plus{D_0}D_{n+1}  ;\gamma\plus{D_0}1_{D_{n+1}}]	
	\efig 
	\] 
	and the second one to be
	\[
	\bfig 
	\morphism(0,0)|a|/@{>}@<0pt>/<750,0>[D_{n+1}`D_1^{\otimes m-1}  \plus{D_0} D_{n+1} ;\gamma]
	\morphism(750,0)|a|/@{>}@<0pt>/<1300,0>[D_1^{\otimes m-1}  \plus{D_0} D_{n+1} `D_1^{\otimes m} \plus{D_0} D_{n+1}  ; 1_{D_1^{\otimes m-1}} \plus{D_0} w ]	
	\efig 
	\]This means that given an $\infty$-groupoid $X$ and a map \[(f_1,\ldots,f_m,\alpha)	\colon D_1^{\otimes m}\plus{D_0} D_{n+1}\rightarrow X\] we get a pair of $(n+1)$-cells in $X$ of the form $\alpha (f_m f_{m-1}\ldots f_1)$ and $ (\alpha f_m) f_{m-1}\ldots f_1$, where juxtaposition is the result of composition using the appropriate $\gamma$ or $w$ as described above.
	
	For $m\geq 1$ also define
	\[ \bfig
	\morphism(0,0)|a|/@{>}@<0pt>/<1500,0>[\Sigma^m D_{\bullet} \coprod \Sigma^m D_{\bullet} ` D_1^{\otimes q}\plus{D_0} D_{m+1}\plus{D_{m}}\Sigma^{m}D_{\bullet}\plus{D_0}D_1^{\otimes k} ;\phi^{q,m,k}]
	\efig \]
	where the first component is given by 
	\[
	\bfig 
	\morphism(-200,0)|a|/@{>}@<0pt>/<1000,0>[D_{n+m}`D_{m+1}\plus{D_m} D_{n+m} ;\Sigma^m(w)]
	\morphism(800,0)|a|/@{>}@<0pt>/<1400,0>[D_{m+1}\plus{D_m} D_{n+m} `D_1^{\otimes q}\plus{D_0}D_{m+1}\plus{D_m}D_{n+m}\plus{ D_0}D_1^{\otimes k}  ;f]	
	\morphism(0,500)|l|/@{>}@<0pt>/<800,-500>[D_{m+1} `D_{m+1}\plus{D_m} D_{n+m} ;i]
	\morphism(0,500)|a|/@{>}@<0pt>/<1200,0>[D_{m+1} `D_1^{\otimes q}\plus{D_0}D_{m+1}\plus{ D_0}D_1^{\otimes k} ;\gamma]
	\morphism(1200,500)|r|/@{>}@<0pt>/<1000,-500>[D_1^{\otimes q}\plus{D_0}D_{m+1}\plus{ D_0}D_1^{\otimes k} `D_1^{\otimes q}\plus{D_0}D_{m+1}\plus{D_m}D_{n+m}\plus{ D_0}D_1^{\otimes k}  ;i]
	\morphism(0,-500)|l|/@{>}@<0pt>/<800,500>[D_{n+m} `D_{m+1}\plus{D_m} D_{n+m} ;i]
	\morphism(0,-500)|a|/@{>}@<0pt>/<1200,0>[D_{n+m} `D_1^{\otimes q}\plus{D_0}D_{n+m}\plus{ D_0}D_1^{\otimes k} ;\gamma]
	\morphism(1200,-500)|r|/@{>}@<0pt>/<1000,500>[D_1^{\otimes q}\plus{D_0}D_{n+m}\plus{ D_0}D_1^{\otimes k} `D_1^{\otimes q}\plus{D_0}D_{m+1}\plus{D_m}D_{n+m}\plus{ D_0}D_1^{\otimes k}  ;i]
	\efig 
	\] 
where, with a minor abuse of language, we let $i$ denote the various colimit inclusions and $f$ the map induced by the universal property of colimits.
  
The second component is given by 
\[
\bfig
\morphism(0,0)|a|/@{>}@<0pt>/<800,0>[D_{n+m}`D_1^{\otimes q}\plus{D_0}D_{n+m}\plus{ D_0}D_1^{\otimes k} ;\gamma]
\morphism(800,0)|a|/@{>}@<0pt>/<2400,0>[D_1^{\otimes q}\plus{D_0}D_{n+m}\plus{ D_0}D_1^{\otimes k} `D_1^{\otimes q}\plus{D_0}D_{m+1}\plus{D_m}D_{n+m}\plus{ D_0}D_1^{\otimes k}  ;1\plus{ D_0} \Sigma^m(w)\plus{ D_0}D_1]
\efig
\]
This means that, given a map \[(f_1,\ldots,f_q,\alpha,\beta,g_1,\ldots,g_k)\colon D_1^{\otimes q}\plus{D_0}D_{m+1}\plus{D_m}D_{n+m}\plus{ D_0}D_1^{\otimes k} \rightarrow X\] we get a pair of $(n+m)$-cells $(g_k\ldots g_1 \beta f_q \ldots f_1)(g_k\ldots g_1 \alpha f_q \ldots f_1)$ and $g_k\ldots g_1(\beta \alpha)f_q\ldots f_1$, where juxtaposition stands for the result of composing those cells using the appropriate operations described above. Notice that both these $(n+m)$-cells can be interpreted as $n$-cells in $\Omega^m(X,A,B)$ for appropriate choices of $A,B\colon S^m \rightarrow X$.

Similarly to $\phi^{q,m,k}$, we get a map 
\[ \bfig
\morphism(0,0)|a|/@{>}@<0pt>/<1500,0>[\Sigma^m D_{\bullet} \coprod \Sigma^m _{\bullet} ` D_1^{\otimes q}\plus{D_0}\Sigma^{m}D_{\bullet} \plus{D_{m}}D_{m+1}\plus{D_0}D_1^{\otimes k} ;\theta^{q,m,k}]
\efig \]
with a completely analogous definition on both components.
This time, given a map \[(f_1,\ldots,f_q, \alpha,\beta,g_1,\ldots,g_k)\colon D_1^{\otimes q}\plus{D_0}\Sigma^{m}D_{\bullet} \plus{D_{m}}D_{m+1}\plus{D_0}D_1^{\otimes k} \rightarrow X\] we get back a pair of $(n+m)$-cells $g_k\ldots g_1 (\beta \alpha) f_q \ldots f_1$ and $ (g_k\ldots g_1 \beta f_q \ldots f_1)(g_k\ldots g_1 \alpha f_q \ldots f_1)$, where juxtaposition stands for the result of composing those cells using the appropriate operations described above. Notice that both these $(n+m)$-cells can be interpreted as $n$-cells in $\Omega^m(X,A,B)$ for appropriate choices of $A,B\colon S^m \rightarrow X$.

Let us now define maps $\Psi^{m,k}, \ \Phi^{m,k}$ and $\Theta^{q,m,k}$ by choosing fillers as follows (the existence of which is ensured by Lemma \ref{Reedy construction}): 
	\begin{equation}
	\label{coherence cyls}
\bfig
\morphism(0,0)|a|/@{>}@<0pt>/<1500,0>[\Sigma D_{\bullet} \coprod \Sigma D_{\bullet} `D_1^{\otimes m}\plus{D_0}\Sigma D_{\bullet}\plus{D_0} D_1^{\otimes k} ;\psi^{m,k}]
\morphism(0,0)|a|/@{>}@<0pt>/<0,-500>[\Sigma D_{\bullet} \coprod \Sigma D_{\bullet} `\Sigma \cyl (D_{\bullet});\Sigma(\iota)]
\morphism(0,-500)|r|/@{>}@<0pt>/<1500,500>[\Sigma \cyl (D_{\bullet}) `D_1^{\otimes m}\plus{D_0}\Sigma D_{\bullet}\plus{D_0} D_1^{\otimes k};\Psi^{m,k}]

\morphism(0,-1000)|a|/@{>}@<0pt>/<1500,0>[\Sigma^m D_{\bullet} \coprod \Sigma^m D_{\bullet} `D_1^{\otimes q}\plus{D_0} D_{m+1}\plus{D_{m}}\Sigma^{m}D_{\bullet}\plus{D_0}D_1^{\otimes k};\phi^{q,m,k}]
\morphism(0,-1000)|a|/@{>}@<0pt>/<0,-500>[\Sigma^m D_{\bullet} \coprod \Sigma^m D_{\bullet} `\Sigma^m \cyl (D_{\bullet});\Sigma^m(\iota)]
\morphism(0,-1500)|r|/@{>}@<0pt>/<1500,500>[\Sigma^m \cyl (D_{\bullet}) `D_1^{\otimes q}\plus{D_0} D_{m+1}\plus{D_{m}}\Sigma^{m}D_{\bullet}\plus{D_0}D_1^{\otimes k};\Phi^{m,k}]

\morphism(0,-2000)|a|/@{>}@<0pt>/<1500,0>[\Sigma^m D_{\bullet} \coprod \Sigma^m D_{\bullet} ` D_1^{\otimes q}\plus{D_0}\Sigma^{m}D_{\bullet} \plus{D_{m}}D_{m+1}\plus{D_0}D_1^{\otimes k};\theta^{q,m,k}]
\morphism(0,-2000)|a|/@{>}@<0pt>/<0,-500>[\Sigma^m D_{\bullet} \coprod \Sigma^m D_{\bullet} `\Sigma^m \cyl (D_{\bullet});\Sigma^m(\iota)]
\morphism(0,-2500)|r|/@{>}@<0pt>/<1500,500>[\Sigma^m \cyl (D_{\bullet}) ` D_1^{\otimes q}\plus{D_0}\Sigma^{m}D_{\bullet} \plus{D_{m}}D_{m+1}\plus{D_0}D_1^{\otimes k};\Theta^{q,m,k}]
\efig
\end{equation}
\end{defi}
For example, this means that given a map \[(f_1,\ldots,f_m,\alpha,g_1,\ldots,g_k)	\colon D_1^{\otimes m}\plus{D_0} D_{n+1}\plus{D_0} D_1^{\otimes k} \rightarrow X\] we get an $n$-cylinder in $\Omega(X,s(F_1),t(g_k))$ of the form: \[\Psi^{m,k}(f_1,\ldots,f_m,\alpha,g_1,\ldots,g_k)\colon g_k\ldots g_1 (\alpha f_m)f_{m-1}\ldots f_1 \curvearrowright g_k\ldots g_2 (g_1 \alpha) f_mf_{m-1}\ldots f_1\] and similarly for the other cases.

We are now ready to define vertical composition of cylinders. Let us list the inductive hypotheses we need to define it on $(n+1)$-cylinders:
\begin{itemize}
	\item existence of a coglobular vertical composition of $n$-cylinders (i.e. compatible with source and target maps) and defined as described in what follows;
	\item given \[\begin{cases} \ m \geq 0, \ g\colon S^m \rightarrow X, \ g_{|S^0}=(b,c) \ \text{and an} \  n\text{-cylinder} \ F\colon A\curvearrowright B \ \text{in} \ \Omega^{m+1}(X,g), \\
	\text{1-cells} \ 	h_i \colon  b_i \rightarrow b_{i+1} \ \text{in} \ X, \ 1\leq i \leq q, b_{q+1}=b\\
	\text{1-cells}\ 	f_j \colon c_j \rightarrow c_{j+1} \ \text{in} \ X, \ 1\leq j \leq k, c_{1}=c	
	\end{cases}\]
	there is an $n$-cylinder \[f_k \ldots f_1Fh_q\ldots h_1\colon f_k \ldots f_1 A h_q\ldots h_1\curvearrowright f_k \ldots f_1 B h_q\ldots h_1\]
	in $\Omega^{m+1}(X,f_k \ldots f_1*g*h_q\ldots h_1)$, where we define (for $m >0$ and $g=(g_0,g_1)$) \[f_k \ldots f_1*g*h_q\ldots h_1 =(f_k \ldots f_1 g_0 h_q\ldots h_1,f_k \ldots f_1 g_1 h_q\ldots h_1)\] and we let juxtaposition represent the result of composing cells using the appropriate operation $\gamma$.
\end{itemize} 
If $m=0$ then $g=(b,c)\colon S^0 \rightarrow X$, and we define
\[f_k \ldots f_1*g*h_q\ldots h_1   =(s(h_1),t(f_k))\]
Let us now consider the case $n=0$. A vertical stack of $0$-cylinders is nothing but a string of composable 1-cells, which we compose using the appropriate choice of $\gamma$.
Explicitly, given $0$-cylinders $(F^i\colon x_i \curvearrowright x_{i+1})_{i \in \{1,\ldots,n\}}$, i.e. $1$-cells $F^i\colon x_i \rightarrow x_{i+1}$ in $X$, we define \[F^n \otimes \ldots \otimes \ F^1  =F^n\ldots F^1\colon x_1 \curvearrowright x_{n+1}\]
With the purpose of addressing the second point in the inductive hypotheses, note that a $0$-cylinder $F\colon A\curvearrowright B \ \text{in} \ \Omega^{m+1}(X,g)$ is just an $(m+2)$-cell in $X$.
Therefore, we simply define the required $0$-cylinder by
\[f_k \ldots f_1Fh_q\ldots h_1\colon f_k \ldots f_1 A h_q\ldots h_1\curvearrowright f_k \ldots f_1 B h_q\ldots h_1\] where, as usual, juxtaposition means the result of composing those cells using the appropriate operation $\gamma$

Turning to the inductive step, given $p>0$ and $(n+1)$-cylinders $F^i\colon A_i \curvearrowright A_{i+1}$ in $X$ for $1\leq i \leq p$ we want to define an $(n+1)$-cylinder \[F^p\otimes \ldots \otimes F^1 \colon A_1 \curvearrowright A_{p+1}\] in a way that is compatible with the already defined composition on lower dimensions.
We can express the cylinders $F^i$ in an equivalent way, by considering them as $n$-cylinders \[\bar{F}^i\colon A_{i+1}F_{s_0}^i \curvearrowright F_{t_0}^i A_i\]  in $X(s(F_{s_0}^i),t(F_{t_0}^i))$.
We define the $(n+1)$-cylinder $F^p\otimes \ldots \otimes F^1$ by setting \[(F^p\otimes \ldots \otimes F^1)_{\epsilon_0}={F^p}_{\epsilon_0} \ldots  {F^1}_{\epsilon_0}\] for $\epsilon=s,t$, using $\gamma$ to compose these $1$-cells, and defining $\overline{F^p\otimes \ldots \otimes F^1}$ to be the vertical composition of the following sequence of $n$-cylinders in $X(s(F^1_{s_0}),t(F^p_{t_0}))$

\[
\vcenter{\hbox{\xymatrix{
			(F^{p}_{t_0}\ldots  F^{1}_{t_0}) A_0 \ar@/^3mm/[d]^-{\Psi^{0,p}(F^{p}_{t_0},\ldots,  F^{1}_{t_0},A_0)}\\F^{p}_{t_0}\ldots  F^{1}_{t_0}(F_{t_0}^0 A_0)
			\ar@/^3mm/[d]^-{F^p F^{p-1}_{s_0}\ldots  F^{1}_{s_0}}\\F^{p}_{t_0}\ldots  F^{1}_{t_0}(A_1 F^1_{s_0})
			\ar@/^3mm/[d]^-{\Psi^{1,p-1}(F^{p}_{t_0},\ldots, F^{1}_{t_0},A_{1},F^{1}_{s_0})}\\F^{p}_{t_0}\ldots  (F^{1}_{t_0}A_1)	 F^1_{s_0}
			\ar@/^3mm/[d]^-{}\\\ldots
			\ar@/^3mm/[d]^-{\Psi^{p-1,1}(F^{p}_{t_0},A_p,F^{p-1}_{s_0},\ldots,   F^{1}_{s_0})}\\
			(F^{p}_{t_0}A_{p})F^{p-1}_{s_0}\ldots F^{1}_{s_0}
			\ar@/^3mm/[d]^-{F^pF^{p-1}_{t_0}\ldots  F^{1}_{t_0}}\\(A_{p+1}F^{p}_{s_0})F^{p-1}_{s_0}\ldots F^{1}_{s_0}
			\ar@/^3mm/[d]^-{\Psi^{p,0}(A_{p+1},F^{p}_{s_0},\ldots ,F^{1}_{s_0})}\\A_{p+1}(F^{p}_{s_0}\ldots F^{1}_{s_0})
}}}
\]

Let us now address the second part of the inductive hypothesis.

The data are the following:
\[\begin{cases} m \geq 0, \ g\colon S^m \rightarrow X, \ g_{|S^0}=(b,c), \ \text{an} \  (n+1)\text{-cylinder} \ F\colon A\curvearrowright B \ \text{in} \ \Omega^{m+1}(X,g),  \\
 \text{1-cells in} \ h_i\colon b_i \rightarrow b_{i+1} \ \text{in} \ X, \ 1\leq i \leq q, b_{q+1}=b\\
\text{1-cells in} \ f_j\colon c_j \rightarrow c_{j+1} \ \text{in} \ X, \ 1\leq j \leq k, c_{k+1}=c	
\end{cases}\]
View $F$ as an $n$-cylinder 
\[\bar{F}\colon BF_{s_0}\curvearrowright F_{t_0}A \  \text{in} \ \Omega(\Omega^{m+1}(X,f),s^{n+1}A,s^{n+1}B)\cong \Omega^{m+2}(X,\phi) \] where we set $\phi\colon=(s^{n+1}A,s^{n+1}B)\colon S^{m+1}\rightarrow X$.

By inductive hypothesis we can construct an $n$-cylinder
\[f_k\ldots f_1 \bar{F} h_q \ldots h_1\colon f_k\ldots f_1 (BF_{s_0}) h_q \ldots h_1\curvearrowright f_k\ldots f_1 (F_{t_0}A) h_q \ldots h_1\]
in $\Omega^{m+2}(X,f_k \ldots f_1*\phi*h_q \ldots h_1)$, which is isomorphic to \[ \Omega(\Omega^{m+1}(X,f_k \ldots f_1*g*h_q \ldots h_1),f_k \ldots f_1*A_{s_0}*h_q \ldots h_1,f_k \ldots f_1*B_{t_0}*h_q \ldots h_1) \]
Finally, we define $f_k\ldots f_1 F h_q \ldots h_1$ by setting \[(f_k\ldots f_1 F h_q \ldots h_1)_{\epsilon_0}=f_k\ldots f_1 F_{\epsilon_0} h_q \ldots h_1 \] for $\epsilon=s,t$, using $\gamma$ to compose these $1$-cells, and defining $\overline{f_k\ldots f_1 F h_q \ldots h_1}$ to be the vertical composition of the following sequence of $n$-cylinders in $\Omega^{m+2}(X,f_k \ldots f_1*\phi*h_q \ldots h_1)$:
\[
\vcenter{\hbox{\xymatrix{
			(f_k\ldots f_1 B h_q \ldots h_1)(f_k\ldots f_1 F_{s_0} h_q \ldots h_1)\ar@/^3mm/[d]^-{\Phi^{q,m+2,k}(f_k,\ldots, f_1,B,F_{s_0},h_q,\ldots h_1)}\\f_k\ldots f_1 (BF_{s_0}) h_q \ldots h_1
			\ar@/^3mm/[d]^-{f_k\ldots f_1 \bar{F} h_q \ldots h_1}\\f_k\ldots f_1 (F_{t_0}A) h_q \ldots h_1
			\ar@/^3mm/[d]^-{\Theta^{q,m+2,k}(f_k,\ldots, f_1,F_{t_0},A,h_q,\ldots h_1)}\\(f_k\ldots f_1 F_{t_0} h_q \ldots h_1)(f_k\ldots f_1 A h_q \ldots h_1)
}}}
\]
This completes the induction.
\subsection{Naive elementary interpretation of operations}
We now describe how to define $\hat{\rho}\colon \cyl(D_n) \rightarrow \cyl(A)$ in a naive way, that will satisfy the first condition in \eqref{hat properties} but not the second. The next section will then address and solve the problem of also satisfying the second condition.
Let $p=\vert \mathscr{L}(A)\vert$, and $B_1, \ldots, B_p$ the ordered list $\mathscr{L}(A)$. As in the previous section, we will obtain this cylinder as the vertical composite of a stack of $p \ (n-1)$-cylinders in $\Omega \left(\cyl(A),a,b\right)$, where $a=\cyl(\partial_{\sigma}^m) \circ \sigma$ and $b=\cyl(\partial_{\tau}^m)\circ \tau$.

First, we specify a sequence of $n$-cells $(\alpha_0, \ldots, \alpha_p)$ in $\cyl(A)$ that appear as top and bottom cells of the cylinders in the stack. The first and the last are thus forced by the requirement that the vertical composite $(n-1)$-cylinder in the hom-$\infty$-groupoid between $a$ and $b$ is the transpose of an actual $n$-cylinder in $\cyl(A)$ satisfying the first of the conditions expressed in \eqref{hat properties}. Therefore, $\alpha_0$ must be given by the composite
\[
\bfig 
\morphism(0,0)|a|/@{>}@<0pt>/<600,0>[D_n` D_n \plus{ D_0} D_1;{}_{D_n}w]
\morphism(600,0)|a|/@{>}@<0pt>/<1100,0>[D_n \plus{ D_0} D_1` \cyl(A) ;(\iota_0 \circ \rho, \cyl(\partial^m_\tau))]
\efig 
\]
\[\alpha_0=(\iota_0 \circ \rho,\cyl(\partial_{\tau}^m)) \circ w\] where $m=\dim(A)$.
Similarly, $\alpha_p$ must be defined to be the composite
\[
\bfig 
\morphism(0,0)|a|/@{>}@<0pt>/<600,0>[D_n` D_1 \plus{ D_0} D_n;w_{D_n}]
\morphism(600,0)|a|/@{>}@<0pt>/<1100,0>[D_1\plus{ D_0} D_n` \cyl(A) ;(\cyl(\partial^m_\sigma),\iota_1 \circ \rho )]
\efig 
\]
For $1\leq i \leq p-1$, we define $\alpha_i\colon D_n \rightarrow \cyl (A)$ as the following composite
\begin{equation}
\label{alpha cells}
\bfig
\morphism(0,0)|a|/@{>}@<0pt>/<300,0>[D_n` A;\rho]
\morphism(300,0)|a|/@{>}@<0pt>/<300,0>[A` B_i;z^A_{B_i}]
\morphism(600,0)|a|/@{>}@<0pt>/<500,0>[B_i` \cyl (A);i_{B_i}]
\efig 
\end{equation}
Notice that they all transpose under the adjunction $\Sigma \dashv \Omega$ to give $(n-1)$-cells $\alpha_0, \ldots,\alpha_p$ in $\Omega \left(\cyl(A),a,b\right)$, where $a=\cyl(\partial_{\sigma}^m) \circ \sigma$ and $b=\cyl(\partial_{\tau}^m)\circ \tau$. In addition, by construction, $\alpha_{i-1}$ and $\alpha_i$ factor through $\Omega (B_i,a,b)$, where, for every $B_i \in \mathscr{L}(A)$, we denote the endpoints of this globular sum (i.e. the $0$-cells $\partial_{\sigma}^{\dim(B_i)},\partial_{\tau}^{\dim(B_i)}\colon D_0 \rightarrow B_i$) with $a$ and $b$, committing a slight abuse of language since they are all sent to $a$ and $b$ in $\cyl (A)$ by the maps $i_{B_i}$).
\begin{ex}
	Let $A=D_2 \amalg_{ D_0}D_1$ and consider a homogeneous map  $\rho\colon D_2\rightarrow A$. This operation may represent, for instance, the whiskering of a $2$-cell with a $1$-cell.
	
	Thanks to Example \ref{ex cyl decomp}, if we follow the algorithm explained above we find that the cells in the list we have to provide are given by the transposes of:
	\[
	\begin{tikzpicture}
	\matrix (m) [matrix of math nodes,row sep=2em,column sep=2.5em,minimum width=1.5em]
	{
		&	& & \ \ \ D_2\plus{D_0}D_1 \plus{D_0} D_1 \\
		D_2&	D_2 \plus{D_0} D_1 & 
		&	 D_2 \plus{D_0} D_2 \\
		D_2&	D_2 \plus{D_0} D_1&
		&	  \ \ \ D_2 \plus{ D_0} D_1 \plus{D_0} D_1\\
		D_2&	D_2 \plus{D_0} D_1 &\
		&	 \ \ \ D_2 \plus{D_1} D_2 \plus{D_0} D_1 & & &\cyl(A)\\
		D_2&	D_2 \plus{D_0} D_1 &
		&	 D_3 \plus{D_0} D_1\\
		D_2&	D_2 \plus{D_0} D_1 &
		&	 \ \ \ D_2 \plus{D_1}D_2 \plus{D_0}D_1\\
		D_2&	D_2 \plus{D_0}D_1&\\};
	\path[-stealth]
	(m-2-2) edge node [above] {$1\coprod w$} (m-1-4)
	(m-3-2)edge  node [above] {$1 \coprod \tau$} (m-2-4)
	
	(m-4-2)edge  node [above] {$w \coprod 1$} (m-3-4)
	
	(m-5-2)edge  node [above] {$w \coprod 1$} (m-4-4)
	
	(m-6-2)edge  node [above] {$\tau \coprod 1$} (m-5-4)
	
	(m-7-2)edge  node [above] {$(i_1,i_2)$} (m-6-4)
	
	(m-2-1) edge node [above] {$\rho$} (m-2-2)
	(m-3-1)edge  node [above] {$\rho$} (m-3-2)
	
	(m-4-1)edge  node [above] {$\rho$} (m-4-2)
	
	(m-5-1)edge  node [above] {$\rho$} (m-5-2)
	
	(m-6-1)edge  node [above] {$\rho$} (m-6-2)
	
	(m-7-1)edge  node [above] {$\rho$} (m-7-2)
	
	(m-1-4) edge node [above] {} (m-4-7)
	(m-2-4)edge  node [above] {} (m-4-7)
	
	(m-3-4)edge  node [above] {} (m-4-7)
	
	(m-4-4)edge  node [above] {} (m-4-7)
	
	(m-5-4)edge  node [above] {} (m-4-7)
	
	(m-6-4)edge  node [above] {} (m-4-7);
	\end{tikzpicture}\]
	together with the transpose of the composite 
	\[\bfig  
	\morphism(0,0)|a|/@{>}@<0pt>/<500,0>[D_2`D_2 \plus{ D_0} D_1;w]
	\morphism(500,0)|a|/@{>}@<0pt>/<800,0>[D_2 \plus{ D_0} D_1`A\plus{ D_0}D_1;\rho \plus{ D_0}1]
	\morphism(1300,0)|a|/@{>}@<0pt>/<850,0>[A \plus{ D_0} D_1`\cyl(A);(\iota_0, \cyl(\partial_{\tau}^2))]
	\efig \]
	as the first cell of the list, and of the composite
	
	\[\bfig  
	\morphism(0,0)|a|/@{>}@<0pt>/<500,0>[D_2`D_2 \plus{ D_0} D_1;w]
	\morphism(500,0)|a|/@{>}@<0pt>/<800,0>[D_2 \plus{ D_0} D_1`D_1\plus{ D_0}A;1 \plus{ D_0}\rho]
	\morphism(1300,0)|a|/@{>}@<0pt>/<850,0>[D_1 \plus{ D_0} A`\cyl(A);(\cyl(\partial_{\sigma}^2),\iota_1)]
	\efig \]
	as the last.
\end{ex}
We can now define a stack of $(n-1)$-cylinders $C_i^{\rho}\colon \alpha_{i-1} \curvearrowright \alpha_{i}$ in $\Omega  \left(\cyl(A),a,b\right)$, for $1\leq i \leq p$, such that $C_i^{\rho}$ factors through $\Omega (B_i ,a,b)$.

We do so by solving the following lifting problems, thanks to Propositions \ref{omega preserves contractibles} and \ref{boundary of deg cyl's}
\[ \bfig
\morphism(0,0)|a|/@{>}@<0pt>/<1000,0>[D_{n-1}\coprod D_{n-1}`\Omega (B_i ,a,b);(\alpha_{i-1},\alpha_{i})]
\morphism(0,0)|a|/@{>}@<0pt>/<0,-500>[D_{n-1}\coprod D_{n-1}`\cyl(D_{n-1});\partial]
\morphism(0,-500)|r|/@{-->}@<0pt>/<1000,500>[\cyl(D_{n-1})`\Omega (B_i,a,b);]
\morphism(1000,0)|a|/@{>}@<0pt>/<800,0>[\Omega (B_i ,a,b)`\Omega \left(\cyl (A),a,b\right);\Omega( i_{B_i})]
\morphism(0,-500)|r|/@{>}@<0pt>/<1800,500>[\cyl(D_{n-1})`\Omega\left(\cyl (A),a,b\right);C_i^{\rho}]
\efig \]
Finally, set $\hat{\rho}$ as the $n$-cylinder induced by the transpose of $C_p^{\rho} \otimes \ldots \otimes C_1^{\rho}\colon \alpha_0 \curvearrowright \alpha_p$, viewed as a map $\Sigma \cyl(D_{n-1}) \rightarrow \cyl(A)$. It is now straightforward to check that, in general, this definition satisfies only the first condition of \eqref{hat properties}, essentially because we have no control on what happens to the boundary of $\hat{\rho}$ in relation to $\widehat{\rho\circ \epsilon}$ for $\epsilon=\sigma, \tau$.
\section{Degenerate Cylinders}
In this section we want to define cylinders whose iterated source or target are degenerate, in a suitable sense.
We will also extend the operation of vertical compositions to this more general setting, as it will be needed later to construct the ``correct'' elementary interpretation of a homogeneous operation, i.e. the one satisfying both conditions expressed in \eqref{hat properties}.
\begin{defi}
	\label{free deg cyl}
	Let $n>0$. Define the $\infty$- groupoid $\cyl^0_{-1} (D_n)=\cyl^0 (D_n)$ as the colimit of the following diagram:
	\[
	\bfig
	\morphism(-1000,-250)|a|/@{>}@<0pt>/<650,250>[D_n` D_n \plus{D_0} D_1;w]
	\morphism(-1000,-250)|l|/@{>}@<0pt>/<650,-250>[D_n` \Sigma \cyl(D_n-1);\Sigma (\iota_0)]
	\efig
	\]
	Similarly, define $\cyl_0^{-1} (D_n)=\cyl_0 (D_n)$ as the colimit of the following diagram:
	\[
	\bfig
	\morphism(-1000,-250)|a|/@{>}@<0pt>/<650,250>[D_n` D_1 \plus{D_0} D_n;w]
	\morphism(-1000,-250)|l|/@{>}@<0pt>/<650,-250>[D_n` \Sigma \cyl(D_n-1);\Sigma (\iota_1)]
	\efig
	\]
	Also set $\cyl_0^0 (D_n)=\Sigma \cyl(D_{n-1})$.
	
	Finally, given $0 < p,q <n$ with $\vert p-q \vert \leq 1$, define inductively \[\cyl^p_{q}(D_n)= \Sigma \cyl^{p-1}_{q-1}(D_{n-1})\] 
	We call $\cyl^p_q$ the n-cylinder with degenerate $p$-source and degenerate $q$-target.
\end{defi}
It is clear that all these cylinders come equipped with maps $\iota_0, \iota_1\colon  D_n \rightarrow \cyl^p_{q}(D_n)$.
\begin{defi}
	\label{deg cyl}
	Given a pair of $n$-cells $\alpha,\beta$ in $X$ and integers $0\leq p,q <n$ as above, an $n$-cylinder in $X$ from $\alpha$ to $\beta$ with degenerate $p$-source and degenerate $q$-target is a map \[C\colon \cyl^p_{q}(D_n) \rightarrow X\] such that $C \circ \iota_0=\alpha$ and $C \circ \iota_1 = \beta$.
	We will denote it by $C\colon \alpha \curvearrowright_q^p \beta$
\end{defi}
\begin{rmk}
	Notice that a cylinder $C\colon \alpha \curvearrowright_q^p \beta$ exists only if $s^p(\alpha) = s^p(\beta)$ and $t^q(\alpha)=t^q(\beta)$.
\end{rmk}
To describe these data explicitly, we need to distinguish between cases.\\
If $p=0$ and $q=-1$ then it consists of:
\begin{itemize}
	\item a $1$-cell $c\colon t^n (\alpha) \rightarrow t^n(\beta)$;
	\item an $(n-1)$-cylinder $\bar{C}\colon c \alpha \curvearrowright \beta$ in $\Omega \left( X,s^n (\alpha), t^n (\beta)\right)$.
	
\end{itemize}
If $p=-1$ and $q=0$ then it consists of:
\begin{itemize}
	\item a $1$-cell $c\colon s^n (\alpha) \rightarrow s^n(\beta)$;
	\item an $(n-1)$-cylinder $\bar{C}\colon \alpha \curvearrowright \beta c$ in $\Omega \left(X,s^n (\alpha),t^n (\beta)\right)$.
\end{itemize}
If $p,q >0$ then it consists of:
\begin{itemize}
	
	\item an $(n-1)$-cylinder with degenerate $(p-1)$-source and degenerate $(q-1)$-target $\bar{C}\colon \alpha \curvearrowright^{p-1}_{q-1} \beta$ in $\Omega \left(X,s^n (\alpha),t^n (\beta)\right)$.
\end{itemize}
\begin{defi}
	Let $p,q\geq -1$ be integers such that $\vert p-q \vert \leq 1$. We define the category $\G_q^p$ as the full subcategory of $\G$ generated by:
	\begin{itemize}
		\item $\G_{\geq p+1}$ if $p=q$;
		\item $\G_{\geq p}$ and $\tau\colon q\rightarrow p$ if $q=p-1$;
		\item $\G_{\geq q}$ and $\sigma\colon p\rightarrow q$ if $p=q-1$;
	\end{itemize}
	Clearly, the direct category structure on $\G$ restricts to one on $\G^p_q$, and we can extend the previous construction to a functor   \[\cyl^p_q(D_{\bullet})\colon \G_q^p \rightarrow \wgpd\]
	Given such $p,q$ we also construct a functor $B_q^p\colon\G^p_q \rightarrow \wgpd$ by defining 
	\[\bfig
	\morphism(-500,-250)|a|/@{}@<0pt>/<750,0>[B_q^p(n)\cong \colim` ;  ]
	
	\morphism(0,0)|a|/@{>}@<0pt>/<500,0>[D_p` D_n;\sigma ]
	\morphism(0,0)|a|/@{>}@<0pt>/<0,-500>[D_p`D_{n};\sigma]
	\morphism(500,-500)|a|/@{>}@<0pt>/<0,500>[D_{q}` D_n;\tau]
	\morphism(500,-500)|a|/@{>}@<0pt>/<-500,0>[D_q` D_n;\tau]
	\efig
	\]
	where we set $D_{-1}=\emptyset$, the initial object of $\wgpd$.
	
	For each such pair of integers we get a natural transformation \[B_q^p \rightarrow \cyl_q^p (D_{\bullet}) \]  induced by $\iota_0,\iota_1:D_n \rightarrow  \cyl^p_q (D_n)$.
\end{defi}
\begin{ex}
	A $1$-cylinder with degenerate $0$-source in $X$, represented by a map $C:\cyl^0(D_1) \rightarrow X$, consists of specifying the following data
	\[ 
	\bfig
	\morphism(0,400)|a|/@{>}@<0pt>/<400,0>[a` b;\alpha]
		\morphism(0,400)|a|/@{=}@<0pt>/<0,-400>[a` a;]
	\morphism(0,0)|b|/@{>}@<0pt>/<400,0>[a` c;\beta]
	\morphism(400,400)|r|/@{>}@<0pt>/<0,-400>[b` c;g]
	
	\morphism(200,300)|a|/@{=>}@<0pt>/<0,-200>[`;C]
	\efig
	\] by which we mean a $2$-cell $C\colon g \alpha \rightarrow \beta$.
	In this case, the pair $(\alpha,\beta)$ represents the natural map $(B^0)_1 \rightarrow \cyl^0 (D_1)$.
\end{ex}
\subsection{Boundary of degenerate cylinders}
As we did for normal cylinders, we will construct a map of diagrams indexed by a direct category, with codomain $\cyl^p_q(D_{\bullet})$, whose latching maps will represent the inclusion of the boundary of a degenerate cylinder.
This construction will be fundamental to perform inductive constructions involving cylinders.
\begin{prop}
	\label{boundary of deg cyl's}
	The map \[B_q^p \rightarrow \cyl_q^p (D_{\bullet}) \]  is a direct cofibration in $\wgpd^{\G^p_q}$.
	\begin{proof}
		If $p,q\geq0$, we have \[B_q^p \cong \Sigma B_{q-1}^{p-1} \ \text{and} \ \cyl_q^p \cong \Sigma \cyl_{q-1}^{p-1}\] and the map $B_q^p \rightarrow \cyl_q^p$ results from applying $\Sigma$ to $B_{q-1}^{p-1} \rightarrow \cyl_{q-1}^{p-1}$. Therefore, since $\Sigma$ preserves cofibrations, it is enough to prove the result for $p=0,q=-1$ and $p=-1,q=0$.
		
	Let's consider $\cyl^0(D_{\bullet})$.
		Consider the following cocartesian square (where $n>1$):
		\[
		\bfig

		\morphism(0,0)|a|/@{>}@<0pt>/<0,-500>[S^{n-1}` \partial \cyl(D_{n-1});]
		\morphism(0,0)|a|/@{>}@<0pt>/<750,0>[S^{n-1}` D_n;]
		\morphism(0,-500)|a|/@{>}@<0pt>/<750,0>[\partial \cyl(D_{n-1})` \cyl(D_{n-1});\hat{L}_{n-1}(\iota)]
		\morphism(750,0)|a|/@{>}@<0pt>/<0,-500>[D_n` \cyl(D_{n-1});]
		\efig
		\]
		If we apply the functor $\Sigma$ to it, we get the following cocartesian square thanks to Remark \ref{cocontinuity of UoSIgma}:
		
		\[
		\bfig
		\morphism(0,0)|a|/@{>}@<0pt>/<0,-500>[S^{n}`\Sigma\partial \cyl(D_{n-1});]
		\morphism(0,0)|a|/@{>}@<0pt>/<1200,0>[S^{n}` D_{n+1};]
		\morphism(0,-500)|a|/@{>}@<0pt>/<1200,0>[\Sigma \partial \cyl(D_{n-1})` \Sigma \cyl(D_{n-1});\Sigma\left(\hat{L}_{n-1}(\iota)\right)]
		\morphism(1200,0)|a|/@{>}@<0pt>/<0,-500>[D_{n+1}` \Sigma \cyl(D_{n-1});]
		\efig
		\]
		From this, we derive the following diagrams where both squares are cocartesian:
		
		\[
		\bfig
		\morphism(0,0)|a|/@{>}@<0pt>/<750,0>[D_n` \Sigma \partial \cyl(D^{n-1}); ]
		\morphism(0,0)|a|/@{>}@<0pt>/<0,-500>[D_n`D_{n} \plus{D_0} D_{1};w]
		\morphism(0,-500)|a|/@{>}@<0pt>/<750,0>[D_{n} \plus{D_0} D_{1}` \partial \cyl^0(D_{n});]
		\morphism(750,0)|a|/@{>}@<0pt>/<0,-500>[\Sigma \partial \cyl(D^{n-1})` \partial \cyl^0(D_{n});]
		
		\morphism(750,0)|a|/@{>}@<0pt>/<1200,0>[\Sigma \partial \cyl(D^{n-1})`\Sigma  \cyl(D^{n-1});\Sigma (\hat{L}_{n-1}(\iota)) ]
		\morphism(750,-500)|a|/@{>}@<0pt>/<1200,0>[\partial \cyl^0(D_{n})`   \cyl^0(D^{n});i]
		\morphism(1950,0)|a|/@{>}@<0pt>/<0,-500>[\Sigma  \cyl(D^{n-1})` \cyl^0(D^{n});]
		
		\efig
		\]
		It is clear that the $n$-th latching map of the map $B^0 \rightarrow \cyl^0(D_{\bullet})$ is given by \[i\colon \partial \cyl^0(D_{n}) \rightarrow \cyl^0(D_{n})\] and is thus a cofibration. Therefore the natural map $B^0 \rightarrow \cyl^0(D_{\bullet})$ is a direct cofibration of $\G^0$-diagrams in $\wgpd$.
		A similar argument shows that also $B_0 \rightarrow \cyl_0(D_{\bullet})$ is a direct cofibration of $\G_0$-diagrams in $\wgpd$.		
	\end{proof}
\end{prop}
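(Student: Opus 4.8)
The plan is to reduce the statement, by means of the suspension functor, to the case of the ordinary cylinder settled in Proposition \ref{direct cof cyl}, and then to dispose of the two remaining base cases by an explicit latching-map computation. Recall that, by Lemma \ref{Reedy construction}, asserting that $B^p_q \to \cyl^p_q(D_\bullet)$ is a direct cofibration in $\wgpd^{\G^p_q}$ amounts to checking that every latching map of this natural transformation at $n \in \G^p_q$ is a cofibration of $\infty$-groupoids, i.e.\ lies in $\mathbb{I}$.

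First I would carry out the suspension reduction. For $p,q\geq 0$ the recursive clause of Definition \ref{free deg cyl} gives $\cyl^p_q(D_n)\cong \Sigma\cyl^{p-1}_{q-1}(D_{n-1})$, and a parallel computation of the defining colimit (using cocontinuity of $\Sigma$ together with the connectedness of the shape of that colimit) gives $B^p_q\cong \Sigma B^{p-1}_{q-1}$; the whole natural transformation $B^p_q \to \cyl^p_q(D_\bullet)$ is then obtained from $B^{p-1}_{q-1}\to \cyl^{p-1}_{q-1}(D_\bullet)$ by postcomposing with $\Sigma$ and reindexing along the degree shift $\G^{p-1}_{q-1}\xrightarrow{\sim}\G^p_q$. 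Since $U\circ\Sigma$ preserves cofibrations (Remark \ref{cocontinuity of UoSIgma}) and preserves the connected colimits computing the latching objects, one concludes in the spirit of Lemma \ref{preservation of cofs^D} that $\Sigma$ carries direct cofibrations to direct cofibrations. Inducting on $\max(p,q)$ therefore reduces the claim to the three bottom cases $(p,q)=(0,0),(0,-1),(-1,0)$. The case $(0,0)$ is immediate, since there $\cyl^0_0(D_\bullet)=\Sigma\cyl(D_{\bullet-1})$ and $B^0_0$ is the suspension of the coproduct $D_{\bullet-1}\amalg D_{\bullet-1}$, so the map in question is $\Sigma$ applied to the boundary inclusion $\iota=(\iota_0,\iota_1)$, a direct cofibration by Proposition \ref{direct cof cyl}.

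It remains to treat $p=0$, $q=-1$, the case $p=-1$, $q=0$ being symmetric (interchange $\sigma\leftrightarrow\tau$, $\iota_0\leftrightarrow\iota_1$, and left- with right-whiskering). Here $\cyl^0(D_n)$ is the pushout of $D_n\amalg_{D_0}D_1 \xleftarrow{\,w\,} D_n \xrightarrow{\Sigma\iota_0}\Sigma\cyl(D_{n-1})$ and $B^0(n)\cong D_n\amalg_{D_0}D_n$. I would identify the source $\partial\cyl^0(D_n)$ of the $n$-th latching map as the pushout $(D_n\amalg_{D_0}D_1)\amalg_{D_n}\Sigma\partial\cyl(D_{n-1})$, then start from the cocartesian square \eqref{cyl' D_n} of Proposition \ref{direct cof cyl} at level $n-1$ and apply the cocontinuous, cofibration-preserving functor $\Sigma$ (so that $S^{n-1}\mapsto S^n$ and $D_n\mapsto D_{n+1}$), obtaining a cocartesian square relating $\Sigma\partial\cyl(D_{n-1})\to\Sigma\cyl(D_{n-1})$ to the sphere inclusion $S^n\to D_{n+1}$. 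Pasting this with the pushout defining $\partial\cyl^0(D_n)$, and reorganising the iterated pushouts by Lemma \ref{iterated pushouts}, exhibits the latching map $i\colon \partial\cyl^0(D_n)\to\cyl^0(D_n)$ as a pushout of $S^n\to D_{n+1}$, hence as a map in $\mathbb{I}$; this is precisely the assertion that $B^0\to\cyl^0(D_\bullet)$ is a direct cofibration.

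I expect the main obstacle to be the latching-object bookkeeping in the base case: correctly computing $L_n(B^0)$ and $L_n(\cyl^0)$, verifying that the source of the latching map is the claimed iterated pushout, and checking that the successive cocartesian squares glue as intended. A secondary subtlety, which I would treat explicitly rather than absorb into ``$\Sigma$ preserves cofibrations'', is the behaviour in low degree—the convention $D_{-1}=\emptyset$ and the fact that the latching colimit at the minimal degree is a \emph{disconnected} coproduct rather than a connected colimit—so that the appeal to preservation of connected colimits by $U\circ\Sigma$ in the suspension reduction must be supplemented by a direct verification at that degree.
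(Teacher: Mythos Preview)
Your proposal is correct and follows essentially the same approach as the paper: reduce via suspension to the base cases, then for $\cyl^0(D_\bullet)$ start from the cocartesian square \eqref{cyl' D_n} at level $n-1$, apply $\Sigma$, and paste with the defining pushout of $\cyl^0(D_n)$ to exhibit the latching map as a pushout of $S^n\to D_{n+1}$. The paper reduces directly to the two cases $(0,-1)$ and $(-1,0)$, absorbing your separate $(0,0)$ case into the suspension reduction (since $\cyl^0_0=\Sigma\cyl^{-1}_{-1}$ is the suspension of the ordinary cylinder, already handled by Proposition \ref{direct cof cyl}); your extra caution about the minimal-degree latching object and the $D_{-1}=\emptyset$ convention is well placed and goes slightly beyond what the paper spells out.
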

\begin{defi}
	Given non-negative integers $p,q$ such that $\vert p-q \vert\leq 1$, we call $\partial \cyl^p_q (D_n)$ the boundary of the $n$-cylinder with degenerate $p$-source and $q$-target.
	
	Given an $\infty$-groupoid $X$ and an $n$-cylinder with degenerate $p$-source and $q$-target in $X$, represented by a map $C:\cyl^p_q (D_n) \rightarrow X$, we call the boundary of $C$ the map we get by precomposing $C$ with the boundary inclusion 
	\[
	\bfig
	\morphism(0,0)|a|/@{>->}@<0pt>/<600,0>[ \partial \cyl^p_q (D_n)`    \cyl^p_q (D_n);] \morphism(600,0)|a|/@{>}@<0pt>/<500,0>[ \cyl^p_q (D_n)`   X;C]
	\efig
	\]
\end{defi}
From the previous analysis we see that, given an $\infty$-groupoid $X$, specifying the boundary of a degenerate $n$-cylinder $\cyl^p_q(D_n)\rightarrow X$ is equivalent to providing the following data:
\begin{itemize}
	\item a pair of parallel $(n-1)$-cylinders $C\colon A\curvearrowright^{p-1}_{q-1} B,D\colon	A'\curvearrowright^{p-1}_{q-1} B'$ in $X$;
	\item a pair of $n$-cells $\alpha\colon A \rightarrow A', \beta \colon B \rightarrow B'$ in $X$.
\end{itemize}
\subsection{Vertical composition of degenerate cylinders}
\label{vert comp of deg cyl (section)}
We now want to define an operation of vertical composition that generalizes the one we already have to the case of a vertical stack of (possibly) degenerate cylinders.

To do so, assume given a $k$-tuple of pairs of integers $((p_i,k_i))_{1\leq i \leq k}$, with $\vert p_i - q_i \vert \leq 1$ for each $1\leq i \leq k$.
This operation is represented by a map:
\begin{equation}
\label{vert comp deg cyl}
\bfig
\morphism(750,0)|a|/@{>}@<0pt>/<1250,0>[\cyl_q^p (D_n)`\cyl_{q_1}^{p_1}(D_n) \otimes \ldots \otimes \cyl_{q_k}^{p_k}(D_n) ;]

\efig 
\end{equation} 
where the codomain is defined to be the colimit of the following diagram:

\[ 
\bfig

\morphism(0,-500)|l|/@{>}@<0pt>/<-300,300>[D_n`\cyl_{q_1}^{p_1}(D_n);\iota_1]
\morphism(0,-500)|r|/@{>}@<0pt>/<300,300>[D_n`\cyl_{q_2}^{p_2}(D_n);\iota_0]

\morphism(600,-500)|l|/@{>}@<0pt>/<-300,300>[D_n`\cyl_{q_2}^{p_2}(D_n);\iota_1]  \morphism(600,-500)|r|/@{>}@<0pt>/<300,300>[D_n`\ldots;\iota_0]

\morphism(1200,-500)|l|/@{>}@<0pt>/<-300,300>[D_n`\ldots;\iota_1]  \morphism(1200,-500)|r|/@{>}@<0pt>/<300,300>[D_n`\cyl_{q_k}^{p_k}(D_n);\iota_0]
\efig
\]
and $p=\min \{p_i\}_{1\leq i \leq k},\ q=\min \{q_i\}_{1\leq i \leq k}$.

We will adapt the construction we already have for the case $p_i=q_i=-1$.
Therefore, we firstly neeed to define whiskerings of degenerate cylinders with 1-cells.
We have to treat the different cases separately, the inductive hypothesis for the general case being:
\begin{itemize}
	\item existence of a coglobular vertical composition of possibly degenerate  $(n-1)$-cylinders (i.e. compatible with source and target maps) as in \eqref{vert comp deg cyl} and defined as described in what follows;
	\item given
	\[\begin{cases} m \geq 0, \ g\colon S^m \rightarrow X, \ g_{|S^0}=(b,c) \ \text{an}\ n\text{-cylinder} \ C\colon A\curvearrowright^p_q B \ \text{in} \ \Omega^{m+1}(X,g)   \\
	1\text{-cells}\ 	h_i\colon b_i \rightarrow b_{i+1} \  \text{in}\ X, \ 1\leq i \leq q,\ b_{q+1}=b\\
		1\text{-cells} \ 	f_j\colon c_j \rightarrow c_{j+1} \  \text{in}\ X, \ 1\leq j \leq k, \ c_{1}=c	
	\end{cases}\]
	there is an $n$-cylinder \[f_k \ldots f_1Fh_q\ldots h_1\colon f_k \ldots f_1 A h_q\ldots h_1\curvearrowright^p_q f_k \ldots f_1 B h_q\ldots h_1\]
	in $\Omega^{m+1}(X,f_k \ldots f_1*g*h_q\ldots h_1)$, where we define (for $m >0$) \[f_k \ldots f_1*g*h_q\ldots h_1 =(f_k \ldots f_1 g_0 h_q\ldots h_1,f_k \ldots f_1 g_1 h_q\ldots h_1)\] if $g=(g_0,g_1)$.
\end{itemize}
Once we have constructed the whiskerings of the second point, the rest of the proof follows just by adapting the one for normal cylinders, omitting the use of the $\Psi$'s when no rebracketing is needed.

Let us start with the case $p=0, \ q=-1$.
By definition, $C$ induces an $(n-1)$-cylinder \[\bar{C}\colon cA \curvearrowright B \ \text{in} \ \Omega^{m+2} (X,\phi)\] where $\phi=(s^n(A),t^n(B))$ and $c=\cyl(\tau^n)(C)$.
Because we already know how to whisker normal cylinders with $1$-cells, we get an $(n-1)$-cylinder \[f_k\ldots f_1 \bar{C} h_q \ldots h_1\colon f_k\ldots f_1 (cA) h_q \ldots h_1\curvearrowright f_k\ldots f_1 B h_q \ldots h_1\]
We now define \[f_k\ldots f_1 C h_q \ldots h_1\colon f_k\ldots f_1 A h_q \ldots h_1 \curvearrowright^0 f_k\ldots f_1 B h_q \ldots h_1\] as the vertical composition of the following cylinders in $\Omega^{m+2}(X,f_k \ldots f_1*\phi*h_q \ldots h_1)$
\[
\vcenter{\hbox{\xymatrix{
			(f_k\ldots f_1 c \  h_q \ldots h_1)(f_k\ldots f_1 A h_q \ldots h_1)\ar@/^3mm/[d]^-{\Phi^{q,m+2,k}(f_k,\ldots, f_1,c,A,h_q,\ldots h_1)}\\f_k\ldots f_1 (cA) h_q \ldots h_1
			\ar@/^3mm/[d]^-{f_k\ldots f_1 \bar{C} h_q \ldots h_1}\\f_k\ldots f_1 B h_q \ldots h_1
}}}
\]
In a completely analogous way, if $p=-1, \ q=0$, we obtain \[f_k\ldots f_1 C h_q \ldots h_1\colon f_k\ldots f_1 A h_q \ldots h_1 \curvearrowright_0 f_k\ldots f_1 B h_q \ldots h_1\] in $\Omega^{m+2}(X,f_k \ldots f_1*\phi*h_q \ldots h_1)$.

The case $C\colon A\curvearrowright_0^0 B \ \text{in} \ \Omega^{m+1}(X,g)$ is even simpler, because we simply define the whiskering as 
\[\overline{f_k\ldots f_1 C h_q \ldots h_1}=f_k\ldots f_1\bar{C} h_q \ldots h_1 \]
This conclude the base case of the induction.

Finally, let us consider the case $C\colon A\curvearrowright_{p-1}^p B \ \text{in} \ \Omega^{m+1}(X,g)$ with $p>0$ (the remaining case $C\colon A\curvearrowright_p^{p-1} B \ \text{in} \ \Omega^{m+1}(X,g)$ is treated similarly).
By definition, we have a cylinder \[\bar{C}\colon A \curvearrowright_{p-2}^{p-1} B \ \text{in} \ \Omega^{m+2}(X,\phi) \] where $\phi=(s^n(A),t^n(B))$.
By inductive hypothesis, we obtain 

\[f_k\ldots f_1 \bar{C} h_q \ldots h_1\colon f_k\ldots f_1 A h_q \ldots h_1\curvearrowright_{p-2}^{p-1} f_k\ldots f_1 B h_q \ldots h_1\]
so that we can set 
\[\overline{f_k\ldots f_1 C h_q \ldots h_1}=f_k\ldots f_1\bar{C} h_q \ldots h_1 \]
Given vertically composable (possibly degenerate) $n$-cylinders $C_1,\ldots, C_k$ in an $\infty$-groupoid $X$, we denote by $C_1 \otimes \ldots \otimes C_k$ the $n$-cylinder in $X$ that results as their vertical composition.

\subsection{Elementary interpretation of operations}
In this section we finally define, for every homogeneous operation $\rho:D_m \rightarrow A$ (so that $m\geq n=\dim(A)$) in a fixed coherator for $\infty$-categories $\mathfrak{D}$, a map
\[\hat{\rho}\colon \cyl(D_m) \rightarrow \cyl(A)\] satisfying both properties depicted in \eqref{hat properties}. In what follows, we will implicitly assume that a map $\mathfrak{D}\rightarrow \mathfrak{C}$ has been chosen once and for all (its existence is ensured by cellularity of $\mathfrak{D}$ and contractibility of $\mathfrak{C}$), and we identify maps in the domain with their image in the codomain as there is no harm in doing so.
To achieve the goal we set for this section, given $\epsilon=\sigma, \tau$ we need a description of the map 
\[\cyl(\partial_{\epsilon})\colon \cyl(\partial A) \rightarrow \cyl(A) \] in terms of the globular decomposition of both its domain and its target, where $\partial_{\epsilon}:\partial A \rightarrow A$ are the maps in $\Theta_0$ defined in \ref{partial defi}.

By construction, we know that the bottom row (see \eqref{zig zag picture}) of the globular decomposition of $\cyl(A)$ is obtained by sticking a new branch at the bottom right of the tree associated with $A$ and then letting this new branch traverse the tree counterclockwise.

Let $n=\dim(A)$, let us first explain how to get the list of trees appearing on the bottom row of the globular decomposition of $\cyl(\partial A)$ starting from the one associated with $\cyl (A)$.
We have three possible cases for the newly added branch in each tree belonging to the set $\mathcal{L}(A)$:
\begin{itemize}
\item it is attached at height $k\geq n$;
\item it is attached at height $k<n-1$;
\item it is attached at height $n-1$.
\end{itemize}
We discard all the trees in the first class, and we keep all the trees associated with the ones belonging to the second class, chopping off everything above height $k=n-1$.
In the third case, if we focus on the strip between height $n-1$ and $n$, the newly added branch has to appear in a certain corolla.
If the newly added branch is at the far left of the corolla (this will be important when we give an alternative description of $\cyl(\partial_{\sigma})$ in Proposition \ref{cocone partial}) it belongs to, then we chop off everything above dimension $n-1$ except this new branch, and we keep the resulting tree. If not, we discard the tree.

By doing so we get a new list that can be easily proven to correspond exactly to the one for $\partial A$, and for which we have maps in $\Theta_0$ between each tree in the list for $\cyl(\partial A)$ and the corresponding one in the list for $\cyl(A)$, induced by the source maps or appropriate colimit inclusions.

This can be reformulated in the following way. Consider the two diagrams of $\infty$-groupoids $\Cyl(\partial A):I_{\vert \mathscr{L}(\partial A)\vert} \rightarrow \wgpd$ and $\Cyl(A):I_{\vert \mathscr{L}(A)\vert } \rightarrow \wgpd$. The previous analysis essentially specifies a cocone under $\Cyl(\partial A)$, whose vertex is $\cyl(A)$, such that the map induced on the colimit is precisely $\cyl(\partial_{\sigma})$.
In fact, the trees that we keep (after having suitably modified them if prescribed by the algorithm described above) come equipped with maps into the tree in the decomposition of $\cyl(A)$ that they are associated to, and these maps are all induced by source maps or colimit inclusions.
After having postcomposed these maps with the colimit inclusions we get the cocone we are after.

A similar analysis, replacing every occurence of ``left'' with ``right'', gives an analogous result for the map $\cyl(\partial_{\tau}):\cyl(\partial A) \rightarrow \cyl(A)$.
\begin{ex}
Let's have a look at a specific example to clarify this argument.

Consider the globular sum given by $A=D_2 \amalg_{D_1} D_2 \amalg_{ D_0} D_1$. We have $\partial A= D_1 \amalg_{ D_0} D_1$.\\
According to the abovementioned rule, to describe the map $\cyl(\partial_{\sigma}):\cyl(\partial A) \rightarrow \cyl(A)$ we have to consider each of the trees on the bottom row, and check where the new edge is.
We have to discard all those in which this special edge is attached at height $n=2$, and keep those in which it is attached at height $n=0$, chopping off everything above height $n=1$.
Moreover, whenever it is attached at height $n=1$, we select only those in which the newly added edge is at the far left of the corolla it belongs to, and we chop everything above height $n=1$ except for this edge.

The list appearing  on the bottom row in the zig-zag expressing the globular decomposition of $\cyl(A)$ is given by:
\[
\begin{tikzpicture}

\tikzstyle{every node}=[circle, draw,
inner sep=0pt, minimum width=2pt]
\node[] (A) at (0,7) {};

\node[] (B) at (-0.5,7.5) {};
\node[] (C) at (0,7.5) {};
\node[] (D) at (0.5,7.5) {};

\node[] (E) at (-1,8) {};
\node[] (F) at (0,8) {};

\path [] (A) edge (B);
\path [draw=red, very thick] (A) edge (D);
\path [] (A) edge (C);
\path [] (B) edge (E);
\path [] (B) edge (F);

\node[] (A') at (2.5,7) {};

\node[] (B') at (3,7.5) {};
\node[] (D') at (2,7.5) {};

\node[] (F') at (2.5,8) {};
\node[] (E') at (1.5,8) {};
\node[] (C') at (3,8) {};

\path [] (A') edge (B');
\path [draw=red, very thick] (B') edge (C');
\path [] (A') edge (D');
\path [] (D') edge (E');
\path [] (D') edge (F');

\node[] (A'') at (5,7) {};

\node[] (B'') at (4.5,7.5) {};
\node[] (C'') at (5,7.5) {};
\node[] (D'') at (5.5,7.5) {};

\node[] (E'') at (4,8) {};
\node[] (F'') at (5,8) {};

\path [] (A'') edge (B'');
\path [draw=red, very thick] (A'') edge (C'');
\path [] (A'') edge (D'');
\path [] (B'') edge (E'');
\path [] (B'') edge (F'');

\node[] (a) at (7.5,7) {};

\node[] (b) at (7,7.5) {};
\node[] (c) at (7.5,8) {};
\node[] (d) at (8,7.5) {};

\node[] (e) at (6.5,8) {};
\node[] (f) at (7,8) {};

\path [] (a) edge (b);
\path [draw=red, very thick] (b) edge (c);
\path [] (a) edge (d);
\path [] (b) edge (e);
\path [] (b) edge (f);

\node[] (a'') at (10,7) {};

\node[] (b'') at (9.5,7.5) {};
\node[] (d'') at (10.5,7.5) {};

\node[] (c'') at (10,8) {};
\node[] (f'') at (9,8) {};

\node[] (e'') at (10,8.5) {};

\path [] (a'') edge (b'');
\path [draw=red, very thick] (c'') edge (e'');
\path [] (a'') edge (d'');
\path [] (b'') edge (c'');
\path [] (b'') edge (f'');

\node[] (a') at (1.5,5) {};

\node[] (b') at (1,5.5) {};
\node[] (d') at (2,5.5) {};

\node[] (c') at (1.5,6) {};
\node[] (e') at (1,6) {};
\node[] (f') at (0.5,6) {};

\path [] (a') edge (b');
\path [draw=red, very thick] (b') edge (e');
\path [] (a') edge (d');
\path [] (b') edge (c');
\path [] (b') edge (f');

\node[] (a''') at (4,5) {};

\node[] (b''') at (4.5,5.5) {};
\node[] (c''') at (3.5,5.5) {};

\node[] (d''') at (3,6) {};
\node[] (e''') at (4,6) {};

\node[] (f''') at (3,6.5) {};

\path [] (a''') edge (b''');
\path [draw=red, very thick] (d''') edge (f''');
\path [] (c''') edge (d''');
\path [] (a''') edge (c''');
\path [] (c''') edge (e''');

\node[] (U) at (6.5,5) {};

\node[] (V) at (7,5.5) {};
\node[] (W) at (6,5.5) {};

\node[] (X) at (5.5,6) {};
\node[] (Y) at (6.5,6) {};
\node[] (Z) at (6,6) {};

\path [] (U) edge (V);
\path [draw=red, very thick] (W) edge (X);
\path [] (U) edge (W);
\path [] (W) edge (Y);
\path [] (W) edge (Z);

\node[] (u) at (9,5) {};

\node[] (v) at (9.5,5.5) {};
\node[] (w) at (8.5,5.5) {};
\node[] (x) at (9,5.5) {};

\node[] (y) at (8.5,6) {};
\node[] (z) at (9.5,6) {};

\path [] (u) edge (v);
\path [draw=red, very thick] (u) edge (w);
\path [] (u) edge (x);
\path [] (x) edge (y);
\path [] (x) edge (z);

\end{tikzpicture}\]
For example, let's consider the sixth tree of this list, namely: 
\[
\begin{tikzpicture}
\tikzstyle{every node}=[circle, draw,
inner sep=0pt, minimum width=2pt]

\node[] (a') at (5.5,-5.5) {};

\node[] (b') at (5,-5) {};
\node[] (c') at (5.5,-4.5) {};
\node[] (d') at (6,-5) {};

\node[] (e') at (5,-4.5) {};
\node[] (f') at (4.5,-4.5) {};

\path [] (a') edge (b');
\path [draw=red, very thick] (b') edge (e');
\path [] (a') edge (d');
\path [] (b') edge (c');
\path [] (b') edge (f');
\end{tikzpicture}\]
The corolla which the special edge belongs to is $\begin{tikzpicture}
\tikzstyle{every node}=[circle, draw,
inner sep=0pt, minimum width=2pt]

\node[] (b') at (5,-5) {};
\node[] (c') at (5.5,-4.5) {};
\node[] (e') at (5,-4.5) {};
\node[] (f') at (4.5,-4.5) {};

\path [draw=red, very thick] (b') edge (e');
\path [] (b') edge (c');
\path [] (b') edge (f');
\end{tikzpicture}$, and the red edge is not at the far left of it, so we discard this tree.

Proceding as described by the rule, we are left with the following list of trees, identified with (respectively) the first, second, third, eighth and ninth tree of the previous list
\[
\begin{tikzpicture}
\tikzstyle{every node}=[circle, draw,
inner sep=0pt, minimum width=2pt]

\node[] (b') at (0,0) {};
\node[] (c') at (0.5,0.5) {};
\node[] (e') at (0,0.5) {};
\node[] (f') at (-0.5,0.5) {};

\path [draw=red, very thick] (b') edge (c');
\path [] (b') edge (e');
\path [] (b') edge (f');

\node[] (b) at (2.5,0) {};
\node[] (c) at (3,0.5) {};
\node[] (e) at (3,1) {};
\node[] (f) at (2,0.5) {};

\path [draw=red, very thick] (c) edge (e);
\path [] (b) edge (c);
\path [] (b) edge (f);

\node[] (b'') at (5,0) {};
\node[] (c'') at (5.5,0.5) {};
\node[] (e'') at (5,0.5) {};
\node[] (f'') at (4.5,0.5) {};

\path [draw=red, very thick] (b'') edge (e'');
\path [] (b'') edge (c'');
\path [] (b'') edge (f'');

\node[] (b-) at (7.5,0) {};
\node[] (c-) at (8,0.5) {};
\node[] (e-) at (7,1) {};
\node[] (f-) at (7,0.5) {};

\path [draw=red, very thick] (f-) edge (e-);
\path [] (b-) edge (c-);
\path [] (b-) edge (f-);

\node[] (b'-) at (9,0) {};
\node[] (c'-) at (9.5,0.5) {};
\node[] (e'-) at (9,0.5) {};
\node[] (f'-) at (8.5,0.5) {};

\path [draw=red, very thick] (b'-) edge (f'-);
\path [] (b'-) edge (e'-);
\path [] (b'-) edge (c'-);
\end{tikzpicture}
\]
Clearly, this is the list of trees appearing on the bottom row of the globular decomposition of $\cyl(\partial A)=\cyl(D_1 \amalg_{ D_0} D_1)$.
\end{ex}
Let's make this precise: we start by defining a map of sets $\phi^{\sigma}_A\colon \mathscr{L}(\partial A)\rightarrow\mathscr{L}(A)$ (resp. $\phi^{\tau}_A\colon \mathscr{L}(\partial A)\rightarrow\mathscr{L}(A)$) by sending $B\in \mathscr{L}(\partial A)$ to $\phi^{\sigma}_A(B)$ (resp. $\phi^{\tau}_A(B)$), whose underlying presheaf of sets is given by adjoining a new vertex to $A$ in the fiber over $x\in A_{m-1}$ if $B$ is obtained by adding a vertex to $\partial A$, in the fiber over such $x$.

To extend the linear order, we consider two possible cases. First, if $m<n=\dim(A)$ then one has $(\iota^{\partial A}_{m-1})^{-1}\{x\}=(\iota^{A}_{m-1})^{-1}\{x\}$. Therefore, we endow the fiber over $x$ in $\phi^{\sigma}_A(B)$ (resp. $\phi^{\tau}_A(B)$) with the linear order transported from the one in $(\iota^{\partial A}_{m-1})^{-1}\{x\}$.
If $m=n$, we impose that the newly adjoined vertex in $\phi^{\sigma}_A(B)$ (resp. $\phi^{\tau}_A(B)$) is the least element (resp. the greatest element) in the fiber over $x$ .

Given any $B\in \mathscr{L}(\partial A)$, let us define a map $j^{\sigma}_B:B \rightarrow \phi^{\sigma}_A(B)$ (resp. $j^{\tau}_B:B \rightarrow \phi^{\tau}_A(B)$) in $\Theta_0$. Following the notation in the previous paragraph, if $m<n$, we define $j^{\sigma}_B$ to be $\partial_{\sigma}\colon B \rightarrow \phi^{\sigma}_A(B)$, and $j^{\tau}_B$ to be $\partial_{\tau}\colon B \rightarrow \phi^{\tau}_A(B)$.
If $m=n$, then the maps $j^{\sigma}_B:B \rightarrow \phi^{\sigma}_A(B)$ and $j^{\tau}_B:B \rightarrow \phi^{\tau}_A(B)$ are induced by the universal property of pushouts as depicted in the diagrams below, where the front and back faces of the cubes are cocartesian
\[
\bfig 
	\morphism(-1200,-400)|a|/@{>}@<0pt>/<600,0>[D_{n-1}`D_n;\tau]
		\morphism(-1200,-400)|a|/@{>}@<0pt>/<0,-600>[D_{n-1}`\partial A;\alpha]
	\morphism(-1200,-1000)|a|/@{>}@<0pt>/<600,0>[\partial A`B;\partial_{\tau}]
	\morphism(-600,-400)|a|/@{>}@<0pt>/<0,-600>[D_{n}`B;] 
	
\morphism(-700,0)|a|/@{>}@<0pt>/<600,0>[D_n^{\otimes k}`D_n^{\otimes k+1};d^1]
\morphism(-700,0)|a|/@{>}@<0pt>/<0,-600>[D_n^{\otimes k}` A;\beta]
\morphism(-700,-600)|a|/@{>}@<0pt>/<600,0>[ A`\phi^{\sigma}_{A}(B);z^A_B]
\morphism(-100,0)|a|/@{>}@<0pt>/<0,-600>[D_n^{\otimes k+1}`\phi^{\sigma}_{A}(B);]

\morphism(-1200,-1000)|a|/@{>}@<0pt>/<500,400>[\partial A`A;\partial_{\sigma}]
	\morphism(-1200,-400)|a|/@{>}@<0pt>/<500,400>[D_{n-1}`D_n^{\otimes k};i_1\circ \sigma]
		\morphism(-600,-400)|r|/@{>}@<0pt>/<500,400>[D_{n}`D_n^{\otimes k+1};i_1] 
			\morphism(-600,-1000)|r|/@{-->}@<0pt>/<500,400>[B`\phi^{\sigma}_{A}(B);j^{\sigma}_B] 

\morphism(800,-400)|a|/@{>}@<0pt>/<600,0>[D_{n-1}`D_n;\sigma]
\morphism(800,-400)|a|/@{>}@<0pt>/<0,-600>[D_{n-1}`\partial A;\alpha']
\morphism(800,-1000)|a|/@{>}@<0pt>/<600,0>[\partial A`B;\partial_{\sigma}]
\morphism(1400,-400)|a|/@{>}@<0pt>/<0,-600>[D_{n}`B;] 

\morphism(1300,0)|a|/@{>}@<0pt>/<600,0>[D_n^{\otimes k}`D_n^{\otimes k+1};d^{k+1}]
\morphism(1300,0)|a|/@{>}@<0pt>/<0,-600>[D_n^{\otimes k}` A;\beta']
\morphism(1300,-600)|a|/@{>}@<0pt>/<600,0>[ A`\phi^{\tau}_{A}(B);v^A_B]
\morphism(1900,0)|a|/@{>}@<0pt>/<0,-600>[D_n^{\otimes k+1}`\phi^{\tau}_{A}(B);]

\morphism(800,-1000)|a|/@{>}@<0pt>/<500,400>[\partial A`A;\partial_{\tau}]
\morphism(800,-400)|a|/@{>}@<0pt>/<500,400>[D_{n-1}`D_n^{\otimes k};i_k\circ \tau]
\morphism(1400,-400)|r|/@{>}@<0pt>/<500,400>[D_{n}`D_n^{\otimes k+1};i_{k+1}] 
\morphism(1400,-1000)|r|/@{-->}@<0pt>/<500,400>[B`\phi^{\tau}_{A}(B);j^{\tau}_B] 
\efig 
\]
Here, $D_n^{\otimes k}=\Sigma^{n-1}(D_1^{\otimes k})$, $d^r$ is the map that skips the $r$-th summand, $i_k$ denotes the inclusion of the $k$-th summand, $\alpha,\alpha'$ represent the target (resp.source) of the leaf we are adjoining to $\partial A$ and $\beta,\beta'$ are the corresponding inclusion of the fiber of $A$ over $x$.
\begin{prop}
	\label{cocone partial}
Given a globular sum $A$ with $\dim(A)=n>0$, we have the following commutative square for each $B\in \mathscr{L}(\partial A)$ and $\epsilon=\sigma, \tau$:
\[
\bfig
	\morphism(0,0)|a|/@{>}@<0pt>/<750,0>[B` \phi^{\epsilon}_A(B);j^{\epsilon}_B]
	\morphism(0,0)|a|/@{>}@<0pt>/<0,-500>[B` \cyl(\partial A);i_B]
	\morphism(0,-500)|a|/@{>}@<0pt>/<750,0>[\cyl(\partial A)` \cyl(A);\cyl(\partial_{\epsilon})]
	\morphism(750,0)|r|/@{>}@<0pt>/<0,-500>[\phi^{\epsilon}_A(B)`\cyl(A);i_{\phi^{\epsilon}_A(B)}]
\efig
\]
\begin{proof}
We only prove the case $\epsilon=\sigma$, the other one being entirely dual. The statement is clear if $A=D_1^{\otimes m}$, in which case $\partial A=D_0$.
Otherwise, let $A=\Sigma A_1 \amalg_{D_0} \ldots \amalg_{ D_0} \Sigma A_k$, as in \eqref{decomposition of glob sum}, and assume the result holds for all the $A_i$'s.
Define $A'_i$ to be $A_i$ if $\dim(A_i)<n-1$, or $\partial A_i$ otherwise.
Let us subdivide the set $\mathscr{L}(\partial A)$ as the union of the set of globular sums for which the new edge is joined at the root, and the sets of the form \[\{\Sigma A'_1 \plus{D_0} \ldots\plus{D_0} \Sigma B_j \plus{ D_0} \ldots \plus{ D_0}\Sigma A'_k\}_{1\leq i \leq k, B_j \in \mathscr{L}(A'_i)}\]
If $B$ belongs to the first set, i.e. $B=\Sigma A'_1 \amalg_{ D_0}\ldots \Sigma A'_i \amalg_{ D_0}D_1 \amalg_{ D_0} \Sigma A'_{i+1} \amalg_{ D_0} \Sigma A'_k$ for some $0\leq i \leq k+1$, then thanks to Remark \ref{some maps in cyl(A) zig zag} one has the following commutative square
\[
\bfig
\morphism(0,0)|a|/@{>}@<0pt>/<1500,0>[B`\Sigma A_1 \plus{ D_0}\ldots \Sigma A_i \plus{ D_0}D_1 \plus{ D_0} \Sigma A_{i+1} \plus{ D_0} \Sigma A_k;\partial_{\sigma}]
\morphism(0,0)|a|/@{>}@<0pt>/<0,-500>[B` \cyl(\partial A);i_B]
\morphism(0,-500)|a|/@{>}@<0pt>/<1500,0>[\cyl(\partial A)` \cyl(A);\cyl(\partial_{\sigma})]
\morphism(1500,0)|r|/@{>}@<0pt>/<0,-500>[\Sigma A_1 \plus{ D_0}\ldots \Sigma A_i \plus{ D_0}D_1 \plus{ D_0} \Sigma A_{i+1} \plus{ D_0} \Sigma A_k`\cyl(A);i_{\phi_A(B)}]
\efig
\]
To conclude the proof for this case, just observe that the upper horizontal map coincides with $j^{\sigma}_B\colon B \rightarrow \phi^{\sigma}_A(B)$.

Next, suppose $B=\Sigma A'_1 \amalg_{D_0} \ldots\amalg_{D_0} \Sigma B_j \amalg_{ D_0} \ldots \amalg_{ D_0}\Sigma A'_k$ for some $B_j \in \mathscr{L}(A'_i)$.
By construction, we have that the natural transformation $\cyl(\partial_{\sigma})$ restricted to the sub zig-zag \[\Sigma A'_1\plus{ D_0}\ldots\plus{ D_0} \Sigma A'_{i-1}\plus{ D_0}\Sigma \Cyl (A'_i) \plus{ D_0} \Sigma A'_{i+1}\plus{ D_0} \ldots\plus{ D_0} \Sigma A'_k\] coincides with \[\Sigma  \partial'_1 \plus{ D_0}\ldots \plus{ D_0} \Sigma \cyl(\partial'_i)\plus{D_0} \Sigma \partial'_{i+1}\plus{ D_0}\ldots\plus{ D_0}\Sigma  \partial'_k \]
where we set $\partial'_i$ to be $\partial_{\sigma}:\partial A_i=A'_i \rightarrow A_i$ if $\dim (A_i)=n-1$, and the identity otherwise.

Thanks to Remark \ref{some maps in cyl(A) zig zag} and the inductive hypothesis,we get the following commutative square 
\[
\bfig
\morphism(0,0)|a|/@{>}@<0pt>/<2800,0>[B`\Sigma A_1 \plus{D_0} \ldots\plus{D_0} \Sigma\phi^{\sigma}_{A}(B_j) \plus{ D_0} \ldots \plus{ D_0}\Sigma A_k;\Sigma \partial'_1\plus{ D_0}\ldots\plus{ D_0}\Sigma j_{B_j}\plus{ D_0} \Sigma \partial'_{i+1}\plus{ D_0} \ldots \plus{ D_0} \Sigma \partial'_k]
\morphism(0,0)|a|/@{>}@<0pt>/<0,-500>[B` \cyl(\partial A);i_B]
\morphism(0,-500)|a|/@{>}@<0pt>/<2800,0>[\cyl(\partial A)` \cyl(A);\cyl(\partial_{\sigma})]
\morphism(2800,0)|r|/@{>}@<0pt>/<0,-500>[\Sigma A_1 \plus{D_0} \ldots\plus{D_0} \Sigma\phi^{\sigma}_{A}(B_j) \plus{ D_0} \ldots \plus{ D_0}\Sigma A_k`\cyl(A);i]
\efig
\]
We conclude by observing that \[\Sigma A_1 \plus{D_0} \ldots\plus{D_0} \Sigma\phi^{\sigma}_{A}(B_j) \plus{ D_0} \ldots \plus{ D_0}\Sigma A_k=\phi_A(B)\] and
\[\Sigma \partial'_1\plus{ D_0}\ldots\plus{ D_0}\Sigma j_{B_i}\plus{ D_0} \Sigma \partial'_{i+1}\plus{ D_0} \ldots \plus{ D_0} \Sigma \partial'_k=j_B\]
\end{proof}
\end{prop}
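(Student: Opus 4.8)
The plan is to establish commutativity of the square by induction on the suspension decomposition of $A$. Using Lemma \ref{decomposition of glob sum} I would write $A \cong \Sigma A_1 \plus{D_0} \Sigma A_2 \plus{D_0} \ldots \plus{D_0} \Sigma A_k$, and then exploit the globularity of the cylinder functor, which gives $\cyl(A) \cong \cyl(\Sigma A_1) \plus{\cyl(D_0)} \ldots \plus{\cyl(D_0)} \cyl(\Sigma A_k)$; this is precisely the feature that lets an inductive argument on the $A_i$ interact with the colimit inclusions $i_B$. I would treat only $\epsilon = \sigma$, since the case $\epsilon = \tau$ is entirely dual (interchanging the roles of least/greatest and of far-left/far-right, and replacing $\partial_\sigma$ by $\partial_\tau$ throughout). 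The base case is $A = D_1^{\otimes m}$, where $\partial A = D_0$ and the claim is immediate since both composites reduce to a single colimit inclusion.

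For the inductive step the key move is to partition $\mathscr{L}(\partial A)$ according to where the new edge of $B$ is attached: those $B$ for which it is joined at the root, and those for which it lies strictly inside a single suspension factor. Setting $A'_i = A_i$ when $\dim(A_i) < n-1$ and $A'_i = \partial A_i$ otherwise, the second class consists exactly of the globular sums $\Sigma A'_1 \plus{D_0} \ldots \plus{D_0} \Sigma B_j \plus{D_0} \ldots \plus{D_0} \Sigma A'_k$ with $B_j \in \mathscr{L}(A'_i)$. For a $B$ of the first type, say $B = \Sigma A'_1 \plus{D_0} \ldots \plus{D_0} D_1 \plus{D_0} \ldots \plus{D_0} \Sigma A'_k$, I would invoke the explicit description of the colimit inclusion $i_{S \plus{D_0} D_1 \plus{D_0} T}$ recorded in Remark \ref{some maps in cyl(A) zig zag}: this directly identifies the bottom-then-right composite of the square, and one checks that the top horizontal map it produces is exactly $j^\sigma_B$.

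For a $B$ of the second type I would combine that same remark with the inductive hypothesis applied to $A_i$ through $B_j$. The structural input making this work is that $\cyl(\partial_\sigma)$, restricted to the sub-zig-zag indexed by $\Sigma A'_1 \plus{D_0} \ldots \plus{D_0} \Sigma \Cyl(A'_i) \plus{D_0} \ldots \plus{D_0} \Sigma A'_k$, coincides with $\Sigma \partial'_1 \plus{D_0} \ldots \plus{D_0} \Sigma \cyl(\partial'_i) \plus{D_0} \ldots \plus{D_0} \Sigma \partial'_k$, where $\partial'_i$ is $\partial_\sigma \colon A'_i \to A_i$ when $\dim(A_i) = n-1$ and the identity otherwise. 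This reduces the outer composite to a $\plus{D_0}$ of suspended maps, each of which the inductive hypothesis controls, and the square then closes by the universal property of the relevant colimit inclusions, after identifying $\Sigma A_1 \plus{D_0} \ldots \plus{D_0} \Sigma \phi^\sigma_{A_i}(B_j) \plus{D_0} \ldots \plus{D_0} \Sigma A_k$ with $\phi^\sigma_A(B)$ and the corresponding suspended map with $j^\sigma_B$.

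The hard part will be the bookkeeping of the linear orders on the vertices of the trees, and in particular verifying that the pushout-defined maps $j^\sigma_B$ — which arise precisely when $\dim(A) = n$ and the new vertex is forced to be the \emph{least} element of its fibre — agree on the nose with the maps extracted from the colimit inclusions of Remark \ref{some maps in cyl(A) zig zag}. The far-left selection rule for the new edge and its compatibility with the source boundary map $\partial_\sigma$ is exactly where the combinatorics of $\mathscr{L}(\partial A) \to \mathscr{L}(A)$ must be matched against the geometry of the zig-zag, and this is where I would spend most of the care; everything else is a routine application of the universal property of pushouts.
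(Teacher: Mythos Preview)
Your proposal is correct and follows essentially the same approach as the paper's proof: the same base case $A=D_1^{\otimes m}$, the same inductive decomposition via $A\cong\Sigma A_1\plus{D_0}\ldots\plus{D_0}\Sigma A_k$, the same partition of $\mathscr{L}(\partial A)$ into root-attached and factor-attached edges, and the same appeals to Remark~\ref{some maps in cyl(A) zig zag} together with the identification of $\cyl(\partial_\sigma)$ on the sub-zig-zag as a $\plus{D_0}$ of suspended boundary maps. Your closing remarks on the bookkeeping of linear orders and the far-left selection rule accurately pinpoint where the care is needed, and the paper handles these points exactly as you outline.
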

We also record here the following result, for future use (and generalization):
\begin{prop}
	\label{compatibility of j's}
	The following square commutes for $\epsilon=\sigma,\tau$
	\[\bfig
	\morphism(0,0)|a|/@{>}@<0pt>/<500,0>[\partial A` A;\partial^A_{\epsilon}]
	\morphism(0,0)|a|/@{>}@<0pt>/<0,-500>[\partial A` B;z^{\partial A}_B]
	\morphism(0,-500)|a|/@{>}@<0pt>/<500,0>[B` \phi^{\epsilon}_A(B);j^{\epsilon}_B]
	\morphism(500,0)|r|/@{>}@<0pt>/<0,-500>[A`\phi^{\epsilon}_A(B);z^A_{\phi^{\epsilon}_A(B)}]
	\efig
	\]
	\begin{proof}
		Let$*_B$ be the vertex adjoined to $\partial A$ to get $B$, and set $m=\height(*_B)$.
		
		If $*_B=\min F$, with $F=(\iota^B_{m-1})^{-1}\{x\}$ for $x=\iota^B_{m-1}(*_B)$, then either $n>m \geq 1$, in which case the square in the statement is given by
		\[
		\bfig 
		\morphism(0,0)|a|/@{>}@<0pt>/<500,0>[\partial A` A;\partial_{\sigma}]
		\morphism(0,0)|a|/@{>}@<0pt>/<0,-500>[\partial A` B;]
		\morphism(0,-500)|a|/@{>}@<0pt>/<500,0>[B` \phi^{\epsilon}_A(B);\partial_{\sigma}]
		\morphism(500,0)|r|/@{>}@<0pt>/<0,-500>[A`\phi^{\epsilon}_A(B);]
		\efig 
		\]
		where the unlabelled maps are the natural inclusions discussed in the definition of the maps $z^A_B$, or $m=n$, in which case the square becomes
		\[
		\bfig 
		\morphism(0,0)|a|/@{>}@<0pt>/<500,0>[\partial A` A;\partial_{\sigma}]
		\morphism(0,0)|a|/@{>}@<0pt>/<0,-500>[\partial A` B;\partial_{\tau}]
		\morphism(0,-500)|a|/@{>}@<0pt>/<500,0>[B` \phi^{\epsilon}_A(B);j^{\sigma}_B]
		\morphism(500,0)|r|/@{>}@<0pt>/<0,-500>[A`\phi^{\epsilon}_A(B);z^A_{\phi^{\sigma}_A(B)}]
		\efig 
		\]
	The first square commutes for obvious reasons, and the second one is the bottom face of the cube appearing in the definition of the maps $j^{\sigma}_B$, and therefore commutes as well.	
		
		
If $*_B\neq \min F$, then $m<n$ and so the square is given by 		\[
\bfig 
\morphism(0,0)|a|/@{>}@<0pt>/<500,0>[\partial A` ;\partial_{\sigma}]
\morphism(0,0)|a|/@{>}@<0pt>/<0,-500>[\partial A`B;z^A_B]
\morphism(0,-500)|a|/@{>}@<0pt>/<500,0>[B`\phi^{\epsilon}_A(B);\partial_{\sigma}]
\morphism(500,0)|r|/@{>}@<0pt>/<0,-500>[A`\phi^{\epsilon}_A(B);z^A_{\phi^{\sigma}_A(B)}]
\efig 
\]
If we let $F'=(\iota^{\phi^{\sigma}_A(B)}_{m-1})^{-1}\{x'\}$ for $x'=\iota^{\phi^{\sigma}_A(B)}_{m-1}j_B^{\sigma}(*_B)$, we see that $*_{\phi^{\sigma}_A(B)}=j_B^{\sigma}(*_B)\neq \min F'$, so that the commutativity of the square above follows immediately from the globularity of the generalized whiskering $w$'s.
	\end{proof}
\end{prop}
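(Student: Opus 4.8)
The plan is to argue by a direct case analysis on the height $m=\height(*_B)$ of the vertex $*_B$ adjoined to $\partial A$ to produce $B$, together with the position of $*_B$ inside its fibre $F=(\iota^B_{m-1})^{-1}\{x\}$, where $x=\iota^B_{m-1}(*_B)$. All four edges of the square lie in $\Theta_0$ and are pinned down either by their underlying maps of trees or by the suspended whiskering maps of Definition \ref{whiskering w}, so in each case the verification of commutativity will reduce to a statement about trees or to the naturality of the whiskerings. I would treat only $\epsilon=\sigma$ explicitly, the case $\epsilon=\tau$ being obtained by the systematic exchange of ``left'' and ``right'' (equivalently $\min\leftrightarrow\max$ and $\sigma\leftrightarrow\tau$).

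First I would record the basic numerology. Since $\dim(\partial A)=n-1$, the height $m$ satisfies $1\le m\le n$, and when $m=n$ the fibre $F$ is necessarily the singleton $\{*_B\}$, because $\partial A$ has no vertices of height $n$. Hence the condition $*_B\neq\min F$ forces $m<n$, which is exactly the dichotomy built into the definitions of $z^{\partial A}_B$ (Definition \ref{upward maps}) and of $j^\sigma_B$. This splits the argument into three cases.

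In the case $m=n$ (so automatically $*_B=\min F$) we have $z^{\partial A}_B=\partial_\tau$ by Definition \ref{upward maps}, while by construction $*_{\phi^\sigma_A(B)}$ is the least element of its fibre, so that $j^\sigma_B$ and $z^A_{\phi^\sigma_A(B)}$ are precisely the maps produced by the two cocartesian squares in the definition of the $j$'s. The square to be checked is then literally the bottom face of the defining cube, and it commutes by the very construction of $j^\sigma_B$ as the map induced on the back pushout. In the case $m<n$ with $*_B=\min F$, all four maps are of ``inclusion type'': the verticals $z^{\partial A}_B$ and $z^A_{\phi^\sigma_A(B)}$ are the unique $\Theta_0$-maps realizing the tree inclusions $\chi$, while both horizontals are $\partial_\sigma$; commutativity is then immediate on underlying trees. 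Finally, in the case $m<n$ with $*_B\neq\min F$, writing $C$ for the subtree over the predecessor $y$ of $*_B$, both verticals are of the form $\Sigma^m({}_C w)$ (the identity away from the relevant suspended summand) and both horizontals are again $\partial_\sigma$.

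The step I expect to be the main obstacle is the last one: I must check that $\phi^\sigma_A$ preserves the position of $*_B$ within its fibre, so that the two verticals really are the \emph{same} whiskering on corresponding suspended summands, and that $\Sigma^m({}_C w)$ commutes with $\partial_\sigma$ on the nose. The first point is handled by the construction of $\phi^\sigma_A(B)$, which transports the linear order on the fibre precisely when $m<n$; the second is the defining globularity property of the whiskering maps ${}_C w$ from Definition \ref{whiskering w}. A minor bookkeeping subtlety worth isolating separately is confirming that the ``bottom face of the cube'' in the case $m=n$ is oriented exactly as the square in the statement, with $\partial_\tau$ (and not $\partial_\sigma$) as its left vertical; this is why the $\min$/$\tau$ pairing in Definition \ref{upward maps} is the one compatible with the $\sigma$-variants of $\phi$ and of $j$.
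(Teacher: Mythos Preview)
Your proposal is correct and follows essentially the same approach as the paper's own proof: the same three-way case split on $m=n$, $m<n$ with $*_B=\min F$, and $m<n$ with $*_B\neq\min F$, with the same justifications in each case (bottom face of the defining cube, tree-level inclusion argument, and globularity of the whiskerings, respectively). If anything, your write-up is slightly more careful than the paper's in spelling out the numerology and in flagging that in the third case one must check $\phi^\sigma_A$ preserves the position of $*_B$ in its fibre before invoking globularity.
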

We now describe how to extend the results presented thus far, in order to study the maps $\cyl(\partial_{\sigma}^k):\cyl(\partial^k A) \rightarrow \cyl (A)$ for $k>1$.
This time too, if we want to obtain the list of trees appearing on the bottom row of the globular decomposition of $\cyl(\partial^k A)$ we start with those in the decomposition of $\cyl(A)$.

Firstly, we have to discard all the trees in which the new edge has been attached at height $m\geq n-k+1$. If it has been attached at height $m<n-k$, we keep the trees after having chopped off everything above height $n-k$.
Finally, if the new edge is attached at height $m=n-k$, we consider the strip comprised between height $m=n-k$ and $m'=n-k+1$. The new edge belongs to a corolla in here, and we only keep the trees in which it is at the far left of the corolla it belongs to. Again, in this case, we chop everything above height $n-k$, except for the newly added edge.

Each of the trees (representing a globular sum) we thus obtain comes equipped with a map (induced by a $k$-fold iteration of the source maps or by a colimit inclusion) towards the one appearing in the decomposition of $\cyl(A)$ which it is associated to.
In this way one gets a cocone under the diagram $\Cyl(\partial^k A)\colon I_m \rightarrow \wgpd$, whose vertex is $\cyl(A)$, such that the map induced on the colimit is precisely $\cyl(\partial_{\sigma}^k)$.

A similar argument, replacing every occurence of ``left'' with ``right'', yields an analogous result for the map $\cyl(\partial_{\tau}^k)\colon \cyl(\partial^k A) \rightarrow \cyl (A)$.
To make this precise we have to generalize the work already done for $k=1$.
\begin{defi}
Let A be a globular sum, and $k>0$ a positive integer. Define (for $\epsilon=\sigma, \tau$) the map of sets $(\phi^{\epsilon}_{A})^k \colon \mathscr{L}(\partial^k A) \rightarrow \mathscr{L}(A)$ inductively, by setting $(\phi^{\epsilon}_{A})^1=\phi^{\epsilon}_A$ and $(\phi^{\epsilon}_{A})^k=\phi^{\epsilon}_A \circ (\phi^{\epsilon}_{A})^{k-1} $ for $k>1$.
\end{defi}
Unraveling the previous definition we see that $(\phi^{\epsilon}_{A})^k=\phi^{\epsilon}_A \circ \phi^{\epsilon}_{\partial A} \circ \ldots \circ \phi^{\epsilon}_{\partial^k A}$.
We also define, given $B \in \mathscr{L}(\partial^k A)$, a map $(j^{\epsilon}_B)^k\colon B \rightarrow (\phi^{\epsilon}_{A})^k(B)$ by setting $(j^{\epsilon}_B)^1=j^{\epsilon}_B$ and $(j^{\epsilon}_B)^k=j^{\epsilon}_{\phi_{\partial A}^{k-1}(B)} \circ (j^{\epsilon}_B)^{k-1}$ if $k>1$.

The following result is an immediate consequence of Proposition \ref{cocone partial}.
\begin{prop}
Given a positive integer $k$ and a globular sum $A$ with $\dim(A)=n\geq k$, we have the following commutative square for each $B\in \mathscr{L}(\partial^k A)$:
\[
\bfig
\morphism(0,0)|a|/@{>}@<0pt>/<750,0>[B` (\phi^{\epsilon}_{A})^k(B);(j^{\epsilon}_B)^k]
\morphism(0,0)|a|/@{>}@<0pt>/<0,-500>[B` \cyl(\partial^k A);i_B]
\morphism(0,-500)|a|/@{>}@<0pt>/<750,0>[\cyl(\partial^k A)` \cyl(A);\cyl(\partial_{\epsilon}^k)]
\morphism(750,0)|r|/@{>}@<0pt>/<0,-500>[(\phi^{\epsilon}_{A})^k(B)`\cyl(A);i_{(\phi^{\epsilon}_{A})^k(B)}]
\efig
\]
\end{prop}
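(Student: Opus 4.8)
The plan is to argue by induction on $k$, reducing everything to the case $k=1$, which is precisely Proposition \ref{cocone partial}. The base case $k=1$ is that proposition verbatim, so I would assume $k>1$ and that the statement holds for $k-1$ and every globular sum. Throughout, fix $\epsilon\in\{\sigma,\tau\}$ and $B\in\mathscr{L}(\partial^k A)$. The essential observation is that all three $k$-fold gadgets appearing in the square factor into single steps: $(\phi^{\epsilon}_A)^k=(\phi^{\epsilon}_A)^{k-1}\circ\phi^{\epsilon}_{\partial^{k-1}A}$ by unravelling its definition, $(j^{\epsilon}_B)^k$ is the corresponding composite of single-step $j$-maps, and $\cyl(\partial_{\epsilon}^k)=\cyl(\partial_{\epsilon}^{k-1})\circ\cyl(\partial_{\epsilon})$ by functoriality of $\cyl\colon\Theta_0\to\wgpd$. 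So the asserted rectangle should arise as a horizontal pasting of two commuting squares, each supplied by a result we already have.

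Concretely, writing $\partial^k A=\partial(\partial^{k-1}A)$, I would first apply Proposition \ref{cocone partial} to the globular sum $\partial^{k-1}A$ (whose dimension is $n-k+1>0$ since $n\geq k$) and to $B\in\mathscr{L}(\partial(\partial^{k-1}A))$. This produces a commutative square with top edge the single-step map $j^{\epsilon}_B\colon B\to\phi^{\epsilon}_{\partial^{k-1}A}(B)$, vertical legs the colimit inclusions $i_B$ and $i_{\phi^{\epsilon}_{\partial^{k-1}A}(B)}$, and bottom edge $\cyl(\partial_{\epsilon})\colon\cyl(\partial^k A)\to\cyl(\partial^{k-1}A)$. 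Setting $B'=\phi^{\epsilon}_{\partial^{k-1}A}(B)\in\mathscr{L}(\partial^{k-1}A)$, I would then apply the inductive hypothesis (for the integer $k-1$) to $A$ and $B'$, obtaining a second commutative square with top edge $(j^{\epsilon}_{B'})^{k-1}$, vertical legs $i_{B'}$ and $i_{(\phi^{\epsilon}_A)^{k-1}(B')}$, and bottom edge $\cyl(\partial_{\epsilon}^{k-1})\colon\cyl(\partial^{k-1}A)\to\cyl(A)$.

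Since $i_{B'}=i_{\phi^{\epsilon}_{\partial^{k-1}A}(B)}$ and the vertex $\cyl(\partial^{k-1}A)$ is common to both, the right-hand column of the first square coincides with the left-hand column of the second, so the two squares paste horizontally into a single commuting rectangle. It remains only to identify the edges of this rectangle with the data in the statement: the bottom composite is $\cyl(\partial_{\epsilon}^k)$ by functoriality, the top-right vertex is $(\phi^{\epsilon}_A)^{k-1}(B')=(\phi^{\epsilon}_A)^k(B)$ by the composition formula above, and the top composite is $(j^{\epsilon}_B)^k$ (up to the associativity of composition needed to reconcile this grouping with the one used in the recursive definition of $(j^{\epsilon}_B)^k$). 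The pasted rectangle is then exactly the square the proposition asserts, which closes the induction.

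I expect no genuine mathematical difficulty, in keeping with the claim that this is an immediate consequence of Proposition \ref{cocone partial}: the whole content lives in the $k=1$ case. The only points requiring care are index bookkeeping — verifying that the colimit inclusion $i_{B'}$ at the gluing column is literally the same map read off from both squares, that the dimension hypothesis $n\geq k$ keeps every intermediate $\partial^{j}A$ of strictly positive dimension so that both Proposition \ref{cocone partial} and the inductive hypothesis apply, and that the two admissible groupings of the $k$-fold maps $(\phi^{\epsilon}_A)^k$ and $(j^{\epsilon}_B)^k$ agree by associativity.
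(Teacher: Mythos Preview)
Your proposal is correct and is precisely the argument the paper has in mind when it says the result is an immediate consequence of Proposition~\ref{cocone partial}: induction on $k$, pasting a single-step square (from Proposition~\ref{cocone partial}) with a $(k-1)$-step square (from the inductive hypothesis). The only cosmetic difference is that you peel off the innermost step first (going $\partial^k A\to\partial^{k-1}A\to A$), whereas the recursive definitions of $(\phi^{\epsilon}_A)^k$ and $(j^{\epsilon}_B)^k$ in the paper peel off the outermost step (going $\partial^k A\to\partial A\to A$, as is done explicitly in the analogous later proof generalizing Proposition~\ref{compatibility of j's}); both groupings agree by the associativity you already flagged.
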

It is easy to give an explicit description of $(\phi^{\epsilon}_{A})^k(B)$, similar to what we did for the case $k=1$.
\begin{lemma}
Consider a globular sum $B\in \mathscr{L}(\partial^k A)$, so that $B$ is obtained by adjoining a new vertex $*_B$ to $\partial^k A$. Let $m=\height (*_B)$, define $x=\iota_{B} (*_B)$ and $F=\left(\iota_{\partial^k A}\right)^{-1}(x)$.

If $1\leq m < n-k+1$ $(\phi^{\epsilon}_A)^k(B)$ is obtained by adding a new vertex to $A$ in the fiber $(\iota_A)^{-1}(x)\cong F$, with the linear order inherited from $F$.

If $m=n-k+1$, then  $(\phi^{\epsilon}_A)^k(B)$ is obtained by adding a new vertex to A in the fiber $(\iota_A)^{-1}(x)$, where we extend the linear order by imposing that the newly added vertex is the least element in this fiber if $\epsilon=\sigma$, and the greatest if $\epsilon=\tau$.
\begin{proof}
We prove this lemma by induction, the case $k=1$ being already proven. We also assume $\epsilon=\sigma$, the other case being entirely dual, and we drop the superscripts since we have just clarified any possible ambiguity. Let $k>1$ and assume the claim holds for any globular sum with dimension greater than $k-1$, and for every integer $k'<k$. By definition, $\phi_A^k(B)=\phi_A \left( \phi_{\partial A}^{k-1}(B) \right)$.

If $1\leq m \leq n-k$ then $m \leq (n-1)-(k-1)$, so that, by inductive hypothesis, $\phi_{\partial A}^{k-1}(B)$ is obtained by adding a new vertex to the fiber of $\partial A$ over $x$ (this fiber coincides with $F$, and the order is the transported one). Because $m\leq n-k<n$, $\phi_{A}$ sends $\phi_{\partial A}^{k-1}(B)$ to a tree obtained by adding a new vertex to $A$, over $x$, with the order induced once again by that of $F$.

If $m=n-k+1=(n-1)-(k-1)+1$ then $\phi_{\partial A}^{k-1}(B)$ is obtained from $\partial A$ by adding a new least element to its fiber over $x$. Since $k>1$, this fiber is the same as that of $A$ over $x$, and $\phi_{A}$ sends $\phi_{\partial A}^{k-1}(B)$ to the tree obtained by adding a new vertex over $x$ to $A$, with the order transported from that of $\phi_{\partial A}^{k-1}(B)$. Therefore, the newly added vertex is going to be the minimum in the fiber over $x$, which concludes the proof. 
\end{proof}
\end{lemma}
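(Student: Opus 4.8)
The plan is to proceed by induction on $k$. The base case $k=1$ requires no work beyond unwinding definitions: the two regimes $1\le m<n$ and $m=n$ appearing in the statement are precisely the two cases distinguished in the construction of $\phi^{\epsilon}_A$, where a new vertex is adjoined to $A$ in the fibre over $x$ either with the order transported from the fibre of $\partial A$ (when $m<n$) or as the extremal element of that fibre (when $m=n$, least for $\epsilon=\sigma$ and greatest for $\epsilon=\tau$). Since the cutoff in the statement for $k=1$ is exactly $m<n$ versus $m=n$, the base case is identical to that description. As usual, the case $\epsilon=\tau$ is dual to $\epsilon=\sigma$, so I would treat only the latter explicitly.

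For the inductive step I would first record the factorization obtained by unravelling the recursive definition, namely $(\phi^{\epsilon}_A)^k=\phi^{\epsilon}_A\circ(\phi^{\epsilon}_{\partial A})^{k-1}$, where the inner map sends $\mathscr{L}(\partial^k A)=\mathscr{L}(\partial^{k-1}(\partial A))$ to $\mathscr{L}(\partial A)$. Because $\dim(\partial A)=n-1$, the inductive hypothesis applies to the globular sum $\partial A$ with exponent $k-1$: for $B$ obtained by adjoining $*_B$ at height $m$ over $x$, the image $(\phi^{\epsilon}_{\partial A})^{k-1}(B)$ adjoins a vertex to $\partial A$ over $x$, with the inherited order when $1\le m\le (n-1)-(k-1)=n-k$, and as the extremal element when $m=(n-1)-(k-1)+1=n-k+1$.

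The single point that genuinely requires care is the bookkeeping of thresholds, and this is also where the inductive step closes cleanly. In the step $k>1$ the height satisfies $m\le n-k+1\le n-1<n$, so when the outer map $\phi^{\epsilon}_A$ is applied to $(\phi^{\epsilon}_{\partial A})^{k-1}(B)$, whose distinguished vertex sits at height $m$, we are always in the regime $m<n=\dim(A)$ of the base case; hence $\phi^{\epsilon}_A$ merely adjoins a vertex to $A$ over $x$ and transports the existing linear order on the fibre, never reaching its own special threshold $m=n$. Combining this with the inductive description then yields exactly the two cases of the statement: for $1\le m\le n-k$ the order inherited from $F$ is carried through both maps, and for $m=n-k+1$ the extremality established at the $(k-1)$-st stage is preserved by the order-transporting outer map. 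Throughout one uses that passing to a boundary alters only the top-dimensional structure, so that for every relevant height the fibres over $x$ in $\partial^k A$, in $\partial A$, and in $A$ are canonically identified, which is what makes the phrase ``the order inherited from $F$'' unambiguous. The main obstacle is thus not conceptual but this index arithmetic: one must verify that the cutoff $n-k+1$ for $\partial^k A$ matches, after one application of $\partial$, the cutoff $(n-1)-(k-1)+1$ for $\partial^{k-1}(\partial A)$, and that the outer map's threshold $m=n$ is out of reach once $k>1$.
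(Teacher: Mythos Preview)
Your proposal is correct and follows essentially the same approach as the paper: induction on $k$ via the factorization $(\phi^{\epsilon}_A)^k=\phi^{\epsilon}_A\circ(\phi^{\epsilon}_{\partial A})^{k-1}$, the same threshold arithmetic $(n-1)-(k-1)=n-k$ and $(n-1)-(k-1)+1=n-k+1$, and the same observation that for $k>1$ the outer map $\phi^{\epsilon}_A$ always lands in its order-transporting regime since $m\le n-k+1<n$. Your write-up is in fact slightly more explicit than the paper's about why the fibres over $x$ agree across the various boundaries and why the outer threshold is never reached, but the argument is the same.
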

We also generalize Proposition \ref{compatibility of j's}
\begin{prop}
Let $k$ be a positive integer and $A$ be a globular sum with $\dim(A)=n\geq k$.
Given $B\in \mathscr{L}(\partial^k A)$ the following square commutes for $\epsilon=\sigma,\tau$
\[\bfig
\morphism(0,0)|a|/@{>}@<0pt>/<500,0>[\partial^k A` A;\partial_{\epsilon}^k]
\morphism(0,0)|a|/@{>}@<0pt>/<0,-500>[\partial^k A` B;z^{\partial^k A}_B]
\morphism(0,-500)|a|/@{>}@<0pt>/<500,0>[B` (\phi^{\epsilon}_A)^k(B);j^k_B]
\morphism(500,0)|r|/@{>}@<0pt>/<0,-500>[A`(\phi^{\epsilon}_A)^k(B);z^A_{(\phi^{\epsilon}_A)^k(B)}]
\efig
\]
\begin{proof}
	The square is obtained by gluing together two squares, as displayed below
	\[\bfig
\morphism(0,0)|a|/@{>}@<0pt>/<800,0>[\partial^k A` \partial A;\partial_{\epsilon}^k]
\morphism(0,0)|a|/@{>}@<0pt>/<0,-500>[\partial^k A` B;z^{\partial^k A}_B]
\morphism(0,-500)|a|/@{>}@<0pt>/<800,0>[B`(\phi^{\epsilon}_{\partial A})^{k-1}(B);(j_B^{\epsilon})^{k-1}]
\morphism(800,0)|r|/@{>}@<0pt>/<0,-500>[\partial A`(\phi^{\epsilon}_{\partial A})^{k-1}(B);z^{\partial A}_{(\phi^{\epsilon}_{\partial A})^{k-1}(B)}]

\morphism(800,0)|a|/@{>}@<0pt>/<1000,0>[\partial A` A;\partial_{\epsilon}]

\morphism(800,-500)|a|/@{>}@<0pt>/<1000,0>[(\phi^{\epsilon}_{\partial A})^{k-1}(B)` (\phi^{\epsilon}_A)^k(B);j^{\epsilon}_{\phi^{k-1}_{\partial A}(B)}]
\morphism(1800,0)|r|/@{>}@<0pt>/<0,-500>[A`(\phi^{\epsilon}_A)^k(B);z^A_{(\phi^{\epsilon}_A)^k(B)}]
	\efig
	\]
	Therefore, the claim follows by induction from the case treated in Proposition \ref{compatibility of j's}.
\end{proof}
\end{prop}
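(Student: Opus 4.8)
The plan is to argue by induction on $k$, reducing the $k$-fold statement to the already-established case $k=1$ (Proposition \ref{compatibility of j's}) together with the inductive hypothesis applied one dimension down. The base case $k=1$ is exactly Proposition \ref{compatibility of j's}, so I would assume $k>1$ and that the result holds for $k-1$ and for every globular sum of dimension at least $k-1$. First I would unwind the three recursive definitions appearing in the square: by construction $\partial_\epsilon^k=\partial_\epsilon \circ \partial_\epsilon^{k-1}$, while $(\phi^\epsilon_A)^k=\phi^\epsilon_A \circ (\phi^\epsilon_{\partial A})^{k-1}$ and $(j^\epsilon_B)^k=j^\epsilon_{(\phi^\epsilon_{\partial A})^{k-1}(B)} \circ (j^\epsilon_B)^{k-1}$, where $B$ is viewed as an element of $\mathscr{L}(\partial^{k-1}(\partial A))=\mathscr{L}(\partial^k A)$. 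These factorisations let me split the square of the statement into two squares sharing the common vertical edge $z^{\partial A}_{(\phi^\epsilon_{\partial A})^{k-1}(B)}\colon \partial A \rightarrow (\phi^\epsilon_{\partial A})^{k-1}(B)$.

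The left square has top edge $\partial_\epsilon^{k-1}\colon \partial^k A \rightarrow \partial A$, left edge $z^{\partial^k A}_B$, bottom edge $(j^\epsilon_B)^{k-1}$ and right edge $z^{\partial A}_{(\phi^\epsilon_{\partial A})^{k-1}(B)}$; this is precisely the $(k-1)$-fold instance of the statement for the globular sum $\partial A$, which satisfies $\dim(\partial A)=n-1\geq k-1$, applied to $B\in \mathscr{L}(\partial^{k-1}(\partial A))$, and so commutes by the inductive hypothesis. The right square has top edge $\partial_\epsilon\colon \partial A \rightarrow A$, left edge $z^{\partial A}_{(\phi^\epsilon_{\partial A})^{k-1}(B)}$, bottom edge $j^\epsilon_{(\phi^\epsilon_{\partial A})^{k-1}(B)}$ and right edge $z^A_{(\phi^\epsilon_A)^k(B)}$; this is the instance of Proposition \ref{compatibility of j's} for $A$ and the element $(\phi^\epsilon_{\partial A})^{k-1}(B)\in \mathscr{L}(\partial A)$, hence commutes as well. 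Pasting the two commutative squares along the shared edge then yields commutativity of the outer rectangle, whose top composite is $\partial_\epsilon\circ\partial_\epsilon^{k-1}=\partial_\epsilon^k$ and whose bottom composite is $j^\epsilon_{(\phi^\epsilon_{\partial A})^{k-1}(B)}\circ (j^\epsilon_B)^{k-1}=(j^\epsilon_B)^k$; this is exactly the square in the statement. Since $\epsilon=\sigma$ and $\epsilon=\tau$ are handled identically (both Proposition \ref{compatibility of j's} and the inductive hypothesis are available for each value), no separate case analysis is needed.

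The only points requiring care, as opposed to the purely formal pasting, are bookkeeping: one must confirm that the three recursive factorisations genuinely identify the rectangle's outer edges with the claimed composites, in particular that the output $(\phi^\epsilon_{\partial A})^{k-1}(B)$ is a legitimate input for $\phi^\epsilon_A$ so that $\phi^\epsilon_A\big((\phi^\epsilon_{\partial A})^{k-1}(B)\big)=(\phi^\epsilon_A)^k(B)$, and that the dimension hypothesis $\dim(A)\geq k$ descends to $\dim(\partial A)\geq k-1$ so the inductive hypothesis is applicable. I expect the main — though still routine — obstacle to be checking that the intermediate vertex $\partial A$ together with the arrow $z^{\partial A}_{(\phi^\epsilon_{\partial A})^{k-1}(B)}$ is shared verbatim by the two subsquares, i.e. that the right edge of the left square is literally the left edge of the right square; this is exactly where the identity $(\phi^\epsilon_A)^k=\phi^\epsilon_A\circ(\phi^\epsilon_{\partial A})^{k-1}$ is invoked.
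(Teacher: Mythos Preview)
Your proposal is correct and follows essentially the same argument as the paper: both split the rectangle into a left square (the $(k-1)$-fold instance for $\partial A$, handled by induction) and a right square (the $k=1$ instance of Proposition~\ref{compatibility of j's} applied to $A$ and $(\phi^\epsilon_{\partial A})^{k-1}(B)\in\mathscr{L}(\partial A)$), pasted along the shared edge $z^{\partial A}_{(\phi^\epsilon_{\partial A})^{k-1}(B)}$. Your write-up is in fact slightly more careful than the paper's, which labels the top-left edge $\partial_\epsilon^k$ where it should read $\partial_\epsilon^{k-1}$.
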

We now get back to the task of defining $\hat{\rho}\colon \cyl(D_n) \rightarrow \cyl
(A)$, for a homogeneous map $\rho\colon D_n \rightarrow A$, with $m=\dim(A)$. The goal is to define it as the vertical composition of a suitable stack of $(n-1)$-cylinders in the $\infty$-groupoid $\Omega\left(\cyl(A),a,b\right)$. Therefore, we just need to define this vertical stack so that its vertical composition has the desired properties.
We assume this construction has been performed for every $k<n$, in the same way as in what follows.
So far we have $(n-1)$-cells $\alpha_0, \ldots, \alpha_p$, as in \eqref{alpha cells}. We now build possibly degenerate $(n-1)$-cylinders $C_i^{\rho}\colon \alpha_{i-1} \curvearrowright_{q_i}^{r_i} \alpha_{i}$ in $\Omega \left(\cyl(A),a,b\right)$ for $1\leq i \leq p$, such that $C_i^{\rho}$ factors through $\Omega (B_{i},a,b)$.

We do so by defining the boundary of each of these cylinders, and then we extend this piece of data to actual cylinders by applying Propositions \ref{omega preserves contractibles} and \ref{boundary of deg cyl's} to diagrams of the form
\[ \bfig
\morphism(0,0)|a|/@{>}@<0pt>/<800,0>[\partial \cyl^{r_i}_{q_i}(D_{n-1})`\Omega (B_{i},a,b);]
\morphism(0,0)|a|/@{>}@<0pt>/<0,-500>[\partial \cyl^{r_i}_{q_i}(D_{n-1})`\cyl^{r_i}_{q_i}(D_{n-1});\partial]
\morphism(0,-500)|r|/@{-->}@<0pt>/<800,500>[\cyl^{r_i}_{q_i}(D_{n-1})`\Omega (B_{i},a,b);]
\morphism(800,0)|a|/@{>}@<0pt>/<1000,0>[\Omega (B_{i},a,b)`\Omega \left(\cyl (A),a,b\right);\Omega( i_{B_i})]
\morphism(0,-500)|r|/@{>}@<0pt>/<1800,500>[\cyl^{r_i}_{q_i}(D_{n-1})`\Omega\left(\cyl (A),a,b\right);C_i^{\rho}]
\efig \]
To begin with, we have to define $r_i$ and $q_i$ for every $0\leq i \leq p-1$.
$B_i$ is obtained, by construction, by adding a new vertex $*_{B_i}$ to $A$. Letting $d=\height(*_{B_i})$, we have two possibilities: either $*_{B_i}$ is the least (resp. greatest) element in the fiber $(\iota_{B_i})^{-1}(\iota_{B_i}(*_{B_i}))$ or it is not.
In the first case we set $r_i=d-2$ (resp. $q_i=d-2$), otherwise $r_i=d-1$ (resp. $q_i=d-1$).

If $n>m$ then we define the source (resp. the target) of $C_i^{\rho}$ to be $C_i^{\rho \circ \sigma}$ (resp. $C_i^{\rho \circ \tau}$), which have already been defined, since $\rho \circ \epsilon$ is a homogeneous map with domain $D_{n-1}$. It is easy to check that this is well defined, since the indexing set for the $i$'s is given in both cases by $\mathscr{L}(A)$.

If $n=m$ then, for $\epsilon=\sigma, \tau$, there exists a unique factorization of $\rho \circ \epsilon$, due to the homogeneity of the coherator for $\infty$-categories $\mathfrak{D}$, of the form:
\[
\bfig
\morphism(0,0)|a|/@{>}@<0pt>/<500,0>[D_{n-1}` D_n;\epsilon]
\morphism(0,0)|a|/@{>}@<0pt>/<0,-500>[D_{n-1}` \partial A;\rho_{\epsilon}]
\morphism(0,-500)|a|/@{>}@<0pt>/<500,0>[\partial A` A;\partial^A_{\epsilon}]
\morphism(500,0)|r|/@{>}@<0pt>/<0,-500>[D_n`A;\rho]
\efig
\] where $\rho_{\epsilon}$ is a homogeneous map.
If $r_i<n-2$ (resp. $q_i<n-2$), i.e. the source (resp. the target) is not collapsed, then either $d<n$ and $*_{B_i}$ is the least (resp. greatest)  element in the fiber $(\iota_{B_i})^{-1}(\iota_{B_i}(*_{B_i}))$ or $d<n-1$. In both cases there exists a unique $1\leq k \leq \vert \mathscr{L}(\partial A)\vert$ such that $B_i=\phi^{\sigma}_A (E_k)$ (resp. $B_i=\phi^{\tau}_A (E_k)$), where $E_k$ is the $k$-th element of $\mathscr{L}(\partial A)$.
We now define the source (resp. target) of $C_i^{\rho}$ to be $C_k^{\rho_{\sigma}}$ (resp. $C_k^{\rho_{\tau}}$)
This assignment is well defined, as the commutative squares below ensure that the cylinder $C_k^{\rho_{\sigma}}$ (resp. $C_k^{\rho_{\tau}}$) has, as top and bottom cells, the source (resp. target) of the top and bottom cells of $C_i^{\rho}$ 
\[\bfig
\morphism(0,0)|a|/@{>}@<0pt>/<500,0>[D_{n-1}` D_n;\epsilon]
\morphism(0,0)|a|/@{>}@<0pt>/<0,-500>[D_{n-1}` \partial A;\rho_{\epsilon}]
\morphism(0,-500)|a|/@{>}@<0pt>/<500,0>[\partial A` A;\partial^A_{\epsilon}]
\morphism(500,0)|r|/@{>}@<0pt>/<0,-500>[D_n`A;\rho]

\morphism(0,-500)|l|/@{>}@<0pt>/<0,-500>[\partial A` E_k;z^{\partial A}_{E_k}]

\morphism(500,-500)|r|/@{>}@<0pt>/<0,-500>[A` \phi^{\epsilon}_A(E_k);z^A_{\phi^{\epsilon}_A(E_k)}]
\morphism(0,-1000)|a|/@{>}@<0pt>/<500,0>[E_k`\phi^{\epsilon}_A(E_k);j_{E_k}]
\efig
\]
The vertical composition of the stack of cylinders given by: 
\[C^{\rho}_1\otimes \ldots \otimes C^{\rho}_p\colon \cyl^{r_1}_{q_1}(D_{m-1})\otimes \ldots \otimes \cyl^{r_p}_{q_p}(D_{m-1}) \rightarrow \Omega\left(\cyl(A),a,b\right)\]
produces an $(m-1)$-cylinder $C^{\rho}\colon \alpha_0 \curvearrowright \alpha_p$ in $\Omega\left(\cyl(A),a,b\right)$, since $\min\{r_{i}\}_{1\leq i \leq p}=-1$ and $\min\{q_{i}\}_{1\leq i \leq p}=-1$ by construction.
\begin{defi}
	\label{rho hat defi}
Let $\rho\colon D_n \rightarrow A$ be a homogeneous operation in $\mathfrak{D}$. Using the notation established so far, we let $\hat{\rho}\colon \cyl(D_n) \rightarrow \cyl(A)$ be the $n$-cylinder consisting of the following piece of data:
\begin{itemize}
\item $\hat{\rho}_{\epsilon_0}=\cyl(\partial^{n}_{\epsilon})\colon D_1\cong\cyl(D_0) \rightarrow \cyl(A)$
\item $\overline{\hat{\rho}}$ is given by $C^{\rho}\colon\cyl(D_{n-1})\rightarrow \Omega \left(\cyl(A),a,b\right)$
\end{itemize}
\end{defi}
We are now left with checking the compatibility with the coglobular structure, i.e. we have to prove that, for $\epsilon=\sigma, \tau$, $\hat{\rho}\circ \cyl(\epsilon)=\widehat{\rho \circ \epsilon}$ if $n>m$ and $\hat{\rho}\circ \cyl(\epsilon)=\cyl(\partial_{\epsilon})\circ \hat{\rho_{\epsilon}}$ if $n=m$.

The first case is straightforward by construction, since the operation of vertical composition is compatible with the coglobular structure.

The proof of the second case is accomplished by using the following lemma, which essentially says that the collapsed pieces of the boundaries do not contribute to the result of the vertical composition.
\begin{lemma}
	\label{collapsed boundary gives no contribution}
Let q be a positive integer, and suppose given a sequence of $n$-cylinders $C_i\colon \alpha_{i} \curvearrowright^{p_i}_{q_i} \alpha_{i+1}$ in an $\infty$-groupoid X. Consider the ordered set $\{p_i\}_{1\leq i \leq q}$, where $p_i<p_j$ if and only if $i<j$, and let $\{\bar{p}_{i_1}, \ldots, \bar{p}_{i_k}\}$ be the (ordered) subset spanned by those $p_i<n-1$.
Then the cylinders $(C_{i_j}\circ \cyl(\sigma))_{1\leq j \leq k}$ are again composable, and moreover we have
\[(C_1 \otimes \ldots \otimes C_q)\circ \cyl(\sigma)=(C_{i_1}\circ \cyl(\sigma))\otimes \ldots \otimes  (C_{i_k}\circ \cyl(\sigma))\]
An analogous result holds true if we replace p with q and $\sigma$ with $\tau$.
\begin{proof}
The fact that the $C_{i_j}$ are again composable is obvious.
We prove the second statement by induction on $n$, the base case $n=1$ being straightforward. We know that $C_1 \otimes \ldots \otimes C_q$ is obtained by transposing the result of vertically composing  a stack obtained from whiskerings of the $(n-1)$-cylinders $\bar{C}_i$ with appropriate 1-cells together with $(n-1)$-cylinders of the form $\psi^{c,d}$ that witness the rebracketing of cells when needed (as explained in Section \ref{vert comp of deg cyl (section)} ). Whenever $p_i=n-1$ these $\Psi$'s do not appear, so that the claim follows from the inductive assumption and the coglobularity of the remaining $\Psi$'s.
\end{proof}
\end{lemma}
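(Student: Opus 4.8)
The plan is to dispose of the composability claim directly and to prove the displayed equality by induction on $n$. Composability is immediate from the construction of the stack: the bottom cell of $C_{i_j}$ is the top cell of $C_{i_{j+1}}$, and since $\cyl(\sigma)$ preserves top and bottom cells, the $(n-1)$-cylinders $C_{i_j}\circ\cyl(\sigma)$ again form a vertically composable sequence. All the content thus lies in the equation, which I would attack by unwinding the inductive definition of $\otimes$ from Subsection~\ref{vert comp of deg cyl (section)}.

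For the base case $n=1$ the $C_i$ are $1$-cylinders with $p_i\in\{-1,0\}$, and $\cyl(\sigma)$ extracts the left edge. A factor with $p_i=0=n-1$ has degenerate $0$-source, and the vertical composition of degenerate cylinders is set up precisely so that such a factor contributes nothing to the left edge of the composite; the remaining factors (those with $p_i=-1$) contribute their left edges in order. This is a direct reading of the definition of the vertical composition in dimensions $0$ and $1$, where the degenerate pieces are absorbed by construction rather than producing genuine coherence data.

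For the inductive step I would recall that $C_1\otimes\ldots\otimes C_q$ is assembled by setting its boundary $1$-cells to the $\gamma$-composites of the $(C_i)_{\epsilon_0}$ and declaring $\overline{C_1\otimes\ldots\otimes C_q}$ to be the vertical composite, in the appropriate hom-$\infty$-groupoid, of the stack built from the whiskered transposes $\bar C_i$ interleaved with the coherence cylinders $\Psi^{m,k}$ (and their variants $\Phi,\Theta$) witnessing the necessary rebracketings. Applying $\cyl(\sigma)$ and invoking that the source operation is coglobular --- it commutes with $\otimes$, with the passage to the transpose, and with each coherence cylinder by virtue of the coglobularity forced on them in \eqref{coherence cyls} --- rewrites the left-hand side as the vertical composite of the sources of the constituents. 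The crucial point, already visible in the definition of vertical composition of degenerate cylinders, is that whenever $p_i=n-1$ no rebracketing of $C_i$ is needed, so the coherence cylinders flanking $\bar C_i$ are omitted and the source of that factor collapses; consequently these indices drop out under $\cyl(\sigma)$, leaving exactly the stack indexed by $\{i_1,\ldots,i_k\}$. The inductive hypothesis applied to this reduced stack of $(n-1)$-cylinders then gives the claim, and the statement for $\tau$ (with $q$ in place of $p$) follows by the evident dual argument.

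The step I expect to be the main obstacle is the bookkeeping in the inductive passage: one must verify carefully that $\cyl(\sigma)$ genuinely commutes with the coherence cylinders $\Psi^{m,k}$, and that for the indices with $p_i=n-1$ the simultaneous absence of those coherences and the degeneracy of the source of $C_i$ conspire to yield no contribution at all, rather than a spurious unit requiring cancellation. Concretely this amounts to tracing the explicit construction of $\overline{C_1\otimes\ldots\otimes C_q}$ and matching, term by term, the stack obtained after applying $\cyl(\sigma)$ with the stack that defines the right-hand side; once this identification of stacks is in place, the equality is forced.
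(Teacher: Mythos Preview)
Your proposal is correct and follows essentially the same approach as the paper: both argue by induction on $n$, dispose of the base case $n=1$ directly, and for the inductive step unwind $\overline{C_1\otimes\ldots\otimes C_q}$ as the vertical composite of whiskered transposes $\bar C_i$ interleaved with the coherence cylinders $\Psi^{m,k}$, observing that when $p_i=n-1$ these $\Psi$'s are absent and concluding via the inductive hypothesis together with the coglobularity of the remaining coherence cylinders. Your write-up is more explicit about the bookkeeping (in particular the need to check that $\cyl(\sigma)$ commutes with the $\Psi$'s), but the underlying argument is the same.
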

We conclude this section with an extension of Definition \ref{rho hat defi}. Note that a general map $\phi\colon A \rightarrow B$ in $\mathfrak{D}$ is homogeneous if the homogeneous-globular factorizations $D_{i_k}\rightarrow B_k \rightarrow B$ of the composites $D_{i_k}\rightarrow A \rightarrow B$ for every $i_k$ in the table of dimensions of $A$ are such that the following isomorphism holds 
\[\colim_k B_k \cong B\]
\begin{defi}
	\label{ext of hat defi}
If $\phi\colon A\rightarrow B$ is a homogeneous map in $\mathfrak{D}$, we can obtain an elementary interpretation of it which still satisfies the properties expressed in \eqref{hat properties} simply by coglobularity of the construction recorded in Definition \ref{rho hat defi}. Indeed, we can consider the induced homogeneous maps $\phi_k\colon D_{i_k} \rightarrow B_k$ and define $\widehat{\phi}\colon \cyl(A) \rightarrow \cyl(B)$ as the map induced by passing to the colimit the family of maps $\hat{\phi_k}\colon \cyl(D_{i_k})\rightarrow \cyl(B_k)$.

For a general map $h\colon A \rightarrow B$ in $\mathfrak{D}$, we factor $h$ as $h=i\circ \rho$, using homogeneity of $\mathfrak{D}$, with $\rho \colon A \rightarrow C$ homogeneous map and $i\colon C \rightarrow B$ globular map, i.e. a map in $\Theta_0$. Now set $\hat{h}=\cyl(i)\circ \hat{\rho}$ to get the desired result, where we have used the fact that we do have a functor $\cyl\colon\Theta_0 \rightarrow \wgpd$.
\end{defi}
\section{Modifications}
If we consider the framework described in Section \ref{overview}, we see that setting $\cyl(\rho)=\hat{\rho}$ for every map $\rho\colon D_n \rightarrow A$ in $\mathfrak{C}$ does not produce a well defined functor, since it does not necessarily respect the operadic composition in $\mathfrak{C}$.

For instance, suppose the table of dimensions of $A$ is given by 
\[\begin{pmatrix}
j_1 &&j_2 & \ldots&j_{m-1} & &j_m\\
& j'_1 & &\ldots&& j'_{m-1}
\end{pmatrix}\] 
and assume we are given operations $\phi_i \colon D_{j_i} \rightarrow B_i$ for each $1\leq i \leq m$, that are compatible in the sense that the factorizations depicted in the following commutative diagram exist for all such $i$:
\[
\bfig 
\morphism(0,0)|a|/@{>}@<0pt>/<1500,0>[D_{j_i}` B_i;\phi_i]
\morphism(0,500)|l|/@{>}@<0pt>/<0,-500>[D_{j'_{i-1}}` D_{j_i};\sigma_{j'_{i-1}}^{j_i}]
\morphism(0,500)|a|/@{>}@<0pt>/<1500,0>[D_{j'_{i-1}}`\partial^{j_i -j'_{i-1}}B_i=\partial^{j_{i-1}-{j'}_{i-1}}B_{i-1} ;]
\morphism(1500,500)|r|/@{>}@<0pt>/<0,-500>[\partial^{j_i -j'_{i-1}}B_i=\partial^{j_{i-1}-{j'}_{i-1}}B_{i-1} `B_i;\partial^{j_i -j'_{i-1}}_{\sigma}]

\morphism(1500,500)|r|/@{>}@<0pt>/<0,500>[\partial^{j_i -j'_{i-1}}B_i=\partial^{j_{i-1}-{j'}_{i-1}}B_{i-1} `B_{i-1};\partial^{j_{i-1}-{j'}_{i-1}}_{\tau}]
\morphism(0,500)|l|/@{>}@<0pt>/<0,500>[D_{j'_{i-1}}` D_{j_{i-1}};\tau_{j'_{i-1}}^{j_{i-1}}]
\morphism(0,1000)|a|/@{>}@<0pt>/<1500,0>[D_{j_{i-1}}` B_{i-1};\phi_{i-1}]
\efig 
\]
We can now consider $ (\phi_1,\ldots,\phi_m)\circ \rho\colon D_n \rightarrow B$, where $B$ is the globular sum obtained from glueing the $B_i$'s, as explained in Prop. 2.5.7 of \cite{AR1}.

We have two ways of interpreting this using the construction we have just explained: one is $\left((\phi_1,\ldots,\phi_m)\circ \rho\right)^{\wedge}$, by which we mean the elementary interpretation of $(\phi_1,\ldots,\phi_m)\circ \rho$, and the other one is $(\hat{\phi_1},\ldots \hat{\phi_m})\circ \hat{\rho}  $ (which is well defined thanks to globularity of the elementary interpretation). These two maps need not coincide, in fact they almost never do. In order to try to remedy to this problem we introduce modifications of cylinders.

Given an $\infty$-groupoid $X$, a modification in $X$ between $n$-cylinders $\Theta\colon C \Rightarrow D$ will be defined inductively to consist of a pair of $2$-cells $\Theta_s \colon s^n(C)\rightarrow s^n (D)$, $\Theta_t \colon t^n (D) \rightarrow t^n(C)$ together with a modification of $(n-1)$-cylinders in $\Omega (X,x,y)$  \[\bar{\Theta}\colon\Upsilon(\iota_0 C,\Theta_t)\otimes \bar{C}\otimes\Gamma(\Theta_s,\iota_1 C)\Rightarrow \bar{D}\]
where $x=s^n(C)\circ \sigma, \ y=t^n(C)\circ \tau$, and $\Gamma,\Upsilon$ are cylinders we define below.
\begin{ex}
	\label{unraveling modifications}
	Before we formally give the definition of modification, we give an example of what modifications look like in low dimensions. 
	If $n=0$ then a modification is simply a 2-cell.
	If $n=1$ then we can depict $C$ and $D$ as, respectively
	\[
	\bfig
	\morphism(0,0)|a|/@{>}@<0pt>/<300,0>[a`b;\alpha]
	\morphism(0,0)|l|/@{>}@<0pt>/<0,-300>[a`c;f]
	\morphism(300,0)|r|/@{>}@<0pt>/<0,-300>[b`d;g]
	\morphism(0,-300)|b|/@{>}@<0pt>/<300,0>[c`d; \beta]
	\morphism(250,-75)|a|/@{=>}@<0pt>/<-150,-150>[`; \Gamma]

	\morphism(600,0)|a|/@{>}@<0pt>/<300,0>[a`b;\alpha]
	\morphism(600,0)|l|/@{>}@<0pt>/<0,-300>[a`c;f']
	\morphism(900,0)|r|/@{>}@<0pt>/<0,-300>[b`d;g']
	\morphism(600,-300)|b|/@{>}@<0pt>/<300,0>[c`d; \beta]
	\morphism(850,-75)|a|/@{=>}@<0pt>/<-150,-150>[`; \Delta]
	\efig\] 
	Therefore, a modification $\Theta\colon C \Rightarrow D$ corresponds to the data of a pair of $2$-cells $S\colon f \rightarrow f'$, $T\colon g' \rightarrow g$ in $X$ and a modification $\bar{\Theta}\colon\Upsilon(\iota_0 C,\Theta_t)\otimes \bar{C}\otimes\Gamma(\Theta_s,\iota_1 C)\Rightarrow \bar{D}$, which is easily seen to correspond to a $3$-cell $\tilde{\Theta}\colon (\beta \Theta_s)(\Gamma(\Theta_t \alpha)) \rightarrow \Delta$ in $X$, where we denote by juxtaposition the result of the appropriate operations $w$ involved in the definition. Notice that if $f=f'$ and $g=g'$, then a modification $\Theta\colon C \Rightarrow D$ such that $\Theta_s$ and $\Theta_t$ are identities can be equivalently thought of as a $3$-cell between the $2$-cells $\Gamma$ and $\Delta$.
\end{ex}
If we think of $\cyl(D_n)$ to be an instance of a (yet to be defined) Gray tensor product $D_1 \otimes D_n$, then $\mathbf{M}_n$, i.e. the free $\infty$-groupoid on a modification of $n$-cylinders, is to be thought of as the tensor product $D_2 \otimes D_n$.
In fact, we will construct a coglobular object $\M_{\bullet}\colon \G \rightarrow \wgpd$, that will induce a functor $\mathbb{P}_2\colon \wgpd \rightarrow \left[\G^{op}, \mathbf{Set}\right]$ defined by $\p_2(X)_n=\wgpd(\M_n,X)$.
This coglobular object will also come provided with a map $\Xi=(\Xi_0,\Xi_1)\colon \cyl(D_{\bullet})\ast\cyl(D_{\bullet}) \rightarrow \M_{\bullet}$, where the domain denotes the colimit of the diagram
\[\bfig
\morphism(0,0)|a|/@{>}@<0pt>/<500,0>[D_{\bullet}` \cyl(D_{\bullet});\iota_0 ]
\morphism(0,0)|a|/@{>}@<0pt>/<0,-500>[D_{\bullet}`\cyl(D_{\bullet});\iota_1]
\morphism(500,-500)|a|/@{>}@<0pt>/<0,500>[D_{\bullet}`\cyl(D_{\bullet});\iota_1]
\morphism(500,-500)|a|/@{>}@<0pt>/<-500,0>[D_{\bullet}`\cyl(D_{\bullet});\iota_0]
\efig\]
Just like in the case of cylinders, this map will be proven to be a direct cofibration.

As a preliminary step, we need to construct some cylinders that witness specific coherences between certain whiskerings, that will be used in the definition of modifications, just as we did in \eqref{coherence cyls}.
We define these cylinders by solving the following lifting problems in $\left[\G,\wgpd\right]$:
\[
\bfig
\morphism(0,0)|a|/@{>}@<0pt>/<900,0>[\Sigma(D_{\bullet}\coprod D_{\bullet})` D_2 \plus{ D_0}\Sigma D_{\bullet};(a,b)]
\morphism(0,0)|a|/@{>}@<0pt>/<0,-400>[\Sigma(D_{\bullet}\coprod D_{\bullet})` \Sigma\cyl(D_{\bullet});\Sigma(\iota_0,\iota_1) ]
\morphism(0,-400)|r|/@{-->}@<0pt>/<900,400>[\Sigma\cyl(D_{\bullet})` D_2 \plus{ D_0}\Sigma D_{\bullet};\Gamma]

\morphism(2000,0)|a|/@{>}@<0pt>/<900,0>[\Sigma(D_{\bullet}\coprod D_{\bullet})`\Sigma D_{\bullet} \plus{ D_0} D_2;(a',b') ]
\morphism(2000,0)|a|/@{>}@<0pt>/<0,-400>[\Sigma(D_{\bullet}\coprod D_{\bullet})` \Sigma\cyl(D_{\bullet});\Sigma(\iota_0,\iota_1) ]
\morphism(2000,-400)|r|/@{-->}@<0pt>/<900,400>[\Sigma\cyl(D_{\bullet})`\Sigma D_{\bullet} \plus{ D_0} D_2;\Upsilon]
\efig
\]
where we define $a=(\sigma_1 \amalg_{ D_0}1)\circ w, \ b=(\tau_1 \amalg_{ D_0}1)\circ w, \ a'=(1\amalg_{ D_0}\sigma)\circ w, \ b'=(1\amalg_{ D_0}\tau)\circ w$.
In words, given an $\infty$-groupoid $X$, these produce $(n-1)$-cylinders $\Gamma(c,B)\colon Bs(c) \curvearrowright Bt(c)$ in $\Omega (X,s^2(c),t^n(B))$ (resp. $\Upsilon(A,d)\colon s(d)A \curvearrowright t(d)A$ in $\Omega (X,s^n(A),t^2(d))$) out of an $n$-cell $B$ and  $2$-cell $c$ (resp. an $n$-cell $A$ and a $2$-cell $d$) in $X$ which are suitably compatible .

We start with defining $M_0=D_2$ and $(\Xi)_0$ to be simply the boundary inclusion $S^1 \rightarrow D_2$.
Assuming we have defined $M_{\bullet}\colon \G_{\leq n-1 } \rightarrow \wgpd$ together with a direct cofibration of $(n-1)$-truncated coglobular objects $\Xi\colon \cyl(D_{\bullet})\ast\cyl(D_{\bullet}) \rightarrow \M_{\bullet}$, we set $\M_n$ to be the colimit of the following diagram of $\infty$-groupoids
\[
\bfig
\morphism(0,0)|a|/@{>}@<0pt>/<0,400>[\Sigma \cyl(D_{n-1})` \Sigma \M_{n-1};\Sigma(\Xi_{0}) ]
\morphism(0,0)|a|/@{>}@<0pt>/<0,-400>[\Sigma \cyl(D_{n-1})` \Sigma \left( \cyl(D_{n-1}) \otimes \cyl(D_{n-1}) \otimes \cyl(D_{n-1}) \right);\Sigma(c) ]
\morphism(800,0)|r|/@{>}@<0pt>/<-800,-400>[\Sigma \cyl(D_{n-1})` \Sigma \left( \cyl(D_{n-1}) \otimes \cyl(D_{n-1}) \otimes \cyl(D_{n-1}) \right);\Sigma(i_3) ]
\morphism(-800,0)|l|/@{>}@<0pt>/<800,-400>[\Sigma \cyl(D_{n-1})` \Sigma \left( \cyl(D_{n-1}) \otimes \cyl(D_{n-1}) \otimes \cyl(D_{n-1}) \right);\Sigma(i_1) ]
\morphism(-800,0)|a|/@{>}@<0pt>/<0,400>[\Sigma \cyl(D_{n-1})` D_n \plus{ D_0}D_2;\Upsilon_{n-1} ]
\morphism(800,0)|a|/@{>}@<0pt>/<0,400>[\Sigma \cyl(D_{n-1})` D_2 \plus{ D_0}D_n;\Gamma_{n-1} ]
\efig
\]
where $c$ denotes the vertical composition of a stack of three $(n-1)$-cylinders and $i_k$ is the inclusion on the $k$-th cylinder of the stack.
The reason we chose this ``biased'' definition, in contrast with the ``unbiased'' one we gave of cylinders is to simplify the results of the following section.

Define $\M_{\sigma_0}\colon \M_0 \rightarrow \M_1$ to be the composite $D_2 \rightarrow D_2 \amalg_{ D_0}D_1\rightarrow \M_1$, both maps being given by colimit inclusions. Analogously, we set $\M_{\tau_0}\colon \M_0 \rightarrow \M_1$ to be the composite $D_2 \rightarrow D_1 \amalg_{ D_0}D_2\rightarrow \M_1$.

Now suppose $n>2$, and define $\M_{\epsilon_{n-1}}\colon \M_{n-1} \rightarrow \M_n$ (for $\epsilon=\sigma, \tau$) as the map obtained by applying the colimit functor to the natural transformation between the defining diagrams for $\M_{n-1}$ and $\M_n$ induced by $\epsilon_{n-1}$, $\cyl(\epsilon_{n-2})$ and $\M_{\epsilon_{n-2}}$.

We define $\Xi=(\Xi_0,\Xi_1)\colon \cyl(D_{n})\ast\cyl(D_n) \rightarrow \M_n$ by setting $\Xi_0$ to be induced by the following cocone
\[
\bfig
\morphism(0,0)|a|/@{>}@<0pt>/<1000,0>[\Sigma \cyl(D_{n-1})` \Sigma \cyl(D_{n-1})^{\otimes 3};\Sigma(i_1)]
\morphism(-400,400)|l|/@{>}@<0pt>/<400,-400>[D_n` \Sigma \cyl(D_{n-1});\iota_0]
\morphism(-400,400)|a|/@{>}@<0pt>/<600,0>[D_n` D_n \plus{ D_0}D_1;w]
\morphism(200,400)|a|/@{>}@<0pt>/<800,0>[D_n \plus{ D_0}D_1` D_n \plus{ D_0}D_2;1 \coprod \tau]
\morphism(1000,0)|a|/@{>}@<0pt>/<800,0>[\Sigma \cyl(D_{n-1})^{\otimes 3}`  \M_{n};]
\morphism(1000,400)|a|/@{>}@<0pt>/<800,-400>[D_n \plus{ D_0}D_2` \M_{n};]

\morphism(-400,-400)|l|/@{>}@<0pt>/<400,400>[D_n` \Sigma \cyl(D_{n-1});\iota_1]
\morphism(-400,-400)|a|/@{>}@<0pt>/<600,0>[D_n` D_1 \plus{ D_0}D_n;w]
\morphism(200,-400)|a|/@{>}@<0pt>/<800,0>[D_1 \plus{ D_0}D_n` D_2 \plus{ D_0}D_n;\sigma \coprod 1]
\morphism(1000,-400)|a|/@{>}@<0pt>/<800,400>[D_2 \plus{ D_0}D_n`  \M_{n};]
\efig
\]
Next, we set $\Xi_1$ to be induced by the following cocone
\[
\bfig
\morphism(0,0)|a|/@{>}@<0pt>/<1000,0>[\Sigma \cyl(D_{n-1})` \Sigma \M_{n-1};\Sigma(\Xi_1)]
\morphism(-400,400)|l|/@{>}@<0pt>/<400,-400>[D_n` \Sigma \cyl(D_{n-1});\iota_0]
\morphism(-400,400)|a|/@{>}@<0pt>/<600,0>[D_n` D_n \plus{ D_0}D_1;w]
\morphism(200,400)|a|/@{>}@<0pt>/<800,0>[D_n \plus{ D_0}D_1` D_n \plus{ D_0}D_2;1 \coprod \tau]
\morphism(1000,0)|a|/@{>}@<0pt>/<800,0>[\Sigma\M_{n-1}` \M_{n};]
\morphism(1000,400)|a|/@{>}@<0pt>/<800,-400>[D_n \plus{ D_0}D_2` \M_{n};]

\morphism(-400,-400)|l|/@{>}@<0pt>/<400,400>[D_n` \Sigma \cyl(D_{n-1});\iota_1]
\morphism(-400,-400)|a|/@{>}@<0pt>/<600,0>[D_n` D_1 \plus{ D_0}D_n;w]
\morphism(200,-400)|a|/@{>}@<0pt>/<800,0>[D_1 \plus{ D_0}D_n` D_2 \plus{ D_0}D_n;\sigma \coprod 1]
\morphism(1000,-400)|a|/@{>}@<0pt>/<800,400>[D_2 \plus{ D_0}D_n` \M_{n};]
\efig
\]
In both cases the unlabeled maps denote the colimit inclusions.
\begin{defi}
	Given an $\infty$-groupoid $X$ and a map $\Theta\colon M_n \rightarrow X$ such that $C=\Theta \circ \Xi_0$ and $D=\Theta \circ \Xi_1$ we say that $\Theta$ is a modification between the $n$-cylinders $C$ and $D$. Notice that, by construction, $C\circ \iota_k=D\circ \iota_k$ for $k=0,1$. 
	
	We will also denote this by $\Theta\colon C \Rightarrow D$ or, pictorially, by
	\[\bfig
	\morphism(0,0)|a|/{@{>}@/^1.5em/}/<1000,0>[\cyl(D_n)`X;C]
	\morphism(0,0)|b|/{@{>}@/^-1.5em/}/<1000,0>[\cyl(D_n)`X;D ]
	\morphism(550,150)|r|/@{=>}@<0pt>/<0,-250>[`; \Theta]
	
	\efig
	\]
	$\Theta \circ \M_{\sigma}$ is called the source of $\Theta$, and it is denoted by $s(\M)$. Similarly, $\Theta \circ \M_{\tau}$ is called the target of $\Theta$, and it is denoted by $t(\M)$.
	
	Given two modifications $\Theta_1,\Theta_2$ such that $\epsilon(\Theta_1) =\epsilon(\Theta_2)$ for $\epsilon=\sigma, \tau$, we say that $\Theta_1$ and $\Theta_2$ are parallel.
\end{defi}

\begin{lemma}
	The map of coglobular objects $\Xi\colon \cyl(D_{\bullet})\ast\cyl(D_{\bullet})\rightarrow \M_{\bullet}$ is a direct cofibration.
	\begin{proof}
		We will prove by induction on $n$ that the $n$-th latching map $\hat{L}_n(\Xi)$ is a cofibration of $\infty$-groupoids.
		For $n=0$ this is just $(\Xi)_0$, i.e. the boundary inclusion $S^1 \rightarrow D_2$, and therefore it is a cofibration.
		
		Assume by induction that $\hat{L}_{k}(\Xi)$ is the pushout of the boundary inclusion $S^{k+1} \rightarrow D_{k+2}$ for each $0\leq q \leq n-1$ and let's prove the same holds true for $k=n$.
		We do this representably, as follows: let $X$ be an $\infty$-groupoid, and $C,D\colon \cyl(D_n)\rightarrow X$ be two $n$-cylinders in $X$, such that $C\circ\iota_k=D\circ\iota_k$ for $k=0,1$. Assume given a pair of parallel modifications $\Theta \colon s(C) \Rightarrow s(D)$, $\Psi \colon t(C) \Rightarrow t(D)$.
		To extend this to a modification $C \Rightarrow D$	we have to give a modification of $(n-1)$-cylinders $\Upsilon(\iota_0 C,\Theta_t)\otimes \bar{C}\otimes\Gamma(\Theta_s,\iota_1 C)\Rightarrow \bar{D}$ in $X(a,b)$ where $\Theta_s, \Theta_t$ are the $2$-cells that are part of the data of both $C$ and $D$, and $a=s^2(S)$, $b=t^2(T)$. Notice that we already have the source and target of this modification, so that (by inductive hypothesis), this extension amounts to filling in an $n$-sphere $X(a,b)$.
		Upon transposing along the suspension-loop space adjunction we see that the original extension problem is equivalent to extending along the boundary inclusion $S^{n+1}\rightarrow D_{n+2}$, which concludes the proof.
	\end{proof}
\end{lemma}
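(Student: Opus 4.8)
The plan is to follow the same template as the proof of Proposition \ref{direct cof cyl} for ordinary cylinders. By the definition of direct cofibration together with Lemma \ref{Reedy construction}, it is enough to show that each latching map $\hat{L}_n(\Xi)$ belongs to $\mathbb{I}$; I would in fact prove the sharper claim that $\hat{L}_n(\Xi)$ arises as a pushout of the boundary inclusion $S^{n+1}\to D_{n+2}$, from which membership in $\mathbb{I}$ is immediate. Everything is set up to run by induction on $n$.

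For the base case $n=0$ I would first identify the domain: at level $0$ the object $\cyl(D_{\bullet})\ast\cyl(D_{\bullet})$ is the colimit of two copies of $\cyl(D_0)\cong D_1$ glued at both endpoints, which is exactly $S^1$, while $\M_0 = D_2$. Since the latching object at the bottom degree is initial, $\hat{L}_0(\Xi)$ is just $(\Xi)_0$, which by construction is the boundary inclusion $S^1\to D_2$, a member of $I$.

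For the inductive step I would argue representably, just as in Proposition \ref{direct cof cyl}. Unwinding the colimit defining $\M_n$ and the biased inductive description of a modification, a map into an $\infty$-groupoid $X$ out of the domain of $\hat{L}_n(\Xi)$ is precisely the following: two $n$-cylinders $C,D\colon\cyl(D_n)\to X$ with equal top and bottom cells ($C\circ\iota_k = D\circ\iota_k$, $k=0,1$) together with parallel modifications $\Theta\colon s(C)\Rightarrow s(D)$ and $\Psi\colon t(C)\Rightarrow t(D)$. Extending this to all of $\M_n$ then amounts, by the recursive clause of the definition, to supplying a single modification of $(n-1)$-cylinders $\Upsilon(\iota_0 C,\Theta_t)\otimes\bar{C}\otimes\Gamma(\Theta_s,\iota_1 C)\Rightarrow\bar{D}$ in the hom-groupoid $\Omega(X,a,b)$, with $a=s^2(\Theta_s)$ and $b=t^2(\Theta_t)$, whose source and target are already pinned down by $\Theta$ and $\Psi$.

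The main obstacle, and the step requiring care, is to confirm that this residual extension problem is literally a sphere-filling problem. Because the source and target of the sought modification are forced by the inductive data, what remains is to fill an $n$-sphere in $\Omega(X,a,b)$; transposing along the suspension--loop adjunction $\Sigma\dashv\Omega$ turns this into an extension along $S^{n+1}\to D_{n+2}$. Making this precise means checking that the three pieces in the defining colimit of $\M_n$ --- the summand $\Sigma\M_{n-1}$, the triple vertical composite $\Sigma(\cyl(D_{n-1})^{\otimes 3})$, and the two whiskering faces carrying $\Gamma_{n-1}$ and $\Upsilon_{n-1}$ --- assemble after transposition into exactly the data of an $(n+2)$-disk relative to its boundary, so that $\hat{L}_n(\Xi)$ is indeed the pushout of $S^{n+1}\to D_{n+2}$. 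This coherence bookkeeping, rather than any single hard computation, is where the work lies.
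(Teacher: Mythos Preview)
Your proposal is correct and follows essentially the same approach as the paper: both argue by induction on $n$, handle the base case as the boundary inclusion $S^1\to D_2$, and for the inductive step work representably to reduce the latching extension problem to providing a modification of $(n-1)$-cylinders in $\Omega(X,a,b)$ with prescribed source and target, which by induction and the $\Sigma\dashv\Omega$ transpose becomes a filling along $S^{n+1}\to D_{n+2}$. Your final paragraph flagging the coherence bookkeeping is a fair acknowledgment of what the paper's proof leaves implicit, but there is no substantive difference in strategy.
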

Thanks to this lemma, it is straightforward to prove the next result:
\begin{lemma}
	\label{modifications in contractible groupoids}
	Let $X$ be a contractible $\infty$-groupoid. Given a pair of $n$-cylinders $C,D\colon A \curvearrowright B$ in $X$, there exists a modification $\Theta\colon C \Rightarrow D$ in $X$.
\end{lemma}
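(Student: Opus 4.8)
The plan is to recognise a modification $\Theta\colon C\Rightarrow D$ as the solution of a single lifting problem against the trivial fibration $X\to *$, and then to invoke the previous lemma to supply the cofibration on the other side.

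First I would reformulate the data. By definition a modification $\Theta\colon C\Rightarrow D$ is nothing but a map $\Theta\colon \M_n\to X$ satisfying $\Theta\circ\Xi_0=C$ and $\Theta\circ\Xi_1=D$. Since $C,D\colon A\curvearrowright B$ have the same top and bottom cells, i.e.\ $C\circ\iota_k=D\circ\iota_k$ for $k=0,1$, the pair $(C,D)$ factors through the colimit defining $\cyl(D_n)\ast\cyl(D_n)$ and hence determines a single map $\langle C,D\rangle\colon\cyl(D_n)\ast\cyl(D_n)\to X$. Giving a modification is then exactly solving the lifting problem obtained by extending $\langle C,D\rangle$ along the $n$-th component $\Xi_n\colon\cyl(D_n)\ast\cyl(D_n)\to\M_n$ of $\Xi$ against the terminal map:
\[
\bfig
\morphism(0,0)|a|/@{>}@<0pt>/<1000,0>[\cyl(D_n)\ast\cyl(D_n)`X;\langle C,D\rangle]
\morphism(0,0)|l|/@{>}@<0pt>/<0,-500>[\cyl(D_n)\ast\cyl(D_n)`\M_n;\Xi_n]
\morphism(1000,0)|r|/@{>}@<0pt>/<0,-500>[X`*;]
\morphism(0,-500)|b|/@{>}@<0pt>/<1000,0>[\M_n`*;]
\morphism(0,-500)|r|/@{-->}@<0pt>/<1000,500>[\M_n`X;\Theta]
\efig
\]

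Second, I would use the previous lemma together with contractibility of $X$. Since $\Xi$ is a direct cofibration, every latching map $\hat L_k(\Xi)$ lies in $\mathbb{I}$, and a degreewise induction shows that in particular the plain component $\Xi_n$ is itself a cofibration: one writes $\Xi_n$ as the composite of a pushout of the map $L_n(\cyl(D_\bullet)\ast\cyl(D_\bullet))\to L_n(\M_\bullet)$ followed by the latching map $\hat L_n(\Xi)$, and concludes by closure of $\mathbb{I}$ under pushouts and transfinite composition. On the other hand, $X$ contractible means that $X\to *$ is a trivial fibration, and a trivial fibration has the right lifting property against every map in $\mathbb{I}={}^{\pitchfork}(I^{\pitchfork})$, not merely against the generators $I$. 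Hence the square above admits a diagonal filler $\Theta$, and $\Theta\colon C\Rightarrow D$ is the desired modification.

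The only genuinely non-formal point is the passage from ``$\Xi$ is a direct cofibration'' (a condition on the \emph{latching} maps) to ``the plain component $\Xi_n$ is a cofibration'', which is where the degreewise induction described above is needed; once this is recorded, everything else is a direct application of the lifting property afforded by contractibility of $X$, which is why the result is indeed straightforward given the preceding lemma.
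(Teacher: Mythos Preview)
Your proof is correct and follows exactly the approach the paper has in mind: the paper omits the argument entirely, stating only that the result is straightforward from the preceding lemma (that $\Xi$ is a direct cofibration), and you have supplied precisely the details one would expect—reformulating the existence of $\Theta$ as a lifting problem of $\Xi_n$ against the trivial fibration $X\to *$, and observing that the componentwise map $\Xi_n$ is a cofibration. The one point you flag as non-formal (deducing that $\Xi_n\in\mathbb{I}$ from the latching condition) is indeed the only thing to check, and your factorisation argument handles it correctly.
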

We also record here, for future use, the following lemma
\begin{lemma}
	\label{extension of modifications}
	Given an $\infty$-groupoid $X$, an $n$-cylinder $C\colon A \curvearrowright B$ in $X$, a pair of parallel $(n-1)$-cylinders $D_{s},D_t\colon\cyl(D_{n-1}) \rightarrow X$ and parallel modifications $\Theta_1 \colon s(C)\Rightarrow D_s, \ \Theta_2\colon t(C)\Rightarrow D_t$ there exists an $n$-cylinder $D\colon \cyl(D_n)\rightarrow X$ such that $s(D)=D_s, \ t(D)=D_t$ and a modification $\Theta \colon C \Rightarrow D$ such that $s(\Theta)=\Theta_1$ and $t(\Theta)=\Theta_2$.
	\begin{proof}
		We prove this statement by induction, the base case being $n=1$. We can use the $2$-cells $\Theta_1\colon s(C)\rightarrow D_s$ and $\Theta_2\colon t(C)\rightarrow D_t$ and define the $2$-cell filling $D$ to be $(B\Theta_1)(C(\Theta_2^{-1}A))$. Clearly, it is possible to extend $(\Theta_1,\Theta_2^{-1})$ to a modification $\Theta\colon C \Rightarrow D$, thanks to the contractibility of $\mathfrak{C}$.
		
		Now let $n>1$ and assume the statement holds true for every integer $k < n$.
		The pair of parallel $(n-2)$-cylinders $\bar{D}_s,\bar{D}_t$ in $\Omega (X,x,y)$ (where $x=s^n(C)\sigma, \ y=t^n(C)\circ \tau$), the $(n-1)$-cylinder $ \Upsilon(\iota_0 C,\Theta_t)\otimes \bar{C}\otimes\Gamma(\Theta_s,\iota_1 C)$ in $\Omega(X,x,y)$ and the modifications $\bar{\Theta}_1, \bar{\Theta}_2$ satisfy the assumptions of the lemma for $k=n-1$. Therefore, we get an $(n-1)$-cylinder $\bar{D}$ and a modification $\bar{\Theta}\colon\Upsilon(\iota_0 C,\Theta_t)\otimes \bar{C}\otimes\Gamma(\Theta_s,\iota_1 C) \Rightarrow \bar{D}$, both in $\Omega (X,x,y)$, which concludes the proof.
	\end{proof}
\end{lemma}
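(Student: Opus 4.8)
The plan is to argue by induction on $n$, descending in dimension through the suspension--loop adjunction $\Sigma \dashv \Omega$, exactly as cylinders and modifications are themselves defined. At each stage contractibility of $\mathfrak{C}$ (which guarantees that the requisite coherence cells exist in any model $X$) supplies the comparison cells, while the inductive hypothesis, applied inside a hom-$\infty$-groupoid $\Omega(X,x,y)$, produces the lower-dimensional data that transpose back to the cylinder $D$ and the modification $\Theta$ we are after. I would set the $2$-cell components of $\Theta$ to be those already present in $\Theta_1,\Theta_2$, namely $\Theta_s$ read off from $\Theta_1$ and $\Theta_t$ read off from $\Theta_2$, and then produce the remaining (lower) data by induction.

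For the base case $n=1$ I would unwind the data as in Example \ref{unraveling modifications}: the $0$-cylinders $D_s,D_t$ are $1$-cells, the modifications $\Theta_1\colon s(C)\Rightarrow D_s$ and $\Theta_2\colon t(C)\Rightarrow D_t$ are $2$-cells, and a modification $C\Rightarrow D$ amounts to a pair of $2$-cells together with a $3$-cell comparing the filling $2$-cells of $C$ and $D$. Writing $A=\iota_0 C$ and $B=\iota_1 C$ for the (common) top and bottom cells and $\bar C$ for the filling $2$-cell of $C$, I would define the filling of $D$ to be $(B\,\Theta_1)\bigl(\bar C\,(\Theta_2^{-1} A)\bigr)$, where juxtaposition denotes the appropriate whiskering operations $w$; one checks its source is $D_t A$ and its target is $B D_s$, so it determines a $1$-cylinder $D$ with $s(D)=D_s$ and $t(D)=D_t$ and the same top and bottom cells as $C$. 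The pair $(\Theta_1,\Theta_2^{-1})$ then extends to a modification $\Theta\colon C\Rightarrow D$ with $s(\Theta)=\Theta_1$, $t(\Theta)=\Theta_2$ by contractibility of $\mathfrak{C}$, which provides the filling $3$-cell.

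For the inductive step, suppose the statement holds below $n$. Set $x=s^n(C)\circ\sigma$ and $y=t^n(C)\circ\tau$ and transpose along $\Sigma\dashv\Omega$: the cylinder $C$ yields its transpose $\bar C$, the parallel cylinders $D_s,D_t$ yield parallel $(n-2)$-cylinders $\bar D_s,\bar D_t$ in $\Omega(X,x,y)$, and $\Theta_1,\Theta_2$ yield lower modifications $\bar\Theta_1,\bar\Theta_2$ there. Taking the whiskered composite $C'=\Upsilon(\iota_0 C,\Theta_t)\otimes\bar C\otimes\Gamma(\Theta_s,\iota_1 C)$ as the input cylinder, I would apply the inductive hypothesis inside $\Omega(X,x,y)$ to $C'$, to $\bar D_s,\bar D_t$, and to $\bar\Theta_1,\bar\Theta_2$, obtaining an $(n-1)$-cylinder $\bar D$ together with a modification $\bar\Theta\colon C'\Rightarrow\bar D$. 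By the inductive description of a modification, the pair $(\Theta_s,\Theta_t)$ together with $\bar\Theta$ assembles into the desired $\Theta\colon C\Rightarrow D$, where $D$ transposes $\bar D$; its source and target are $D_s,D_t$ by construction.

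The step that demands care, and which I expect to be the main obstacle, is verifying that the transposed data genuinely satisfies the hypotheses of the lemma in dimension $n-1$: one must confirm that $\bar\Theta_1$ and $\bar\Theta_2$ have source and target equal to $s(C')$ and $t(C')$ respectively, and in particular that all three $(n-1)$-cylinders live in the \emph{same} hom-$\infty$-groupoid $\Omega(X,x,y)$. This is not automatic, precisely because the source and target of $C'$ are computed through the whiskerings $\Upsilon,\Gamma$ and the vertical composite $\otimes$ — whose role is exactly to absorb the boundary $2$-cells $\Theta_s,\Theta_t$ and reconcile the basepoints. Closing this bookkeeping requires the compatibility identities $\epsilon(F\otimes G)=\epsilon(F)\otimes\epsilon(G)$ and $\epsilon(\beta C\alpha)=\epsilon(C)$ from Lemma \ref{cell +cyl+cell}, matched against the source/target behaviour encoded in the components of $\Theta_1,\Theta_2$. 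Once this is in place the induction closes, and the contractibility input from the base case is the only nontrivial existence statement used.
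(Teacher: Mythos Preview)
Your proposal is correct and follows essentially the same route as the paper's own proof: induction on $n$, with the base case handled by the explicit formula $(B\,\Theta_1)(\bar C\,(\Theta_2^{-1}A))$ for the filling of $D$ and contractibility of $\mathfrak{C}$ for the $3$-cell, and the inductive step carried out by transposing into $\Omega(X,x,y)$ and applying the hypothesis to the whiskered composite $\Upsilon(\iota_0 C,\Theta_t)\otimes\bar C\otimes\Gamma(\Theta_s,\iota_1 C)$. Your final paragraph flagging the bookkeeping needed to match sources and targets of $\bar\Theta_1,\bar\Theta_2$ with those of $C'$ is a point the paper leaves implicit; it is indeed the only place requiring care, and your indication of how to close it (via the compatibility of $\otimes$ with source and target) is the right one.
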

The content of the previous lemma can be pictorially represented by the following extension problem
\[
\bfig
\morphism(0,0)|a|/{@{>}@/^1.5em/}/<1200,0>[\cyl(S^{n-1})`X;(s(C),t(C))]
\morphism(0,0)|b|/{@{>}@/^-1.5em/}/<1200,0>[\cyl(S^{n-1})`X;(D_s,D_t )\ \ \ \ ]
\morphism(600,125)|r|/@{=>}@<0pt>/<0,-250>[`; (\Theta_1,\Theta_2)]
\morphism(0,0)|a|/{@{>}@//}/<0,-1000>[\cyl(S^{n-1})`\cyl(D_n);(\cyl(\sigma),\cyl(\tau))]
\morphism(0,-1000)|a|/{@{>}@/^1.2em/}/<1200,1000>[\cyl(D_n)`X;C]
\morphism(0,-1000)|b|/{@{-->}@/^-1.2em/}/<1200,1000>[\cyl(D_n)`X;D]
\morphism(525,-425)|r|/@{=>}@<0pt>/<150,-150>[`; \Theta]
\efig
\]
\begin{rmk}
	\label{modif are invert}
Note that all modifications are ``invertible'' in a sense that can be made precise, but here we content ourselves with the weaker statement that given $n$-cylinders $C,D$ in an $\infty$-groupoid $X$, there exists a modification $\Theta \colon C \Rightarrow D$ if and only if there exists a modification $\Theta'\colon D \Rightarrow C$.
\end{rmk}
\section{Low dimensional operations in $\p X$}
In this section we use the tools developed so far to endow the underlying globular set of $\p X$ with all the operations  $\p X(\rho)$ for $\rho\colon D_n \rightarrow A$ in a homogeneous coherator for $\infty$-categories $\mathfrak{D}$, with $n\leq 2$. This is enough, for instance, to endow the $2$-globular set obtained from $\p X$ by identifying $2$-cells connected by a $3$-cell and keeping the same $0$ and $1$-cells with the structure of a bicategory with weak inverses (i.e. an unbiased bigroupoid, or a weak 2-groupoid à la Batanin, see \cite{BAT}), thanks to the results of Section 6.

Thanks to Theorem \ref{algebraic structure on PX}, we already have inverses, so we only need to interpret all the homogeneous operations of dimension $n\leq 2$ since we have already defined $\cyl(\bullet)$ on globular maps. More precisely, we have to define $\cyl(\rho)\colon \cyl(D_n) \rightarrow \cyl(A)$ for every homogeneous map $\rho \colon D_n \rightarrow A$ with $n\leq 2$ in $\mathfrak{D}$. Notice that this forces $m=\dim(A)\leq 2$.
\begin{rmk}
	\label{tower for D}
Thanks to Proposition \ref{fact of maps of gpds} and a similar argument as the one used in the proof of the lemma below, it is not hard to show that one can assume, without loss of generality, that the coherator $\mathfrak{D}$ has been obtained in the following manner: there is a functor $\mathfrak{D}_{\bullet}\colon \omega \rightarrow \mathbf{GlTh}$ as in Definition \ref{cell glob th}, with $\mathfrak{D}_{n+1}= \mathfrak{D}_{n}[X]$, where $X=\{(h_1,h_2)\colon D_n \rightarrow A\}$, i.e. all the $(n+1)$-dimensional ``basic'' operations of $\mathfrak{D}$ are added at the $(n+1)$st step.
Therefore, we may rephrase the goal of this section in terms of constructing an extension of the form:
\[
\bfig
\morphism(0,0)|a|/@{>}@<0pt>/<500,0>[\Theta_0`\wgpd;\cyl]
\morphism(0,0)|a|/@{>}@<0pt>/<0,-400>[\Theta_0` \mathfrak{D}_{2};]
\morphism(0,-400)|r|/@{-->}@<0pt>/<500,400>[\mathfrak{D}_{2}`\wgpd;\cyl]
\efig
\]
\end{rmk}
We will make use of the following fact, whose proof we only sketch not to disrupt the flow of this section
\begin{lemma}
Given a cellular globular theory $\mathfrak{D}$, the inclusion $\Theta_0 \rightarrow \mathfrak{D}$ induces isomorphisms 
\[\Theta_0(D_0,A)\cong \mathfrak{D}(D_0,A)\]
\begin{proof}
It is enough to prove that the unit map $\eta_A\colon A \rightarrow U\circ F A$ of the adjunction 
\[\xymatrixcolsep{1pc}
\vcenter{\hbox{\xymatrix{
			**[l][\G^{op},\mathbf{Set}] \xtwocell[r]{}_{U}^{F}{'\perp}& **[r]\wgpd
}}}
\] is sent to an isomorphism when we evaluate $[\G^{op},\mathbf{Set}](D_0,-)$ at it. Thanks to Proposition 2.2 of \cite{Nik}, the unit is an $I$-cellular map, and therefore it is $0$-bijective.
\end{proof}
\end{lemma}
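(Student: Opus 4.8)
The plan is to recognise both hom-sets as sets of $0$-cells of a representable object and to show that passing from $\Theta_0$ to $\mathfrak{D}$ neither creates nor destroys $0$-cells. Write $\phi\colon\Theta_0\rightarrow\mathfrak{D}$ for the structural functor and let $\mathbf{U}\colon\mathbf{Mod}(\mathfrak{D})\rightarrow\mathbf{Mod}(\Theta_0)\simeq[\G^{op},\mathbf{Set}]$ be the induced forgetful functor, with left adjoint $\mathbf{F}$ (which exists by local presentability). By the Yoneda lemma $\Theta_0(D_0,A)$ is the set of $0$-cells of the globular sum $A$ viewed as a globular set, whereas $\mathfrak{D}(D_0,A)=y(A)_0=\mathbf{U}(y(A))_0$ is the set of $0$-cells of the underlying globular set of the representable $\mathfrak{D}$-model $y(A)$. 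Since $\phi$ is bijective on objects and preserves globular sums, a short computation with the defining adjunctions identifies the free model $\mathbf{F}(A)$ on the globular set $A$ with $y(A)$; under this identification the comparison map $\Theta_0(D_0,A)\rightarrow\mathfrak{D}(D_0,A)$ induced by $\phi$ is exactly the map obtained by applying $[\G^{op},\mathbf{Set}](D_0,-)$ to the unit $\eta_A\colon A\rightarrow\mathbf{U}\mathbf{F}(A)$. It therefore suffices to prove that $\eta_A$ is $0$-bijective, i.e. belongs to $\mathbf{bij_0}$.

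For this I would use the explicit cellular description of $\mathbf{F}$. The free $\mathfrak{D}$-model is built by the small object argument witnessing the cellularity of $\mathfrak{D}$: beginning with $A$, one transfinitely adjoins a filler for the image of each admissible pair occurring in a tower presenting $\mathfrak{D}$. By Proposition 2.2 of \cite{Nik}, this presents $\eta_A$ as a relative $I$-cell complex, that is, a transfinite composite of pushouts of boundary inclusions $S^{m-1}\rightarrow D_m$. The crucial observation is that every adjoined filler has dimension at least $1$ — the lowest-dimensional admissible pairs are parallel pairs $D_0\rightrightarrows B$ whose fillers are $1$-cells $D_1\rightarrow B$ — so only inclusions $S^{m-1}\rightarrow D_m$ with $m\geq 1$ occur; equivalently, $\mathbf{F}$ attaches no new $0$-cells. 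Each such inclusion is $0$-bijective, since for $m\geq 1$ the globular sets $S^{m-1}$ and $D_m$ have the same two $0$-cells and the inclusion is a bijection on them. As $\mathbf{bij_0}$ is the left class of the orthogonal factorization system $(\mathbf{bij_0},\mathbf{ff_0})$ of Proposition \ref{fact syst glob set}, it is closed under pushout and transfinite composition, whence $\eta_A\in\mathbf{bij_0}$.

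Evaluating at $0$ then gives the required bijection $\Theta_0(D_0,A)\cong\mathfrak{D}(D_0,A)$. The genuinely load-bearing step is the second paragraph: pinning down the cellular presentation of the unit and verifying that no $0$-dimensional cell is ever attached, which is precisely the content of the cited result of \cite{Nik}; the identification $\mathbf{F}(A)\cong y(A)$ and the fact that the comparison map is the $0$-component of the unit are routine bookkeeping, and the conclusion is immediate once $0$-bijectivity is established.
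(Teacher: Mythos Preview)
Your proof is correct and follows essentially the same route as the paper's: reduce to showing the unit $\eta_A$ of the free/forgetful adjunction is $0$-bijective, and invoke the cellular presentation of the unit from \cite{Nik}. Your version is in fact more careful than the paper's sketch, since you explicitly justify why the $I$-cellular map attaches no $0$-cells (all fillers have dimension $\geq 1$) and hence lies in $\mathbf{bij_0}$, a point the paper leaves implicit.
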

To begin with, we start with operations of dimension $1$, i.e. extending the functor $\cyl$ to $\mathfrak{D}_1$. More precisely, we mean operations $h$ added as solutions of lifting problems of the following form, as in point (2) of Definition \ref{cell glob th}
\[
\bfig 
\morphism(0,0)|a|/@{>}@<3pt>/<500,0>[D_0`A;f]
\morphism(0,0)|b|/@{>}@<-3pt>/<500,0>[D_0`A;g]

\morphism(0,0)|r|/@{>}@<3pt>/<0,-400>[D_0`D_{1};\tau_0]
\morphism(0,0)|l|/@{>}@<-3pt>/<0,-400>[D_0`D_{1};\sigma_0]

\morphism(0,-400)|r|/@{>}@<0pt>/<500,400>[D_{1}`A;h]
\efig 
\]
We know that, since $\mathfrak{D}$ is assumed to be homogeneous, this implies $\dim(A)\leq 1$, and therefore either $f=g=1_{D_0}$, or $\dim(A)=1$ and necessarily $f=\partial_{\sigma},g=\partial_{\tau}$ thanks to the previous lemma. Therefore, setting $\cyl(h)=\hat{h}$ as in Definition \ref{rho hat defi} is a well-defined choice. Doing so for all the $1$-dimensional operations added at the first stage of the defining tower of globular theories that witnesses the cellularity of $\mathfrak{D}$, we get an extension of the form:
\begin{equation}
\label{ext to D1}	
\bfig
\morphism(0,0)|a|/@{>}@<0pt>/<500,0>[\Theta_0`\wgpd;\cyl]
\morphism(0,0)|a|/@{>}@<0pt>/<0,-400>[\Theta_0` \mathfrak{D}_{1};]
\morphism(0,-400)|r|/@{-->}@<0pt>/<500,400>[\mathfrak{D}_{1}`\wgpd;\cyl]
\efig
\end{equation}
Let us now address the problem of extending this to $2$-dimensional operations, i.e. morphisms $D_2 \rightarrow A$ in $\mathfrak{D}$. This can be done using the following result, whose proof is fundamental to this section and will be subdivided into several lemmas.
\begin{lemma}
	\label{canonical modif of 1 dim operations}
Given a map $\rho \colon D_1 \rightarrow A$ in $\mathfrak{D}$, there exists a modification \[\theta_{\rho}\colon \hat{\rho} \Rightarrow \cyl(\rho)\]
where $\cyl(\rho)$ exists thanks to \eqref{ext to D1}.
Furthermore, these modifications can be built in such a way that if $\rho_1,\rho_2\colon D_1 \rightarrow A$ are parallel maps then $\theta_{\rho_1}$ and $\theta_{\rho_2}$ are parallel.
\end{lemma}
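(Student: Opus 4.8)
The plan is to exhibit $\theta_\rho$ as a modification with identity source and target $2$-cells, reducing its construction to filling a single higher cell inside a contractible hom-$\infty$-groupoid. First I would observe that, $\rho$ being homogeneous, $\dim(A)\leq 1$, so $A$ is $D_0$ or a string $D_1^{\otimes k}$, and that both $\hat\rho$ and $\cyl(\rho)$ are $1$-cylinders $\iota_0\rho\curvearrowright\iota_1\rho$ in $\cyl(A)$: for $\hat\rho$ this is the left-hand square of \eqref{hat properties} built into Definition~\ref{rho hat defi}, and for $\cyl(\rho)$ it holds because the extension \eqref{ext to D1} is compatible with the coglobular maps $\iota_0,\iota_1$. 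I would then check that their source and target cylinders also coincide: since the elementary interpretation respects source and target of $1$-dimensional homogeneous operations, $s(\hat\rho)=\widehat{\rho\sigma}$ and $t(\hat\rho)=\widehat{\rho\tau}$, while every map $D_0\to A$ in $\mathfrak{D}$ is globular, so there $\widehat{(-)}$ agrees with $\cyl$ (Definition~\ref{ext of hat defi}); hence $s(\hat\rho)=\cyl(\rho\sigma)=s(\cyl(\rho))$ and dually for targets. Thus $\hat\rho$ and $\cyl(\rho)$ are parallel $1$-cylinders sharing all four of their top, bottom, source and target cells.

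Granting that $\cyl(A)$ is contractible when $\dim(A)\leq 1$, Proposition~\ref{omega preserves contractibles} makes the hom-$\infty$-groupoid $H=\Omega(\cyl(A),x,y)$ contractible, where $x,y$ are the common endpoints of the shared source and target $1$-cells. I would build $\theta_\rho$ by setting $\Theta_s,\Theta_t$ to be identity $2$-cells — legitimate precisely because the source and target cylinders agree — so that, unwinding the inductive definition of a modification as in Example~\ref{unraveling modifications}, the only remaining datum is a modification between the transposed $0$-cylinders $\bar{\hat\rho}$ (whiskered by the now trivial $\Gamma$- and $\Upsilon$-cylinders) and $\overline{\cyl(\rho)}$ inside $H$. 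These are parallel $0$-cylinders with common top and bottom cells in the contractible $\infty$-groupoid $H$, so Lemma~\ref{modifications in contractible groupoids} in dimension $0$ supplies the required modification, that is, a $3$-cell. This produces a modification with the correct target $\cyl(\rho)$ and with $\Theta_s,\Theta_t$ equal to identities, which is exactly what the coherence clause needs.

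For the parallelism clause, if $\rho_1,\rho_2\colon D_1\to A$ are parallel then $\rho_1\sigma=\rho_2\sigma$ and $\rho_1\tau=\rho_2\tau$, so the source and target cylinders involved are literally the same for both; since each $\theta_{\rho_i}$ was built with identity $\Theta_s,\Theta_t$ on these common cylinders, we obtain $s(\theta_{\rho_1})=s(\theta_{\rho_2})$ and $t(\theta_{\rho_1})=t(\theta_{\rho_2})$, whence the $\theta_{\rho_i}$ are parallel. The main obstacle is the contractibility of $\cyl(A)$ for $\dim(A)\leq 1$: here $\cyl(A)$ is not a globular sum but the colimit of the zig-zag of Lemma~\ref{cyl of suspension}, so Proposition~\ref{glob sums are contractible} does not apply on the nose. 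I expect this to require its own sequence of lemmas, proved by presenting $\cyl(D_1^{\otimes k})$ as an iterated pushout of the contractible globular sums $D_1^{\otimes 2}$ and $D_2$ along copies of $\cyl(D_0)=D_1$, and showing that each extension problem against a sphere inclusion $S^{m-1}\to D_m$ can be solved using contractibility of the pieces together with that of the hom-groupoids through which the attaching maps factor.
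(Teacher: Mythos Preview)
Your setup is correct: for a homogeneous $\rho\colon D_1\to A$ with $\dim(A)\leq 1$, the cylinders $\hat\rho$ and $\cyl(\rho)$ do share top, bottom, source and target data, so reducing $\theta_\rho$ to a single filler in $\Omega(\cyl(A),x,y)$ is the right shape of argument. The gap is precisely where you place it, and it is fatal rather than technical. Contractibility of $\cyl(D_1^{\otimes k})$ is not available in this paper, and your sketch for obtaining it does not work: pushouts of contractible $\infty$-groupoids along contractible $\infty$-groupoids need not be contractible (already $D_1\amalg_{S^0}D_1$ fails), and ``solving each sphere-extension problem using contractibility of the pieces'' is exactly what breaks in such colimits. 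More seriously, contractibility of $\cyl(A)$ for globular sums $A$ would, together with Proposition~\ref{direct cof cyl}, let one solve every extension problem
\[
\bfig
\morphism(0,0)|a|/@{>}@<0pt>/<800,0>[\partial\cyl(D_n)`\cyl(A);]
\morphism(0,0)|l|/@{>}@<0pt>/<0,-400>[\partial\cyl(D_n)`\cyl(D_n);]
\morphism(0,-400)|r|/@{-->}@<0pt>/<800,400>[\cyl(D_n)`\cyl(A);]
\efig
\]
and hence produce the extension $\cyl\colon\mathfrak{C}\to\wgpd$ directly, bypassing the entire machinery of Sections~8--11. In other words, the input you are assuming is essentially the output the paper is working towards; the argument is circular at the level of difficulty even if not literally so.

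The paper's proof avoids any appeal to contractibility of $\cyl(A)$. It proceeds by induction on the cellular structure of $\mathfrak{D}$: for the \emph{basic} $1$-dimensional operations $h$ one has set $\cyl(h)=\hat h$ in \eqref{ext to D1}, so $\theta_h$ is an identity modification; the sequence of lemmas preceding the proof (on $\hat\rho\circ_i\Theta$, on $(\Theta_1,\ldots,\Theta_k)\circ\hat\rho$, and on the comparison $(\amalg_j\hat\phi_j)\circ\hat\rho\Rightarrow((\phi_j)_j\circ\rho)^\wedge$) then shows that the class of maps admitting such a modification is closed under operadic composition. Packaging this as a sub-globular-theory $\D'\subset\mathfrak{D}_{\leq 1}$ containing the generators forces $\D'=\mathfrak{D}_{\leq 1}$ by cellularity. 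All fillers produced along the way live in genuine globular sums (the pieces $B_i\in\mathscr{L}(A)$ of the zig-zag decomposition), where Proposition~\ref{glob sums are contractible} applies, never in $\cyl(A)$ itself. If you want to repair your approach, you would need to replace the global contractibility hypothesis by an argument that the specific $2$-cell you need factors through one of those contractible pieces; but tracing that through is essentially what the paper's compositional lemmas accomplish.
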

Notice that, thanks to the extension of the definition of $\hat{\rho}$ to non-necessarily homogeneous maps $\rho$ in $\mathfrak{D}$ given in Definition \ref{ext of hat defi}, it is enough to define $\theta_{\rho}$ only on homogeneous operations $\rho$, thus forcing $\dim(A)\leq 1$.
If we assume the previous result, we can prove the next one.
\begin{prop}
	Given any operation $\rho\colon D_2 \rightarrow A$ in $\mathfrak{D}$ fitting into a diagram of the form
	\[ \bfig
	\morphism(0,0)|a|/@{>}@<2pt>/<750,0>[D_1`A;h_1]
	\morphism(0,0)|b|/@{>}@<-2pt>/<750,0>[D_1`A;h_2]
	\morphism(0,0)|r|/@{>}@<2pt>/<0,-500>[D_1`D_{2};\tau]
	\morphism(0,0)|l|/@{>}@<-2pt>/<0,-500>[D_1`D_{2};\sigma]
	\morphism(0,-500)|r|/@{>}@<0pt>/<750,500>[D_{2}`A;\rho]
	\efig
	\] 
	 we can associate to it a map $\cyl(\rho)\colon \cyl(D_2) \rightarrow \cyl(A)$ fitting into a diagram of the form
	 	\[ \bfig
	 \morphism(0,0)|a|/@{>}@<2pt>/<750,0>[\cyl(D_1)`\cyl(A);\cyl(h_1)]
	 \morphism(0,0)|b|/@{>}@<-2pt>/<750,0>[\cyl(D_1)`\cyl(A);\cyl(h_2)]
	 \morphism(0,0)|r|/@{>}@<2pt>/<0,-500>[\cyl(D_1)`\cyl(D_{2});\cyl(\tau)]
	 \morphism(0,0)|l|/@{>}@<-2pt>/<0,-500>[\cyl(D_1)`\cyl(D_{2});\cyl(\sigma)]
	 \morphism(0,-500)|r|/@{>}@<0pt>/<750,500>[\cyl(D_{2})`\cyl(A);\cyl(\rho)]
	 \efig
	 \]
	  Moreover, this map also comes endowed with a modification $\theta_{\rho}\colon \hat{\rho}\Rightarrow \cyl(\rho)$ whose boundary is given by $(\theta_{s(\rho)},\theta_{t(\rho)})$.
\begin{proof}
Using Lemma \ref{canonical modif of 1 dim operations}, we can apply Lemma \ref{extension of modifications} to the following diagram, where $\rho\colon D_2 \rightarrow A$ is assumed to be added as the solution of a lifting problem associated with an admissible pair of maps, as in the definition of $\mathfrak{D}$, and the solid triangle at the back commutes by \eqref{hat properties}.
\[
\bfig
\morphism(0,0)|a|/{@{>}@/^1.5em/}/<1700,0>[\cyl(S^{1})`\cyl(A);\left( \widehat{\rho\circ \sigma},\widehat{\rho\circ \tau}\right)]
\morphism(0,0)|b|/{@{>}@/^-1.5em/}/<1700,0>[\cyl(S^{1})`\cyl(A);\left(\cyl(\rho\circ \sigma),\cyl(\rho\circ \tau )\right)\ \ \ \ ]
\morphism(850,125)|r|/@{=>}@<0pt>/<0,-250>[`; (\theta_{\rho\circ \sigma},\theta_{\rho\circ \tau})]
\morphism(0,0)|a|/{@{>}@//}/<0,-1300>[\cyl(S^{1})`\cyl(D_2);(\cyl(\sigma),\cyl(\tau))]
\morphism(0,-1300)|a|/{@{>}@/^1.2em/}/<1700,1300>[\cyl(D_2)`\cyl(A);\hat{\rho}]
\morphism(0,-1300)|b|/{@{-->}@/^-1.2em/}/<1700,1300>[\cyl(D_2)`\cyl(A);\cyl(\rho)]
\morphism(800,-550)|r|/@{=>}@<0pt>/<125,-125>[`; \theta_{\rho}]
\efig
\]
It is straightforward to observe that this is enough to conclude the proof.
\end{proof}
\end{prop}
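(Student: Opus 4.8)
The plan is to reduce the statement to a single application of Lemma \ref{extension of modifications}, fed with the comparison data produced by Lemma \ref{canonical modif of 1 dim operations}. The naive interpretation $\hat{\rho}\colon \cyl(D_2)\to\cyl(A)$ is already available from Definition \ref{rho hat defi}, so the only thing left to do is to deform its boundary into the genuinely functorial interpretations $\cyl(\rho\circ\sigma)$ and $\cyl(\rho\circ\tau)$ (which exist thanks to the extension \eqref{ext to D1} of $\cyl$ to $\mathfrak{D}_1$), and to record the resulting comparison modification $\theta_\rho$.

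First I would pin down the boundary of $\hat{\rho}$. By the second square of \eqref{hat properties}, the restriction of $\hat{\rho}$ along $(\cyl(\sigma),\cyl(\tau))$ is exactly $(\widehat{\rho\circ\sigma},\widehat{\rho\circ\tau})$; equivalently $s(\hat{\rho})=\widehat{\rho\circ\sigma}$ and $t(\hat{\rho})=\widehat{\rho\circ\tau}$. Next I would observe that $\rho\circ\sigma$ and $\rho\circ\tau$ are parallel maps $D_1\to A$: the globular relations $\sigma_1\sigma_0=\tau_1\sigma_0$ and $\sigma_1\tau_0=\tau_1\tau_0$ force their further restrictions along $\sigma_0$ and $\tau_0$ to coincide, which is precisely the parallelism condition of Definition \ref{contr glob th}. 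Hence the \emph{furthermore} clause of Lemma \ref{canonical modif of 1 dim operations} supplies parallel modifications
\[\theta_{\rho\circ\sigma}\colon \widehat{\rho\circ\sigma}\Rightarrow \cyl(\rho\circ\sigma), \qquad \theta_{\rho\circ\tau}\colon \widehat{\rho\circ\tau}\Rightarrow \cyl(\rho\circ\tau).\]

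With this in hand I would apply Lemma \ref{extension of modifications} to the $2$-cylinder $C=\hat{\rho}$, the parallel pair of $1$-cylinders $(D_s,D_t)=(\cyl(\rho\circ\sigma),\cyl(\rho\circ\tau))$, and the parallel modifications $(\Theta_1,\Theta_2)=(\theta_{\rho\circ\sigma},\theta_{\rho\circ\tau})$. The lemma returns a $2$-cylinder, which I \emph{define} to be $\cyl(\rho)$, together with a modification $\theta_\rho\colon\hat{\rho}\Rightarrow\cyl(\rho)$ satisfying $s(\theta_\rho)=\theta_{\rho\circ\sigma}$ and $t(\theta_\rho)=\theta_{\rho\circ\tau}$; since $s(\rho)=\rho\circ\sigma$ and $t(\rho)=\rho\circ\tau$, this is exactly the stated boundary $(\theta_{s(\rho)},\theta_{t(\rho)})$. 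Compatibility with the coglobular structure, i.e. commutativity of the second displayed diagram, is then automatic, because the clauses $s(\cyl(\rho))=\cyl(\rho\circ\sigma)$ and $t(\cyl(\rho))=\cyl(\rho\circ\tau)$ are part of the output of Lemma \ref{extension of modifications}.

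The main obstacle is not located in this proposition, where the argument is a bookkeeping assembly of already-established facts; the genuine content lives upstream in Lemma \ref{canonical modif of 1 dim operations}, whose proof must build the comparison modifications coherently enough that parallel $1$-dimensional operations give rise to \emph{parallel} modifications. Within the present argument the only delicate point is checking that the parallelism hypotheses of Lemma \ref{extension of modifications} are met, which as noted reduces to the elementary globular identities and to the boundary identity \eqref{hat properties} for $\hat{\rho}$.
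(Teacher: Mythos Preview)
Your proof is correct and follows essentially the same approach as the paper: both invoke Lemma \ref{canonical modif of 1 dim operations} to obtain the parallel boundary modifications $(\theta_{\rho\circ\sigma},\theta_{\rho\circ\tau})$ and then feed these, together with $\hat{\rho}$ and the boundary identity \eqref{hat properties}, into Lemma \ref{extension of modifications} to produce $\cyl(\rho)$ and $\theta_\rho$. Your write-up is in fact slightly more explicit than the paper's about verifying the parallelism hypotheses required by Lemma \ref{extension of modifications}.
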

The following corollary follows immediately from the previous result and cellularity of $\mathfrak{D}$
\begin{cor}
There exists an extension of the form:
\[\bfig
\morphism(0,0)|a|/@{>}@<0pt>/<500,0>[\Theta_0`\wgpd;\cyl]
\morphism(0,0)|a|/@{>}@<0pt>/<0,-400>[\Theta_0` \mathfrak{D}_{2};]
\morphism(0,-400)|r|/@{-->}@<0pt>/<500,400>[\mathfrak{D}_{2}`\wgpd;\cyl]
\efig\]
\end{cor}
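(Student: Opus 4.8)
The plan is to read the statement off the universal property of the cellular extension (Proposition \ref{univ prop of glob th}), feeding it the lifts manufactured by the preceding Proposition. By Remark \ref{tower for D} I may assume $\mathfrak{D}_2 = \mathfrak{D}_1[X]$, where $X$ is the set of all admissible pairs $(h_1,h_2)\colon D_1 \rightrightarrows A$ of $1$-dimensional operations of $\mathfrak{D}_1$, each equipped with its freely adjoined lift $\rho\colon D_2 \rightarrow A$ satisfying $\rho\circ\sigma = h_1$ and $\rho\circ\tau = h_2$. The extension of $\cyl$ to $\mathfrak{D}_1$ is already available from \eqref{ext to D1}, so the task reduces to extending $\cyl\colon \mathfrak{D}_1 \rightarrow \wgpd$ along the cellular map $\mathfrak{D}_1 \rightarrow \mathfrak{D}_1[X]$.

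First I would recall that, by Proposition \ref{univ prop of glob th}, such an extension amounts precisely to choosing, for every admissible pair in $X$, a lift of its image under $\cyl$. Concretely, for each $\rho$ as above I must exhibit a map $\cyl(\rho)\colon \cyl(D_2) \rightarrow \cyl(A)$ whose composites with the already-defined maps $\cyl(\sigma),\cyl(\tau)\colon \cyl(D_1) \rightrightarrows \cyl(D_2)$ are $\cyl(h_1)=\cyl(\rho\circ\sigma)$ and $\cyl(h_2)=\cyl(\rho\circ\tau)$, respectively. This is exactly the serially commutative lifting datum of Definition \ref{contr glob th}, read in $\wgpd$ in place of a globular theory.

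The preceding Proposition supplies this datum verbatim: to every $2$-dimensional operation $\rho\colon D_2 \rightarrow A$ it associates a map $\cyl(\rho)$ satisfying $\cyl(\rho)\circ\cyl(\epsilon) = \cyl(\rho\circ\epsilon)$ for $\epsilon=\sigma,\tau$. Making such a choice uniformly over all admissible pairs in $X$ and assembling them through the universal property yields a globular functor $\cyl\colon \mathfrak{D}_2 \rightarrow \wgpd$ whose restriction to $\Theta_0$ is the original one, which is the required extension; no additional coglobular compatibility needs to be verified, since it is built into the Proposition.

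The only point that deserves care is that Proposition \ref{univ prop of glob th} is phrased for morphisms of globular theories, whereas here the target $\wgpd = \mathbf{Mod}(\mathfrak{C})$ is a category of models rather than a globular theory. I would dispatch this by observing that the construction of $\mathfrak{D}_1[X]$ (paragraph 4.1.3 of \cite{AR1}) is the free adjunction of a lift for each admissible pair, whose universal property is tested against any globular-sum-preserving functor into a category in which the relevant lifting problems can be solved; since $\cyl$ is globular and each required lift exists in $\wgpd$ by the preceding Proposition, the same universal property applies with $\wgpd$ in the place of the target theory (equivalently, one passes through the cocontinuous extension of Proposition \ref{UP of models}). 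This is the expected main obstacle, but it is a bookkeeping matter rather than a substantive one, which is why the corollary follows immediately.
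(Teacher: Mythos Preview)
Your proposal is correct and follows exactly the same route as the paper, which simply states that the corollary ``follows immediately from the previous result and cellularity of $\mathfrak{D}$''. You have merely unpacked this one-line justification in detail, including the technical bookkeeping point about applying the universal property of $\mathfrak{D}_1[X]$ with target $\wgpd$ rather than a globular theory --- a point the paper leaves implicit (cf.\ its earlier use of the same device in the Overview, Section \ref{overview}).
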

\begin{defi}
Given integers $i,k,q>0$, a map $\rho\colon D_1 \rightarrow D_1^{\otimes k}$ such that $\rho \circ \sigma=\partial_{\sigma},\rho\circ \tau=\partial_{\tau}$ and a cylinder $C\colon \cyl(D_1)\rightarrow \cyl(D_1^{\otimes q})$, we denote by $\hat{\rho}\circ_i C$ the following composite cylinder
\[
\bfig
\morphism(-800,0)|a|/@{>}@<0pt>/<800,0>[\cyl(D_{1})`\cyl(D_{1}^{\otimes k});\hat{\rho}]
\morphism(0,0)|a|/@{>}@<0pt>/<1600,0>[\cyl(D_{1}^{\otimes k})`\cyl(D_{1}^{\otimes k+q-1});1\coprod\ldots\coprod C \coprod \ldots \coprod 1]
\efig
\]
\end{defi}
The following lemmas altogether produce a proof of Lemma \ref{canonical modif of 1 dim operations}.
The first one simply follows from the universal property of colimits.
\begin{lemma}
	Given a map $\rho\colon D_1 \rightarrow D_1^{\otimes k}$ such that $\rho \circ \sigma=\partial_{\sigma},\rho\circ \tau=\partial_{\tau}$, integers $p,q>0$ and $1\leq i<j\leq k$ together with cylinders $C\colon\cyl(D_1)\rightarrow \cyl(D_1^{\otimes p})$ and $D\colon\cyl(D_1)\rightarrow \cyl(D_1^{\otimes q})$, such that the following diagram makes sense
	\[
	\bfig
	\morphism(0,0)|a|/{@{>}@/^0em/}/<1800,0>[\cyl(D_{1}^{\otimes k})`\cyl\left( D_{1}^{\otimes k+p+q-2}\right);1\coprod\ldots\coprod C\coprod\ldots\coprod D\coprod \ldots \coprod 1]
	\morphism(-800,0)|a|/@{>}@<0pt>/<800,0>[\cyl(D_{1})`\cyl(D_{1}^{\otimes k});\hat{\rho}]
	\efig 
	\]
	We then have an equality $(\hat{\rho}\circ_i C) \circ_{j+p-1} D = (\hat{\rho}\circ_j D)\circ_i C$.
\end{lemma}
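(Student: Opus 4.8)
The plan is to observe that each side of the asserted equality is the composite of $\hat{\rho}$ with a single insertion map into $\cyl(D_1^{\otimes k+p+q-2})$, and then to show that the two insertion maps obtained coincide, directly from the universal property of the colimit defining $\cyl(D_1^{\otimes k})$. Concretely, both $(\hat{\rho}\circ_i C)\circ_{j+p-1}D$ and $(\hat{\rho}\circ_j D)\circ_i C$ unwind to $\hat{\rho}$ followed by a composite of two of the maps of the form $1\amalg\cdots\amalg(-)\amalg\cdots\amalg 1$, so it suffices to prove that these two composite maps $\cyl(D_1^{\otimes k})\to\cyl(D_1^{\otimes k+p+q-2})$ are equal.

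First I would use that $\cyl$ preserves globular sums, so that $\cyl(D_1^{\otimes k})$ is canonically the iterated pushout
\[\cyl(D_1^{\otimes k})\cong \cyl(D_1)\plus{\cyl(D_0)}\cdots\plus{\cyl(D_0)}\cyl(D_1)\]
of $k$ copies of $\cyl(D_1)$ glued along $\cyl(D_0)=D_1$. A map out of this colimit is therefore determined uniquely by its restrictions to the $k$ summands, and a composite of two colimit-induced maps is computed summand by summand. Thus the equality reduces to comparing, for each summand of $\cyl(D_1^{\otimes k})$, the two restrictions.

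The main (and essentially only) point requiring attention is the index bookkeeping. Inserting $C$ at position $i$ replaces the $i$-th copy of $\cyl(D_1)$ by the $p$ summands coming from $\cyl(D_1^{\otimes p})$, so every original summand $\ell>i$ is re-indexed as the $(\ell+p-1)$-st; in particular the original $j$-th summand becomes the $(j+p-1)$-st, which is exactly where $D$ is then inserted. Carrying out the restriction computation for $(\hat{\rho}\circ_i C)\circ_{j+p-1}D$ gives, summand by summand: the identity inclusion on every summand $\ell\neq i,j$; the map $C$ (postcomposed with the evident colimit inclusion) on the $i$-th summand; and $D$ (postcomposed with the evident colimit inclusion) on the $j$-th summand. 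Running the same computation for $(\hat{\rho}\circ_j D)\circ_i C$—where inserting $D$ at $j$ leaves the smaller position $i$ unshifted, so $C$ is inserted at $i$—produces precisely the same family of restrictions, landing in the same summands $i,\dots,i+p-1$ and $j+p-1,\dots,j+p+q-2$ respectively. These families are automatically compatible along the shared $\cyl(D_0)$-objects, since the gluing data are identities there and the standing hypothesis that ``the displayed diagram makes sense'' is exactly what guarantees that $C$ and $D$ agree with the boundary inclusions at their endpoints.

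By uniqueness in the universal property, the two induced maps $\cyl(D_1^{\otimes k})\to\cyl(D_1^{\otimes k+p+q-2})$ therefore agree, and postcomposing $\hat{\rho}$ with this common map yields the claimed identity $(\hat{\rho}\circ_i C)\circ_{j+p-1}D=(\hat{\rho}\circ_j D)\circ_i C$. I expect no genuine obstacle here: once the shift $j\mapsto j+p-1$ forced by the $p$ outputs of $C$ lying to the left of $D$ is accounted for, the statement is a formal consequence of globularity of $\cyl$ together with the universal property of colimits, as indicated.
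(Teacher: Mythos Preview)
Your proposal is correct and is exactly the argument the paper has in mind: the paper simply states that the lemma ``follows from the universal property of colimits,'' and you have spelled out precisely that argument, using globularity of $\cyl$ to identify $\cyl(D_1^{\otimes k})$ as an iterated pushout and then comparing the two insertion maps summandwise with the correct index shift.
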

\begin{lemma}
Given integers $i,k,q>0$, a map $\rho\colon D_1 \rightarrow D_1^{\otimes k}$ such that $\rho \circ \sigma=\partial_{\sigma},\rho\circ \tau=\partial_{\tau}$, and a modification $\Theta\colon 	C \Rightarrow D\colon \cyl(D_1)\rightarrow \cyl(D_1^{\otimes q})$ such that $\Theta_s,\Theta_t$ are identities (thus we think of $\Theta$ as a $3$-cell as explained in Example \ref{unraveling modifications}) we get an induced modification denoted by $\hat{\rho}\circ_i \Theta \colon \hat{\rho}\circ_i C \Rightarrow \hat{\rho}\circ_i D$ between the following $1$-cylinders
\[
\bfig 
\morphism(0,0)|a|/{@{>}@/^1.5em/}/<1700,0>[\cyl(D_{1}^{\otimes k})`\cyl(D_{1}^{\otimes k+q-1});1\coprod\ldots\coprod C \coprod \ldots \coprod 1]
\morphism(0,0)|b|/{@{>}@/^-1.5em/}/<1700,0>[\cyl(D_{1}^{\otimes k})`\cyl(D_{1}^{\otimes k+q-1});1\coprod\ldots\coprod D \coprod \ldots \coprod 1]
\morphism(-800,0)|a|/@{>}@<0pt>/<800,0>[\cyl(D_{1})`\cyl(D_{1}^{\otimes k});\hat{\rho}]
\efig 
\]
whenever this diagram makes sense, where both $C$ and $D$ appear at the $i$-th coordinate. Moreover, this modification is essentially given by a 3-cell.
\begin{proof}
We prove this representably, i.e. we assume given compatible $1$-cylinders $C_i\colon A_i \curvearrowright B_i$ in an $\infty$-groupoid $X$, with $s(C_i)=w_i$ and $t(C_i)=w_{i+1}$. We let $E=(C_i,\ldots ,C_{i+q-1})\circ C$ and $F=(C_i,\ldots ,C_{i+q-1})\circ D$, and we observe that $\Theta$ essentially consists of a 2-cell $\overline{E}\rightarrow\overline{F}\colon w_{i+q}E_0 \rightarrow E_1 w_i$ in $\Omega \left( X,s(E_0),t(E_1) \right)$. Moreover, we denote the effect of composing using $\rho$ with juxtaposition of $k$ $1$-cells. The composite of the following pasting diagram in $\Omega \left( X, s(A_1),t(B_{k+q-1})\right)$ transpose via the adjunction $\Sigma \dashv \Omega$ to give the 3-cell in $X$ which $\hat{\rho}\circ_i \Theta$ essentially corresponds to:
\[
\bfig 
\morphism(0,0)|a|/@{>}@<0pt>/<0,-400>[w_{k+q}(A_{k+q-1}\ldots A_{i+q} E_0 A_{i-1}\ldots A_1)`\ldots;]
\morphism(0,-400)|a|/@{>}@<0pt>/<0,-400>[\ldots`B_{k+q-1}\ldots B_{i+q}(w_{i+q}E_0)A_{i-1}\ldots A_1;]
\morphism(0,-800)|r|/{@{>}@/^1.5em/}/<0,-600>[B_{k+q-1}\ldots B_{i+q}(w_{i+q}E_0)A_{i-1}\ldots A_1`B_{k+q-1}\ldots B_{i+q}(E_1w_i)A_{i-1}\ldots A_1;B_{k+q-1}\ldots B_{i+q}\overline{F} A_{i-1}\ldots A_1]
\morphism(0,-800)|l|/{@{>}@/^-1.5em/}/<0,-600>[B_{k+q-1}\ldots B_{i+q}(w_{i+q}E_0)A_{i-1}\ldots A_1`B_{k+q-1}\ldots B_{i+q}(E_1w_i)A_{i-1}\ldots A_1;B_{k+q-1}\ldots B_{i+q}\overline{E} A_{i-1}\ldots A_1]
\morphism(-100,-1100)|a|/@{=>}@<0pt>/<200,0>[`;\ldots \overline{\Theta}\ldots ]
\morphism(0,-1400)|a|/@{>}@<0pt>/<0,-400>[B_{k+q-1}\ldots B_{i+q}(E_1 w_i)A_{i-1}\ldots A_1`\ldots;]
\morphism(0,-1800)|a|/@{>}@<0pt>/<0,-400>[\ldots`(B_{k+q-1}\ldots B_{i+q} E_1 B_{i-1}\ldots B_1)w_1;]
\efig 
\] where we have implicitly used the fact that $E_i=F_i$ for $i=0,1$. Notice that, by definition, the left-hand side composite is $(C_1,\ldots C_{i-1},E,C_{i+q},\ldots,C_{k+q-1})\circ \hat{\rho}$, and the right-hand side one is $(C_1,\ldots C_{i-1},F,C_{i+q},\ldots,C_{k+q-1})\circ \hat{\rho}$, which concludes the proof.
\end{proof}
\end{lemma}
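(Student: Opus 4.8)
The plan is to construct $\hat{\rho}\circ_i\Theta$ representably and then appeal to the Yoneda lemma, exactly as $\hat{\rho}$ itself is built in Definition \ref{rho hat defi}; equivalently, it suffices to apply the construction to the universal string of cylinders classified by $\cyl(D_1^{\otimes k+q-1})$. Concretely, a map $\cyl(D_1^{\otimes k+q-1})\to X$ into an arbitrary $\infty$-groupoid $X$ amounts to a string of compatible $1$-cylinders $C_1,\dots,C_{k+q-1}$, say $C_j\colon A_j\curvearrowright B_j$ with $s(C_j)=w_j$ and $t(C_j)=w_{j+1}$. First I would set $E=(C_i,\dots,C_{i+q-1})\circ C$ and $F=(C_i,\dots,C_{i+q-1})\circ D$, the two $1$-cylinders in $X$ occupying the middle $q$ slots. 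Since $\Theta_s,\Theta_t$ are identities, Example \ref{unraveling modifications} identifies $\Theta$ with a $3$-cell, so plugging in $C_i,\dots,C_{i+q-1}$ produces a $2$-cell $\overline{\Theta}\colon\overline{E}\to\overline{F}$ in $\Omega(X,s(E_0),t(E_1))$ between the transposed $0$-cylinders $\overline{E},\overline{F}\colon w_{i+q}E_0\to E_1w_i$; crucially $E$ and $F$ then share the same boundary, i.e. $E_0=F_0$, $E_1=F_1$, together with the same vertical $1$-cells $w_i,w_{i+q}$.

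Next I would unwind the two $1$-cylinders to be compared. By the definition of $\hat{\rho}$ as a vertical composite of a stack (Definition \ref{rho hat defi}, via the vertical composition of Subsection \ref{subsect vert comp of cyls}), the transpose of $(C_1,\dots,C_{i-1},E,C_{i+q},\dots,C_{k+q-1})\circ\hat{\rho}$ is a vertical composite in $\Omega(X,s(A_1),t(B_{k+q-1}))$ of a sequence of $0$-cylinders: a block of rebracketing cells of the form $\Psi$ from \eqref{coherence cyls} that move the whiskering $1$-cells past the pasted cells, the whiskered transpose $B_{k+q-1}\cdots B_{i+q}\,\overline{E}\,A_{i-1}\cdots A_1$ sitting in the middle, and a second block of $\Psi$-cells, running from $w_{k+q}(A_{k+q-1}\cdots A_{i+q}E_0A_{i-1}\cdots A_1)$ down to $(B_{k+q-1}\cdots B_{i+q}E_1B_{i-1}\cdots B_1)w_1$. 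The same holds with $F$ in place of $E$. The key observation is that, because $E$ and $F$ agree on boundary and on source and target $1$-cells, every cell in the two stacks is literally equal except the single middle entry $\overline{E}$ versus $\overline{F}$; in particular the surrounding $\Psi$-rebracketings coincide.

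I would then assemble the comparison as the composite of the pasting diagram in $\Omega(X,s(A_1),t(B_{k+q-1}))$ displayed in the statement's proof, which runs down both stacks simultaneously and inserts at the middle level the whiskered $2$-cell $B_{k+q-1}\cdots B_{i+q}\,\overline{\Theta}\,A_{i-1}\cdots A_1$ between the two parallel $1$-cells $B_{k+q-1}\cdots B_{i+q}\,\overline{E}\,A_{i-1}\cdots A_1$ and $B_{k+q-1}\cdots B_{i+q}\,\overline{F}\,A_{i-1}\cdots A_1$. Reading off its two boundaries recovers exactly the transposes of $(C_1,\dots,E,\dots)\circ\hat{\rho}$ and $(C_1,\dots,F,\dots)\circ\hat{\rho}$, so transposing back along $\Sigma\dashv\Omega$ yields the required $3$-cell, i.e. a modification $\hat{\rho}\circ_i\Theta$ with identity boundary. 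Naturality in $X$ is immediate from functoriality of whiskering and of the $\Psi$-cells, so Yoneda upgrades this to the desired map $\mathbf{M}_1\to\cyl(D_1^{\otimes k+q-1})$, and the construction manifestly exhibits it as essentially a $3$-cell.

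The main obstacle I anticipate is the bookkeeping of the second step: verifying that the two vertical stacks differ only in their middle entry. This rests entirely on the hypothesis that $\Theta_s,\Theta_t$ are identities, which forces $\overline{E}$ and $\overline{F}$ to be parallel $0$-cylinders sharing both endpoint cells and the whiskering $1$-cells $w_j$. One must check, against the explicit recipe for vertical composition in Subsection \ref{subsect vert comp of cyls}, that the rebracketing $\Psi$-cells depend only on these shared data and not on the interior of $E$ or $F$, so that the outer blocks are genuinely identical and the only effective content of the comparison is the whiskered $\overline{\Theta}$.
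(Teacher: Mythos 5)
Your proposal is correct and follows essentially the same route as the paper's own proof: a representable argument with $E=(C_i,\dots,C_{i+q-1})\circ C$, $F=(C_i,\dots,C_{i+q-1})\circ D$, the identification of $\Theta$ with a $2$-cell $\overline{E}\to\overline{F}$ (using that $\Theta_s,\Theta_t$ are identities, so $E_\epsilon=F_\epsilon$), insertion of the whiskered $\overline{\Theta}$ into the pasting diagram in $\Omega\left(X,s(A_1),t(B_{k+q-1})\right)$, and transposition along $\Sigma\dashv\Omega$. Your extra bookkeeping step — checking that the two vertical stacks coincide outside the middle entry because the $\Psi$-rebracketings depend only on the shared boundary data — is exactly what the paper leaves implicit when it asserts that the two sides of the displayed diagram are $(C_1,\dots,E,\dots)\circ\hat{\rho}$ and $(C_1,\dots,F,\dots)\circ\hat{\rho}$.
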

We will need something a bit stronger, namely the following generalization of the previous lemma, whose proof is left to the reader.
\begin{lemma}
	\label{functoriality of modif}
Assume given integers $k,q_j>0$ for $1\leq j\leq k$, a map $\rho\colon D_1 \rightarrow D_1^{\otimes k}$ such that $\rho \circ \sigma=\partial_{\sigma},\rho\circ \tau=\partial_{\tau}$, and modifications $\Theta_j\colon 	C_j \Rightarrow D_j\colon \cyl(D_1)\rightarrow \cyl(D_1^{q_j})$ such that ${\Theta_j}_s,{\Theta_j}_t$ are identities for every $j$. We then get an induced modification, denoted by $ (\Theta_1,\ldots,\Theta_k)\circ \hat{\rho}= (C_1,\ldots,C_k)\circ \hat{\rho}\Rightarrow(D_1,\ldots,D_k)\circ \hat{\rho}$, between the following $1$-cylinders
\[
\bfig 
\morphism(0,0)|a|/{@{>}@/^1.5em/}/<1700,0>[\cyl(D_{1}^{\otimes k})`\cyl\left( D_{1}^{\otimes (\sum_j q_j)}\right);\plus{1\leq j \leq k}C_j]
\morphism(0,0)|b|/{@{>}@/^-1.5em/}/<1700,0>[\cyl(D_{1}^{\otimes k})`\cyl\left( D_{1}^{\otimes (\sum_j q_j) }\right) ;\plus{1\leq j \leq k}D_j]
\morphism(-800,0)|a|/@{>}@<0pt>/<800,0>[\cyl(D_{1})`\cyl(D_{1}^{\otimes k});\hat{\rho}]

\efig 
\] which, again, is essentially given by a $3$-cell.
\end{lemma}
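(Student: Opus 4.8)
The plan is to reduce the statement to the single-insertion modification constructed in the previous lemma, inserting the modifications $\Theta_j$ one coordinate at a time and composing the resulting $3$-cells. I would argue representably throughout, fixing an $\infty$-groupoid $X$ together with compatible $1$-cylinders exactly as in the proof of the preceding lemma; since each ${\Theta_j}_s$ and ${\Theta_j}_t$ is an identity, every $\Theta_j$ is encoded by a single $3$-cell, and the object we must produce is again a $3$-cell in $X$ witnessing a modification between $(C_1,\ldots,C_k)\circ\hat{\rho}$ and $(D_1,\ldots,D_k)\circ\hat{\rho}$.

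First I would introduce the telescope of intermediate $1$-cylinders
\[
E^{(r)}=(D_1,\ldots,D_r,C_{r+1},\ldots,C_k)\circ\hat{\rho}, \qquad 0\leq r\leq k,
\]
so that $E^{(0)}=(C_1,\ldots,C_k)\circ\hat{\rho}$ and $E^{(k)}=(D_1,\ldots,D_k)\circ\hat{\rho}$. For each $1\leq r\leq k$ the two cylinders $E^{(r-1)}$ and $E^{(r)}$ differ only in the $r$-th coordinate, where $C_r$ is replaced by $D_r$; all the remaining coordinates carry fixed cylinders (the $D_j$ for $j<r$ and the $C_j$ for $j>r$). The proof of the previous lemma applies with these as the ambient surrounding cylinders — it is carried out representably for arbitrary compatible $1$-cylinders $C_i$, so the simplification to identities in its statement is inessential — and yields a modification $\Lambda^{(r)}\colon E^{(r-1)}\Rightarrow E^{(r)}$ that is again essentially a $3$-cell, namely the transpose of the pasting diagram exhibited there with $\overline{\Theta}_r$ in the central slot.

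Next I would compose the $\Lambda^{(r)}$. Because each is essentially a $3$-cell and the target of $\Lambda^{(r)}$ is literally $E^{(r)}$, which is the source of $\Lambda^{(r+1)}$, the $k$ comparison $3$-cells are composable in $X$; their composite is the sought $3$-cell, and it defines the modification $(\Theta_1,\ldots,\Theta_k)\circ\hat{\rho}$. To see that this assembles coherently and independently of the order in which the coordinates are switched I would invoke the commutativity identity $(\hat{\rho}\circ_i C)\circ_{j+p-1}D=(\hat{\rho}\circ_j D)\circ_i C$ of the earlier lemma: it guarantees that the intermediate cylinders $E^{(r)}$ are well defined regardless of the bracketing, so that the telescope assembles unambiguously. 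Finally, since all the ${\Theta_j}_s$ and ${\Theta_j}_t$ are identities, the source and target $2$-cells of the composite boundary remain identities, confirming that the resulting modification is essentially a $3$-cell, as claimed.

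The main obstacle I anticipate is bookkeeping rather than conceptual: one must check that the whiskerings by the surrounding cylinders in the pasting diagram of the single-insertion lemma are compatible across successive steps of the telescope, i.e. that the boundary $1$-cells produced at step $r$ agree on the nose with those consumed at step $r+1$. This is where the globularity of the generalized whiskerings $w$ and of the coherence cylinders $\Psi$, together with the commutativity lemma, do the real work; once these on-the-nose equalities are verified, the composition of $3$-cells is immediate.
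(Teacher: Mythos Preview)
The paper does not prove this lemma; it explicitly states that the proof is left to the reader. Your telescope argument—replacing one coordinate at a time via the single-insertion modification of the preceding lemma and then composing the resulting $3$-cells—is exactly the intended reduction, and your observation that the representable proof of that lemma already handles arbitrary surrounding cylinders (not just identities) is the key point that makes the iteration go through. One minor remark: the commutativity identity $(\hat{\rho}\circ_i C)\circ_{j+p-1}D=(\hat{\rho}\circ_j D)\circ_i C$ is not actually needed for the construction, since you have fixed a specific order for the telescope and the intermediate cylinders $E^{(r)}$ are well defined as maps regardless; that identity would only be relevant if one wanted to argue that different orderings produce the same modification, which the lemma does not assert.
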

\begin{lemma}
Given compatible operations $\rho \colon D_1 \rightarrow D_1^{\otimes k}$, $\phi_j\colon D_i \rightarrow D_1^{\otimes q_j}$ for $1\leq j \leq k$ similarly to the previous lemma, there is an induced modification
\begin{equation}
\label{coherence mod}
\bfig 
\morphism(0,400)|a|/{@{>}@/^0em/}/<1700,-400>[\cyl(D_{1}^{\otimes k})`\cyl\left( D_{1}^{\otimes (\sum_j q_j) }\right);\plus{1\leq j \leq k}\widehat{\phi_j}]
\morphism(-800,0)|a|/@{>}@<0pt>/<800,400>[\cyl(D_{1})`\cyl(D_{1}^{\otimes k});\hat{\rho}]
\morphism(-800,0)|b|/@{>}@<0pt>/<2500,0>[\cyl(D_{1})`\cyl\left( D_{1}^{\otimes (\sum_j q_j) }\right);{ (\left(\phi_j\right)_{1\leq j \leq k}\circ \rho)}^{\wedge}]
\morphism(150,250)|a|/{@{=>}@/^0em/}/<150,-150>[`;]
\efig 
\end{equation}
\begin{proof}
We prove this representably, thus we assume given an $\infty$-groupoid X and a family of compatible $1$-cylinders $C^m_r$ with $1\leq m\leq k$ and $1\leq r \leq q_m$. Denote with $\underline{C}^m$ the map $(C^m_1,\ldots,C^m_{q_m})\colon \cyl(D_1^{\otimes q_m})\rightarrow X$, and, for $\epsilon=0,1$, let $\underline{C_\epsilon}^m$ be the string of $1$-cells in $X$ given by  $({C^m_1}_{\epsilon},\ldots,{C^m_{q_m}}_{\epsilon})\colon D_1^{\otimes q_m}\rightarrow X$. In this way, we have that $\widehat{\phi_m}$ acts on $\underline{C}^m$, and $\phi_m$ acts on $\underline{C_\epsilon}^m$. Define $n=\sum_j q_j$ and denote the target of the $i$-th cylinder by $w_i$. Consider the following diagram in $\Omega(X,a,b)$, where $a=s\left(({C^1_1})_0\right)$ and $b=t\left(({C^m_{q_m}})_1\right)$, and juxtaposition represents the result of composing the 1-cells involved using $\rho$. The left-hand side (resp. right-hand side) composite coincides with the upper composite of \eqref{coherence mod} (resp. bottom map).
\[
\bfig 
\morphism(0,0)|a|/@{>}@<0pt>/<-1200,-400>[w_n\left(\phi_k(\underline{C_0}^k)\ldots \phi_1(\underline{C_0}^1)\right)`\left( w_n\phi_k(\underline{C_0}^k)\right)\phi_{k-1}(\underline{C_0}^{k-1})\ldots \phi_1(\underline{C_0}^1);]
\morphism(-1200,-400)|l|/@{>}@<0pt>/<0,-800>[\left( w_n\phi_k(\underline{C_0}^k)\right)\phi_{k-1}(\underline{C_0}^{k-1})\ldots \phi_1(\underline{C_0}^1)`\left(\phi_k(\underline{C_1}^k) w_{n-q_k}\right)\phi_{k-1}(\underline{C_0}^{k-1})\ldots \phi_1(\underline{C_0}^1);g]
\morphism(-1200,-1200)|a|/@{>}@<0pt>/<0,-400>[\left(\phi_k(\underline{C_1}^k) w_{n-q_k}\right)\phi_{k-1}(\underline{C_0}^{k-1})\ldots \phi_1(\underline{C_0}^1)`\phi_k(\underline{C_1}^k) \left(w_{n-q_k}\phi_k(\underline{C_0}^{k-1}\right)\ldots \phi_1(\underline{C_0}^1);]	
\morphism(-1200,-1600)|a|/@{>}@<0pt>/<0,-400>[\phi_k(\underline{C_1}^k) \left(w_{n-q_k}\phi_k(\underline{C_0}^{k-1}\right)\ldots \phi_1(\underline{C_0}^1)`\ldots;]	
\morphism(-1200,-2000)|a|/@{>}@<0pt>/<0,-400>[\ldots`\phi_k(\underline{C_1}^k)\ldots \left(\phi_1(\underline{C_1}^1)w_1\right);]
\morphism(-1200,-2400)|a|/@{>}@<0pt>/<1200,-400>[\phi_k(\underline{C_1}^k)\ldots \left(\phi_1(\underline{C_1}^1)w_1\right)`\left(\phi_k(\underline{C_1}^k)\ldots \phi_1(\underline{C_1}^1)\right)w_1;]

\morphism(0,0)|a|/@{>}@<0pt>/<1200,-400>[w_n\left(\phi_k(\underline{C_0}^k)\ldots \phi_1(\underline{C_0}^1)\right)`\phi_k \left( (w_n (C^k_{q_k})_0) \ldots (C^k_1)_0 \right) \phi_{k-1}(\underline{C_0}^{k-1})\ldots \phi_1(\underline{C_0}^1);]
\morphism(1200,-400)|a|/@{>}@<0pt>/<0,-400>[\phi_k \left( (w_n (C^k_{q_k})_0) \ldots (C^k_1)_0 \right) \phi_{k-1}(\underline{C_0}^{k-1})\ldots \phi_1(\underline{C_0}^1)`\ldots;]
\morphism(1200,-800)|a|/@{>}@<0pt>/<0,-400>[\ldots`\phi_k \left(  (C^k_{q_k})_1 \ldots (C^k_1)_1 w_{n-q_k} \right) \phi_{k-1}(\underline{C_0}^{k-1})\ldots \phi_1(\underline{C_0}^1);]
\morphism(1500,-800)|a|/@{ }@<0pt>/<0,0>[ ` ;(2)]
\morphism(-1200,-1200)|a|/@{>}@<0pt>/<2400,0>[\left(\phi_k(\underline{C_1}^k) w_{n-q_k}\right)\phi_{k-1}(\underline{C_0}^{k-1})\ldots \phi_1(\underline{C_0}^1)`\phi_k \left(  (C^k_{q_k})_1 \ldots (C^k_1)_1 w_{n-q_k} \right) \phi_{k-1}(\underline{C_0}^{k-1})\ldots \phi_1(\underline{C_0}^1);]	
\morphism(0,-600)|a|/@{}@<0pt>/<0,0>[`;(1)]
\morphism(1200,-1200)|a|/@{>}@<0pt>/<0,-400>[\phi_k \left(  (C^k_{q_k})_1 \ldots (C^k_1)_1 w_{n-q_k} \right) \phi_{k-1}(\underline{C_0}^{k-1})\ldots \phi_1(\underline{C_0}^1)`\ldots;]	
\morphism(1200,-1600)|a|/@{>}@<0pt>/<-1200,-1200>[\ldots`\left(\phi_k(\underline{C_1}^k)\ldots \phi_1(\underline{C_1}^1)\right)w_1;]
\efig 
\] 
We now explain how to fill the part of the diagram labelled with (1) with a 2-cell, and the same argument will provide fillers for the other analogous subdivisions of the diagram, corresponding to the $\phi_j$ for $j< k$. Note $g$ is a whiskering (via an operation $\overline{\rho}\colon D_2 \rightarrow D_2 \plus{D_0}D_1^{\otimes k-1}$ with $\overline{\rho} \circ\epsilon=\partial_{\epsilon} \circ \rho$ for $\epsilon=\sigma,\tau$) of a composite of the form
\[
\bfig 
\morphism(0,0)|a|/@{>}@<0pt>/<600,0>[w_n\phi_k(\underline{C_0}^k)`\cdots;a_1]
\morphism(600,0)|a|/@{>}@<0pt>/<600,0>[\cdots`\phi_k(\underline{C_1}^k) w_{n-q_k};a_{p}]
\efig 
\] with the $1$-cell $\phi_{k-1}(\underline{C_0}^{k-1})\ldots \phi_1(\underline{C_0}^1)$, where $p=\vert \mathcal{L}(D_1^{\otimes q_k}) \vert$. Here, we denoted the set of $1$-cells which constitutes the vertical stack of 0-cylinders appearing in the construction of $\widehat{\phi_k}$ with $\{a_i\}_{1\leq i \leq \vert \mathcal{L}(D_1^{\otimes q_k})\vert }$.

By construction of $\hat{\rho}$ and contractibility of $\mathfrak{C}$, we see that there is a 2-cell $\alpha\colon g \rightarrow g'$, where $g'$ is the composite of 2-cells of the form $a_i \phi_{k-1}(\underline{C_0}^{k-1})\ldots \phi_1(\underline{C_0}^1)$, again obtained by using the operation $\overline{\rho}$. The target of $a_1 \phi_{k-1}(\underline{C_0}^{k-1})\ldots \phi_1(\underline{C_0}^1)$ is precisely given by \[ \phi_k \left( w_n (C^k_{q_k})_0 \ldots (C^k_1)_0 \right) \phi_{k-1}(\underline{C_0}^{k-1})\ldots \phi_1(\underline{C_0}^1)\] and each of the 2-cells $a_i \phi_{k-1}(\underline{C_0}^{k-1})\ldots \phi_1(\underline{C_0}^1)$ for $1<i < \vert \mathcal{L}(D_1^{\otimes q_k})\vert $ is parallel to (and appears in the same order as) one on the right-hand side composite labelled with (2). Each of this pair of parallel 2-cells factors, by construction, through the same globular sum. They are (possibly different) whiskerings of the same cells, and therefore there is a 3-cell between them. Using again contractibility of $\mathfrak{C}$ for the triangle of the form
\[
\bfig 
\morphism(0,0)|a|/@{>}@<0pt>/<0,-400>[w_n\left(\phi_k(\underline{C_0}^k)\ldots \phi_1(\underline{C_0}^1)\right)`\left( w_n\phi_k(\underline{C_0}^k)\right)\phi_{k-1}(\underline{C_0}^{k-1})\ldots \phi_1(\underline{C_0}^1);]
\morphism(0,0)|a|/@{>}@<0pt>/<2400,0>[w_n\left(\phi_k(\underline{C_0}^k)\ldots \phi_1(\underline{C_0}^1)\right)`\phi_k \left( w_n (C^k_{q_k})_0 \ldots (C^k_1)_0 \right) \phi_{k-1}(\underline{C_0}^{k-1})\ldots \phi_1(\underline{C_0}^1);]
\morphism(0,-400)|r|/@{>}@<0pt>/<2400,400>[\left( w_n\phi_k(\underline{C_0}^k)\right)\phi_{k-1}(\underline{C_0}^{k-1})\ldots \phi_1(\underline{C_0}^1)` \phi_k \left( w_n (C^k_{q_k})_0 \ldots (C^k_1)_0 \right) \phi_{k-1}(\underline{C_0}^{k-1})\ldots \phi_1(\underline{C_0}^1); \ \ \ a_1\phi_{k-1}(\underline{C_0}^{k-1})\ldots \phi_1(\underline{C_0}^1)]
\efig 
\] and for the analogous one at the bottom we get the remaining 3-cell fillers needed to provide the desired filler for (1). 
\end{proof}
\end{lemma}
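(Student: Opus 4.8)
The plan is to argue \emph{representably}: I fix an $\infty$-groupoid $X$ together with a compatible family of $1$-cylinders $C^m_r$ ($1\le m\le k$, $1\le r\le q_m$) realizing the inputs, and produce the modification of \eqref{coherence mod} by hand. Since $\rho\circ\sigma=\partial_\sigma$ and $\rho\circ\tau=\partial_\tau$, the two composites $(\plus{1\le j\le k}\widehat{\phi_j})\circ\hat\rho$ and $((\phi_j)_{1\le j\le k}\circ\rho)^{\wedge}$ have identical source and target $1$-cells, so their boundary $2$-cells may be taken to be identities. By Example \ref{unraveling modifications}, a modification between two such $1$-cylinders with identity boundary $2$-cells is precisely a single $3$-cell in $X$ comparing their filling $2$-cells; transposing along $\Sigma\dashv\Omega$, it therefore suffices to exhibit a $2$-cell in the hom-$\infty$-groupoid $\Omega(X,a,b)$ between the transposes of the two composites.

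First I would unwind both composites using the construction of $\hat{(-)}$ in Definition \ref{rho hat defi}. The left-hand composite is obtained by feeding the stacks that compute each $\widehat{\phi_j}$ into the stack computing $\hat\rho$, i.e.\ it is a vertical composite whose entries are the $\widehat{\phi_j}$-stacks whiskered by the remaining input $1$-cells and glued together by means of the operation $\rho$ and the coherence cells $\Psi$ of \eqref{coherence cyls}; the right-hand composite $((\phi_j)_{1\le j\le k}\circ\rho)^{\wedge}$ is the single stack attached to the globular sum obtained by gluing the $\phi_j$ along $\rho$. The two therefore differ only by the order in which $\rho$ and the $\phi_j$ are applied, that is, by a coherent rebracketing of the same underlying cells.

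The heart of the argument is then local. I would subdivide the comparison into $k$ regions, one for each index $j$, the two sides of the $j$-th region being whiskerings, through a fixed lift of $\rho$, of the cells constituting the $\widehat{\phi_j}$-stack by the unchanged inputs. In each region both boundaries factor through the \emph{same} globular sum and are assembled from the \emph{same} cells, so contractibility of $\mathfrak{C}$ furnishes, step by step, a $2$-cell comparing the two bracketings together with $3$-cells between the corresponding pairs of parallel constituent $2$-cells; filling the two residual triangles at the ends of the region, again by contractibility, produces a $3$-cell filling that region. Pasting the $k$ regional $3$-cells yields the required global $3$-cell, hence the modification.

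The main obstacle I anticipate is bookkeeping rather than conceptual. One must check that the two composites are genuinely parallel, so that contractibility of $\mathfrak{C}$ can be invoked at each stage, and that the regional fillers agree along their shared boundaries so that they paste into one well-defined $3$-cell. Matching the left- and right-hand sides of \eqref{coherence mod} with the endpoints of the large pasting diagram, and verifying that the constituent $2$-cells in each region are parallel and share a common globular sum, is the delicate combinatorial point; once this is in place, every filler is produced formally from contractibility of $\mathfrak{C}$.
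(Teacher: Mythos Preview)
Your proposal is correct and follows essentially the same approach as the paper: a representable argument reducing the modification to a single $3$-cell (equivalently a $2$-cell in the hom-groupoid), a subdivision into $k$ regions indexed by the $\phi_j$, and in each region an appeal to contractibility of $\mathfrak{C}$ to compare two whiskerings of the same cells factoring through the same globular sum, with the residual triangles at the ends also filled by contractibility. The paper is simply more explicit about the intermediate data---naming the whiskering operation $\overline{\rho}$, the cells $a_i$ coming from the stack of $\widehat{\phi_k}$, and the precise parallel pairs---but the structure of the argument is identical to yours.
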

Finally, the last intermediate result before the proof of Lemma \ref{canonical modif of 1 dim operations}.
\begin{lemma}
In the same situation as the previous lemma, assume given compatible modifications $\Theta_j\colon \cyl(\phi_j)\rightarrow \hat{\phi_j}$ with trivial boundary for every $1\leq j\leq k$. Then we get an induced modification $\plus{1\leq j \leq k}\cyl(\phi_j)\circ \cyl(\rho)=\cyl\left( (\phi_1,\ldots,\phi_k)\circ \rho \right) \Rightarrow \left( (\phi_1,\ldots,\phi_k)\circ \rho  \right)^{\wedge}$ that can be depicted as follows
\[
\bfig 
\morphism(0,400)|a|/{@{>}@/^0em/}/<1700,-400>[\cyl(D_{1}^{\otimes k})`\cyl\left( D_{1}^{\otimes (\sum_j q_j) }\right);\plus{1\leq j \leq k}\cyl(\phi_j)]
\morphism(-800,0)|a|/@{>}@<0pt>/<800,400>[\cyl(D_{1})`\cyl(D_{1}^{\otimes k});\cyl(\rho)]
\morphism(-800,0)|b|/@{>}@<0pt>/<2500,0>[\cyl(D_{1})`\cyl\left( D_{1}^{\otimes (\sum_j q_j) }\right);\left( (\phi_j)_{1\leq j\leq k}\circ\rho \right)^{\wedge}]
\morphism(150,250)|a|/{@{=>}@/^0em/}/<150,-150>[`;]
\efig 
\]
\begin{proof}
By Lemma \ref{functoriality of modif} we get a modification $\left( \cyl(\phi_1), \ldots, \cyl(\phi_k)\right) \circ \hat{\rho} \Rightarrow  \left( \hat{\phi}_1, \ldots, \hat{\phi}_k\right) \circ \hat{\rho}$, and thanks to the previous lemma we get one of the form $\left( \hat{\phi}_1, \ldots, \hat{\phi}_k\right) \circ \hat{\rho}\Rightarrow \left( (\phi_j)_{1\leq j\leq k}\circ\rho \right)^{\wedge}$. Moreover, we can precompose the former with \[\plus{1\leq j \leq k}\cyl(\phi_j) \circ \theta_{\rho}\colon \plus{1\leq j \leq k}\cyl(\phi_j)\circ \cyl(\rho) \Rightarrow \plus{1\leq j \leq k}\cyl(\phi_j) \circ \hat{\rho}\]
All of these modifications are essentially $3$-cells, so that we can compose them and conclude the proof.
\end{proof}
\end{lemma}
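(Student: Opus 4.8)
The plan is to interpolate between the two interpretations of $(\phi_1,\ldots,\phi_k)\circ\rho$ by a short zig-zag of modifications and then compose it. Three comparison data are available: the canonical modification $\theta_\rho\colon\hat{\rho}\Rightarrow\cyl(\rho)$ of Lemma \ref{canonical modif of 1 dim operations}, the hypothesised modifications $\Theta_j\colon\cyl(\phi_j)\Rightarrow\hat{\phi}_j$ (which have trivial boundary), and the coherence modification of the preceding lemma, which compares $(\hat{\phi}_1,\ldots,\hat{\phi}_k)\circ\hat{\rho}$ with the elementary interpretation of the composite. Since $\cyl$ is a genuine functor on $\mathfrak{D}_2$, the left-hand object is literally $\cyl\big((\phi_1,\ldots,\phi_k)\circ\rho\big)=\plus{1\leq j\leq k}\cyl(\phi_j)\circ\cyl(\rho)$, so there is nothing to prove on that side.

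First I would assemble the chain
\[
\plus{1\leq j\leq k}\cyl(\phi_j)\circ\cyl(\rho)\ \Rightarrow\ \plus{1\leq j\leq k}\cyl(\phi_j)\circ\hat{\rho}\ \Rightarrow\ (\hat{\phi}_1,\ldots,\hat{\phi}_k)\circ\hat{\rho}\ \Rightarrow\ \big((\phi_j)_{1\leq j\leq k}\circ\rho\big)^{\wedge}.
\]
The first arrow is obtained by whiskering $\theta_\rho$ (or its inverse, which exists by Remark \ref{modif are invert}) with $\plus{1\leq j\leq k}\cyl(\phi_j)$. The second is Lemma \ref{functoriality of modif} applied to the family $\Theta_j$, with $C_j=\cyl(\phi_j)$ and $D_j=\hat{\phi}_j$; the hypothesis that each $\Theta_j$ has trivial boundary is exactly what makes that lemma applicable. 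The third arrow is the coherence modification produced in the preceding lemma. The sources and targets match at each junction by construction, so the chain is well-formed.

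The step I expect to be the main obstacle is the composition of these modifications, since vertical composition of modifications has not been developed in full generality. The way around it is the observation, emphasised in each of the cited lemmas, that all three modifications have trivial boundary $2$-cells and are therefore \emph{essentially $3$-cells} in the sense of Example \ref{unraveling modifications}. As $3$-cells with matching boundaries they can be composed using the chosen composition operation, the necessary coherence being supplied by contractibility of $\mathfrak{C}$; invertibility of the first factor is again Remark \ref{modif are invert}. The only genuinely delicate point is the bookkeeping needed to check that the boundary $2$-cells remain trivial throughout, so that the composite is once more a modification of the required form; this follows from the compatibility clauses of Lemmas \ref{functoriality of modif} and \ref{canonical modif of 1 dim operations} together with the explicit construction of the coherence modification.
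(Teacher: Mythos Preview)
Your argument is correct and follows the paper's proof essentially step for step: the same three-term chain of modifications (whisker $\theta_\rho$, apply Lemma~\ref{functoriality of modif} to the $\Theta_j$, apply the preceding coherence lemma), followed by the observation that all three have trivial boundary and can therefore be composed as $3$-cells. One minor point: you invoke Lemma~\ref{canonical modif of 1 dim operations} to obtain $\theta_\rho$, but the present lemma is itself one of the ingredients in the proof of Lemma~\ref{canonical modif of 1 dim operations}; in context $\theta_\rho$ is available as part of the inductive hypothesis (the paper's own proof uses it the same way without comment), and your appeal to $\mathfrak{D}_2$ should read $\mathfrak{D}_1$, which is all that is needed since $\rho$ and the $\phi_j$ are $1$-dimensional.
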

We now end this section with a proof of Lemma \ref{canonical modif of 1 dim operations}.  
\begin{proof}
	Without loss of generality we can assume $\rho$ is homogeneous, since the statement clearly holds for globular maps, and we can factorize $\rho$ into a homogeneous map followed by a globular one. In particular, $\dim(A)\leq 1$.
Let us define a category $\D'$ whose objects are globular sums of dimension 0 and 1, for which we set
\[\D'(A,B)=\{\rho\colon A \rightarrow B \in \mathfrak{D}_{\textbf{hom}}\vert \  \exists \widehat{\rho} \Rightarrow \cyl(\rho) \}\]
where $\mathfrak{D}_{\textbf{hom}}$ denotes the subcategory of homogeneous maps of $\mathfrak{D}$. Thanks to Definition \ref{ext of hat defi} and the previous lemma, this indeed defines a subcategory of $\mathfrak{D}_{\textbf{hom}}$, closed under 1-dimensional globular sums (note that we make implicit use of what noted in Remark \ref{modif are invert}).
It is clear that cellularity of $\mathfrak{D}$, together with Proposition \ref{fact of maps of gpds}, implies that we can construct a map $F\colon\mathfrak{D}_{\leq 1} \rightarrow \D'$, where $\mathfrak{D}_{\leq 1}$ is the full subcategory of $\mathfrak{D}_{\textbf{hom}}$ spanned by globular sums of dimension less than or equal to 1. In fact, it is enough to define it on the ``basic'' operations that we add in going from $\Theta_0$ to $\mathfrak{D}_1$ in the defining tower of the coherator $\mathfrak{D}$ (as explained in \ref{tower for D}), and we have already done so in \eqref{ext to D1}. We can now conclude by observing that the functor $F$ encodes the statement of the Lemma, together with the preservation of parallelism, which easily follows from the previous constructions.
\end{proof}

\newpage

\end{document}